\documentclass[11pt]{article}
\bibliographystyle{abbrv}
\usepackage[pdftex]{graphicx}
\usepackage{bm}
\usepackage{amsmath,amsthm,amssymb,url}
\usepackage{multirow,bigdelim}
\usepackage{amscd}

%\definecolor{revision}{rgb}{1,0,0}

%\usepackage{cases}
\newtheorem{thm}{Theorem}[section]
\newtheorem{theorem}[thm]{Theorem}
\newtheorem{proposition}[thm]{Proposition}
\newtheorem{lemma}[thm]{Lemma}

\newtheorem{claim}[thm]{Claim}

\DeclareMathOperator{\rank}{rank}

\newcommand{\id}{\mathrm{id}}

\newcommand{\aff}{\mathrm{aff}}

\setlength{\itemsep}{-10cm}
\evensidemargin 0.2in %0in
\oddsidemargin 0.2in %0in
\topmargin -0.445in
\textwidth 5.9in
\textheight 8.8in

\newcommand{\M}{{\mathcal{M}}}

%tikz
\usepackage{tikz}
\usetikzlibrary{arrows}

\newcommand{\myGlobalTransformation}[2]
{
    \pgftransformcm{1}{0}{0.4}{0.5}{\pgfpoint{#1cm}{#2cm}}
}

\newcommand{\gridThreeD}[3]
{
    \begin{scope}
        \myGlobalTransformation{#1}{#2};
        \draw [#3,step=2cm] grid (8,8);
				\fill[white] (-2,0) -- (10,0) -- (10,-2) -- (-2,-2) -- (-2,0);
				\fill[white] (-2,-2) -- (-2,10) -- (0,10) -- (0,-2) -- (-2,-2);
				\fill[white] (-2,10) -- (10,10) -- (10,8) -- (-2,8) -- (-2,10);
				\fill[white] (8,10) -- (10,10) -- (10,-2) -- (8,-2) -- (8,10);
				%\draw[->, very thick] (0,0) -- node [below,draw opacity=0,fill=white] {$\ell_1$} (2,0);
				%\draw[->, very thick] (0,0) -- (0,2);
				%\node[draw opacity=0,fill=white] (l) at (-0.35,1) {$\ell_2$};
    \end{scope}
}

\title{Global Rigidity of Periodic Graphs under Fixed-lattice Representations}
\author{Vikt\'oria E. Kaszanitzky\thanks{
 Department of Mathematics and Statistics,
Lancaster University,
Lancaster, LA1 4YF, United Kingdom, and Department of Computer Science and Information Theory, Budapest University of Technology and Economics, Magyar tud\'osok krt 2., Budapest, 1117, Hungary
(\texttt{viktoria@cs.elte.hu}).
  Supported by EPSRC First Grant EP/M013642/1 and by the Hungarian Scientific Research Fund (OTKA, grant number K109240).}
\and
Bernd Schulze \thanks{
Department of Mathematics and Statistics,
Lancaster University,
Lancaster,
LA1 4YF, United Kingdom
(\texttt{b.schulze@lancaster.ac.uk}).
 Supported by EPSRC First Grant EP/M013642/1.}
\and
 Shin-ichi Tanigawa\thanks{Research Institute for Mathematical Sciences, Kyoto University, Sakyo-ku, Kyoto 606-8502,
Japan, and Centrum Wiskunde \& Informatica (CWI), Postbus 94079, 1090 GB Amsterdam, The
Netherlands (\texttt{tanigawa@kurims.kyoto-u.ac.jp}).
  Supported by JSPS Postdoctoral Fellowships for Research Abroad, JSPS Grant-in-Aid for Scientific Research(A)(25240004), and JSPS Grant-in-Aid for Young Scientists (B) (No. 15K15942).
}
}

\begin{document}

%\renewcommand{\thefootnote}{ }
%\footnotetext{2010 {\em Mathematics Subject Classification}. Primary 52C25, 05B35, 70B10; Secondary 05C10, 68R10.}

\maketitle

\begin{abstract}
In \cite{hend} Hendrickson proved that $(d+1)$-connectivity and redundant rigidity are necessary conditions for a generic (non-complete) bar-joint framework to be globally rigid in $\mathbb{R}^d$. Jackson and Jord\'{a}n~\cite{jj} confirmed that these conditions are also sufficient in $\mathbb{R}^2$, giving a combinatorial characterization of graphs whose generic realizations in $\mathbb{R}^2$ are globally rigid.
 In this paper, we  establish analogues of these results for infinite periodic frameworks under fixed lattice representations.
  Our combinatorial characterization of globally rigid generic periodic frameworks in $\mathbb{R}^2$ in particular implies toroidal and cylindrical counterparts of the theorem by Jackson and Jord\'{a}n.
\end{abstract}

\noindent {\em Key words}: global rigidity, periodic framework, toroidal framework, cylindrical framework, group-labeled graph, matroid connectivity.

\section{Introduction}
\label{sec:intro}

A bar-joint framework (or simply framework) in $\mathbb{R}^d$ is a pair $(G, p)$, where $G = (V, E)$ is a  graph and $p:V\to \mathbb{R}^d$ is a map. We think of a  framework  as a straight-line realization of $G$ in
$\mathbb{R}^d$ in which the length of an edge $uv\in E$ is given by the Euclidean distance
between the points $p(u)$ and $p(v)$.
A well-studied problem in discrete geometry is to determine the rigidity of frameworks.
A framework $(G,p)$ is called \emph{(locally) rigid} if, loosely speaking, it cannot be deformed continuously into another non-congruent framework while maintaining the lengths of all edges. 
It is well-known that a generic framework $(G,p)$ is  rigid in $\mathbb{R}^d$ if and only if \emph{every} generic realization of $G$ in $\mathbb{R}^d$ is  rigid~\cite{asiroth,gluck}. 
In view of this fact, a graph $G$ is said to be {\em  rigid} in $\mathbb{R}^d$ if some/any generic realization of $G$ is  rigid in $\mathbb{R}^d$.
A classical theorem by Laman~\cite{Lamanbib} says that $G$ is rigid in $\mathbb{R}^2$ if and only if $G$ contains a spanning subgraph $H$ with $|E(H)|=2|V(H)|-3$ such that
$|F|\leq 2|V(F)|-3$ for every nonempty $F\subseteq E(H)$, where $V(F)$ denotes the set of vertices incident to $F$.

Another central property in rigidity theory is global rigidity.
A  framework $(G,p)$ is called \emph{globally rigid}, if every  framework $(G,q)$ in $\mathbb{R}^d$ with the same edge lengths as $(G,p)$ has the same distances between all pairs of vertices as $(G, p)$.  Although deciding the global rigidity of a given framework is a difficult problem  in general, 
the problem becomes tractable if we restrict attention to \emph{generic} frameworks~\cite{gortler2010}, i.e. frameworks with the property that the coordinates of all points $p(v), v \in V$, are algebraically independent over $\mathbb{Q}$.

In 1992 Hendrickson~\cite{hend} established the following necessary condition for a generic framework  in $\mathbb{R}^d$ to be globally rigid.  
\begin{theorem} [Hendrickson~\cite{hend}] \label{thm:hendrickson} If $(G,p)$ is a generic globally rigid framework in $\mathbb{R}^d$, then $G$ is a complete graph with at most $d+1$ vertices, or $G$ is  $(d+1)$-connected and redundantly rigid in $\mathbb{R}^d$,
where $G$ is called redundantly rigid if $G-e$ is rigid for every $e\in E(G)$.
\end{theorem}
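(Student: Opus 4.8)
The plan is to prove the two necessary conditions separately, in each case assuming the condition fails and constructing an explicit non-congruent framework with the same edge lengths, thereby contradicting global rigidity. Throughout we use the standard fact that genericity lets us pass between combinatorial and geometric statements freely.

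First I would establish $(d+1)$-connectivity. Suppose $G$ is not complete on at most $d+1$ vertices and is \emph{not} $(d+1)$-connected, so there is a vertex cut $S$ with $|S|\le d$ separating $V\setminus S$ into nonempty parts $V_1,V_2$. Since $|S|\le d$ generic points $p(S)$ span an affine subspace of dimension at most $d-1$, so there is a nontrivial isometry of $\mathbb{R}^d$ (a reflection in the hyperplane through $p(S)$, or a rotation about the affine span of $p(S)$) fixing every point of $p(S)$. Applying this isometry to $p(V_1)$ while leaving $p(V_2\cup S)$ fixed produces a framework $(G,q)$ with the same edge lengths: edges inside $V_1\cup S$ and inside $V_2\cup S$ are preserved because each part undergoes an isometry, and there are no edges between $V_1$ and $V_2$. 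The only thing to check is that $(G,q)$ is genuinely non-congruent to $(G,p)$, which follows because $G$ is not a complete graph on $\le d+1$ vertices (so there exist two vertices, one in $V_1$ and one in $V_2\cup S$ or similar, whose distance changes under a generic choice of the reflecting/rotating isometry); one must also handle the degenerate case $|V_1|=1$ and the case $|S|<d$ by choosing the isometry generically among those fixing $p(S)$.

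Next I would establish redundant rigidity. Suppose $G$ is rigid in $\mathbb{R}^d$ but there is an edge $e=uv$ with $G-e$ not rigid. Then $(G-e,p)$ admits a nontrivial flex, i.e.\ an analytic path $q(t)$ of frameworks with $q(0)=p$, preserving all edge lengths of $G-e$, along which the framework is not merely moving by isometries. Along this flex the single quantity $\|q(t)_u-q(t)_v\|$ varies; the key point is that because $p$ is generic and $G$ is rigid (so $e$ genuinely constrains the framework), this length is non-constant along the flex, so it takes some value $\ell'=\|q(t_0)_u-q(t_0)_v\|$ equal to $\|p(u)-p(v)\|$ at more than one congruence class — more precisely, starting from $q(t_0)$ one can continue the flex of $G-e$ and return the edge $uv$ to its original length $\ell=\|p(u)-p(v)\|$ at a configuration $q(t_1)$ not congruent to $p$. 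Then $(G,q(t_1))$ has the same edge lengths as $(G,p)$ but is not congruent to it, contradicting global rigidity. Making this precise is the delicate part: one argues that the set of configurations of $G-e$ reachable from $p$ via the flex has positive dimension modulo congruences, while the locus where $uv$ has length exactly $\ell$ is a proper subvariety, and one shows it meets the flex in more than the single point $p$ (up to congruence) — this uses that the edge function of $G-e$ along the flex is a non-constant algebraic function whose level set at $\ell$ is therefore not a single point.

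The main obstacle I anticipate is the redundant-rigidity step, specifically the claim that returning the edge $uv$ to its original length yields a configuration \emph{non-congruent} to $p$ rather than merely a different parametrization of the same framework. The clean way to handle this is via an algebraic/dimension-counting argument rather than an explicit path: the rigidity map of $G-e$ restricted to a neighbourhood of $p$ (modulo the isometry group) has image of dimension equal to $|E|-1-\binom{d+1}{2}+(\text{one}) \ge$ the dimension of the rigidity map of $G$ plus one, so the fibre of the $G$-rigidity map over $p$'s edge lengths, intersected with this neighbourhood, has dimension at least one, forcing infinitely many non-congruent configurations — contradicting even local rigidity of $G$ unless... — so one instead argues that $G-e$ flexible plus $G$ rigid forces the flex of $G-e$ to change the length of $e$, and a generic framework on the flex with $\|q_u-q_v\|=\ell$ that is reached by continuing past the fold point of this length function is provably non-congruent to $p$ because a congruence would force the flex to be trivial. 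I would cite or adapt the treatment of this point from Hendrickson's original argument, as the infinitesimal/analytic bookkeeping is standard but must be done carefully with the genericity hypothesis doing the work of ruling out coincidences.
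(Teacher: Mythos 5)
The paper does not actually prove Theorem~\ref{thm:hendrickson}; it cites Hendrickson and instead proves periodic analogues, Lemma~\ref{lem:connectivity} (connectivity) and Lemma~\ref{thm:necweak} (redundant rigidity). Your two-step plan follows the same outline as those analogues, so I compare against them.

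Your connectivity step is essentially the reflection argument that Lemma~\ref{lem:connectivity} generalizes to the periodic setting. You are somewhat vague about verifying non-congruence and about the case $|S|<d$, but the core idea (reflect one side of the cut in a hyperplane through $p(S)$; a generic point $p(w)$ on the other side is not equidistant from a reflected point and its image, so the new framework is non-congruent) is correct.

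The redundant-rigidity step has a genuine gap, which you partly acknowledge but do not close. Producing a second configuration $q\neq p$ on the flex of $G-e$ with $\|q(u)-q(v)\|=\|p(u)-p(v)\|$ is the easy part: the pinned configuration space of $G-e$ near $p$ is a compact $1$-manifold (a disjoint union of circles by the classification of compact $1$-manifolds), $f_e$ has nonzero differential at $p$, and the intermediate value theorem gives such a $q$. The hard part is ruling out that $q$ is congruent to $p$. Your assertion that ``a congruence would force the flex to be trivial'' is false as stated: $q=h\circ p$ for a nontrivial isometry $h$ fixing the pinned data is a genuine possibility, and it does not make the flex trivial. Your dimension-counting paragraph is also internally inconsistent --- it derives infinitely many non-congruent configurations, which would contradict the hypothesis that $G$ is rigid, and you abandon that argument mid-sentence. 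The paper's Lemma~\ref{thm:necweak} (following~\cite{jmn}) closes exactly this gap with a two-case dichotomy: compare the flex path $\gamma$ from $p$ to $q$ with its image $\gamma'=h\circ\gamma$, both lying in the same circle component $\mathcal{O}$; if $\gamma$ and $\gamma'$ cover $\mathcal{O}$, an intermediate-value argument on $f_e$ contradicts the minimal choice of $q$; if they do not cover $\mathcal{O}$, they meet, so some intermediate configuration $p_t$ lies in the fixed affine subspace of $h$, and a transcendence-degree bound on $\mathbb{Q}(p_t)$ contradicts $|V|\ge d+2$. Neither ``continue past the fold point'' nor appealing to the level set of $f_e$ being algebraic substitutes for this dichotomy; you need an argument of that precision to complete the step.
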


Although the converse direction is false for $d\geq 3$ as pointed out by Connelly, it turned out to be true for $d\leq 2$. 
\begin{theorem}[Jackson and Jord\'{a}n~\cite{jj}] \label{thm:jj}
Let $(G,p)$ be a generic framework in $\mathbb{R}^2$. Then $(G,p)$ is globally rigid if and only if  \(G\) is a complete graph with at most $3$ vertices, or  \(G\) is 3-connected and redundantly rigid in $\mathbb{R}^2$.
\end{theorem}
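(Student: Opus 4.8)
The plan is to split along the biconditional. \emph{Necessity} is immediate from Theorem~\ref{thm:hendrickson} applied with $d=2$: the underlying graph of a generic globally rigid framework in $\mathbb{R}^2$ is either complete on at most three vertices, or $3$-connected and redundantly rigid. For \emph{sufficiency} the complete graphs $K_1,K_2,K_3$ are trivially globally rigid, so the real task is to show that if $G$ is $3$-connected and redundantly rigid in $\mathbb{R}^2$ (hence $|V(G)|\ge 4$), then every generic realization $(G,p)$ in $\mathbb{R}^2$ is globally rigid. The engine for this is Connelly's sufficient condition: it suffices to exhibit an equilibrium stress $\omega$ of $(G,p)$ whose stress matrix $\Omega$ attains the maximum possible rank $|V(G)|-3$. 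So the problem becomes the essentially linear-algebraic assertion that a $3$-connected redundantly rigid graph admits a generic realization carrying a rank-$(|V|-3)$ stress matrix.

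\emph{Stage 1: a combinatorial reduction.} I would first show that $3$-connectivity together with redundant rigidity forces the generic two-dimensional rigidity matroid $\mathcal{R}(G)$ to be \emph{connected} ($M$-connected), i.e.\ that every two edges of $G$ lie in a common circuit of $\mathcal{R}(G)$. Redundant rigidity already places every edge in some circuit; what must be ruled out is a nontrivial partition of $E(G)$ into the edge sets of two or more $M$-components. Using the basic structure theory --- in particular that the vertex sets spanned by distinct $M$-components pairwise meet in at most one vertex, together with the Laman-type sparsity bounds $|F|\le 2|V(F)|-3$ --- such a decomposition would produce a vertex cut of size at most $2$ (or a smaller obstruction), contradicting $3$-connectivity.

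\emph{Stage 2: constructing the stress.} Assuming $\mathcal{R}(G)$ is connected, I would argue by induction using an ear decomposition of the matroid: circuits $C_0,C_1,\dots,C_k$ of $\mathcal{R}(G)$ with $G_0=C_0$, $G_i=G_{i-1}\cup C_i$, $E(C_i)\cap E(G_{i-1})\neq\emptyset$, $E(C_i)\setminus E(G_{i-1})\neq\emptyset$, and $G_k=G$. For fixed generic $p$, each circuit $C$ has a one-dimensional space of equilibrium stresses, whose generator $\omega_C$ is nowhere zero on $E(C)$; let $\Omega_C$ be its stress matrix, padded with zeros to $V(G)$. The candidate stress for $(G,p)$ is $\omega=\sum_i\lambda_i\,\omega_{C_i}$, with matrix $\Omega=\sum_i\lambda_i\Omega_{C_i}$, and the goal is to show that for generic $p$ and generic $\lambda$ one has $\rank(\Omega)=|V(G)|-3$, by tracking along $G_0\subseteq G_1\subseteq\cdots\subseteq G_k$ how the rank grows as each ear is absorbed --- the point being that attaching $C_i$ enlarges the common kernel by no more than the trivial ($3$-dimensional) directions forced by the shared vertices. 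A natural alternative is an inductive reduction: descend from $G$ to $K_4$ by inverse $1$-extensions and edge deletions, staying inside the class of $3$-connected $M$-connected graphs, and use that $1$-extensions and edge additions preserve the existence of a maximum-rank stress (Connelly).

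The step I expect to be the main obstacle is the rank control in Stage~2. The subtlety is that individual circuits need \emph{not} carry a maximum-rank stress matrix --- a $2$-sum of two copies of $K_4$ is a rigidity circuit on six vertices whose stress matrix has rank only $2$ instead of $3$ --- and correspondingly, gluing globally rigid pieces along a single shared edge can fail to preserve global rigidity. So the induction cannot merely concatenate maximum-rank stresses; one must show that the combined matrix $\sum_i\lambda_i\Omega_{C_i}$ still reaches rank $|V(G)|-3$, and the only leverage for getting past the $2$-sum obstruction is $3$-connectivity, which must be fed in precisely (by choosing ears that attach to $G_{i-1}$ along a rich enough subgraph, or by choosing admissible reductions in the alternative scheme). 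A secondary difficulty is Stage~1, where the equivalence of "$3$-connected and redundantly rigid" with "$3$-connected and $\mathcal{R}(G)$ connected" is genuinely combinatorial and leans on the structure theory of rigidity-matroid components.
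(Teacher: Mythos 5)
The statement you address is cited in the paper as the 2005 result of Jackson and Jord\'an, so the paper contains no proof of it to compare against; but the paper's strategy for its periodic analogues (Theorems~\ref{thm:main0}, \ref{thm:main1}) bears on your plan, and I comment on that below.

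Your Stage~1 is sound: the equivalence of ``$3$-connected and redundantly rigid'' with ``$3$-connected and $M$-connected'' is exactly Theorem~3.2 of \cite{jj}, and the obstruction sketch you give is the right one. The genuine gap is in Stage~2, and you name it yourself but do not close it. Combining circuit stresses $\Omega=\sum_i\lambda_i\Omega_{C_i}$ for generic $\lambda$ will not in general reach rank $|V|-3$; your own $2$-sum-of-two-$K_4$'s example is the counterexample. Defeating the $2$-sum obstruction is precisely where the difficulty of the theorem lives, and ``choose ears that attach to $G_{i-1}$ along a rich enough subgraph'' is not a lemma one can wave at --- it needs a precise statement and proof, and you supply neither. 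As written, Stage~2 is a plan, not an argument, and the plan would fail without a new idea that controls the kernel of $\Omega$ across ear attachments.

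The alternative you mention in passing --- descending from $G$ to $K_4$ by inverse $1$-extensions and edge deletions while staying in the class of $3$-connected $M$-connected graphs, then invoking Connelly's theorem that generic $1$-extension preserves global rigidity --- is in fact the actual Jackson--Jord\'an proof, and it is the route you should develop; there the hard step is the purely combinatorial reduction lemma, and the rank control is outsourced to Connelly. Note also that the present paper proves its periodic analogues by a third route that avoids stress matrices entirely: the algebraic engine is Lemma~\ref{lem:redrig} (a nondegenerate degree-$(d+1)$ vertex $v$ with $G-v$ rigid and $G_v$ globally rigid forces $G$ globally rigid), adapted from \cite{jjsz,T1}, paired with the combinatorial Lemmas~\ref{lem:comb} and~\ref{lem:comb2}. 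The Euclidean specialization of that machinery is yet another proof of Theorem~\ref{thm:jj}, which is what the authors signal when they write that their proof ``does not require the notion of stress matrices.''
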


Based on the theory of  stress matrices by Connelly~\cite{connelly1982,connelly2005}, Gortler, Healy, and Thurston~\cite{gortler2010} gave an algebraic characterization of the global rigidity of generic frameworks. This in particular implies that all generic realizations of a given graph share the same global rigidity properties in $\mathbb{R}^d$, as in the case of local rigidity. 
However the problem of extending Theorem~\ref{thm:jj} to higher dimension remains  unsolved.

Largely motivated by practical applications in crystallography, materials science and engineering, as well as by mathematical applications in areas such as sphere packings, the  rigidity  analysis of infinite periodic frameworks  has seen an increased interest in recent years.  
Particularly relevant to our work is an extension of Laman's theorem to periodic frameworks with fixed-lattice representations by Ross~\cite{ross2}. 
In her rigidity model, a periodic framework can deform continuously under a fixed periodicity constraint (i.e., each orbit of points is fixed).

In this paper, we shall initiate the global rigidity counterpart of the rigidity theory of periodic frameworks. 
The global rigidity of periodic frameworks is considered at the same level as Ross's rigidity model~\cite{ross2,rossd}, and  we shall extend Theorem~\ref{thm:hendrickson} and Theorem~\ref{thm:jj} to periodic frameworks.
Analogous to Theorem~\ref{thm:hendrickson} there are two types of necessary conditions, a graph connectivity condition (Lemma~\ref{lem:connectivity}) and a redundant rigidity condition (Lemma~\ref{thm:necweak}). 
Our main result (Theorem~\ref{thm:main1}) is that these necessary conditions are also sufficient in $\mathbb{R}^2$, thus giving a first combinatorial characterization of the global rigidity of generic periodic frameworks.
 The proof of this result is inspired by the work in \cite{jjsz,T1}. 
 In particular, it does not require the notion of stress matrices \cite{connelly1982,connelly2005}. Note also that our proof does not rely on periodic global rigidity in $\mathbb{R}^2$ being a generic property, meaning that all generic realizations of a periodic graph in the plane share the same global rigidity properties.

As for the rigidity of periodic frameworks, an extension of Laman's theorem was established in a more general setting by Malestein and Theran~\cite{mt13}, 
where the underlying lattice may deform during a motion of a framework (but it is still subject to being periodic). 
Extending our result to this general setting would be  an important  challenging open problem. 

Important corollaries of our main theorem are toroidal and cylindrical counterparts of Theorem~\ref{thm:jj}.
Here we only give a statement for cylindrical frameworks, but the statement for toroidal frameworks can be  derived in a similar fashion (see Theorem~\ref{thm:torus}).
Consider a straight-line drawing of a graph $G$ on a flat cylinder ${\cal C}$.
We regard it as a bar-joint framework on ${\cal C}$.
Using the metric inherited from its representation as $\mathbb{R}^2/L$ for a fixed one-dimensional lattice $L$, the local/global rigidity is defined.
Ross's theorem~\cite{ross2} for periodic frameworks implies that a generic framework on ${\cal C}$ is rigid if and only if the underlying graph contains a spanning subgraph $H$ with $|E(H)|= 2|V(H)|-2$
such that $|F|\leq 2|V(F)|-2$ for every $F\subseteq E(H)$ and $|F|\leq 2|V(F)|-3$ for every nonempty {\em contractible} $F\subseteq E(H)$,
where $F$ is said to be contractible if every cycle in $F$ is contractible on ${\cal C}$.
\begin{theorem}
\label{thm:cylinder}
A generic framework $(G,p)$ with $|V(G)|\geq 3$ on a flat cylinder ${\cal C}$ is globally rigid if and only if 
it is redundantly rigid on ${\cal C}$, 2-connected, and has no contractible subgraph $H$ with $|V(H)|\geq 3$ and $|B(H)|=2$, 
where $B(H)$ denotes the set of vertices in $H$ incident to some edge not in $H$.
\end{theorem}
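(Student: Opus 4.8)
The plan is to derive Theorem~\ref{thm:cylinder} from the main theorem, Theorem~\ref{thm:main1}, through the standard correspondence between cylindrical and periodic frameworks. A straight-line framework $(G,p)$ on the flat cylinder ${\cal C} = \mathbb{R}^2/L$, with $L$ a fixed one-dimensional lattice, lifts to a $\mathbb{Z}$-periodic framework $(\tilde G, \tilde p)$ in $\mathbb{R}^2$, and conversely every $\mathbb{Z}$-periodic framework descends to ${\cal C}$. This correspondence preserves all edge lengths and all pairwise distances, hence it preserves (local) rigidity, redundant rigidity, and global rigidity. So it suffices to rewrite the hypotheses of Theorem~\ref{thm:main1}, specialized to a one-dimensional lattice, in terms of the quotient gain graph $(G,\psi)$ on ${\cal C}$.

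By Theorem~\ref{thm:main1}, together with Lemma~\ref{lem:connectivity} and Lemma~\ref{thm:necweak}, a generic periodic framework with $|V(G)|\geq 3$ is globally rigid if and only if it is redundantly rigid and satisfies the connectivity condition of Lemma~\ref{lem:connectivity}. Redundant rigidity translates verbatim into redundant rigidity on ${\cal C}$ (that is, $G-e$ is rigid on ${\cal C}$ for every $e \in E(G)$), with rigidity on ${\cal C}$ governed by the Ross-type count recalled before the statement. Thus the theorem reduces to a purely combinatorial claim: \emph{for a connected gain graph $(G,\psi)$ with $\mathbb{Z}$-gains and $|V(G)|\geq 3$, the connectivity condition of Lemma~\ref{lem:connectivity} holds if and only if $G$ is $2$-connected and has no contractible subgraph $H$ with $|V(H)|\geq 3$ and $|B(H)|=2$.} At this point I would also check that the exceptional (complete-graph-type) cases appearing in Theorem~\ref{thm:main1} and Theorem~\ref{thm:jj} either do not arise for $|V(G)|\geq 3$ on ${\cal C}$ or are already subsumed by the listed conditions.

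For the combinatorial claim I would work in the cover $\tilde G$. If $G$ has a cut vertex $v$, then $v$ and all its translates are cut vertices of $\tilde G$, so $\tilde G$ fails to be even $2$-connected; concretely, reflecting one side of such a cut across the axial line through $v$ (a global isometry of ${\cal C}$) yields an incongruent framework with the same edge lengths, so $2$-connectivity is necessary. If $H$ is a contractible subgraph with $B(H)=\{a,b\}$ and $|V(H)|\geq 3$, then $H$ lifts to a bounded piece of $\tilde G$ attached to the rest only through lifts of $a$ and $b$, and reflecting this piece across the line through those two lifts is compatible with the lattice (precisely because $H$ is contractible), again producing an incongruent framework with the same edge lengths. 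Conversely, I would show that any separation of $\tilde G$ that witnesses failure of the condition of Lemma~\ref{lem:connectivity} projects either to a cut vertex of $G$ or to a subgraph $H$ with $|B(H)|\leq 2$ all of whose cycles are contractible; the contractibility is forced because a reflection across a non-contractible separating pair does not respect the periodicity. The cases $|B(H)|\leq 1$ reduce to the cut-vertex case, and $|V(H)|=2$ gives no genuine obstruction (the reflection is trivial there), which is why only $|V(H)|\geq 3$ with $|B(H)|=2$ appears.

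The main obstacle is this converse direction: faithfully matching the abstract connectivity hypothesis of Lemma~\ref{lem:connectivity} to the explicit statement about contractible subgraphs with a two-vertex boundary, while carefully tracking how separations and reflections in the infinite cover $\tilde G$ correspond to data on the quotient gain graph --- in particular, pinning down why ``contractible'' is exactly the right restriction on $H$ and why $2$-connectivity (rather than some stronger connectivity of $G$) is the correct finite shadow of $3$-connectivity of $\tilde G$. Once the dictionary is established, the sufficiency half of Theorem~\ref{thm:cylinder} follows directly from Theorem~\ref{thm:main1} with no further argument, since each of its hypotheses will have been identified with a hypothesis in the present statement.
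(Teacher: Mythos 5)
Your overall strategy --- lift the cylindrical framework to a periodic framework, apply the main characterization theorem, translate back --- is the right one and is essentially what the paper does, but there are two concrete problems.

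First, you cite the wrong theorem. The flat cylinder is $\mathbb{R}^2/L$ with $L$ a \emph{one}-dimensional lattice, so the periodicity group has rank one, and the relevant result is Theorem~\ref{thm:main0}, not Theorem~\ref{thm:main1}. Theorem~\ref{thm:main1} is the rank-two case and carries the extra hypothesis that each $2$-connected component has rank two, which has no meaningful analogue on the cylinder. Related to this, your claim to obtain an ``if and only if'' by combining Theorem~\ref{thm:main1} with Lemma~\ref{lem:connectivity} and Lemma~\ref{thm:necweak} is not logically sound: those two lemmas supply only necessary conditions, and the sufficiency must come from the main theorem itself, which for the cylinder is Theorem~\ref{thm:main0}.

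Second, once you invoke Theorem~\ref{thm:main0}, the long detour through the connectivity condition of Lemma~\ref{lem:connectivity} is unnecessary --- Theorem~\ref{thm:main0} already states the characterization in exactly the form needed: redundantly periodically rigid, $2$-connected, and no $(0,2)$-block. The only remaining step is the one-line translation the paper records in its concluding remarks: a subgraph of a $\mathbb{Z}$-labeled graph is balanced if and only if it is contractible on ${\cal C}$. (You gesture at this with your remark that reflections across a non-contractible separating pair fail to respect the periodicity, but it should be stated directly as the dictionary rather than discovered mid-argument.) A $(0,2)$-block is by definition a balanced subgraph $H$ with $|B(H)|=2$ and $I(H)\neq\emptyset$, and the last pair of conditions is equivalent to ``$|B(H)|=2$ and $|V(H)|\geq 3$'', which is exactly the condition in Theorem~\ref{thm:cylinder}. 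Your worry about exceptional complete-graph-type cases is moot: Theorem~\ref{thm:main0} already has $|V(G)|\geq 3$ built into its hypothesis and has no such exception.
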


The paper is organized as follows.
In Section 2, we define the concept of global rigidity for periodic frameworks with a fixed lattice representation, 
and then establish Hendrickson-type necessary conditions for a generic periodic framework to be globally rigid in $\mathbb{R}^d$ in Section 3. 
In Section 4  we then show that  for $d=2$ the necessary conditions established in Section 3 are also sufficient for generic global rigidity (Theorem~\ref{thm:main1}). 
Section 5 is devoted to the proofs of the combinatorial lemmas stated in Section 4.

%%%%%%%%%%%%%%%%%%%%%%%%%%%%%%%%%%%%%%%%%%%%%%%%%%%%%%%%%%%%%%%%%%%%%%%%%%%%%%%%%%%%%%%%%

\section{Preliminaries}
\label{sec:pre}
\subsection{Periodic graphs}

Let $\Gamma$ be a group isomorphic to $\mathbb{Z}^k$.
In general, a pair $(G,\psi)$ of a directed (multi-)graph $G$ and a map $\psi:E(G)\rightarrow \Gamma$ is called  a {\em $\Gamma$-labeled graph}.

For a given $\Gamma$-labeled graph $(G,\psi)$, one can construct a \emph{$k$-periodic graph} $\tilde{G}$ by setting
$V(\tilde{G})=\{\gamma v_i: v_i\in V(G), \gamma\in \Gamma\}$
and $E(\tilde{G})=\{\{\gamma v_i, \psi(v_iv_j) \gamma v_j\}: (v_i, v_j)\in E(G), \gamma \in \Gamma\}$. $\Gamma$ is called a {\em periodicity} of $\tilde{G}$, which naturally acts on $V(\tilde{G})$ and $E(\tilde{G})$.
This $\tilde{G}$ is also called the {\em covering} of $(G, \psi)$ while $(G, \psi)$ is called the \emph{quotient $\Gamma$-labeled graph} of $\tilde{G}$.
See Figure~\ref{fig:disconn} for two examples of $k$-periodic graphs and corresponding quotient $\Gamma$-labeled graphs.

Although $G$ is directed, its orientation is used only for the reference of the group label, and we are free to change the orientation of
each edge by imposing the property that if an edge has a label $\gamma$ in one direction,
then it has $\gamma^{-1}$ in the other direction. 
More precisely, two edges \(e_1,e_2\) are regarded as {\em identical} if they are parallel with the same direction and the same label, or with the opposite direction and the opposite labels.
Throughout the paper, all  $\Gamma$-labeled
graphs are assumed to be {\em semi-simple}, that is, no identical two edges exist (although
parallel edges may exist). We will assume that \(G\) is loopless. Note that a loop in \(G\) corresponds to an edge orbit in $\tilde{G}$ consiting of edges  that connect vertices in the same orbit in $\tilde{G}$. In our $L$-periodic (global) rigidity model defined in Sections~\ref{subsec:global} and \ref{subsec:local} such edge constraints are redundant thus do not affect the (global) rigidity of the framework.

We define a \emph{walk} as an alternating sequence $v_1,e_1,v_2\ldots, e_k,v_{k+1}$  of vertices and edges such that $v_i$ and $v_{i+1}$ are the end vertices of $e_i$. For a  closed walk $C=v_1,e_1,v_2\ldots, e_k,v_1$ in $(G,\psi)$, let $\psi(C)=\prod_{i=1}^k\psi(e_i)^{\textrm{sign}(e_i)}$, where $\textrm{sign}(e_i)=1$ if $e_i$ has forward direction in $C$, and $\textrm{sign}(e_i)=-1$ otherwise.
For a subgraph $H$ of $G$ define
$\Gamma_H$ as the subgroup of $\Gamma$ generated by the elements $\psi(C)$, where $C$ ranges  over all closed walks in $H$.
The {\em rank} of $H$ is defined to be the rank of $\Gamma_H$.
Note that the rank of $G$ may be less than the rank of $\Gamma$, in which case the covering graph $\tilde{G}$ contains an infinite number of connected components (see Figure~\ref{fig:disconn}(a)).

We say that an edge set $F$ is {\em balanced} if the subgraph induced by $F$ has rank zero, i.e., $\psi(C)=\textrm{id}$ for every closed walk $C$ in $F$.

One useful tool to compute the rank of a subgraph is the {\em switching} operation.
A {\em switching} at $v\in V(G)$  by $\gamma\in \Gamma$ changes $\psi$ to $\psi'$
defined by $\psi'(e)=\gamma \psi(e)$ if $e$ is directed from $v$,
$\psi'(e)=\gamma^{-1} \psi(e)$ if $e$ is directed to $v$,
and $\psi'(e)=\psi(e)$ otherwise. A switching operation preserves the rank of any subgraph (see \cite{jkt} for details).

Note that the quotient $\Gamma$-labeled graph of a $k$-periodic graph is not unique in general. It is not difficult to see that $(G=(V,E), \psi)$ and $(G'=(V,E'), \psi')$ define the same covering graph $\tilde{G}$ if and only if $(G, \psi)$ can be transformed into $(G', \psi')$ via edge reversions (as described above) and switching operations.

\begin{figure}[h!]

\begin{tikzpicture}[scale=0.8]
\tikzstyle{every node}=[circle, draw=black, fill=white, inner sep=0pt, minimum width=4pt];
\hspace{-0.8cm}
    \begin{scope}
        \myGlobalTransformation{0}{0};
        %draws all graph edges with label id
        \foreach \x in {1,3,5,7} {
            \foreach \y in {1,3,5,7} {
                \node (a\x\y) at (\x-0.3,\y) {};
                \node (b\x\y) at (\x+0.5,\y+0.3) {};
                \node (c\x\y) at (\x+0.6,\y-0.3) {};
                {
                    \pgftransformreset
										\draw[black, thick] (a\x\y) -- (b\x\y);
                    \draw[black, thick] (a\x\y) -- (c\x\y);
                }
            }
        }
    \end{scope}
    \begin{scope}
        \myGlobalTransformation{0}{0};
        %draws all graph edges with label (1,1)
        \foreach \x in {-1,1,3,5,7,9} {
            \foreach \y in {-1,1,3,5,7,9} {
                \node (a\x\y) at (\x-0.3,\y) {};
                \node (b\x\y) at (\x+0.5,\y+0.3) {};
                \node (c\x\y) at (\x+0.6,\y-0.3) {};
                {
									\ifnum\x<9
									\ifnum\y<9
										\draw[black, thick] (b\x\y) -- ++(2.1,1.4); %bc
										\draw[black, thick] (b\x\y) -- ++(1.2,1.7); %ba
                  \fi
									\fi
                }
            }
        }
    \end{scope}
    \gridThreeD{0}{0}{black,dotted};
  \node [rectangle, draw=white, fill=white] (a) at (4.8,-1.2) {(a)};
\hspace{0.8cm}
\begin{scope}
        \myGlobalTransformation{0}{0};
                \node (a2) at (10.2,3) {};
                \node (b2) at (11.5,5.9) {};
                \node (c2) at (12.8,1.5) {};
								\draw[->, black, thick] (a2) .. controls (10.8,3.8) .. (b2) node[draw=white, midway]{\small{$(0,0)$}};
								\draw[->, black, thick] (a2) -- (c2) node[draw=white, midway]{\small{$(0,0)$}};
								\draw[<-, black, thick] (a2) .. controls (9.8,5.8) .. (b2) node[draw=white, midway]{\small{$(1,1)$}};
								\draw[->, black, thick] (b2) -- (c2) node[draw=white, midway]{\small{$(1,1)$}};
\end{scope}
  \node [rectangle, draw=white, fill=white] (b) at (12.8,-1.2) {(b)};

\end{tikzpicture}

\begin{tikzpicture}[scale=0.8]
\tikzstyle{every node}=[circle, draw=black, fill=white, inner sep=0pt, minimum width=4pt];
\hspace{-0.8cm}
    \begin{scope}
        \myGlobalTransformation{0}{0};
        %draws all graph edges with label id
        \foreach \x in {1,3,5,7} {
            \foreach \y in {1,3,5,7} {
                \node (a\x\y) at (\x-0.3,\y) {};
                \node (b\x\y) at (\x+0.5,\y+0.3) {};
                \node (c\x\y) at (\x+0.6,\y-0.3) {};
                {
                    \pgftransformreset
										\draw[black, thick] (a\x\y) -- (b\x\y);
                    \draw[black, thick] (a\x\y) -- (c\x\y);
                }
            }
        }
    \end{scope}
    \begin{scope}
        \myGlobalTransformation{0}{0};
        %draws all graph edges with label (1,1)
        \foreach \x in {-1,1,3,5,7,9} {
            \foreach \y in {-1,1,3,5,7,9} {
                \node (a\x\y) at (\x-0.3,\y) {};
                \node (b\x\y) at (\x+0.5,\y+0.3) {};
                \node (c\x\y) at (\x+0.6,\y-0.3) {};
                {
									\ifnum\x<9
									\ifnum\y<7
										\draw[black, thick] (b\x\y) -- ++(2.1,1.4); %bc
										\draw[black, thick] (b\x\y) -- ++(-0.8,3.7);
									\fi
									\fi
									\ifnum\x<9
									\ifnum\y=7
										\draw[black, thick] (b\x\y) -- ++(2.1,1.4); %bc
										\draw[black, thick] (b\x\y) -- ++(-0.4,1.85);
                  \fi
									\fi
									\ifnum\x<9
									\ifnum\y=1
										\draw[black, thick] (b\x\y) -- ++(2.1,1.4); %bc
										\draw[black, thick] (a\x\y) -- ++(0.4,-1.85);
                  \fi
									\fi
                }
            }
        }
    \end{scope}
		\begin{scope}
        \myGlobalTransformation{0}{0};
				    \foreach \x in {1,5} {
            \foreach \y in {1,5} {
                \node[fill=black] (a\x\y) at (\x-0.3,\y) {};
                \node[fill=black] (b\x\y) at (\x+0.5,\y+0.3) {};
                \node[fill=black] (c\x\y) at (\x+0.6,\y-0.3) {};
								}}
				    \foreach \x in {3,7} {
            \foreach \y in {3,7} {
                \node[fill=black] (a\x\y) at (\x-0.3,\y) {};
                \node[fill=black] (b\x\y) at (\x+0.5,\y+0.3) {};
                \node[fill=black] (c\x\y) at (\x+0.6,\y-0.3) {};
								}}
    \end{scope}
    \gridThreeD{0}{0}{black,dotted};
  \node [rectangle, draw=white, fill=white] (c) at (4.8,-1.2) {(c)};
\hspace{0.8cm}
\begin{scope}
        \myGlobalTransformation{0}{0};
                \node (a2) at (10.2,3) {};
                \node (b2) at (11.5,5.9) {};
                \node (c2) at (12.8,1.5) {};
								\draw[->, black, thick] (a2) .. controls (10.8,3.8) .. (b2) node[draw=white, midway]{\small{$(0,0)$}};
								\draw[->, black, thick] (a2) -- (c2) node[draw=white, midway]{\small{$(0,0)$}};
								\draw[<-, black, thick] (a2) .. controls (9.8,5.8) .. (b2) node[draw=white, midway]{\small{$(0,2)$}};
								\draw[->, black, thick] (b2) -- (c2) node[draw=white, midway]{\small{$(1,1)$}};
\end{scope}
  \node [rectangle, draw=white, fill=white] (d) at (12.8,-1.2) {(d)};
\end{tikzpicture}

\vspace{-0.3cm}
\caption{
The framework in (a)  has infinitely many connected components, and its quotient $\mathbb{Z}^2$-labeled graph shown in (b) is of rank one.
The framework in (c) has  two connected components, and its  quotient $\mathbb{Z}^2$-labeled graph shown in (d) is of rank two. The vertices of the two components are depicted with two distinct colors in (c).
We remark that neither of these frameworks is \(L\)-periodically globally rigid, by Lemma~\ref{lem:connectivity}.} %and~\ref{thm:necweak}, respectively.}
\label{fig:disconn}
\end{figure}
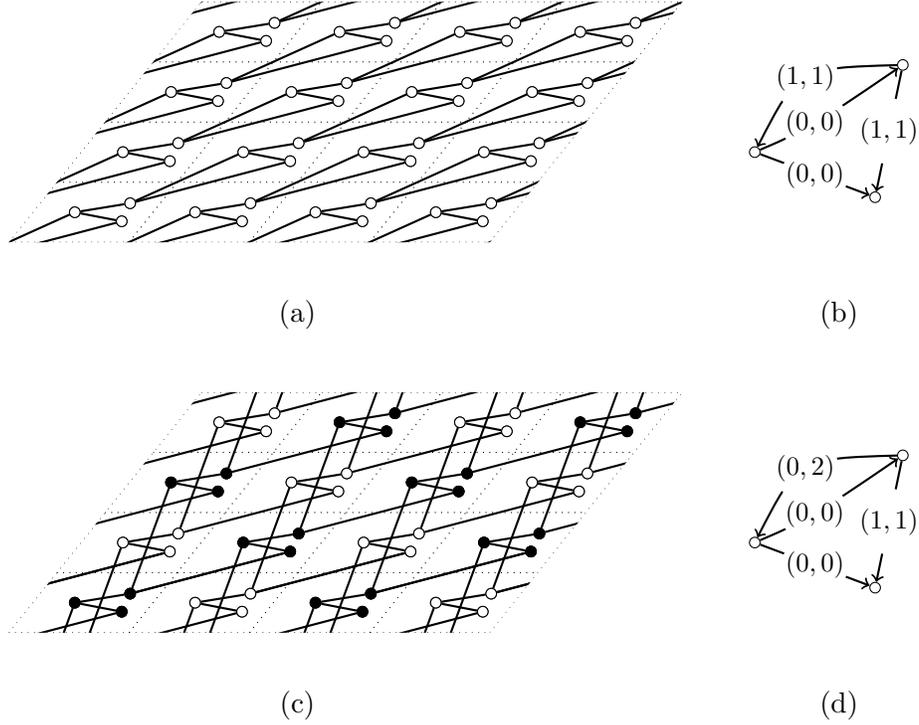

%%%%%%%%%%%%%%%%%%%%%%%%%%%%%%%%%%%%%%%%%%%%%%%%%%%%%%%%%%%%%%%%%%%%%%%%%%%%%%%%%%%%%%%%%%%%%%%%%%%%%%%%%%%%%%%%%%%%%%%%
\subsection{Periodic frameworks}
In the context of graph rigidity, a pair $(G,p)$ of a graph $G=(V,E)$ and a map
$p:V\rightarrow \mathbb{R}^d$ is called a {\em (bar-joint) framework} in $\mathbb{R}^d$.
A periodic framework is a special type of infinite framework defined as follows.

Let $\tilde{G}$ be a $k$-periodic graph with periodicity $\Gamma$,
and let  $L:\Gamma \rightarrow \mathbb{R}^d$ be a nonsingular homomorphism with $k\leq d$,
where $L$ is said to be nonsingular if $L(\Gamma)$ has rank $k$.
A pair $(\tilde{G},\tilde{p})$ of $\tilde{G}$ and $\tilde{p}:\tilde{V}\rightarrow \mathbb{R}^d$ is said to be an
{\em $L$-periodic framework} in $\mathbb{R}^d$ if  \begin{equation}
\label{eq:periodic_func}
\tilde{p}(v)+L(\gamma)=\tilde{p}(\gamma v) \qquad \text{for all } \gamma\in \Gamma \text{ and all } v\in \tilde{V}.
\end{equation}
We also say that a pair $(\tilde{G}, \tilde{p})$ is {\em $k$-periodic} in $\mathbb{R}^d$ if it is $L$-periodic for some nonsingular homomorphism $L:\Gamma\rightarrow \mathbb{R}^d$.
Note that the rank $k$ of the periodicity may be smaller than $d$.
For instance, if $k=1$ and $d=2$, then $(\tilde{G}, \tilde{p})$ is an infinite strip framework in the plane. Any connected component of the framework in Figure~\ref{fig:disconn}(a), for example, is such a strip framework (with $\Gamma=\langle(1,1)\rangle$).

An $L$-periodic framework $(\tilde{G}, \tilde{p})$ is {\em generic} if the set of coordinates is algebraically independent over the rationals modulo the ideal generated by the equations (\ref{eq:periodic_func}). For simplicity of description, throughout the paper, we shall assume that $L$ is a rational-valued function\footnote{This assumption is not essential. Let \(\gamma_1,\dots,\gamma_k\) be a set of generators of \(\Gamma\) and let \(H\) be the set of coordinates of the vectors \(L(\gamma_i)\) for \(1\leq i\leq k\). Then one can apply the subsequent arguments even if $L$ is not rational by replacing $\mathbb{Q}$ with the field extension \(\mathbb{Q}(H)\). If \(L\) is not rational-valued, then we say that $(\tilde{G}, \tilde{p})$ is {\em generic} if the set of coordinates is algebraically independent over \(\mathbb{Q}(H)\) modulo the ideal generated by the equations (\ref{eq:periodic_func}).},
i.e., $L:\Gamma \rightarrow \mathbb{Q}^d$.

Let $(\tilde{G}, \tilde{p})$ be an $L$-periodic framework and  let $(G,\psi)$ be the quotient $\Gamma$-labeled graph of $\tilde{G}$.
Following the convention that $V(G)$ is identified with the set $\{v_1,\dots, v_n\}$ of representative vertices,
one can define the {\em quotient $\Gamma$-labeled framework} as the triple $(G,\psi, p)$
with $p: V(G)\ni v_i\mapsto \tilde{p}(v_i)\in \mathbb{R}^d$.
In general, a {\em $\Gamma$-labeled framework } is defined to be a triple $(G,\psi, p)$ of a finite $\Gamma$-labeled graph $(G,\psi)$ and a map $p:V(G)\rightarrow \mathbb{R}^d$.
The {\em covering} of $(G,\psi,p)$ is a $k$-periodic framework $(\tilde{G}, \tilde{p})$,
where $\tilde{G}$ is the covering of $G$ and $\tilde{p}$ is uniquely determined from $p$ by (\ref{eq:periodic_func}).

We say that a $\Gamma$-labeled framework $(G,\psi,p)$ is {\em generic} if  the set of coordinates in \(p\) is algebraically independent over the rationals.
Note that an $L$-periodic framework $(\tilde{G}, \tilde{p})$ is generic if and only if the quotient $(G,\psi,p)$ of $(\tilde{G}, \tilde{p})$ is generic.

%%%%%%%%%%%%%%%%%%%%%%%%%%%%%%%%%%%%%%%%%%%%%%%%%%%%%%%%%%%%%%%%%%%%%%%%%%%%%%%%%%%%%%%%%%%%%%%%%%%%
\subsection{Rigidity and global rigidity} \label{subsec:global}
Let $G=(V,E)$ be a graph.
Two frameworks $(G,p)$ and $(G,q)$ in $\mathbb{R}^d$ are said to be \emph{equivalent} if
\begin{equation*}
\label{eq:periodic_equiv}
\left\| p(u)-p(v)\right\|=\left\| q(u)-q(v)\right\|\qquad \text{for all } uv\in E.
\end{equation*}
They are \emph{congruent} if
\begin{equation*}
\label{eq:periodic_congr}
\left\| p(u)-p(v)\right\|=\left\| q(u)- q(v)\right\|\qquad \text{for all } u,v\in V.
\end{equation*}
A framework $(G,p)$ is called {\em globally rigid} if every framework $(G,q)$ in $\mathbb{R}^d$ which is equivalent to $(G,p)$ is also congruent to $(G,p)$.

We may define the corresponding periodicity-constrained concept as follows.
An $L$-periodic framework $(\tilde{G}, \tilde{p})$  in $\mathbb{R}^d$ is {\em $L$-periodically globally rigid} if
every $L$-periodic framework in $\mathbb{R}^d$ which is equivalent to $(\tilde{G}, \tilde{p})$ is also congruent to $(\tilde{G}, \tilde{p})$.
Note that if the rank of the periodicity is equal to zero, then $L$-periodic global rigidity coincides with the global rigidity of finite frameworks.

A key notion to analyze  $L$-periodic global rigidity is  $L$-periodic {\em  rigidity}.
By the periodicity constraint (\ref{eq:periodic_func}), the space of all $L$-periodic frameworks in $\mathbb{R}^d$ for a given graph $\tilde{G}$ can be identified with Euclidean space $\mathbb{R}^{dn}$ so that the topology is defined.
A framework $(\tilde{G},\tilde{p})$ is called {\em $L$-periodically rigid} if there is an open neighborhood $N$ of $\tilde{p}$ in which  every $L$-periodic framework $(\tilde{G},\tilde{q})$ which is equivalent to $(\tilde{G},\tilde{p})$ is also congruent to $(\tilde{G},\tilde{p})$.

\subsection{Characterizing $L$-periodic rigidity} \label{subsec:local}
A key tool to analyze the local or the global rigidity of finite frameworks is the length-squared function and its Jacobian, called the rigidity matrix.
One can follow the same strategy to analyze local or global periodic rigidity.

For a $\Gamma$-labeled graph $(G,\psi)$ and $L:\Gamma\rightarrow \mathbb{R}^d$, define
$f_{G,L}:\mathbb{R}^{d|V(G)|}\rightarrow \mathbb{R}^{|E(G)|}$ by
\[
f_{G,L}(p)=(\dots,\left\|p(v_i)-(p(v_j)+L(\psi(v_iv_j)))\right\|^2,\dots) \qquad (p\in \mathbb{R}^{d|V(G)|}).
\]
For a finite set $V$, the {\em complete $\Gamma$-labeled graph} $K(V,\Gamma)$ on $V$ is defined
to be the graph on $V$ with the edge set $\{(u, \gamma v): u,v \in V, \gamma\in \Gamma\}$.
We simply denote $f_{K(V,\Gamma), L}$ by $f_{V,L}$.

By (\ref{eq:periodic_func}) we have the following fundamental fact.
\begin{proposition}
\label{prop:trivial}
Let $(\tilde{G}, \tilde{p})$ be an $L$-periodic framework and let $(G=(V,E), \psi, p)$ be a quotient $\Gamma$-labeled framework of $(\tilde{G}, \tilde{p})$. Then $(\tilde{G}, \tilde{p})$ is $L$-periodically globally (resp.~locally) rigid
if and only if for every $q\in \mathbb{R}^{d|V|}$ (resp.~for every $q$ in an open neighborhood of $p$ in $\mathbb{R}^{d|V|}$), $f_{G,L}(p)=f_{G,L}(q)$ implies $f_{V,L}(p)=f_{V, L}(q)$.
\end{proposition}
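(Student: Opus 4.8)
The plan is to use the identification, noted just before the proposition, of the set of all $L$-periodic frameworks on $\tilde{G}$ with Euclidean space $\mathbb{R}^{d|V|}$: once representatives $v_1,\dots,v_n$ of the vertex orbits are fixed, an $L$-periodic framework $(\tilde{G},\tilde{q})$ corresponds bijectively to the point $q\in\mathbb{R}^{d|V|}$ with $q(v_i)=\tilde{q}(v_i)$ (and $\tilde{q}$ is recovered from $q$ through $\tilde{q}(\gamma v_i)=q(v_i)+L(\gamma)$ by (\ref{eq:periodic_func})), and, by the very definition of the topology on the framework space, this bijection is a homeomorphism. Under it, the proposition reduces to two translations: (i) $(\tilde{G},\tilde{q})$ is equivalent to $(\tilde{G},\tilde{p})$ if and only if $f_{G,L}(q)=f_{G,L}(p)$; and (ii) $(\tilde{G},\tilde{q})$ is congruent to $(\tilde{G},\tilde{p})$ — i.e.\ $\|\tilde{q}(u)-\tilde{q}(w)\|=\|\tilde{p}(u)-\tilde{p}(w)\|$ for all $u,w\in\tilde{V}$, as in the definition of congruence above — if and only if $f_{V,L}(q)=f_{V,L}(p)$.

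Both translations amount to collapsing $\Gamma$-orbits of edges (resp.\ of vertex pairs) by means of (\ref{eq:periodic_func}), thereby reducing the infinitely many conditions on the covering to the finitely many coordinates of $f_{G,L}$ (resp.\ $f_{V,L}$). For (i): each edge of $\tilde{G}$ lies in an orbit $\{\{\gamma v_i,\psi(v_iv_j)\gamma v_j\}:\gamma\in\Gamma\}$ with $(v_i,v_j)\in E(G)$, and $\|\tilde{q}(\gamma v_i)-\tilde{q}(\psi(v_iv_j)\gamma v_j)\|=\|q(v_i)-(q(v_j)+L(\psi(v_iv_j)))\|$ is independent of $\gamma$ and equals the square root of the coordinate of $f_{G,L}(q)$ indexed by $(v_i,v_j)$; hence the list of edge lengths of $(\tilde{G},\tilde{q})$ is encoded by $f_{G,L}(q)$, and comparing $(\tilde{G},\tilde{q})$ with $(\tilde{G},\tilde{p})$ yields (i). For (ii): writing an arbitrary pair $u,w\in\tilde{V}$ as $u=\alpha v_i$, $w=\beta v_j$ with $\alpha,\beta\in\Gamma$, we get $\|\tilde{q}(u)-\tilde{q}(w)\|=\|q(v_i)-(q(v_j)+L(\beta\alpha^{-1}))\|$, which is the square root of the coordinate of $f_{V,L}(q)$ indexed by the edge $(v_i,\beta\alpha^{-1}v_j)$ of $K(V,\Gamma)$ (pairs with $u=w$ contribute $0$ to both sides and may be disregarded); hence the collection of all pairwise distances of $(\tilde{G},\tilde{q})$ is encoded by $f_{V,L}(q)$, and comparing with $(\tilde{G},\tilde{p})$ yields (ii).

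Granting (i) and (ii), the proposition follows directly. Globally: $(\tilde{G},\tilde{p})$ is $L$-periodically globally rigid iff every $L$-periodic $(\tilde{G},\tilde{q})$ equivalent to it is congruent to it, which via the bijection and (i), (ii) is precisely the statement that $f_{G,L}(q)=f_{G,L}(p)$ implies $f_{V,L}(q)=f_{V,L}(p)$ for all $q\in\mathbb{R}^{d|V|}$. Locally: since the bijection is a homeomorphism, an open neighbourhood $N$ of $\tilde{p}$ in the framework space corresponds to an open neighbourhood $U$ of $p$ in $\mathbb{R}^{d|V|}$, and applying the same equivalences to $q\in U$ gives the local assertion. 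The only step carrying any content is the orbit-collapsing in (i) and (ii); the rest is a routine unwinding of the definitions, and in particular genericity plays no role here.
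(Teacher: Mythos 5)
Your proof is correct and takes the same route the paper leaves implicit: the paper states the proposition as an immediate consequence of the periodicity relation (\ref{eq:periodic_func}) without writing out a proof, and your argument simply fills in that derivation by collapsing the $\Gamma$-orbits of edges and of vertex pairs to identify the equivalence condition with $f_{G,L}(p)=f_{G,L}(q)$ and the congruence condition with $f_{V,L}(p)=f_{V,L}(q)$, then unwinding the definitions in both the global and local cases.
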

In view of this proposition, we say that a $\Gamma$-labeled framework $(G,\psi,p)$ is {\em $L$-periodically globally (or, locally) rigid} if for every $q\in \mathbb{R}^{d|V|}$ (resp. for every $q$ in an open neighborhood of $p$ in $\mathbb{R}^{d|V|}$), $f_{G,L}(p)=f_{G,L}(q)$ implies $f_{V,L}(p)=f_{V, L}(q)$,
and we may focus on characterizing the $L$-periodic global (or, local) rigidity of $\Gamma$-labeled frameworks.

For $X\subset \mathbb{R}^d$, an isometry $h:\mathbb{R}^d\rightarrow \mathbb{R}^d$ of $\mathbb{R}^d$ is said to be {\em $X$-invariant} if $h(x)-h(0)=x$ for every $x\in X$.
The following is the first fundamental fact for analyzing periodic rigidity.
\begin{proposition}
\label{prop:isometry}
Let $V$ be a finite set, $p, q:V\rightarrow \mathbb{R}^d$ two maps, and let $L:\Gamma\rightarrow \mathbb{R}^d$ be nonsingular.
Suppose that $p(V)$ affinely spans $\mathbb{R}^d$.
Then $f_{V,L}(p)=f_{V,L}(q)$ holds if and only if $q$ can be written as $q=h\circ p$ for some $L(\Gamma)$-invariant isometry $h$ of $\mathbb{R}^d$.
\end{proposition}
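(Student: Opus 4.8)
The plan is to reduce the statement to elementary linear algebra about isometries of $\mathbb{R}^d$. The two structural facts I would invoke are: (i) every isometry $h$ of $\mathbb{R}^d$ has the form $h(x)=Ox+h(0)$ with $O$ orthogonal, and, in the terminology of the excerpt, $h$ is $L(\Gamma)$-invariant exactly when $O$ fixes every vector of $L(\Gamma)$ (since $h(x)-h(0)=Ox$); and (ii) two finite configurations $p,q\colon V\to\mathbb{R}^d$ with equal pairwise distances differ by an isometry of $\mathbb{R}^d$ (translate a base vertex to the origin, observe that the configurations then have equal Gram matrices by polarization, and conclude via the existence of an orthogonal map matching two tuples of vectors with the same Gram matrix).

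For the \emph{sufficiency} direction, assume $q=h\circ p$ with $h(x)=Ox+h(0)$ an $L(\Gamma)$-invariant isometry. For each edge $(u,\gamma v)$ of $K(V,\Gamma)$ one has $q(u)-q(v)-L(\gamma)=O\bigl(p(u)-p(v)\bigr)-L(\gamma)=O\bigl(p(u)-p(v)-L(\gamma)\bigr)$, using $OL(\gamma)=L(\gamma)$; since $O$ preserves norms, the corresponding coordinate of $f_{V,L}$ agrees for $p$ and $q$. Running over all edges gives $f_{V,L}(p)=f_{V,L}(q)$. Note that no hypothesis on $p(V)$ is needed here.

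For the \emph{necessity} direction I would proceed in two steps. First, restricting the equality $f_{V,L}(p)=f_{V,L}(q)$ to the edges labelled by $\mathrm{id}\in\Gamma$ (where $L(\mathrm{id})=0$) recovers $\|p(u)-p(v)\|=\|q(u)-q(v)\|$ for all $u,v\in V$, so by fact (ii) there is an isometry $h(x)=Ox+t$ with $q=h\circ p$. Second, I would show this $h$ is automatically $L(\Gamma)$-invariant: fixing $\gamma\in\Gamma$, expand the remaining equalities $\|p(u)-p(v)-L(\gamma)\|^2=\|q(u)-q(v)-L(\gamma)\|^2$ and cancel the common terms $\|p(u)-p(v)\|^2=\|q(u)-q(v)\|^2$ and $\|L(\gamma)\|^2$; what is left is $\langle p(u)-p(v),\,(I-O^{\top})L(\gamma)\rangle=0$ for all $u,v\in V$ (using $q(u)-q(v)=O(p(u)-p(v))$ and $O^{\top}=O^{-1}$). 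Since $p(V)$ affinely spans $\mathbb{R}^d$, the differences $p(u)-p(v)$ span $\mathbb{R}^d$ linearly, forcing $(I-O^{\top})L(\gamma)=0$, i.e. $OL(\gamma)=L(\gamma)$; as $\gamma$ was arbitrary, $O$ fixes $L(\Gamma)$ pointwise, so $h$ is $L(\Gamma)$-invariant.

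I do not expect a genuine obstacle: once fact (ii) is granted, the argument is a short expansion. The only points requiring a little care are confirming that $K(V,\Gamma)$ genuinely supplies the identity-labelled edges used to produce $h$ in the first place, and keeping track of the transpose when moving $O$ across the inner product; the affine-spanning hypothesis is used precisely once, exactly at the step passing from ``$p(u)-p(v)$ is orthogonal to $(I-O^{\top})L(\gamma)$ for all $u,v$'' to ``$(I-O^{\top})L(\gamma)=0$''.
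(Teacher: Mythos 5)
Your proof is correct and follows essentially the same route as the paper: recover an isometry $h(x)=Ox+t$ from the identity-labelled edges, then expand the $\gamma$-labelled length equalities and use the affine-spanning hypothesis to force $(I-O^{\top})L(\gamma)=0$, i.e.\ that the linear part fixes $L(\Gamma)$ pointwise. The sufficiency direction in the paper is likewise the same short computation you give, and like you it does not use the affine-spanning hypothesis there.
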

\begin{proof}
To see the necessity, assume that $f_{V,L}(p)=f_{V,L}(q)$.
Then
$\|p(u)-p(w)\|=\|q(u)-q(w)\|$ for every pair of elements $u,w\in V$.
Since $p(V)$ affinely spans the whole space,
this implies that there is a unique isometry $h:\mathbb{R}^d\rightarrow \mathbb{R}^d$
such that $q(u)=h(p(u))$ for every $u\in V$.
In other words, there is an orthogonal matrix $S$ such that $q(u)-q(v)=S(p(u)-p(v))$ for every $u, v\in V$.
By $f_{V,L}(p)=f_{V,L}(q)$, we have
$\|p(v)-(p(u)+L(\gamma))\|=\|q(v)-(q(u)+L(\gamma))\|$ for every $u, v\in V$ and every $\gamma\in \Gamma$,
which implies  $\langle (I-S)(p(u)-p(v)), L(\gamma)\rangle=0$,
and hence $\langle p(u)-p(v), (I-S^{\top})L(\gamma)\rangle=0$.
Since $p(V)$ affinely spans the whole space, $(I-S^{\top})L(\gamma)=0$ for every $\gamma$.
In other words, $S$ fixes each element in $L(\Gamma)$ as required.

Conversely, suppose that  $q$ is  represented as
$q(v)=S p(v)+t$ for some  $t\in \mathbb{R}^d$ and some orthogonal matrix $S$ that fixes each element in $L(\Gamma)$.
Then we have $\|q(v)-(q(u)+L(\gamma))\|=\|S(p(v)-(p(u)+L(\gamma)))\|=\|p(v)-(p(u)+L(\gamma))\|$ for every $u, v\in V$ and every $\gamma\in \Gamma$, implying $f_{V,L}(q)=f_{V,L}(p)$.
\end{proof}

The following algebraic characterization is the periodic version of one of the fundamental facts in rigidity theory,
and is known in the periodic case even if $k=d$~\cite{rossd}.
\begin{proposition}
\label{prop:rigidity_matrix}
Let $(G,\psi, p)$ be a  generic $\Gamma$-labeled framework in $\mathbb{R}^d$
with $|V(G)|\geq d+1$ and rank $k$ periodicity $\Gamma$, and let $L:\Gamma\rightarrow \mathbb{R}^d$ be nonsingular.
Then $(G,\psi,p)$ is $L$-periodically rigid if and only if
\[
\rank df_{G,L}|_p=d|V(G)|-d-{d-k\choose 2},
\]
where $df_{G,L}|_p$ denotes the Jacobian of $f_{G,L}$ at $p$.
\end{proposition}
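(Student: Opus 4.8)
The plan is to run the classical Asimow--Roth argument for generic local rigidity in the periodicity-constrained setting; the only place where the periodicity rank $k$ enters is through the dimension of the space of trivial motions. Write $V=V(G)$, $n=|V|$, and let $K=K(V,\Gamma)$ be the complete $\Gamma$-labeled graph, so that $f_{V,L}$ is the edge-length map of $K$. Since (after a choice of labels) $G$ is a subgraph of $K$, we have $f_{V,L}(q)=f_{V,L}(p)\Rightarrow f_{G,L}(q)=f_{G,L}(p)$, hence
\[
f_{V,L}^{-1}(f_{V,L}(p))\subseteq f_{G,L}^{-1}(f_{G,L}(p)),
\]
and by Proposition~\ref{prop:trivial} together with the definition of $L$-periodic rigidity for $\Gamma$-labeled frameworks, $(G,\psi,p)$ is $L$-periodically rigid if and only if these two fibres coincide in some neighbourhood of $p$.

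First I would show that both fibres are smooth manifolds near $p$. The entries of $df_{G,L}$ (and of $df_{V,L}$) are polynomials in the coordinates of $p$ with rational coefficients, since the only constants appearing are the rational vectors $L(\psi(e))$; therefore $\rank df_{G,L}$ attains its maximum off a proper algebraic subvariety defined over $\mathbb{Q}$, and as $p$ is generic it lies outside this subvariety. By lower semicontinuity of rank, $df_{G,L}$ then has locally constant rank near $p$, so by the constant rank theorem $f_{G,L}^{-1}(f_{G,L}(p))$ is, in a neighbourhood of $p$, a smooth manifold of dimension $dn-\rank df_{G,L}|_p$, and likewise $f_{V,L}^{-1}(f_{V,L}(p))$ is a smooth manifold of dimension $dn-\rank df_{V,L}|_p$ near $p$.

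Next I would compute $\rank df_{V,L}|_p$. Since $n\geq d+1$ and $p$ is generic, $p(V)$ affinely spans $\mathbb{R}^d$, so by Proposition~\ref{prop:isometry} the fibre $f_{V,L}^{-1}(f_{V,L}(p))$ equals the orbit of $p$ under the group of $L(\Gamma)$-invariant isometries of $\mathbb{R}^d$, and this action is free because $p(V)$ affinely spans. Such an isometry has the form $x\mapsto Sx+t$ with $t\in\mathbb{R}^d$ and $S$ an orthogonal matrix fixing every vector of $L(\Gamma)$; as $L$ is nonsingular, $\spa L(\Gamma)$ is $k$-dimensional, so the admissible $S$ are precisely the orthogonal maps that restrict to the identity on $\spa L(\Gamma)$ and to an arbitrary orthogonal map on its $(d-k)$-dimensional orthogonal complement. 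Hence the group of $L(\Gamma)$-invariant isometries is a manifold of dimension $d+{d-k\choose 2}$, so the orbit --- and therefore the fibre, which is a manifold near $p$ by the previous paragraph --- has dimension $d+{d-k\choose 2}$ near $p$. Comparing with the dimension formula above yields $\rank df_{V,L}|_p=dn-d-{d-k\choose 2}$.

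Finally I would combine these. Both fibres are closed subsets of $\mathbb{R}^{dn}$ (preimages of a point under continuous maps), both are smooth manifolds near $p$, and one is contained in the other. If their local dimensions at $p$ agree, then in a small ball $B$ about $p$ on which $f_{G,L}^{-1}(f_{G,L}(p))\cap B$ is a connected manifold, the set $f_{V,L}^{-1}(f_{V,L}(p))\cap B$ is a nonempty subset of it that is open (by invariance of domain) and closed (as the intersection with a closed set), hence equal to it; conversely, equal fibres have equal dimension. Therefore $(G,\psi,p)$ is $L$-periodically rigid if and only if $\rank df_{G,L}|_p=\rank df_{V,L}|_p=dn-d-{d-k\choose 2}$, which is the claim. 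The main point requiring care is the genericity step --- establishing that $df_{G,L}$ has locally constant rank at a generic $p$, so that the fibre really is a manifold there --- together with the free-action observation underlying Proposition~\ref{prop:isometry} that pins down the dimension of the space of trivial motions; the remaining steps are routine.
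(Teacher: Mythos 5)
Your argument is correct and is essentially the paper's own proof, just with the details filled in: you compute the dimension of the space of $L(\Gamma)$-invariant isometries to pin down $\rank df_{V,L}|_p$, and then unpack the ``standard argument using the inverse function theorem'' (here in its constant-rank form, together with genericity to guarantee locally constant rank) that the paper cites without elaboration to equate local rigidity with $\rank df_{G,L}|_p=\rank df_{V,L}|_p$.
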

\begin{proof}
Since the rank of $\Gamma$ is $k$, the set of $L(\Gamma)$-invariant isometries forms a $(d+{d-k\choose 2})$-dimensional manifold. Hence $\rank df_{V,L}|_p=d|V(G)|-d-{d-k\choose 2}$.
By the standard argument using the inverse function theorem,
it follows that $(G,\psi,p)$ is $L$-periodically rigid if and only if $\rank df_{V,L}|_p=\rank df_{G,L}|_p$,
implying the statement.
\end{proof}
 For $d=2$ Ross~\cite{ross2} gave a combinatorial characterization of the rank of $df_{G,L}|_p$ for generic $(G,\psi,p)$, which implies the following. (Her statement is only for $k=2$, but the proof can easily be adapted to the case when $k=1$.)

 \begin{theorem} [Ross~\cite{ross2}]
 \label{thm:ross}
 Let $(G,\psi, p)$ be a generic $\Gamma$-labeled framework in $\mathbb{R}^2$  with rank $k\geq 1$ periodicity $\Gamma$ and let $L:\Gamma\rightarrow \mathbb{R}^2$ be nonsingular.
 Then $(G,\psi, p)$ is $L$-periodically rigid if and only if
 $(G,\psi)$ contains a spanning subgraph $(H,\psi_H)$ satisfying the following count conditions:
 \begin{itemize}
 \item $|E(H)|=2|V(G)|-2$;
 \item  $|F|\leq 2|V(F)|-3$ for every nonempty balanced $F\subseteq E(H)$;
 \item $|F|\leq 2|V(F)|-2$ for every nonempty $F\subseteq E(H)$.
 \end{itemize}
% where $V(F)$ denotes the set of vertices incident to $F$.
 \end{theorem}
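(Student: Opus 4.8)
The plan is to derive Theorem~\ref{thm:ross} from Proposition~\ref{prop:rigidity_matrix} by identifying the generic rank of the periodic rigidity matrix with the rank function of a purely combinatorial matroid on $E(G)$. Since $d=2$ and $k\in\{1,2\}$ give $\binom{d-k}{2}=0$, Proposition~\ref{prop:rigidity_matrix} reduces the statement to the claim that, for generic $p$, $\rank df_{G,L}|_p=2|V(G)|-2$ if and only if $(G,\psi)$ has a spanning subgraph obeying the three counts. Because the generic rank of $df_{G,L}|_p$ is the rank of the linear matroid $\mathcal{R}_{G,L}$ of its rows, and because a spanning subgraph with exactly $2|V(G)|-2$ edges satisfying the last two (hereditary) counts is precisely a basis of the count matroid $\mathcal{M}$ on $E(G)$ whose independent sets are the edge sets $F$ with $|F'|\le 2|V(F')|-2$ for every $\emptyset\ne F'\subseteq F$ and $|F'|\le 2|V(F')|-3$ for every balanced $\emptyset\ne F'\subseteq F$, it suffices to prove $\mathcal{R}_{G,L}=\mathcal{M}$.

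\emph{Step 1 ($\mathcal{M}$ is a matroid, and $\mathcal{R}_{G,L}\le\mathcal{M}$).} First I would check that $\mathcal{M}$ really is a matroid; the essential combinatorial lemma is that the union of two balanced edge sets whose intersection is nonempty and connected is again balanced, which follows from the switching operation (switch so that one of the two pieces carries only identity labels, hence so does the shared part, hence all of the other piece), and this makes the ``$-2$ versus $-3$'' decoration submodular. For the geometric upper bound $\rank_{\mathcal{R}_{G,L}}(F)\le 2|V(F)|-2-\epsilon(F)$, where $\epsilon(F)=1$ if $F$ is balanced and $0$ otherwise, the bound $2|V(F)|-2$ is the standard column count for $df_{G,L}|_p$, and when $F$ is balanced one switches to all-identity labels and observes that the rows indexed by $F$ then span the same space as the ordinary rigidity matrix of the finite graph $(V(F),F)$, whose generic rank is at most $2|V(F)|-3$.

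\emph{Step 2 ($\mathcal{R}_{G,L}\ge\mathcal{M}$).} This is the crux. I would show that every $\mathcal{M}$-independent set has linearly independent rows in $df_{G,L}|_p$ for generic $p$, by induction on $|V(G)|$ built on a recursive construction of $\mathcal{M}$-tight $\Gamma$-labeled graphs. The combinatorial ingredient is a Henneberg-type decomposition: every $(2,2)$-tight, balanced-$(2,3)$-sparse $\Gamma$-labeled graph on at least two vertices arises from a two-vertex base graph consisting of two parallel edges carrying distinct labels (realizing $2\cdot 2-2=2$) by $0$-extensions (adding a degree-two vertex) and $1$-extensions (deleting an edge and adding a new vertex joined to its two ends and to a third vertex), where in the labeled setting the labels of the new edges must be chosen from the appropriate coset so that no forbidden balanced set is created. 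The rigidity ingredient is that each of these moves, applied to a generic framework, preserves independence of the relevant rows: the $0$-extension is immediate, and for the $1$-extension one argues as in the classical proof that for a generic placement of the new vertex the three new rows, together with independence after removal of the subdivided edge, force independence; here one also exploits that perturbing the new vertex along a generic line, or against the periodic direction, kills any putative dependence that uses the group labels.

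\emph{Step 3 (base cases and the case $k=1$).} Finally I would verify the base cases directly --- a single vertex, where both sides are trivial, and the two-vertex base, where the $2\times 4$ matrix $df_{G,L}|_p$ has rank $2$ for generic $p$ exactly because the two parallel edges carry labels with distinct images $L(\gamma_1)\ne L(\gamma_2)$, making $p(u)-p(v)-L(\gamma_1)$ and $p(u)-p(v)-L(\gamma_2)$ generically non-parallel --- and note that the whole argument is insensitive to whether $k=1$ or $k=2$: $L(\Gamma)$ spans a line rather than the plane, but the target rank is $2|V(G)|-2$ in both cases, and the switching, upper-bound and Henneberg steps do not depend on $k$. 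The main obstacle is Step~2: proving the Henneberg-type recursive construction of $\mathcal{M}$-tight $\Gamma$-labeled graphs and showing that the $1$-extension preserves the generic rank of the periodic rigidity matrix in the presence of group labels; the interaction between the labels and the choice of admissible new edges in the $1$-extension is the delicate feature with no counterpart in the classical Laman proof.
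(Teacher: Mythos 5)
The paper does not prove Theorem~\ref{thm:ross}; it is quoted verbatim from Ross~\cite{ross2}, with the authors merely noting that her proof for $k=2$ adapts to $k=1$ (the fixed-torus Laman theorem is also restated and used, but never re-derived). So there is no ``paper's own proof'' to compare against. Your sketch is in fact a reconstruction of Ross's original strategy: reduce via the rank formula (your Proposition~\ref{prop:rigidity_matrix} step), match the generic rank of the periodic rigidity matrix to the count matroid, and prove the lower bound by a Henneberg-type recursion on $(2,2)$-tight, balanced-$(2,3)$-sparse $\Gamma$-labeled graphs built from the two-vertex, two-parallel-edge base by $0$- and $1$-extensions, with care about which gain labels are admissible. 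That is the right framework and is essentially how \cite{ross2} (and, in a more general setting, \cite{mt13} and \cite{jkt}) proceed.

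Two points worth sharpening in the plan. First, Proposition~\ref{prop:rigidity_matrix} is stated only for $|V(G)|\geq d+1=3$, so the reduction does not by itself cover $|V(G)|\in\{1,2\}$; you flag the base cases, but should note that they are handled directly rather than via the proposition. Second, the ``balanced union'' fact you use for submodularity of the $-2$/$-3$ decoration needs the hypothesis that the graph $(V(E_1)\cap V(E_2),\,E_1\cap E_2)$ is connected — not merely that $E_1\cap E_2$ is nonempty and connected; this is exactly Lemma~\ref{lem:disconnected} in the paper (via its contrapositive), and the distinction matters because the shared vertex set can strictly contain $V(E_1\cap E_2)$. Beyond that, you correctly identify the real work as Step~2: the existence of a gain-admissible $0$- or $1$-reduction at every tight graph with at least three vertices, and the preservation of generic rank under the labeled $1$-extension. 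Those are genuine theorems in the cited literature and are only named, not proved, in your sketch; as a blind plan this is reasonable, but it is the part of the argument that a full write-up would have to supply.
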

    Note that if $k=0$, then an $L$-periodic framework is simply a finite framework,
    and  generic rigid frameworks in $\mathbb{R}^2$ are characterized by the celebrated Laman theorem \cite{Lamanbib}.

%%%%%%%%%%%%%%%%%%%%%%%%%%%%%%%%%%%%%%%%%%%%%%%%%%%%%%%%%%%%%%%%%%%%%%%%%%%%%%%%%%%%%%%%%%%%%%%%%%%%%%%%%%%%%%%%%%%%%%%%5
\section{Necessary Conditions}
\label{sec:nec}
In this section we provide necessary conditions for $L$-periodic global rigidity.
As in the finite case, there are two types of conditions, a connectivity condition and a redundant rigidity condition.
These two conditions are stated in Lemma~\ref{lem:connectivity} and Lemma~\ref{thm:necweak}, respectively.

\subsection{Necessary connectivity conditions}
Let $(G,\psi)$ be a $\Gamma$-labeled graph with $\Gamma$ having rank $k$.
For a subgraph $H$ of $G$, the {\em boundary} $B(H)$ is defined to be the set of vertices in $H$ incident to some edge in $E(G)\setminus E(H)$, 
and the {\em interior} $I(H)$ is defined by $I(H)=V(H)\setminus B(H)$.
A subgraph $H$ is said to be an {\em $(s,t)$-block} if 
the rank of $H$ is $s$, $|B(H)|=t$, and $I(H)\neq \emptyset$.

\begin{lemma}
\label{lem:connectivity}
Let $(G, \psi, p)$ be a generic $\Gamma$-labeled framework in $\mathbb{R}^d$ with $|V(G)|\geq 2$,
rank $k$ periodicity $\Gamma$, and nonsingular $L:\Gamma\rightarrow \mathbb{R}^d$.
If $(G, \psi, p)$ is $L$-periodically globally rigid, 
then $G$ contains no $(s,t)$-block $H$ with $s+t\leq d$
such that $V(H)\neq V(G)$ or $s<k$.
\end{lemma}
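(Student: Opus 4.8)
The plan is to prove the contrapositive: assuming $G$ contains an $(s,t)$-block $H$ with $(s+1)t \le d$ such that $V(H) \ne V(G)$ or $s < k$, I will construct an $L$-periodic framework $(G,\psi,q)$ that is equivalent to $(G,\psi,p)$ but not congruent to it, i.e. $f_{G,L}(q) = f_{G,L}(p)$ but $f_{V,L}(q) \ne f_{V,L}(p)$. The idea, as in the finite-framework proofs of Hendrickson-type connectivity results, is to take a nontrivial isometry that fixes the ``boundary'' part $B(H)$ together with the lattice, apply it only to the interior $I(H)$, and argue that the available dimension count $(s+1)t \le d$ leaves enough room for such an isometry to be nontrivial. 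First I would set up notation: let $B = B(H)$, $I = I(H) \ne \emptyset$, and note that every edge of $G$ incident to $I$ lies in $H$ (by definition of $B(H)$ and $I(H)$). Define $q$ to agree with $p$ on $V(G) \setminus I$ and on the lattice (so the periodicity equation (\ref{eq:periodic_func}) still holds once $q$ is extended to the cover), and on $I$ set $q|_I = g \circ p|_I$ for a suitable isometry $g$ of $\mathbb{R}^d$ to be chosen.

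The key step is choosing $g$ so that (i) $g$ fixes $L(\Gamma)$ pointwise up to translation, i.e. $g$ is $L(\Gamma)$-invariant in the sense of the paper, (ii) $g$ fixes each point $p(b)$, $b \in B$, and also fixes $p(b) + L(\psi(C))$ for the relevant closed walks $C$ through $H$ — equivalently, $g$ fixes the affine span $A$ of $\{p(b) : b \in B\} \cup L(\Gamma_H)$ — and (iii) $g$ is not an $L(\Gamma)$-invariant isometry globally (so that it genuinely moves the interior relative to the rest, producing a non-congruent framework). Condition (i)+(ii) guarantee $f_{G,L}(q) = f_{G,L}(p)$: edges inside $H$ have their lengths preserved because $g$ is an isometry (being careful that the lattice shifts $L(\psi(e))$ appearing in $f_{G,L}$ are themselves fixed by $g$, which is where $L(\Gamma_H)$-invariance and the $s < k$ or $V(H) \ne V(G)$ hypothesis enter), and edges with an endpoint in $B$ but going outside $H$ are unaffected since $g$ fixes $p(b)$. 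The dimension of the space $A$ that $g$ must fix: $|B| = t$ points contribute at most $t$ dimensions, and once we account for the requirement that $g$ also commute with the $L(\Gamma_H)$-translations (rank $s$), the affine space $A$ has dimension at most $t - 1 + s = (s+1) + (t-1) - 1$... more carefully, $\dim A \le (t-1) + s$ when $s < k$ or $V(H)\neq V(G)$ supplies the slack, and the group of isometries of $\mathbb{R}^d$ fixing an affine subspace of dimension $m$ pointwise and invariant on an additional lattice direction has positive dimension precisely when $m < d$ minus the lattice contribution — the inequality $(s+1)t \le d$ is exactly the arithmetic that forces the fixing constraints to leave a nontrivial residual rotation. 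I would verify this count case by case ($t=1$: $A$ is at most a point plus $s$ lattice directions, and $s+1 \le d$; $t \ge 2$: the bound $(s+1)t \le d$ with $t \ge 2$ gives $s + 1 \le d/2$, very ample room).

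The remaining point, which I expect to be the \emph{main obstacle}, is the separate handling of the condition ``$V(H) \ne V(G)$ or $s < k$''. When $V(H) = V(G)$ but $s < k$, the block $H$ spans all vertices, so moving $I(H)$ genuinely by $g$ changes distances of the form appearing in $f_{V,L}$ only if $g$ is not already $L(\Gamma)$-invariant; here the gap $k - s \ge 1$ means there is a lattice direction $L(\gamma)$ with $\gamma \notin \Gamma_H$, and we can pick $g$ to be a rotation in a plane containing that direction, which is invisible to $f_{G,L}$ (no edge of $H$ ``sees'' $\gamma$) but visible to $f_{V,L}$ — so the framework moves within the ambient configuration space even though the graph is spanning. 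When $V(H) \ne V(G)$ instead, there is a vertex $w \notin V(H)$, and one shows the perturbed $q$ cannot be congruent to $p$ because a congruence would have to be a single global $L(\Gamma)$-invariant isometry $h$ with $h \circ p = q$, forcing $h$ to both fix $p(V(G)\setminus I)$ (a set affinely spanning $\mathbb{R}^d$, since it contains $p(w)$ and $p(B)$ and enough other generic points once $|V(G)| \ge d+1$... and if $|V(G)| \le d$ one argues directly) and to equal $g$ on $I$ — a contradiction once $g$ was chosen nontrivial. Assembling these two cases, checking the genericity is not disturbed (we only apply a fixed algebraic isometry, and equivalence/non-congruence are Zariski-robust), and nailing down the precise dimension bookkeeping relating $\dim A$, $s$, $t$, $k$, $d$ under $(s+1)t \le d$ is where the real work lies; everything else is routine.
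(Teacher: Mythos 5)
Your overall plan is the same as the paper's: pick an isometry $g$ that fixes $B(H)$ together with $L(\Gamma_H)$ and apply it only to the interior $I(H)$, then show the resulting $q$ is equivalent but not congruent to $p$. The paper implements this concretely as the reflection $g$ in a hyperplane $\mathcal{H}$ containing $\aff P$ with $P=\{p(v)+L(\gamma):v\in B',\gamma\in\Gamma_H\}$, chosen generically to avoid the remaining points of $p(V(G))$; your dimension count $\dim A\le(t-1)+s$ is in fact the right one (and slightly sharper than the $(s+1)t-1$ written in the paper, though both are $<d$ under the hypothesis).

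However, your stated conditions on $g$ contain a genuine contradiction that is not a cosmetic slip. Condition (i) requires $g$ to be $L(\Gamma)$-invariant, while condition (iii) requires $g$ not to be $L(\Gamma)$-invariant. The resolution is that (i) is simply wrong: what is needed for the equivalence $f_{G,L}(p)=f_{G,L}(q)$ is only $L(\Gamma_H)$-invariance (together with $g$ fixing $p(B(H))$ pointwise, and after a switching that puts all labels of $H$ into $\Gamma_H$). Requiring full $L(\Gamma)$-invariance as in (i) is fatal in the case $V(H)=V(G)$: there $q=g\circ p$ on all of $V(G)$ (since $g$ fixes $p(B(H))$), and if $g$ were an $L(\Gamma)$-invariant isometry then Proposition~\ref{prop:isometry} would give $f_{V,L}(q)=f_{V,L}(p)$, i.e.\ congruence --- the exact opposite of what you need. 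It is also unnecessarily restrictive when $V(H)\neq V(G)$: for instance with $d=k=2$, $s=0$, $t=1$ there is no nontrivial $L(\Gamma)$-invariant isometry fixing a point, whereas the paper's reflection (which need not respect $L(\Gamma)$) works fine and non-congruence comes from a vertex $u\notin V(H)$. Your paragraph on the case $V(H)=V(G)$, $s<k$ shows you do see that $g$ must move some $L(\gamma^*)$ with $\gamma^*\notin\Gamma_H$, but this directly contradicts your (i), and the write-up never repairs the inconsistency. You should replace (i) by ``$g$ is $L(\Gamma_H)$-invariant and fixes $p(B(H))$'' (equivalently, $g$ fixes the hyperplane $\mathcal{H}\supseteq\aff P$ pointwise) and then add, only in the spanning case, the extra requirement that $g$ not fix $L(\gamma^*)$, which is available precisely because $s<k$; this is where the disjunction ``$V(H)\neq V(G)$ or $s<k$'' actually enters. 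Finally, the degenerate subcase $t=0$ with $V(H)\neq V(G)$ (a disconnected $G$) is not treated in your sketch; the paper handles it separately by a translation.
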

\begin{proof}
We first remark that the following holds for any $\Gamma$-labeled graph $(H, \psi_H)$ and any $v\in V(H)$:
\begin{equation}
\label{eq:separation0}
\begin{split}
&\text{If a hyperplane ${\cal H}$ of $\mathbb{R}^d$ contains $\{p(v)+L(\gamma): \gamma\in \Gamma_H\}$, then} \\
&\text{$f_{H,L}(g\circ p)=f_{H,L}(p)$ holds for the reflection  $g:\mathbb{R}^d\rightarrow \mathbb{R}^d$  with respect to ${\cal H}$.}
\end{split}
\end{equation}
This follows from Proposition~\ref{prop:isometry} by noting that the reflection $g$ is $L(\Gamma_H)$-invariant.

Suppose that $G$ has an $(s,t)$-block $H$ satisfying the property of the statement. 
If $t=0$ and $V(H)\neq V(G)$ (i.e., $G$ is disconnected), then by translating $p(V(H))$ we obtain $q$ with $f_{G,L}(p)= f_{G,L}(q)$ and $f_{V,L}(p)\neq f_{V,L}(q)$, contradicting the global rigidity of $(G,\psi,p)$. 
Thus we have
\begin{equation}
\label{eq:cut1}
t>0 \text{ or } V(H)=V(G).
\end{equation}

Define $B'$ by $B'=B(H)$ if $t>0$ and otherwise $B'=\{x\}$ by picking any vertex $x\in V(G)$, 
and consider the set of points $P=\{p(v)+L(\gamma): v\in B', \gamma\in \Gamma_{H}\}$.
We have the following:
\begin{equation}
\label{eq:cut2}
\text{The affine span $\aff P$ of $P$ is a proper subspace of $\mathbb{R}^d$.}
\end{equation}
Indeed, if $t>0$, then $B'=B(H)$ and $\aff P$ has dimension at most $s+t-1$, which is less than $d$ by the lemma assumption.
On the other hand,  if $t=0$, then $B'=\{x\}$ and $\aff P$ has dimension $s$, which is less than $d$ 
by (\ref{eq:cut1}) and the lemma assumption. Thus (\ref{eq:cut2}) follows.

By (\ref{eq:cut2}) we can take a hyperplane $\cal H$ that contains $\aff P$.
Since $p$ is generic, such a hyperplane can be taken such that $\cal H$ contains no point in $\{p(v): v\in V(G)\setminus B'\}$.
Let $g:\mathbb{R}^d\rightarrow \mathbb{R}^d$ be the reflection of $\mathbb{R}^d$ with respect to ${\cal H}$,
and we define $q:V\rightarrow \mathbb{R}^d$ by $q(v)=g(p(v))$ if $v\in V(H)$, and $q(v)=p(v)$ otherwise.

We first show $f_{G,L}(p)=f_{G,L}(q)$.
Take any edge $e=uv$ in $G$.
If $e\notin E(H)$, then $p(u)=q(u)$ and $p(v)=q(v)$ (in particular, $g$ is identity on $p(B(H))$),
implying $\|p(u)-(p(v)+L(\psi(e)))\|=\|q(u)-(q(v)+L(\psi(e)))\|$.
Otherwise, (\ref{eq:separation0}) implies $\|p(u)-(p(v)+L(\psi(e)))\|=\|q(u)-(q(v)+L(\psi(e)))\|$.
Thus $f_{G,L}(p)=f_{G,L}(q)$ follows.

To derive a contradiction we shall show $f_{V,L}(p)\neq f_{V,L}(q)$ by splitting the proof into two cases depending on whether $V(H)=V(G)$ or not.

Suppose that $V(G)\neq V(H)$.
By the definition of an $(s,t)$-block, we can take a \(v\in I(H)\).
Then  $f_{V,L}(p)\neq f_{V,L}(q)$ holds
since  $\|p(v)-p(u)\|\neq \|q(v)-q(u)\|$ for any $u\in V(G)\setminus V(H)$.

Suppose that $V(H)=V(G)$.
Let $x\in B'$ and $u\in I(H)\setminus B'$. (As $x$ was chosen arbitrary from $V(G)$, we may suppose 
$I(H)\setminus B'\neq \emptyset$.)
By the lemma assumption,  $s<k$ holds, and hence we can take $\gamma^*\in \Gamma$ that is not spanned by $\Gamma_H$.
As $\gamma^*$ is not spanned by $\Gamma_H$,  
we could take the above hyperplane ${\cal H}$ such that $p(x)-L(\gamma^*)$ is outside of ${\cal H}$.
Note that $p(u)\neq q(u)$ as $p(u)\notin {\cal H}$ (by $u\in I(H)\setminus B'$).
Hence the set of points equidistant from $p(u)$ and $q(u)$ is ${\cal H}$,
which in turn implies that 
the set of points equidistant from $p(u)+L(\gamma^*)$ and $q(u)+L(\gamma^*)$ is ${\cal H}+L(\gamma^*)$.
Therefore, as $p(x)\notin {\cal H}+L(\gamma^*)$ but $p(x)\in {\cal H}$, 
$\|p(x)-(p(u)+L(\gamma^*))\|\neq \|p(x)-(q(u)+L(\gamma^*))\|=\|q(x)-(q(u)+L(\gamma^*))\|$.
This implies $f_{V,L}(p)\neq f_{V,L}(q)$.
\end{proof}

\begin{figure}[h!]

\begin{tikzpicture}[scale=0.8]
\tikzstyle{every node}=[circle, draw=black, fill=white, inner sep=0pt, minimum width=4pt];
\hspace{-1.4cm}
    \begin{scope}
        \myGlobalTransformation{0}{0};
        %draws all graph edges with label id
        \foreach \x in {1,3,5,7} {
            \foreach \y in {1,3,5,7} {
%                \node (a\x\y) at (\x-0.1,\y) {};
                \node (b\x\y) at (\x+0.5,\y+0.3) {};
                \node (c\x\y) at (\x+0.6,\y-0.3) {};
								\node (d\x\y) at (\x-0.65,\y-0.5) {};
								\node (e\x\y) at (\x-0.2,\y+0.2) {};
                {
                    \pgftransformreset
										\draw[black, thick, dashed] (d\x\y) -- (e\x\y);
                    \draw[black, thick] (b\x\y) -- (c\x\y);
										\draw[black, thick, dashed] (d\x\y) -- (c\x\y);
                    \draw[black, thick, dashed] (c\x\y) -- (e\x\y);
                }
            }
        }
    \end{scope}
    \begin{scope}
        \myGlobalTransformation{0}{0};
        %draws all graph edges with label (1,1)
        \foreach \x in {-1,1,3,5,7,9} {
            \foreach \y in {-1,1,3,5,7,9} {
%                \node (a\x\y) at (\x-0.1,\y) {};
                \node (b\x\y) at (\x+0.5,\y+0.3) {};
                \node[fill=black] (c\x\y) at (\x+0.6,\y-0.3) {};
								\node (d\x\y) at (\x-0.65,\y-0.5) {};
								\node (e\x\y) at (\x-0.2,\y+0.2) {};
                {
									\ifnum\x<9
									\ifnum\y<9
										\draw[black, thick] (b\x\y) -- ++(2.1,1.4); %bc
										\draw[black, thick] (b\x\y) -- ++(2.1,-0.6); %bc
										\draw[black, thick, dashed] (e\x\y) -- ++(-0.45,1.3); %ed
                  \fi
									\fi
                }
            }
        }
    \end{scope}
    \begin{scope}
        \myGlobalTransformation{0}{0};
        %draws all graph edges with label (1,1)
        \foreach \x in {-1,1,3,5,7,9} {
										\draw[black, very thin] (c\x-1) -- (c\x9);
        }
    \end{scope}
    \gridThreeD{0}{0}{black,dotted};
  \node [rectangle, draw=white, fill=white] (a) at (4.8,-1.2) {(a)};
\hspace{1cm}
\begin{scope}
        \myGlobalTransformation{0}{0};
                \node (a2) at (10.2,3) {};
                \node (b2) at (11.5,5.9) {};
                \node[fill=black] (c2) at (12.8,1.5) {};
                \node (d2) at (16,0) {};
								\draw[->, black, dashed, thick] (a2) .. controls (10.8,3.8) .. (b2) node[draw=white, midway]{\small{$(0,0)$}};
								\draw[<-, black, dashed, thick] (a2) -- (c2) node[draw=white, midway]{\small{$(0,0)$}};
								\draw[<-, black, dashed, thick] (a2) .. controls (9.8,5.8) .. (b2) node[draw=white, midway]{\small{$(0,1)$}};
								\draw[<-, black, dashed, thick] (b2) -- (c2) node[draw=white, midway]{\small{$(0,0)$}};
								\draw[<-, black, thick] (d2) -- (c2) node[draw=white, midway]{\small{$(0,0)$}};
								\draw[->, black, thick] (d2) .. controls (13.8,3) .. (c2) node[draw=white, midway]{\small{$(1,0)$}};
								\draw[->, black, thick] (d2) .. controls (14.8,-1) .. (c2) node[draw=white, midway]{\small{$(1,1)$}};
\end{scope}
  \node [rectangle, draw=white, fill=white] (b) at (13.2,-1.2) {(b)};
\end{tikzpicture}
\begin{tikzpicture}[scale=0.8]
\tikzstyle{every node}=[circle, draw=black, fill=white, inner sep=0pt, minimum width=4pt];
\hspace{-1.4cm}
    \begin{scope}
        \myGlobalTransformation{0}{0};
        %draws all graph edges with label id
        \foreach \x in {1,3,5,7} {
            \foreach \y in {1,3,5,7} {
%                \node (a\x\y) at (\x-0.1,\y) {};
                \node (b\x\y) at (\x+0.5,\y+0.7) {};
                \node (c\x\y) at (\x+0.6,\y-0.3) {};
								\node (d\x\y) at (\x-0.65,\y-0.5) {};
								\node (e\x\y) at (\x-0.2,\y+0.4) {};
                    \draw[black, very thin] (b\x\y) -- ++ (-0.15,1.5);
                    \draw[black, very thin] (c\x\y) -- ++ (0.15,-1.5);
                {
                    \pgftransformreset
										\draw[black, thick, dashed] (d\x\y) -- (e\x\y);
                    \draw[white, very thick] (b\x\y) -- (c\x\y);
                    \draw[black, thick, dashed] (b\x\y) -- (c\x\y);
										\draw[black, thick, dashed] (d\x\y) -- (c\x\y);
                    \draw[black, thick, dashed] (b\x\y) -- (e\x\y);
                }
            }
        }
    \end{scope}
    \begin{scope}
        \myGlobalTransformation{0}{0};
        %draws all graph edges with label (1,1)
        \foreach \x in {-1,1,3,5,7,9} {
            \foreach \y in {-1,1,3,5,7,9} {
%                \node (a\x\y) at (\x-0.1,\y) {};
                \node[fill=black] (b\x\y) at (\x+0.5,\y+0.7) {};
                \node[fill=black] (c\x\y) at (\x+0.6,\y-0.3) {};
								\node (d\x\y) at (\x-0.65,\y-0.5) {};
								\node (e\x\y) at (\x-0.2,\y+0.4) {};
                {
									\ifnum\x<9
									\ifnum\y<9
										\draw[black, thick] (b\x\y) -- ++(2.1,-1); %bc
										\draw[black, thick] (b\x\y) -- ++(2.1,1); %bc
                  \fi
									\fi
                }
            }
        }
    \end{scope}
    \gridThreeD{0}{0}{black,dotted};
  \node [rectangle, draw=white, fill=white] (a) at (4.8,-1.2) {(c)};
\hspace{1cm}
\begin{scope}
        \myGlobalTransformation{0}{0};
                \node (a2) at (9.2,6) {};
                \node (b2) at (12,6.9) {};
                \node[fill=black] (c2) at (11.8,1.5) {};
                \node[fill=black] (d2) at (14,3.1) {};
								\draw[->, black, dashed, thick] (a2) -- (b2) node[draw=white, midway]{\small{$(0,0)$}};
								\draw[<-, black, dashed, thick] (a2) -- (c2) node[draw=white, midway]{\small{$(0,0)$}};
								\draw[<-, black, dashed, thick] (b2) -- (d2) node[draw=white, midway]{\small{$(0,0)$}};
								\draw[<-, black, dashed, thick] (d2) -- (c2) node[draw=white, midway]{\small{$(0,0)$}};
								\draw[->, black, thick] (d2) .. controls (11.5,4.5) .. (c2) node[draw=white, midway]{\small{$(1,0)$}};
								\draw[->, black, thick] (d2) .. controls (14,0) .. (c2) node[draw=white, midway]{\small{$(1,1)$}};
\end{scope}
  \node [rectangle, draw=white, fill=white] (b) at (13.2,-1.2) {(d)};

\end{tikzpicture}

\vspace{-0.3cm}
\caption{Examples of  frameworks which are not globally \(L\)-periodically rigid, illustrating the proof of Lemma~\ref{lem:connectivity} for a rank 2 group  \(\Gamma=\mathbb{Z}^2\). 
}
\label{fig:sep}
\end{figure}
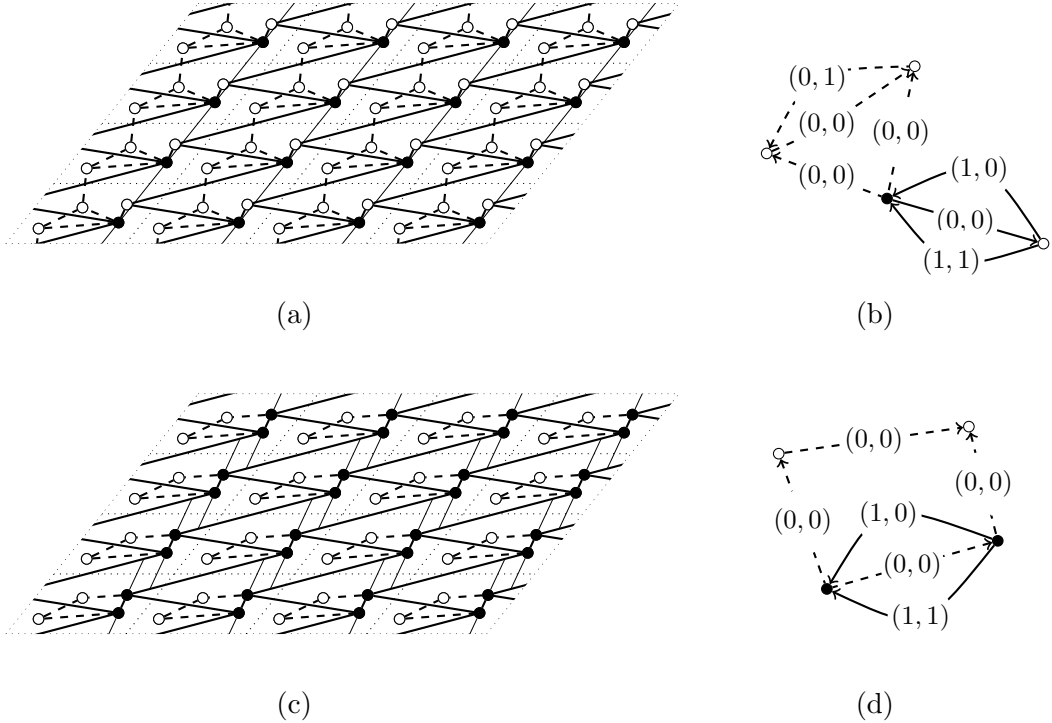

Consider, for example, the framework shown in Figure~\ref{fig:sep}(a) and its quotient $\mathbb{Z}^2$-labeled graph  \((G,\psi)\) shown in (b). The subgraph $H$ of \((G,\psi)\) induced by the dashed edges is a $(1,1)$-block with $V(H)\neq V(G)$. 
 Thus, by Lemma~\ref{lem:connectivity}, the framework in (a) is not globally \(L\)-periodically rigid.  Here \(\aff(P)\) is one of the thin black lines in (a) connecting the copies of the black vertices, and \(g\) is the reflection in \(\aff(P)\). The framework \((\tilde G,\tilde q)\) is obtained from \((\tilde G,\tilde p)\) by reflecting each connected component of the framework with dashed edges in the corresponding parallel copy of \(\aff(P)\) containing the black vertices of the component.

Figure~\ref{fig:sep}(c) shows another example of a framework  which is not globally \(L\)-periodically rigid by Lemma~\ref{lem:connectivity}.
Consider the corresponding quotient $\mathbb{Z}^2$-labeled graph  \((G,\psi)\) shown in Figure~\ref{fig:sep}(d). The subgraph $H$ induced by the dashed edges is a $(0,2)$-block of $G$ with $V(H)=V(G)$ but $0=s<k=2$. Here \(\aff(P)\) is one of the lines in (c) indicated by thin black line segments connecting pairs of black vertices, and  \(g\) is again the reflection in \(\aff(P)\). The framework \((\tilde G,\tilde q)\) is obtained from \((\tilde G,\tilde p)\)  as described in the previous case.

Sometimes Lemma~\ref{lem:connectivity} can be  strengthened by decomposing graphs.
Consider for example the case $d=2$. Suppose that $(G_1,\psi_1,p_1)$ is redundantly $L$-periodically rigid but contains a $(1,1)$-block $H$.
Then by Lemma~\ref{lem:connectivity} $(G_1,\psi_1,p_1)$ is not $L$-periodically globally rigid.
Now consider attaching a new $L$-periodically globally rigid framework $(G_2,\psi_2,p_2)$
at a vertex in $I(H)$ with $|V(G_1)\cap V(G_2)|=1$.
Then the resulting framework is clearly not $L$-periodically globally rigid but
$H$ is no longer a $(1,1)$-block and the resulting graph may satisfy the cut condition (and may also be redundantly $L$-periodically rigid).
In general, if a framework has a cut vertex, then we should look at each 2-connected component individually based on the following fact.

\begin{lemma}
\label{lem:glueing}
Let $(G,\psi, p)$ be a $\Gamma$-labeled framework with rank $d$ periodicity $\Gamma$, and
suppose that it can be decomposed into two frameworks $(G_i, \psi_i, p_i)\ (i=1,2)$ with
$|V(G_1)\cap V(G_2)|=1$.
Then $(G, \psi, p)$ is $L$-periodically globally rigid if and only if
each $(G_i, \psi_i, p_i)$ is $L$-periodically globally rigid and of rank $d$.
\end{lemma}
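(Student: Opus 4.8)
The plan is to prove both directions via Proposition~\ref{prop:trivial}, working throughout with the quotient $\Gamma$-labeled frameworks and the functions $f_{G,L}$, $f_{V,L}$. Write $v_0$ for the unique common vertex of $G_1$ and $G_2$, and write $V_i = V(G_i)$, so $V = V_1 \cup V_2$ and $V_1 \cap V_2 = \{v_0\}$. For the \emph{only if} direction, suppose $(G,\psi,p)$ is $L$-periodically globally rigid, and suppose for contradiction that, say, $(G_1,\psi_1,p_1)$ is not. Then there is $q_1 \in \mathbb{R}^{d|V_1|}$ with $f_{G_1,L}(p_1) = f_{G_1,L}(q_1)$ but $f_{V_1,L}(p_1) \neq f_{V_1,L}(q_1)$. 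By Proposition~\ref{prop:isometry} (using that $|V(G)|\ge d+1$ is not assumed here, but we can still argue directly: if $p_1(V_1)$ does not affinely span $\mathbb{R}^d$ one works inside the affine span, or one replaces $q_1$ by a congruent copy) we may assume $q_1$ agrees with $p_1$ up to an $L(\Gamma_{G_1})$-invariant isometry on the part that is ``rigid''; more to the point, since $L(\Gamma) = L(\Gamma_G)$ has rank $d$ only the identity isometry is $L(\Gamma)$-invariant, so after composing $q_1$ with a suitable isometry of $\mathbb{R}^d$ we may assume $q_1(v_0) = p_1(v_0)$. Now define $q : V \to \mathbb{R}^d$ by $q|_{V_1} = q_1$ and $q|_{V_2} = p_2$; this is well defined since $q_1(v_0) = p_1(v_0) = p_2(v_0)$. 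Every edge of $G$ lies in $G_1$ or in $G_2$, so $f_{G,L}(p) = f_{G,L}(q)$, while $f_{V_1,L}(q) = f_{V_1,L}(q_1) \neq f_{V_1,L}(p_1) = f_{V_1,L}(p)$ forces $f_{V,L}(q) \neq f_{V,L}(p)$, contradicting global rigidity of $(G,\psi,p)$.

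For the \emph{if} direction, assume each $(G_i,\psi_i,p_i)$ is $L$-periodically globally rigid, and let $q \in \mathbb{R}^{d|V|}$ satisfy $f_{G,L}(p) = f_{G,L}(q)$. Restricting to the edges of $G_i$ gives $f_{G_i,L}(p_i) = f_{G_i,L}(q|_{V_i})$, so by global rigidity $f_{V_i,L}(p_i) = f_{V_i,L}(q|_{V_i})$ for $i = 1,2$. Since the periodicity has rank $d$, Proposition~\ref{prop:isometry} tells us there are $L(\Gamma)$-invariant isometries $h_i$ of $\mathbb{R}^d$ with $q|_{V_i} = h_i \circ p_i$ --- but an $L(\Gamma)$-invariant isometry with $\Gamma$ of rank $d$ has orthogonal part equal to the identity, i.e.\ $h_i$ is a pure translation, say by $t_i$. (Here we should be a little careful: Proposition~\ref{prop:isometry} requires $p_i(V_i)$ to affinely span $\mathbb{R}^d$; if it does not, we instead note directly that $f_{V_i,L}(p_i) = f_{V_i,L}(q|_{V_i})$ together with the rank-$d$ periodicity still forces $q|_{V_i}$ to be a translate of $p_i$, since the vectors $L(\gamma)$ already span $\mathbb{R}^d$ and pin down the orthogonal part.) Evaluating at $v_0$ gives $q(v_0) = p(v_0) + t_1 = p(v_0) + t_2$, hence $t_1 = t_2 =: t$, and therefore $q = p + t$ on all of $V$. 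Then $f_{V,L}(q) = f_{V,L}(p)$ by Proposition~\ref{prop:isometry} (or by direct computation as in its proof), so $(G,\psi,p)$ is $L$-periodically globally rigid.

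The step I expect to require the most care is the repeated use of Proposition~\ref{prop:isometry} when the point configuration $p_i(V_i)$ fails to affinely span $\mathbb{R}^d$ --- for instance when $|V_i|$ is small relative to $d$. The clean fix is to observe that because $\Gamma$ has rank $d$, the set $\{L(\gamma) : \gamma \in \Gamma\}$ already affinely spans $\mathbb{R}^d$, so the ``padded'' configuration $\{p_i(v) + L(\gamma)\}$ always spans; one can then either reprove the relevant half of Proposition~\ref{prop:isometry} in this slightly more general form, or simply remark that the conclusion ``$q|_{V_i}$ is a translate of $p_i$'' follows from the same inner-product argument used in the proof of Proposition~\ref{prop:isometry}, applied to the difference vectors $p_i(u) - (p_i(v) + L(\gamma))$. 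Apart from this bookkeeping, the argument is a straightforward gluing along a single vertex, with the rank-$d$ hypothesis doing the essential work by collapsing the isometry group to the group of translations.
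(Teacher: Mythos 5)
Your proof is correct and follows the same route as the paper's (very terse) argument: rank-$d$ periodicity forces every $L(\Gamma)$-invariant isometry to be a translation, and then Proposition~\ref{prop:isometry} lets one glue along the shared vertex $v_0$ in both directions. Your main added value is the careful handling of the case where $p_i(V_i)$ does not affinely span $\mathbb{R}^d$ (so that Proposition~\ref{prop:isometry} cannot be invoked verbatim); your fix via the inner-product computation with the vectors $L(\gamma)$ is exactly right, and the paper glosses over this point. One small quibble: in the ``only if'' direction the appeal to Proposition~\ref{prop:isometry} is a red herring --- you only need to translate $q_1$ so that it agrees with $p_1$ at $v_0$, which preserves $f_{G_1,L}$ and $f_{V_1,L}$ and needs no hypothesis on affine span; the digression about $L(\Gamma)$-invariant isometries is not used there (also, note that translations, not just the identity, are $L(\Gamma)$-invariant).
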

\begin{proof}
Note that, if the underlying periodicity group $\Gamma$ has rank $d$, then every $L(\Gamma)$-invariant isometry is a translation.
Hence the claim follows from Proposition~\ref{prop:isometry}.
\end{proof}
A similar statement to Lemma~\ref{lem:glueing} holds if we assume that the intersection of the two frameworks forms an $L$-periodically globally rigid subframework. Extending it to a more general gluing scenario (and sharpening the necessary condition for global periodic rigidity) is left as an open problem.

%%%%%%%%%%%%%%%%%%%%%%%%%%%%%%%%%%%%%%%%%%%%%%%%%%%%%%%%%%%%%%%%%%%%%%%%%%%%%%%%%%%%%%%%%%%%%%%%%%%%%%%%%%%%%%%%%%%%%%%%%%%%%%%%%%%%%%%%%%%%%%%%%%%%
\subsection{The necessity of redundant $L$-periodic rigidity}\label{sec:z2nec}

Let  \(f:\mathbb{R}^d\rightarrow\mathbb{R}^k\) be a smooth map. Then \(x\in \mathbb{R}^d\) is said to be a \emph{regular point} of \(f\) if the Jacobian \(df|_x\) has maximum rank, and is a \emph{critical point} of \(f\) otherwise. Also \(f(x)\) is said to be a \emph{regular value} of \(f\) if, for all \(y\in f^{-1}(f(x))\), \(y\) is a regular point of \(f\). Otherwise \(f(x)\) is called a \emph{critical value} of \(f\).

%Let \(X\) be a smooth manifold and \(f:X\rightarrow\mathbb{R}^m\) be a smooth map. Then \(x\in X\) is said to be a \emph{regular point} of \(f\) if the Jacobian \(df|_x\) has maximum rank, and is a \emph{critical point} of \(f\) otherwise. Also \(f(x)\) is said to be a \emph{regular value} of \(f\) if, for all \(y\in f^{-1}(f(x))\), \(y\) is a regular point of \(f\). Otherwise \(f(x)\) is called a \emph{critical value} of \(f\).

We use the following lemmas.

\begin{lemma}\label{lem:edgefunction}(See, e.g., \cite{jjsz})
Let $f:\mathbb{R}^d\rightarrow \mathbb{R}^k$ be a polynomial map with rational coefficients
and $p$ be a generic point in $\mathbb{R}^d$.
If $df|_p$ is row-independent, then $f(p)$ is generic in $\mathbb{R}^k$.
\end{lemma}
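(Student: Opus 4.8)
The plan is to argue by contradiction, using the defining property of a generic point: a polynomial with rational coefficients that vanishes at a generic point of $\mathbb{R}^d$ must be the zero polynomial. So suppose $f(p)=(f_1(p),\dots,f_k(p))$ were not generic in $\mathbb{R}^k$. Then the set of nonzero polynomials $g\in\mathbb{Q}[y_1,\dots,y_k]$ with $g(f_1(p),\dots,f_k(p))=0$ is nonempty, and I would pick such a $g$ of \emph{minimum} total degree. The composite $h:=g\circ f$ is a polynomial in $\mathbb{Q}[x_1,\dots,x_d]$ with rational coefficients satisfying $h(p)=g(f(p))=0$; since $p$ is generic, $h$ is identically zero, i.e. $g(f_1(x),\dots,f_k(x))=0$ as a polynomial identity on $\mathbb{R}^d$.

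Next I would differentiate this identity. For each coordinate $x_j$, the chain rule gives $\sum_{i=1}^k(\partial g/\partial y_i)(f(x))\,(\partial f_i/\partial x_j)(x)=0$ for all $x\in\mathbb{R}^d$. Evaluating at $x=p$ shows that the vector $v\in\mathbb{R}^k$ with entries $v_i=(\partial g/\partial y_i)(f(p))$ satisfies $v^\top df|_p=0$, that is, $v$ lies in the left kernel of the Jacobian $df|_p$. Since $df|_p$ is row-independent by hypothesis, this forces $v=0$, so $(\partial g/\partial y_i)(f(p))=0$ for every $i$.

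To finish, note that a nonzero constant polynomial vanishes nowhere, so $\deg g\geq 1$; hence some variable $y_i$ genuinely appears in $g$, and because we are in characteristic zero the partial derivative $\partial g/\partial y_i$ is a \emph{nonzero} rational-coefficient polynomial of total degree at most $\deg g-1$ that vanishes at $f(p)$. This contradicts the minimality of $\deg g$. Therefore no such $g$ exists and $f(p)$ is generic. I do not expect a genuine obstacle here: the only points needing care are the index bookkeeping in the chain-rule step, so that ``row-independence of $df|_p$'' is precisely the hypothesis used, and the appeal to characteristic zero guaranteeing that a non-constant polynomial has a nonvanishing partial derivative. A less elementary alternative would deduce the statement from the Jacobian criterion for algebraic independence of rational functions over a field of characteristic zero, combined with the fact that the rank of a rational-coefficient polynomial Jacobian at a generic point equals its generic rank; but the self-contained minimal-degree argument above is shorter and keeps the exposition elementary.
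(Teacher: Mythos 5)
Your proof is correct. The paper itself gives no proof of Lemma~\ref{lem:edgefunction}, merely citing \cite{jjsz}, so there is no in-paper argument to compare against; but the minimal-degree argument you give is sound and complete. To record the route the literature usually takes: one observes that for generic $p$ the rank of $df|_p$ equals the rank of the Jacobian $(\partial f_i/\partial x_j)$ as a matrix over the rational function field $\mathbb{Q}(x_1,\dots,x_d)$; the Jacobian criterion (characteristic zero) then says $f_1,\dots,f_k$ are algebraically independent over $\mathbb{Q}$ as rational functions; and finally any rational-coefficient algebraic relation among $f_1(p),\dots,f_k(p)$ would, by genericity of $p$, lift to an identical relation among the $f_i$, a contradiction. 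Your minimal-degree-descent proof is exactly the argument hidden inside the standard proof of the Jacobian criterion, unrolled and specialized to the situation at hand. It buys a self-contained exposition at the cost of re-deriving a classical fact; the citation route buys brevity. Both are valid, and the chain-rule step, the identification of $v$ with a left-kernel vector of $df|_p$, and the characteristic-zero appeal to a nonvanishing partial derivative are all handled correctly in your write-up.
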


\begin{lemma}(See, e.g., \cite{gortler2010})\label{lem:regularvalue}
Let $f:\mathbb{R}^d\rightarrow \mathbb{R}^k$ be a polynomial map with rational coefficients
and $p$ be a generic point in $\mathbb{R}^d$.
Then $f(p)$ is a regular value of $f$.
\end{lemma}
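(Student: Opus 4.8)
The plan is to produce a single nonzero polynomial $g$ with rational coefficients in the target coordinates $y_1,\dots,y_k$ that vanishes on every critical value of $f$ while $g\circ f$ is not the zero polynomial. Once such a $g$ is in hand, genericity of $p$ forces $(g\circ f)(p)\neq 0$, hence $f(p)$ is not a critical value, which is exactly the assertion.

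First I would fix the relevant algebraic sets. Let $r=\max_{x\in\mathbb{R}^d}\rank df|_x$; then $r\le k$, and the set of critical points of $f$ is $C=\{x:\rank df|_x<r\}$, the common zero set of all $r\times r$ minors of $df$. Since the entries of $df$ are polynomials over $\mathbb{Q}$, the set $C$ is a real algebraic set defined over $\mathbb{Q}$, and $C\neq\mathbb{R}^d$ because the maximum $r$ is attained. To argue about dimensions cleanly I would complexify: let $C_{\mathbb{C}}\subseteq\mathbb{C}^d$ be the complex zero set of the same minors, $f_{\mathbb{C}}\colon\mathbb{C}^d\to\mathbb{C}^k$ the map with the same coefficients, and put $W=\overline{f_{\mathbb{C}}(\mathbb{C}^d)}$ and $Z=\overline{f_{\mathbb{C}}(C_{\mathbb{C}})}$ for the Zariski closures; both are defined over $\mathbb{Q}$, $Z\subseteq W$, and $W$ is irreducible.

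The core of the proof is the dimension comparison $\dim Z<\dim W$. For the bound $\dim Z\le r-1$: at every point of $C_{\mathbb{C}}$ the Jacobian $df_{\mathbb{C}}$ has rank at most $r-1$, so on the smooth locus of each irreducible component of $C_{\mathbb{C}}$ the restriction of $f_{\mathbb{C}}$ has differential of rank at most $r-1$, whence $\dim Z\le r-1$. For the bound $\dim W\ge r$: picking $x_0$ with $\rank df_{\mathbb{C}}|_{x_0}=r$ and invoking the implicit function theorem, $f_{\mathbb{C}}$ maps a neighbourhood of $x_0$ onto an $r$-dimensional submanifold, so $\dim W\ge r$. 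Hence $Z\subsetneq W$, and since both are defined over $\mathbb{Q}$ there is $g\in\mathbb{Q}[y_1,\dots,y_k]$ that vanishes identically on $Z$ but not on $W$; equivalently, $g\circ f$ is a nonzero polynomial with rational coefficients.

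To conclude: since $p$ is generic its coordinates are algebraically independent over $\mathbb{Q}$, so $(g\circ f)(p)\neq 0$, i.e. $f(p)\notin Z$. Because $f(C)\subseteq f_{\mathbb{C}}(C_{\mathbb{C}})\subseteq Z$, no point of $f^{-1}(f(p))$ lies in $C$; that is, every $y$ with $f(y)=f(p)$ has $\rank df|_y=r$, the maximum possible, so $y$ is a regular point, and $f(p)$ is a regular value. The step I expect to require the most care is not exhibiting a proper $\mathbb{Q}$-subvariety $Z$ containing the critical values — a Sard-type fact here extracted from the rank bound on $df_{\mathbb{C}}|_{C_{\mathbb{C}}}$ — but rather ensuring that $Z$ does not contain the whole image closure $W$: if it did, $g\circ f$ would vanish identically and say nothing about $p$. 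This is precisely what the strict inequality $\dim Z\le r-1<r\le\dim W$ prevents, and it is the reason genericity of $p$ can be brought to bear at all.
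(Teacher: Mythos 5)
The paper does not supply a proof of this lemma; it is cited as a known result from Gortler, Healy, and Thurston \cite{gortler2010}, so there is no in-paper argument to compare against. Your blind proof is correct and is essentially the algebraic Sard-type argument underlying that reference: complexify, observe that the image closure $Z$ of the critical locus has dimension at most $r-1$ while the image closure $W$ of the map has dimension at least $r$, extract a rational polynomial $g \in I(Z)\setminus I(W)$ (possible because both are defined over $\mathbb{Q}$), and conclude from genericity of $p$ that $(g\circ f)(p)\neq 0$, so $f(p)\notin Z\supseteq f(C)$. You have also correctly identified the point that actually needs care — that $g$ must be chosen to not vanish on $W$ (rather than merely being nonzero in $\mathbb{Q}[y]$), since when $f$ is not dominant a polynomial vanishing on $Z$ but not on $\mathbb{C}^k$ could still have $g\circ f\equiv 0$ — and the dimension comparison $\dim Z \le r-1 < r \le \dim W$ disposes of it cleanly.
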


For a vector $p$ in $\mathbb{R}^d$, let $\mathbb{Q}(p)$ be the field generated by the entries of $p$ and the rationals.
For a field $F$ and an extension $K$, let $td[K:F]$ denote the transcendence degree of the extension.
For a field $K$, let $\overline{K}$ be the algebraic closure of $K$.
We also need the following lemma which will be used in the proof of  Lemma~\ref{lem:redrig}.
(See, e.g., \cite[Proposition 13]{jjt} for the proof.)
\begin{lemma}
\label{lem:pq}
Let $f:\mathbb{R}^d\rightarrow \mathbb{R}^d$ be a polynomial map with rational coefficients
and $p$ be a generic point in $\mathbb{R}^d$.
Suppose that $df|_p$ is nonsingular.
Then for every $q\in f^{-1}(f(p))$  we have $\overline{\mathbb{Q}(p)}=\overline{\mathbb{Q}(q)}$.
\end{lemma}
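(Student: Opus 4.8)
The plan is to prove the two inclusions $\overline{\mathbb{Q}(p)}\subseteq\overline{\mathbb{Q}(q)}$ and $\overline{\mathbb{Q}(q)}\subseteq\overline{\mathbb{Q}(p)}$ separately, all fields being viewed as subfields of a fixed algebraically closed overfield such as $\mathbb{C}$. Since $df|_p$ is nonsingular it is in particular square, so we are in the case $k=d$ and $f^{-1}(p)=\{q\in\mathbb{R}^d: f(q)=p\}$. For the first inclusion, fix $q\in f^{-1}(p)$ and write $f=(f_1,\dots,f_d)$ with $f_i\in\mathbb{Q}[x_1,\dots,x_d]$; then $p_i=f_i(q)\in\mathbb{Q}[q_1,\dots,q_d]\subseteq\mathbb{Q}(q)$ for each $i$, so $\mathbb{Q}(p)\subseteq\mathbb{Q}(q)$, whence $\overline{\mathbb{Q}(p)}\subseteq\overline{\mathbb{Q}(q)}$.

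For the reverse inclusion I would run a transcendence-degree count over $\mathbb{Q}$. As $p$ is generic, its coordinates are algebraically independent over $\mathbb{Q}$, so $td[\mathbb{Q}(p):\mathbb{Q}]=d$; and as $\mathbb{Q}(q)$ is generated over $\mathbb{Q}$ by the $d$ coordinates of $q$, we have $td[\mathbb{Q}(q):\mathbb{Q}]\le d$. Applying additivity of transcendence degree to the tower $\mathbb{Q}\subseteq\mathbb{Q}(p)\subseteq\mathbb{Q}(q)$ (the second inclusion being the one just proved) gives $td[\mathbb{Q}(q):\mathbb{Q}(p)]=td[\mathbb{Q}(q):\mathbb{Q}]-d\le 0$, hence $td[\mathbb{Q}(q):\mathbb{Q}(p)]=0$. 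Therefore $\mathbb{Q}(q)$ is algebraic over $\mathbb{Q}(p)$, i.e. $\mathbb{Q}(q)\subseteq\overline{\mathbb{Q}(p)}$, so $\overline{\mathbb{Q}(q)}\subseteq\overline{\mathbb{Q}(p)}$. Together with the first inclusion this yields $\overline{\mathbb{Q}(p)}=\overline{\mathbb{Q}(q)}$.

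I do not anticipate any real obstacle: the entire content is the additivity of transcendence degree together with the two elementary bounds $td[\mathbb{Q}(p):\mathbb{Q}]=d$ and $td[\mathbb{Q}(q):\mathbb{Q}]\le d$, and the only point requiring care is that ``generic'' is understood in the sense that the coordinates of $p$ are algebraically independent over $\mathbb{Q}$. In fact the hypothesis that $df|_p$ is nonsingular is never used: if $f^{-1}(p)=\emptyset$ the statement is vacuous, while if $f^{-1}(p)\ne\emptyset$ then the Jacobian determinant of $f$ --- a polynomial with rational coefficients --- cannot vanish identically (otherwise $f(\mathbb{R}^d)$ would be contained in a proper $\mathbb{Q}$-subvariety, which a generic $p$ avoids), so $df|_p$ is nonsingular automatically. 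The hypothesis is retained only because this is the form in which the lemma is invoked in the proof of Lemma~\ref{lem:redrig}.
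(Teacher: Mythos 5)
The paper offers no proof of this lemma, deferring instead to~\cite[Proposition~13]{jjt}, so there is no in-paper argument to compare against. Your argument is a correct proof of the lemma \emph{as literally written}, but the literal statement is a typo. The intended statement (and the one that is actually cited and used) has $q\in f^{-1}(f(p))$, i.e.\ the hypothesis is $f(q)=f(p)$, not $f(q)=p$. This is visible from the one place the lemma is invoked, in the proof of Lemma~\ref{lem:redrig}: there one knows $\hat f_{G-v,L}(q')=\hat f_{G-v,L}(p')$ (because $\hat f_{G,L}(q)=\hat f_{G,L}(p)$ and the restriction to $V(G)-v$ discards only coordinates of $v$), never $\hat f_{G-v,L}(q')=p'$, and the lemma is used to conclude $\overline{\mathbb{Q}(p')}=\overline{\mathbb{Q}(q')}$.

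Under the correct hypothesis $f(q)=f(p)$, your first inclusion breaks: evaluating the $f_i$ at $q$ recovers the coordinates of $f(p)$, not of $p$, so you only get $\mathbb{Q}(f(p))=\mathbb{Q}(f(q))\subseteq\mathbb{Q}(q)$, not $\mathbb{Q}(p)\subseteq\mathbb{Q}(q)$, and the rest of your transcendence-degree tower collapses. The nonsingularity hypothesis is precisely what repairs this: by Lemma~\ref{lem:edgefunction}, $df|_p$ nonsingular together with $p$ generic forces $f(p)$ to be generic, so $td[\mathbb{Q}(f(p)):\mathbb{Q}]=d$. Since $\mathbb{Q}(p)$ and $\mathbb{Q}(q)$ each contain $\mathbb{Q}(f(p))=\mathbb{Q}(f(q))$ and each has transcendence degree at most $d$ over $\mathbb{Q}$, both are algebraic over $\mathbb{Q}(f(p))$, giving $\overline{\mathbb{Q}(p)}=\overline{\mathbb{Q}(f(p))}=\overline{\mathbb{Q}(q)}$. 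Your observation that the nonsingularity hypothesis ``is never used'' should have been the red flag: a hypothesis that is genuinely superfluous in a lemma of this kind usually signals a misread statement, and here it is exactly the load-bearing ingredient.
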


We now return to our discussion of $L$-periodically globally rigid frameworks.
Let $\Gamma$ be a group isomorphic to $\mathbb{Z}^k$, 
and $(G,\psi)$ be  a $\Gamma$-labeled graph with $|V|\geq t$, where $t=\max\{d-k, 1\}$. 
Let  $L:\Gamma\rightarrow \mathbb{R}^d$ be a nonsingular homomorphism, 
and for simplicity we suppose that
the linear span of $L(\Gamma)$ is $\{0\}^{d-k}\times \mathbb{R}^k$, the linear subspace spanned by the last $k$ coordinates.
We pick any $t$ vertices $v_1,\dots, v_t$, and define the augmented function of $f_{G,L}$ by $\hat{f}_{G,L}:=(f_{G,L}, g)$, where $g:\mathbb{R}^{d|V|}\rightarrow \mathbb{R}^{d+{t\choose2}}$ is a rational polynomial map
given by
%\[
%g(p)=(p_1(v_1), \dots, p_d(v_1),
%p_1(v_2), \dots, p_{d-1}(v_2),\dots, p_1(v_t), \dots,  p_{d-t+1}(v_t))\qquad (p\in \mathbb{R}^{d|V|})
%\]
\[
g(p)=(p_1(v_1), \dots, p_d(v_1),
p_1(v_2), \dots, p_{t-1}(v_2), p_1(v_3),\dots, p_{t-2}(v_3),\dots,  p_1(v_t))\qquad (p\in \mathbb{R}^{d|V|})
\]
 with $p_i(v_j)$ denoting the $i$-th coordinate of $p(v_j)$.
Augmenting $f_{G,L}$ by appending $g$ corresponds to ``pinning down'' some coordinates to eliminate trivial continuous motions.
%The following claim is implicit in the proof of Proposition~\ref{prop:rigidity_matrix}.
\begin{proposition}
\label{prop:rigidity_matrix2}
Let $(G,\psi,p)$ be a  $\Gamma$-labeled framework in $\mathbb{R}^d$ with rank $k$ periodicity
and $L:\Gamma\rightarrow \mathbb{R}^d$ be a nonsingular homomorphism such that $L(\Gamma)=\{0\}^{d-k}\times \mathbb{R}^k$.
Suppose that $p$ is generic and $|V(G)|\geq t= \max\{d-k, 1\}$.
Then
\[
\rank d\hat{f}_{G,L}|_p=\rank df_{G,L}|_p+d+{t\choose 2}.
\]
\end{proposition}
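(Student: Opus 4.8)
The plan is to reduce the statement to the complete $\Gamma$-labeled graph $K(V,\Gamma)$ together with a ``pinning'' lemma for $g$, and then to pass to a general $G$ by elementary linear algebra. Write $\hat f_{V,L}=(f_{V,L},g)$ for the complete $\Gamma$-labeled graph $K(V,\Gamma)$, and set $T:=\ker df_{V,L}|_p$. The computation carried out in the proof of Proposition~\ref{prop:rigidity_matrix} (via Proposition~\ref{prop:isometry}) identifies $T$ with the tangent space at $p$ of the orbit $\{h\circ p: h \text{ an } L(\Gamma)\text{-invariant isometry}\}$; in particular $\dim T=d+\binom{d-k}{2}$, and $T$ consists exactly of the maps $\dot p$ of the form $\dot p(v)=Sp(v)+b$ with $b\in\mathbb{R}^d$ and $S$ a skew-symmetric $d\times d$ matrix satisfying $SL(\gamma)=0$ for every $\gamma\in\Gamma$. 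Moreover, since each edge of $G$ occurs (with its label) as an edge of $K(V,\Gamma)$, every row of $df_{G,L}|_p$ occurs among the rows of $df_{V,L}|_p$, so $T\subseteq\ker df_{G,L}|_p$.

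The crux is the following \emph{pinning lemma}: for generic $p$ the restriction $\pi:=dg|_p|_T\colon T\to\mathbb{R}^{d+\binom{d-k}{2}}$ is a linear isomorphism. Since its domain and target have the same dimension, it suffices to show $\pi$ is injective. Let $\dot p\in T$, say $\dot p(v)=Sp(v)+b$ as above, with $\pi(\dot p)=0$. Because $g$ pins all $d$ coordinates of $p(v_1)$, we get $Sp(v_1)+b=0$, hence $b=-Sp(v_1)$ and $\dot p(v)=S(p(v)-p(v_1))$ for all $v$; the remaining coordinates prescribed by $g$ then force prescribed coordinates of the vectors $S(p(v_j)-p(v_1))$, $j=2,\dots,t$, to vanish. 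One must check that, for a generic configuration, these linear conditions on the $\binom{d-k}{2}$-dimensional space of admissible matrices $S$ (the skew-symmetric $S$ with $SL(\Gamma)=0$) leave only $S=0$, and hence $b=0$. To make this precise, observe that ``$\pi$ is injective'' is equivalent to $d\hat f_{G_0,L}|_p$ having full column rank, where $G_0\subseteq K(V,\Gamma)$ is the finite subgraph consisting of all edges of label $\id$ together with, for each pair $u,v$ of vertices and each $\gamma$ in a fixed generating set of $\Gamma$, the edge from $u$ to $v$ with label $\gamma$ (a routine check gives $\ker df_{G_0,L}|_p=T$); this full-rank condition is the non-vanishing of polynomials with rational coefficients in the coordinates of $p$, so it suffices to exhibit one configuration for which it holds, which is a direct computation with the ``staircase'' pattern defining $g$ — the periodic analogue of the familiar fact that pinning one vertex completely and successive vertices in nested coordinate subspaces kills every trivial infinitesimal motion. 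This pinning lemma is the one nonroutine point, and I expect it to be the main obstacle; the rest is bookkeeping.

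Granting the lemma, write $U:=\ker df_{G,L}|_p$, so $T\subseteq U$. Given $u\in U$, there is $\tau\in T$ with $dg|_p(\tau)=dg|_p(u)$ (as $\pi$ is onto the whole codomain of $g$), and then $u-\tau\in U\cap\ker dg|_p$; since $T\cap\ker dg|_p=\ker\pi=\{0\}$, this yields the internal direct sum $U=T\oplus(U\cap\ker dg|_p)$. As $U\cap\ker dg|_p=\ker df_{G,L}|_p\cap\ker dg|_p=\ker d\hat f_{G,L}|_p$, we obtain
\[
\dim\ker d\hat f_{G,L}|_p=\dim U-\dim T=\dim\ker df_{G,L}|_p-\Bigl(d+\binom{d-k}{2}\Bigr).
\]
Finally, $f_{G,L}$ and $\hat f_{G,L}$ both have domain $\mathbb{R}^{d|V(G)|}$, so applying the rank--nullity theorem to each side converts the last identity into $\rank d\hat f_{G,L}|_p=\rank df_{G,L}|_p+d+\binom{d-k}{2}$, which is the assertion.
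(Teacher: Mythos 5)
Your argument is correct and is exactly the reasoning the paper treats as ``implicit in the proof of Proposition~\ref{prop:rigidity_matrix}'': identify $T=\ker df_{V,L}|_p$ as the $(d+\binom{d-k}{2})$-dimensional tangent space of trivial motions (Proposition~\ref{prop:isometry}), note $T\subseteq\ker df_{G,L}|_p$, show that the pinning map $dg|_p$ restricted to $T$ is an isomorphism for generic $p$, and conclude by the direct-sum decomposition and rank--nullity. You rightly isolate the pinning lemma as the only step that is not pure bookkeeping; it is the standard ``pin a staircase of coordinates to kill the trivial motions'' fact, established by exhibiting one configuration and invoking genericity as you indicate, and the paper indeed does not spell it out any further than you do.

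One remark unrelated to the correctness of your reduction: for $d>2$ and $k\geq 1$ the explicit staircase formula for $g$ in the paper lists more than $d+\binom{d-k}{2}$ coordinates (e.g.\ $d=3$, $k=1$ gives $5$ components for the declared codomain $\mathbb{R}^4$), which would make $\pi$ injective but not onto and break the ``$\pi$ is onto the codomain of $g$'' step. This is a slip in the paper's definition of $g$ rather than in your argument -- the declared codomain $\mathbb{R}^{d+\binom{d-k}{2}}$ is what is meant, and for $d=2$ (all cases $k=0,1,2$), which is all that the main theorems require, the staircase count does agree with $d+\binom{d-k}{2}$, so your proof goes through verbatim there.
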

\begin{proof}
We consider the system $(d\hat{f}_{G,L}|_p) \dot{p}=0$  of linear equations with variables $\dot{p}\in \mathbb{R}^{d|V|}$.
By regarding $\dot{p}\in \mathbb{R}^{d|V|}$ as a map $\dot{p}: V\rightarrow \mathbb{R}^d$, this system can be described as 
\begin{equation}
\label{eq:inf_motions}
\langle p(v_i)-(p(v_j)+L(\psi(v_iv_j)), \dot{p}(v_i)-\dot{p}(v_j) \rangle=0 \quad (v_iv_j\in E).
\end{equation}

By $L(\Gamma)=\{0\}^{d-k}\times \mathbb{R}^k$, an (orientation preserving) $L(\Gamma)$-invariant isometry is a composition of a rotation fixing the last $k$ coordinates and a translation. Hence a map $\dot{p}:V\rightarrow \mathbb{R}^d$ is a solution of the linear system (\ref{eq:inf_motions}) if it is of the form 
\[\dot{p}(v)=S(p(v)-p(v_1))+x\quad (v\in V)\] for some $x\in \mathbb{R}^d$ and some skew-symmetric matrix $S\in \mathbb{R}^{d\times d}$ such that 
only the top-left $(d-k)\times (d-k)$ block of $S$ may be nonzero and the remaining entries are zero. 
Such $\dot{p}$ is called a {\em trivial infinitesimal motion}.
The set of trivial infinitesimal motions  forms a linear space of dimension $d+{t\choose 2}$, and hence it suffices to prove that 
no trivial infinitesimal motion is  in the kernel of $d\hat{f}_{G,L}|_p$.

Let us take any trivial infinitesimal motion $\dot{p}$ described by $S$ and $x$ as above,
and suppose that $\dot{p}$ is in the kernel of $d\hat{f}_{G,L}|_p$.
Note that $d\hat{f}_{G,L}|_p$ is obtained from $df_{G,L}|_p$ by augmenting $dg|_p$.

By the first $d$ rows in  $dg|_p$, we have $x=0$. 
Similarly, if we denote  the $i$-th row of $S$ by $s_i$, then by the the remaining ${t\choose 2}$ rows of $dg|_p$, we get 
\begin{align*}
\langle s_1, p(v_j)-p(v_1)\rangle&=0 \quad (2\leq j\leq t) \\
\langle s_2, p(v_j)-p(v_1)\rangle&=0 \quad (2\leq j\leq t-1) \\
\vdots & \\ 
\langle s_{t-1}, p(v_j)-p(v_1)\rangle&=0 \quad (j=2)
\end{align*}
Since $p$ is generic and $S$ is a skew-symmetric matrix such that only the top-left $(d-k)\times (d-k)$ block of $S$ may be nonzero, 
we have $s_i=0$ for every $i$, implying $S=0$ and $\dot{p}=0$.
\end{proof}

We say that $(G,\psi,p)$ is {\em redundantly $L$-periodically rigid} if
$(G-e,\psi,p)$ is  $L$-periodically rigid for every $e\in E(G)$. (For simplicity, we slightly abuse notation here and denote the restriction of $\psi$ to $E(G)-e$ also by $\psi$.)

\begin{lemma}\label{thm:necweak}
Let $(G=(V,E),\psi,p)$ be a generic $\Gamma$-labeled framework in $\mathbb{R}^d$ with
rank $k$ periodicity $\Gamma$ and nonsingular $L:\Gamma\rightarrow \mathbb{R}^d$.
Suppose also that $|V|\geq d+1$ if $k\geq 1$ and $|V|\geq d+2$ if $k=0$.
If $(G, \psi,p)$ is $L$-periodically globally rigid,
then $(G,\psi,p)$ is redundantly $L$-periodically rigid.
\end{lemma}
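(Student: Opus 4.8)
The plan is to mimic the classical Hendrickson argument: assume, for contradiction, that $(G,\psi,p)$ is $L$-periodically globally rigid but $(G-e,\psi,p)$ is not $L$-periodically rigid for some $e=uv\in E$. The goal is to produce an equivalent but non-congruent $L$-periodic framework, i.e.\ a $q$ with $f_{G,L}(q)=f_{G,L}(p)$ but $f_{V,L}(q)\neq f_{V,L}(p)$. Since $(G-e,\psi,p)$ is not rigid, by Proposition~\ref{prop:rigidity_matrix} (for $k\geq 1$) or Laman's theorem (for $k=0$) the row rank of $df_{G-e,L}|_p$ is strictly less than $d|V|-d-\binom{d-k}{2}$. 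Working instead with the augmented map $\hat f_{G-e,L}$ and invoking Proposition~\ref{prop:rigidity_matrix2}, the Jacobian $d\hat f_{G-e,L}|_p$ has rank strictly less than $|E(G-e)|+d+\binom{d-k}{2} = \dim(\text{target})$, hence is row-dependent. (Here the hypotheses $|V|\geq d+1$ for $k\geq 1$ and $|V|\geq d+2$ for $k=0$ guarantee $|V|\geq\max\{d-k,1\}$, so the augmentation is well defined, and also that $K(V,\Gamma)$ has enough edges for the dimension count to be the relevant one.)

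**Main steps.** First, I would pass to the augmented edge map $\hat f_{G,L}=(\hat f_{G-e,L}, \ell_e)$, where $\ell_e(p)=\|p(u)-(p(v)+L(\psi(e)))\|^2$ is the single extra length-squared coordinate. Because $d\hat f_{G-e,L}|_p$ is row-dependent at the generic point $p$, its image is a proper algebraic subvariety; equivalently there is a nonzero rational polynomial vanishing on the image, so $\hat f_{G-e,L}(p)$ is a critical value of $\hat f_{G-e,L}$ — this is where Lemma~\ref{lem:edgefunction} enters in contrapositive form, to say the corresponding coordinate value is non-generic. Second, I would restrict attention to the fiber $\hat f_{G-e,L}^{-1}(\hat f_{G-e,L}(p))$ and argue, using Sard's theorem together with the fact that a neighborhood of $p$ in this fiber is not a single point (non-rigidity of $G-e$ gives a nontrivial analytic path), that one can move along this fiber to a point $q$ at which the edge $e$ takes a different length, i.e.\ $\ell_e(q)\neq\ell_e(p)$ — otherwise $\ell_e$ would be locally constant on the fiber through $p$, forcing $df_{G,L}|_p$ and $df_{G-e,L}|_p$ to have the same row space and hence $G$ itself to fail rigidity, contradicting global rigidity (a globally rigid framework is rigid). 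Then $q'$ obtained by following this path, suitably rescaled so that $\ell_e(q')=\ell_e(p)$, is equivalent to $(G,\psi,p)$; the key point is to show it is not congruent. Congruence would mean $f_{V,L}(q')=f_{V,L}(p)$, which by Proposition~\ref{prop:isometry} says $q'=h\circ p$ for an $L(\Gamma)$-invariant isometry $h$, forcing $\ell_e(q')=\ell_e(p)$ along the whole path and contradicting the construction.

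**Where the care goes.** The delicate part is arranging a point $q$ that lies on the correct level set of $f_{G-e,L}$ \emph{and} changes the length of $e$ \emph{and} is still reachable by a continuous $L$-periodic deformation (so that it is genuinely equivalent, not merely combinatorially so). This is exactly the structure handled in \cite{jjsz,T1}: one uses that $\hat f_{G-e,L}(p)$, being a regular value of $\hat f_{G-e,L}$ by Lemma~\ref{lem:regularvalue} applied appropriately, makes the fiber a smooth manifold of the expected dimension, and one uses Lemma~\ref{lem:pq} to control the field of definition of nearby fiber points — ensuring that a $q$ on the fiber close to $p$ satisfies $\overline{\mathbb{Q}(q)}=\overline{\mathbb{Q}(p)}$, which forbids $q$ from lying on the measure-zero set where $e$ keeps its original length unless it does so identically. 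I expect the main obstacle to be the bookkeeping needed to reduce the periodic case cleanly to a finite-dimensional statement about $f_{G,L}$ and $f_{V,L}$ on $\mathbb{R}^{d|V|}$: once Proposition~\ref{prop:trivial}, Proposition~\ref{prop:isometry}, and Proposition~\ref{prop:rigidity_matrix2} are in hand the periodicity is fully encoded in the labels $\psi$ and the map $L$, and the argument becomes formally identical to the finite-framework Hendrickson argument, with $d+\binom{d-k}{2}$ playing the role of the dimension of the trivial-motion space. The two separate lower bounds on $|V|$ in the hypothesis (one for $k\geq1$, a stronger one for $k=0$) are needed precisely so that in each regime the relevant complete (labeled) graph $K(V,\Gamma)$ is already rigid and the augmented dimension count is tight.
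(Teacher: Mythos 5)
Your high-level plan matches the paper's (a Hendrickson-style argument via the $1$-dimensional fiber of the pinned map $\hat f_{G-e,L}$), but there are serious gaps; the decisive one is that your non-congruence argument is vacuous, and that is exactly where the whole difficulty of the lemma lies.

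First, the regular/critical-value remarks are muddled. You begin by asserting that $\hat f_{G-e,L}(p)$ is a \emph{critical} value because the Jacobian is ``row-dependent,'' and later say it is a \emph{regular} value via Lemma~\ref{lem:regularvalue}. Only the latter is correct and needed: since $(G,\psi,p)$ is globally rigid it is rigid, so by Propositions~\ref{prop:rigidity_matrix} and \ref{prop:rigidity_matrix2} one has $\rank d\hat f_{G,L}|_p=d|V|$; deleting the single row for $e$ drops the rank by exactly one because $G-e$ is assumed not periodically rigid, giving $\rank d\hat f_{G-e,L}|_p=d|V|-1$. Regularity of the value (Lemma~\ref{lem:regularvalue}) then makes the fiber a smooth compact $1$-manifold, hence a disjoint union of circles. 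Also, Lemma~\ref{lem:pq} plays no role here and cannot be invoked the way you suggest: it requires a \emph{nonsingular} Jacobian, while $d\hat f_{G-e,L}|_p$ has a nontrivial kernel by construction; the paper uses Lemma~\ref{lem:pq} only in the proof of Lemma~\ref{lem:redrig}.

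Second — the genuine gap — you write that if $q'$ were congruent to $p$ this would ``force $\ell_e(q')=\ell_e(p)$, contradicting the construction.'' But your construction \emph{already} arranges $\ell_e(q')=\ell_e(p)$ (that is precisely what makes $q'$ equivalent to $p$), so there is no contradiction; indeed, $q$ may perfectly well be congruent to $p$, and the hard part of the proof is dealing with exactly that case. The paper handles it as follows: global rigidity gives $q=h\circ p$ for an $L(\Gamma)$-invariant isometry $h$; one takes the path $\gamma:[0,1]\to\mathcal O$ from $p$ to $q$ along the circle $\mathcal O$, and a second path $\gamma'=h\circ\gamma$, which also lies in $\mathcal O$ because $h$ fixes $L(\Gamma)$. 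If $\gamma$ and $\gamma'$ together cover $\mathcal O$, a monotonicity/intermediate-value argument produces a point strictly between $p$ and $q$ with the same $e$-length, contradicting the minimal choice of $q$. If they do not cover $\mathcal O$, a fixed point $p_t=h\circ p_t$ exists; then $p_t(V)$ lies in the proper invariant affine subspace of $h$, and a transcendence-degree count (using $td[\mathbb Q(\hat f_{G-e,L}(p)):\mathbb Q]\geq d|V|-1$ from Lemma~\ref{lem:edgefunction}) yields $|V|\leq d$ when $k\geq 1$ and $|V|\leq d+1$ when $k=0$, contradicting the hypotheses. Without the two-path topological step and this final estimate the proof does not close.

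Third, and as a consequence, you misidentify the role of the size hypotheses on $|V|$. They are not there to make $K(V,\Gamma)$ rigid or to make a dimension count ``tight''; they are precisely what is needed, at the very last step, to rule out the degenerate situation where $p_t(V)$ can fit inside the fixed-point subspace of $h$.
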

\begin{proof}
The proof strategy is analogous to the one for Theorem 8.2 in \cite{jmn}.
Suppose for a contradiction that
$(G-e,\psi,p)$ is not  $L$-periodically  rigid for some $e\in E$.
Since $(G, \psi,p)$ is $L$-periodically globally rigid, it is $L$-periodically rigid.
Hence by Proposition~\ref{prop:rigidity_matrix} and Proposition~\ref{prop:rigidity_matrix2}
we have
\(\rank d\hat{f}_{G-e,L} |_p=d|V|-1\).
Since $p$ is generic, Lemma~\ref{lem:edgefunction} implies that
\begin{equation}
\label{eq:necweak}
td[\mathbb{Q}(\hat{f}_{G-e, L}(p)): \mathbb{Q}]\geq d|V|-1.
\end{equation}

\(\hat{f}_{G-e,L}(p)\) is a regular value of \(\hat{f}_{G-e, L}\) by Lemma \ref{lem:regularvalue}.
Hence its preimage is a 1-dimensional smooth manifold (see, e.g., \cite{milnor}).
Since this manifold is bounded (due to the ``pinning'' of some of the vertices) and closed, it is compact,
and it consists of a disjoint union of cycles by the classification of 1-dimensional manifolds.
Let  \(\mathcal{O}\) be the component that contains $p$.

Consider \(f_{e,L}:\mathbb{R}^{d|V|}\rightarrow\mathbb{R}\) which returns $\|p(u)-(p(v)+L(\psi(e))\|^2$ for the edge $e=uv$.
Since \(\hat{f}_{G-e, L}(p)\) is a regular value,  we have
 \(\rank df_{e,L}|_{p}=\rank d\hat{f}_{G,L}|_{p}-\rank d\hat{f}_{G-e,L}|_{p}=1\) 
 (see, e.g., \cite[Lemma 3.4]{jacksonkeevash} for the proof of the first equation).
 Hence $df_{e,L}|_{p}$ is nonzero (i.e., $p$ is not a critical point of \(f_{e,L}\)), and so
 the intermediate value theorem implies that there is a \(q\in\mathcal{O}\) with \(f_{e,L}(q)=f_{e,L}(p)\)
 and \(q\neq p\).
 We can assign an orientation to \(\mathcal{O}\) and we may assume that \(q\) is chosen as close to \(p\) as possible in the forward direction.

\(f_{e,L}(p)=f_{e,L}(q)\) implies that $\hat{f}_{G,L}(p)=\hat{f}_{G,L}(q)$.
This implies  $\hat{f}_{V,L}(p)=\hat{f}_{V,L}(q)$ since $(G,p)$ is $L$-periodically globally rigid.
By Proposition~\ref{prop:isometry}, $q$ can be written as $q=h\circ p$ for some $L(\Gamma)$-invariant isometry $h$.
Since $p(v_1)=q(v_1)$, there is an orthogonal matrix $S$ 
such that $q(u)=Sp(u)+(I-S)p(v_1)$ for every $u$ and  $S$ fixes each element in $L(\Gamma)$.

Take a path \(\gamma:[0,1]\ni t \mapsto p_t\in \mathcal{O}\) with \(\gamma(0)=p\) and \(\gamma(1)=q\),
and define a path \(\gamma':[0,1]\ni t \mapsto h\circ p_t\in \mathbb{R}^{d|V|}\).
Since $h$ fixes each element in $L(\Gamma)$, we have $f_{G,L}(p)=f_{G,L}(p_t)=f_{G,L}(h\circ p_t)$
for every $t\in[0,1]$.
In other words, $\gamma'$ is a path in ${\cal O}$.

If \(\gamma\) and \(\gamma'\) cover \(\mathcal{O}\) then we can assume that \(f_{e,L}\) increases as we pass through \(p\) in the forward direction. Then \(f_{e,L}\) has to increase as we pass through \(q\). Thus there are values \(t_1,t_2\) with \(0<t_1<t_2<1\) and \(f_{e,L}(p_{t_2})<f_{e,L}(p_1)=f_{e,L}(q)=f_{e,L}(p)=f_{e,L}(p_0)<f_{e,L}(p_{t_1})\). Using the intermediate value theorem, we then get a contradiction, because there exists a point \(p'\) with \(f_{e,L}(p')=f_{e,L}(p)\) between \(p_0\) and \(p_1\).

If \(\gamma\) and \(\gamma'\) do not cover \(\mathcal{O}\), then
there exists a $t\in [0,1]$ such that $\gamma(t)=\gamma'(t)$.
At this $t$, we have $p_t(u)=h(p_t(u))$ for every $u\in V$.
In other words, $p_t(V)$ is contained in the invariant subspace $H$ of $h$,
which is a proper affine subspace of $\mathbb{R}^d$ as $p\neq q$.
Let $d'(< d)$ be the affine dimension of $H$.
Since $H$ contains $L(\Gamma)$ whose basis is rational,
$H$ is determined by $(d'+1)d$  parameters, at most $(d'+1-k)d$ of which are independent over $\mathbb{Q}$.
Thus we get $td[\mathbb{Q}(p_t):\mathbb{Q}]\leq (d'+1-k)d+(|V|-(d'+1))d'$.
On the other hand, since $\mathbb{Q}(\hat{f}_{G-e,L}(p))=\mathbb{Q}(\hat{f}_{G-e,L}(p_t))\subseteq \mathbb{Q}(p_t)$, we also have  $td[\mathbb{Q}(p_t):\mathbb{Q}]\geq td[\mathbb{Q}(\hat{f}_{G-e, L}(p)): \mathbb{Q}]\geq d|V|-1$ by (\ref{eq:necweak}).
Thus $|V|\leq 1+d'+\frac{1-kd}{d-d'}\leq d+\frac{1-kd}{d-d'}$.
The last term is at most $d$ if $k\geq 1$ and at most $d+1$ if $k=0$,
which contradicts the assumption of the statement.
\end{proof}

%%%%%%%%%%%%%%%%%%%%%%%%%%%%%%%%%%%%%%%%%%%%%%%%%%%%%%%%%%%%%%%%%%%%%%%%%%%%%%%%%%%%%%%%%%%%%%%%%%%%%%%%%%%%%%%%%%%%%%%%%%%%%%%%%%%%%%%%%%%%%%%%%%%%%%%%%%%
\section{Characterizing Periodic Global Rigidity}
\label{sec:suf}
\subsection{Main theorems}
In this section we characterize periodic global rigidity in the plane based on the necessary conditions given in Section \ref{sec:nec}.
We need to  introduce one more term to describe the main theorem combinatorially.
Given a $\Gamma$-labeled graph $(G,\psi)$, Proposition~\ref{prop:rigidity_matrix} implies that
$(G, \psi, p)$ is $L$-periodically rigid for some generic $p$
if and only if $(G, \psi, p)$ is $L$-periodically rigid for every generic $p$.
Moreover, Theorem~\ref{thm:ross} says that the choice of $L$ is not important as long as $L$ is nonsingular.
In view of these facts, we say that $(G,\psi)$ is {\em periodically rigid} in $\mathbb{R}^d$ if
$(G,\psi,p)$ is $L$-periodically rigid for some (any) generic $p:V(G)\rightarrow \mathbb{R}^d$ and for some (any) nonsingular $L:\Gamma\rightarrow \mathbb{R}^d$.

We are now ready to state  our main theorems. 
\begin{theorem}
\label{thm:main0}
Let $(G,\psi, p)$ be a generic $\Gamma$-labeled framework in $\mathbb{R}^2$ with rank $k=1$ periodicity $\Gamma$ and $|V(G)|\geq 3$, and let $L:\Gamma\rightarrow \mathbb{R}^2$ be nonsingular.
Then $(G,\psi, p)$ is $L$-periodically globally rigid if and only if 
$(G,\psi)$ is redundantly periodically rigid in $\mathbb{R}^2$, 2-connected, and has no $(0,2)$-block.
 \end{theorem}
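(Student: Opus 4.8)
\emph{Overview and necessity.} The plan is to prove the two implications separately. Necessity is immediate from Section~\ref{sec:nec}: if $(G,\psi,p)$ is $L$-periodically globally rigid then, since $d=2$, $k=1$ and $|V(G)|\ge 3=d+1$, Lemma~\ref{thm:necweak} shows $(G,\psi,p)$ --- hence $(G,\psi)$ --- is redundantly periodically rigid, while Lemma~\ref{lem:connectivity} (applied with $d=2$, $k=1$) forbids each of the following, as each is an $(s,t)$-block $H$ with $(s+1)t\le 2=d$ satisfying $V(H)\ne V(G)$ or $s<k$: a proper connected component (boundary $B(H)=\emptyset$, $V(H)\ne V(G)$), a pendant piece at a cut vertex (a $(0,1)$- or $(1,1)$-block with $V(H)\ne V(G)$), and a $(0,2)$-block (which has $s=0<1=k$). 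Hence $G$ is $2$-connected and has no $(0,2)$-block.

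\emph{The inductive scheme for sufficiency.} Conversely, let $\mathcal{F}$ denote the class of $\Gamma$-labeled graphs with rank-one $\Gamma$ that are $2$-connected, redundantly periodically rigid in $\mathbb{R}^2$, free of $(0,2)$-blocks, and on at least $3$ vertices; I want to show that every generic realization of a graph in $\mathcal{F}$ is $L$-periodically globally rigid. I argue by induction on $|V(G)|+|E(G)|$. Two ingredients are needed. First, a combinatorial decomposition lemma (stated in Section~\ref{sec:suf}, proved in Section~5): every $(G,\psi)\in\mathcal{F}$ is either one of finitely many small base graphs, or has an edge $e$ with $(G-e,\psi)\in\mathcal{F}$, or has a vertex of degree $3$ admitting a $1$-reduction --- delete the vertex and insert a suitably labelled edge between two of its neighbours --- that produces a graph in $\mathcal{F}$ on one fewer vertex. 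Second, two operations preserving $L$-periodic generic global rigidity: adding an edge (immediate from the definition), and performing a $1$-extension on a generic $L$-periodically globally rigid framework, provided the enlarged $\Gamma$-labeled graph is redundantly periodically rigid. Granting both, the induction is routine: the finitely many base graphs are verified directly (via Proposition~\ref{prop:isometry}, Theorem~\ref{thm:ross} and a low-dimensional computation); if an edge may be deleted within $\mathcal{F}$, we use the induction hypothesis and edge addition; otherwise the decomposition lemma supplies a $1$-reduction to some $G_0\in\mathcal{F}$, and we finish by the induction hypothesis for $G_0$ together with the $1$-extension operation, using that $G\in\mathcal{F}$ is itself redundantly periodically rigid. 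Observe that, since $k=1<d=2$, the $L(\Gamma)$-invariant isometries of $\mathbb{R}^2$ include reflections in lines parallel to $L(\Gamma)$, not merely translations; this is precisely why the connectivity and $(0,2)$-block conditions are the obstructions, and why this rank-one case is more delicate than its rank-$2$ analogue (cf.\ Lemma~\ref{lem:glueing}).

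\emph{The $1$-extension step.} This is the technical core, and I would follow \cite{jjsz,T1} and mirror the topological proof of Lemma~\ref{thm:necweak}. Suppose $G$ is obtained from $G_0\in\mathcal{F}$ by deleting an edge $e=v_1v_2$ of $G_0$ and adding a new vertex $v_0$ joined to $v_1,v_2$ and to a third vertex $v_3$; let $p$ be a generic realization of $G$ and set $p_0=p|_{V(G_0)}$, which is generic and, by the induction hypothesis, gives an $L$-periodically globally rigid $(G_0,\psi_0,p_0)$. Given an $L$-periodic framework $(G,q)$ equivalent to $(G,p)$, put $q_0=q|_{V(G_0)}$; then $q_0$ realizes $G_0-e$ equivalently to $p_0$. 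If the length of $e$ in $q_0$ equals that in $p_0$, then $(G_0,q_0)$ is equivalent --- hence congruent --- to $(G_0,p_0)$, so $q_0=h\circ p_0$ for an $L(\Gamma)$-invariant isometry $h$ by Proposition~\ref{prop:isometry}; since $v_0$ has degree $3$ its position is over-determined by $v_1,v_2,v_3$, so generically $q(v_0)=h\circ p(v_0)$ and $(G,q)$ is congruent to $(G,p)$. If the length of $e$ in $q_0$ differs, one derives a contradiction in the manner of the proof of Lemma~\ref{thm:necweak}: using that $G_0-e=G-v_0$ is $L$-periodically rigid (which follows from redundant periodic rigidity of $G$) together with Lemmas~\ref{lem:regularvalue} and~\ref{lem:pq}, one studies the relevant measurement level set and the function $f_{e,L}$ along it and, via the intermediate value theorem and a transcendence-degree count --- using $|V(G_0)|\ge d+1=3$ and that a degenerate configuration would have to lie in a proper affine subspace of $\mathbb{R}^2$ containing $L(\Gamma)$ --- rules this case out.

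\emph{Expected main obstacle.} I expect the combinatorial decomposition lemma to be the delicate part. When no edge of $(G,\psi)$ can be deleted within $\mathcal{F}$, the graph is ``minimal'' for the combined count and connectivity constraints and has many degree-$3$ vertices, but one must single out one whose $1$-reduction simultaneously preserves $2$-connectedness, redundant periodic rigidity, \emph{and} the absence of $(0,2)$-blocks. Controlling the balanced-subgraph condition in the presence of the group labels is the crux: a $1$-reduction can create a new small balanced block, a parallel pair whose net label becomes trivial, or a new cut vertex, and excluding all of these requires a careful case analysis with the count matroid of Theorem~\ref{thm:ross}, exploiting minimality to guarantee enough freedom in the choice of the reduction.
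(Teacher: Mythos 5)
Your overall scaffolding --- necessity from Lemmas~\ref{lem:connectivity} and~\ref{thm:necweak} via (\ref{eq:nec1}), sufficiency by induction on $(|V|,|E|)$ with base graphs, edge deletion, and a degree-three reduction --- matches the paper, and your reading of the necessity direction is correct. But the reduction operation you use is not the one the paper uses, and the difference is load-bearing. Your $1$-reduction deletes the degree-$3$ vertex $v$ and inserts \emph{one} labelled edge between two of its neighbours. The paper's Lemma~\ref{lem:redrig} instead uses $G_v$, which deletes $v$ and inserts \emph{all} edges $e_1\cdot e_2$ over nonparallel pairs incident to $v$ (a labelled triangle on $N(v)$ when $v$ has three distinct neighbours). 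This is the operation of Tanigawa~\cite{T1}, and it makes both halves of the induction step cleaner: combinatorially, $G_v$ is uniquely determined by $G$ and $v$ and Lemma~\ref{lem:comb2} shows it \emph{always} stays in $\mathcal{F}$, so one never has to search for a good reduction; algebraically, the presence of all $d+1$ implied edge-length constraints is exactly what lets Lemma~\ref{lem:redrig} run its orthogonal-matrix computation ($PQ^{-1}$ is forced to be orthogonal) and conclude $p=q$ without topology or stress matrices.

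Your version of the algebraic step is where a genuine gap sits. You assert that a $1$-extension of a globally rigid framework is globally rigid provided the larger graph is redundantly periodically rigid, and sketch a proof by case-splitting on whether the length of the deleted edge $e=v_1v_2$ agrees in $q_0$ and $p_0$, invoking ``a contradiction in the manner of the proof of Lemma~\ref{thm:necweak}'' in the disagreeing case. But the transcendence-degree contradiction in Lemma~\ref{thm:necweak} is driven by the \emph{assumed} global rigidity of $(G,p)$ (it is a necessity argument: it produces two paths $\gamma,\gamma'$ in the compact level-set component $\mathcal{O}$ joining $p$ to an isometric image of $p$, then analyses their intersection). Here you have no such hypothesis --- global rigidity of $(G,p)$ is the conclusion, not a premise --- nor is it clear that $q'$ lies on the same component $\mathcal{O}$ as $p'$, so the intermediate-value/transcendence machinery has nothing to bite on. Making your case work would essentially require a globally-linked-pairs lemma for the pair $\{v_1,v_2\}$ in the periodic setting (à la~\cite{jjsz}), which is a nontrivial additional ingredient you have not stated or proved, or else Connelly's stress-matrix $1$-extension theorem, which the paper explicitly sets out to avoid. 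Separately, your combinatorial lemma would have to guarantee the existence of a \emph{good} single added edge (a choice of neighbour pair and label) keeping the graph $2$-connected, redundantly periodically rigid, \emph{and} free of $(0,2)$-blocks --- strictly harder than Lemma~\ref{lem:comb2}'s unconditional statement about $G_v$, and you flag it as ``delicate'' without resolving it. Replacing the $1$-reduction with $G_v$ and the topological argument with the algebraic one in Lemma~\ref{lem:redrig} is precisely how the paper closes both gaps at once.

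Two smaller remarks. In the ``lengths agree'' branch of your sketch, the over-determination of $v_0$ by three anchors requires nondegeneracy of $v$ (the three points $q(v_i)+L(\gamma_i)$ must be affinely independent for the intersection of three circles to give a unique position); the paper isolates this as a hypothesis in Lemma~\ref{lem:redrig} and proves it in Lemma~\ref{lem:comb2}, and you should make the same hypothesis explicit. And the base case $|V(G)|=3$ in the paper is handled via a minimal spanning subgraph with five edges plus Lemma~\ref{rem:2orbits}, rather than a blanket ``finitely many base graphs'' --- minor, but worth noting that the reduction to $|V|=2$ is what Lemma~\ref{rem:2orbits} is for.
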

\begin{theorem}
\label{thm:main1}
Let $(G,\psi, p)$ be a generic $\Gamma$-labeled framework in $\mathbb{R}^2$ with rank $k=2$ periodicity $\Gamma$ and $|V(G)|\geq 3$, and let $L:\Gamma\rightarrow \mathbb{R}^2$ be nonsingular.
Then $(G,\psi, p)$ is $L$-periodically globally rigid if and only if $(G,\psi)$ is connected and each 2-connected component $(G', \psi')$ of $(G,\psi)$ is redundantly periodically rigid in $\mathbb{R}^2$,
has no $(0,2)$-block, and has rank two.
 \end{theorem}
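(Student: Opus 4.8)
\emph{Necessity.} By Lemma~\ref{lem:glueing}, applied repeatedly along the cut vertices, $(G,\psi,p)$ is $L$-periodically globally rigid if and only if each $2$-connected component $(G',\psi',p')$ is $L$-periodically globally rigid, where $(G',\psi',p')$ is regarded as a $\Gamma$-labeled framework over the ambient rank-two group $\Gamma$. Suppose $(G',\psi',p')$ is globally rigid. Taking $H=G'$ in Lemma~\ref{lem:connectivity} — it is an $(s,0)$-block with $s$ the rank of $G'$ and $I(H)=V(G')\neq\emptyset$ — forces $s\ge k=2$, so $G'$ has rank two; Lemma~\ref{lem:connectivity} with $(s,t)=(0,2)$ shows $G'$ has no $(0,2)$-block; and, when $|V(G')|\ge 3$, Lemma~\ref{thm:necweak} shows $(G',\psi',p')$ is redundantly periodically rigid. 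Components with $|V(G')|\le 2$ (bridges and parallel-edge classes on two vertices) are handled by a short direct computation: such a component is $L$-periodically globally rigid exactly when it has rank two, and for such a component having rank two is equivalent to being redundantly periodically rigid (it vacuously has no $(0,2)$-block). This settles the ``only if'' direction.

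\emph{Sufficiency: reduction and construction.} For the converse, by Lemma~\ref{lem:glueing} it suffices to fix a $2$-connected $\Gamma$-labeled graph $(G,\psi)$ with $|V(G)|\ge 3$ that is redundantly periodically rigid, has rank two and has no $(0,2)$-block, and to prove that $(G,\psi,p)$ is $L$-periodically globally rigid for \emph{every} generic $p$. (Proving the statement for all generic realizations at once, rather than one, is what keeps the argument from needing generic global periodic rigidity to be a graph property.) Note that the hypotheses already force the minimum degree of $G$ to be at least $3$, since a vertex of degree at most $2$ would create a $(0,2)$- or $(1,1)$-block. The plan is induction on $|V(G)|+|E(G)|$ using an inductive construction, which is the content of the combinatorial lemmas stated in this section and proved in Section~5: every such $(G,\psi)$ is obtained from a member of a finite list $\mathcal{B}$ of ``base'' graphs on three vertices by a sequence of (i) \emph{edge additions} and (ii) \emph{$1$-extensions} — delete an edge $xy$, add a new vertex $v$ and edges $vx,vy,vz$ whose labels are chosen so that the path $x,v,y$ carries the label $\psi(xy)$ (so that contracting $v$ recovers the edge $xy$) — in such a way that every intermediate $\Gamma$-labeled graph is again $2$-connected, of rank two, redundantly periodically rigid, and without a $(0,2)$-block.

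\emph{Preservation of global rigidity.} Matching this construction, it remains to show that the two operations preserve $L$-periodic global rigidity of generic frameworks, and that each base graph in $\mathcal{B}$ is $L$-periodically globally rigid in all of its generic realizations (a direct check, since $\mathcal{B}$ is finite and its members are small). Edge addition preserves global rigidity trivially: if $(G-e,\psi,p)$ is $L$-periodically globally rigid and $f_{G,L}(p)=f_{G,L}(q)$, then a fortiori $f_{G-e,L}(p)=f_{G-e,L}(q)$, hence $f_{V,L}(p)=f_{V,L}(q)$. The substantive step is the $1$-extension lemma: if $(H,\psi,q)$ is a generic $L$-periodically globally rigid $\Gamma$-labeled framework with rank-two periodicity and $(G,\psi,p)$ is obtained from it by a $1$-extension with $p|_{V(H)}=q$, then $(G,\psi,p)$ is $L$-periodically globally rigid. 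Granting this, Theorem~\ref{thm:main1} follows by the induction just described, because $p$ restricts to a generic realization of the graph obtained by the inverse operation, and that smaller graph is again in the class and hence covered by the induction hypothesis (or by the base case).

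\emph{The $1$-extension lemma and the main obstacle.} Let $v$ be the new degree-three vertex with neighbours $x,y,z$ and $xy$ the deleted edge, so that $G+xy$ is $(H,\psi)$ with a degree-three vertex attached. First, $(G+xy,\psi,p)$ is $L$-periodically globally rigid: any framework equivalent to it restricts to one equivalent to $(H,\psi,q)$, hence — since $q$ is generic and, $\Gamma$ having rank two, every $L(\Gamma)$-invariant isometry is a translation — to a translate of $q$; then the three points $q(x)+L(\psi(vx))$, $q(y)+L(\psi(vy))$, $q(z)+L(\psi(vz))$ affinely span $\mathbb{R}^2$ and pin $v$, so the framework is a translate of $p$ (Proposition~\ref{prop:isometry}). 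Consequently, if $f_{G,L}(p)=f_{G,L}(p')$ \emph{and} the ``virtual length'' $\|p'(x)-(p'(y)+L(\psi(xy)))\|$ equals $\|p(x)-(p(y)+L(\psi(xy)))\|$, then $f_{G+xy,L}(p)=f_{G+xy,L}(p')$, and we are done. The entire difficulty — and, I expect, the main obstacle of the whole proof — is the remaining case, in which the virtual length is \emph{not} preserved; this is the periodic counterpart of the hardest step in the finite theory. The stress-matrix-free treatments in \cite{jjsz,T1} handle the finite analogue by showing that in this situation the pair $\{x,y\}$ is \emph{globally linked} in $G$, i.e.\ cannot acquire a different distance in an equivalent realization, using genericity of $p$ together with the rigidity of $H-xy$ (valid because $H$, being globally rigid, is redundantly rigid by Lemma~\ref{thm:necweak}) and a critical-value/transcendence-degree analysis in the spirit of the proof of Lemma~\ref{thm:necweak} and of Lemmas~\ref{lem:edgefunction}--\ref{lem:pq}. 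Adapting this to $\Gamma$-labeled frameworks — tracking the label carried by the virtual edge and restricting throughout to $L$-periodic configurations — is where the real work of the proof lies; the rank-two hypothesis helps by collapsing the group of trivial motions to translations, so the structure of the argument parallels the finite case.
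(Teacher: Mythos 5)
Your necessity argument is fine and matches the paper's, but the sufficiency direction has two genuine gaps. First, you assert that the class (2-connected, rank two, redundantly periodically rigid, no $(0,2)$-block) admits a constructive characterization by edge additions and $1$-extensions from a finite base on three vertices, with \emph{every} intermediate graph staying in the class, and you claim this ``is the content of the combinatorial lemmas stated in this section.'' It is not. Lemma~\ref{lem:comb2} concerns the graph $(G_v,\psi_v)$, obtained by deleting a degree-three vertex $v$ and inserting \emph{all} combination edges $e_i\cdot e_j$ on its neighbours, not the graph $G-v+xy$ produced by the inverse of a $1$-extension. The latter is a proper subgraph of the former (up to two edges missing), and verifying that for \emph{some} choice of recovered edge $xy$ the resulting $G-v+xy$ remains in the class is an additional combinatorial claim you neither prove nor cite. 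Moreover, your construction requires every intermediate graph to have rank two, but deleting an edge can drop the rank to one; the paper handles precisely this case separately in the proof of Theorem~\ref{thm:main1} via Theorem~\ref{thm:main0} and a reflection argument, and your proposal does not address it.

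Second, the algebraic heart of your argument, showing that the ``virtual length'' of the deleted edge $xy$ must be preserved when $f_{G,L}(p)=f_{G,L}(p')$, is exactly what you do not prove: you acknowledge this is ``where the real work of the proof lies'' and defer it to an unwritten periodic adaptation of a globally-linked-pairs analysis. The paper circumvents that difficulty entirely with Lemma~\ref{lem:redrig}: instead of fixing one recovered edge, it derives (from genericity of $p$, rigidity of $G-v$, and Lemma~\ref{lem:pq}) that the matrix $PQ^{-1}$ built from all neighbours of $v$ is orthogonal, which simultaneously recovers the lengths of \emph{all} combination edges. That is why the natural reduction target is $G_v$ rather than $G-v+xy$, and why Lemma~\ref{lem:comb2} is stated for $G_v$. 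Until the $1$-extension lemma is actually proved in the periodic setting and your constructive characterization (including the rank-two preservation) is established, the sufficiency direction does not close.
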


Combining   Theorem~\ref{thm:main0} and Theorem~\ref{thm:main1} with Proposition~\ref{prop:trivial}, we have the main theorem in this paper.
\begin{theorem}\label{thm:main2}
A generic $L$-periodic framework $(\tilde{G}, \tilde{p})$ with at least three vertex orbits is $L$-periodically globally  rigid in the plane
if and only if its quotient $\Gamma$-labeled graph $(G,\psi)$ satisfies the combinatorial condition in Theorem~\ref{thm:jj}, Theorem~\ref{thm:main0} or Theorem~\ref{thm:main1} depending on the rank of the periodicity.
\end{theorem}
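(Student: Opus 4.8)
The plan is to deduce Theorem~\ref{thm:main2} directly from Proposition~\ref{prop:trivial} together with the three characterizations already in hand: Theorem~\ref{thm:jj} for rank $0$, Theorem~\ref{thm:main0} for rank $1$, and Theorem~\ref{thm:main1} for rank $2$ (recall that the rank $k$ of the periodicity satisfies $k\le d=2$, so these are all the cases). First I would fix the quotient $\Gamma$-labeled graph $(G,\psi)$ of $\tilde G$ and the associated map $p\colon V(G)\to\mathbb{R}^2$, and record two facts already established in Section~\ref{sec:pre}: the number of vertex orbits equals $|V(G)|$, so the hypothesis ``at least three vertex orbits'' becomes $|V(G)|\ge 3$; and $(\tilde G,\tilde p)$ is generic if and only if $(G,\psi,p)$ is generic. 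By Proposition~\ref{prop:trivial}, $(\tilde G,\tilde p)$ is $L$-periodically globally rigid if and only if $(G,\psi,p)$ is $L$-periodically globally rigid, so it suffices to characterize the latter.

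Next I would split on the value of $k$. If $k=0$ then $\Gamma$ is trivial, $\tilde G=G$, $\tilde p=p$, and $L$ is the zero map; in this case $f_{G,L}$ records the squared edge lengths of $(G,q)$ while $f_{V,L}$ records the squared distances between all pairs of vertices, so the defining condition for $L$-periodic global rigidity of $(G,\psi,p)$---that $f_{G,L}(p)=f_{G,L}(q)$ forces $f_{V,L}(p)=f_{V,L}(q)$ for every $q$---is exactly the statement that every framework equivalent to $(G,p)$ is congruent to it, i.e.\ ordinary generic global rigidity in $\mathbb{R}^2$. Theorem~\ref{thm:jj} then yields the equivalence with the combinatorial condition of that theorem. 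If $k=1$, Theorem~\ref{thm:main0} applies verbatim (its hypotheses are met, since $|V(G)|\ge 3$), and if $k=2$, Theorem~\ref{thm:main1} applies. Assembling the three cases gives Theorem~\ref{thm:main2}.

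Because the argument is essentially a repackaging of Proposition~\ref{prop:trivial} and the three theorems, I do not expect a genuine obstacle. The only steps needing a sentence of justification are the $k=0$ reduction---verifying that for a trivial group the notion of $L$-periodic global rigidity specializes to classical generic global rigidity---and the remark that passing to the quotient preserves both genericity and the vertex-orbit count; both are immediate from the definitions and the observations in Section~\ref{sec:pre}. All of the substantive content resides in Theorems~\ref{thm:jj},~\ref{thm:main0} and~\ref{thm:main1} themselves.
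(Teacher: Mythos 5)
Your proposal is correct and follows essentially the same route as the paper, which simply invokes Proposition~\ref{prop:trivial} to pass to the quotient and then applies Theorems~\ref{thm:jj}, \ref{thm:main0}, and \ref{thm:main1} according to the rank of the periodicity. The extra sentences you supply for the $k=0$ case and for the preservation of genericity under passing to the quotient are both already recorded verbatim in Section~\ref{sec:pre}, so there is no divergence in substance.
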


For smaller frameworks we have the following.
\begin{lemma}\label{rem:2orbits} 
Let $(G,\psi, p)$ be a generic $\Gamma$-labeled framework in $\mathbb{R}^2$ with rank $k$ periodicity $\Gamma$, and let $L:\Gamma\rightarrow \mathbb{R}^2$ be nonsingular. If $|V(G)|=1$, then $(G,\psi, p)$ is $L$-periodically globally rigid. If $|V(G)|=2$ and $k\geq 1$, then $(G,\psi, p)$ is $L$-periodically globally rigid if and only if the rank of $\Gamma_G$ is $k$. 
\end{lemma}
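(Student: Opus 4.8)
\emph{Proof plan.} I would treat the two regimes separately, disposing first of the case $|V(G)|=1$, which is essentially trivial: since $G$ is loopless it has no edge, so $f_{G,L}$ is the empty map and $f_{G,L}(p)=f_{G,L}(q)$ holds vacuously for every $q$; on the other hand, every coordinate of $f_{V,L}$ equals $\|p(v)-(p(v)+L(\gamma))\|^2=\|L(\gamma)\|^2$ and hence does not depend on $p$, so $f_{V,L}(p)=f_{V,L}(q)$ for every $q$ as well, and $(G,\psi,p)$ is $L$-periodically globally rigid.

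Next, for $|V(G)|=2$, write $V(G)=\{v_1,v_2\}$; since $L$ is nonsingular and $k\geq 1$, necessarily $k\in\{1,2\}$. The ``only if'' direction I would obtain from Lemma~\ref{lem:connectivity} by contraposition: if $s:=\rank\Gamma_G<k$, then $G$ itself is an $(s,0)$-block (indeed $B(G)=\emptyset$, hence $|B(G)|=0$ and $I(G)=V(G)\neq\emptyset$), and it satisfies $(s+1)\cdot 0\leq 2$ together with $s<k$; since $|V(G)|=2$, Lemma~\ref{lem:connectivity} applies and shows that $(G,\psi,p)$ is not $L$-periodically globally rigid. Thus global rigidity forces $\rank\Gamma_G=k$.

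For the converse, assume $\rank\Gamma_G=k$ and take any $q\in\mathbb{R}^{d|V|}$ with $f_{G,L}(p)=f_{G,L}(q)$; the task is $f_{V,L}(p)=f_{V,L}(q)$. I would set $a=p(v_1)-p(v_2)$, $b=q(v_1)-q(v_2)$, orient every edge $e$ from $v_1$ to $v_2$ with label $\gamma_e$, and expand $\|a-L(\gamma_e)\|^2=\|b-L(\gamma_e)\|^2$ into $\|a\|^2-\|b\|^2=2\langle a-b,L(\gamma_e)\rangle$ for every edge $e$; subtracting this identity for two edges gives $\langle a-b,L(\gamma_e-\gamma_{e'})\rangle=0$, and since the elements $\gamma_e-\gamma_{e'}$ generate $\Gamma_G$ we conclude $a-b\perp L(\Gamma_G)$. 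Nonsingularity makes $L$ injective, so $L(\Gamma_G)$ has rank $k$, and as $\Gamma_G\subseteq\Gamma$ have equal rank, $L(\Gamma)$ and $L(\Gamma_G)$ span the same $k$-dimensional subspace $W\subseteq\mathbb{R}^2$. If $k=2$ then $W=\mathbb{R}^2$ and $a=b$. If $k=1$ then $W$ is a line; here $L(\gamma_e)\in L(\Gamma)\subseteq W$ is orthogonal to $a-b$, so $\|a\|^2-\|b\|^2=0$, and combining $a-b\in W^{\perp}$ with $\|a\|=\|b\|$ forces $b$ to equal $a$ or the reflection $\rho(a)$ of $a$ in $W$. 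In every case $q=Sp+t$ on $V$ for some $t\in\mathbb{R}^2$ and some orthogonal $S\in\{I,\rho\}$ fixing $L(\Gamma)\subseteq W$ pointwise (using that $W=\spa L(\Gamma)$ passes through the origin), so the ``if'' direction of Proposition~\ref{prop:isometry} — whose proof does not use the affine-spanning hypothesis — gives $f_{V,L}(p)=f_{V,L}(q)$, as desired.

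The one step that needs a moment's care is the case $k=1$: with only two vertex orbits there is a genuine one-parameter reflection ambiguity in the fibre direction, and the crux of the argument is recognizing that the relevant reflection is the reflection in the line $W=\spa L(\Gamma)$, which is an $L(\Gamma)$-invariant isometry and hence does not obstruct $L$-periodic global rigidity. Everything else is a direct computation, and genericity of $p$ is not used anywhere.
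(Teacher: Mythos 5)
Your proof is correct and actually more complete than the paper's own. The published proof only establishes the sufficiency direction for $|V(G)|=2$: it silently assumes that $\rank\Gamma_G=k$ (in order to produce the second or third edge with independent label) and never argues that global rigidity forces this, nor does it touch the trivial $|V(G)|=1$ case. You supply both missing pieces — the $|V(G)|=1$ case is dispatched by noting $f_{G,L}$ is empty and every coordinate of $f_{V,L}$ is the constant $\|L(\gamma)\|^2$, and the necessity is obtained by feeding the $(s,0)$-block $H=G$ with $s=\rank\Gamma_G<k$ into Lemma~\ref{lem:connectivity}. For sufficiency, your route is genuinely different in presentation from the paper's: where the paper fixes $q(v)$ and argues geometrically that the circle constraints leave at most two positions for $q(u)$, you work algebraically, showing from the edge equations $\|a\|^2-\|b\|^2=2\langle a-b,L(\gamma_e)\rangle$ that $a-b\perp\spa L(\Gamma_G)=\spa L(\Gamma)=:W$ and (when $k=1$) that $\|a\|=\|b\|$, hence $b\in\{a,\rho_W(a)\}$, then invoke the translation/reflection as an $L(\Gamma)$-invariant isometry via the easy ``conversely'' half of Proposition~\ref{prop:isometry}. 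Your remark that this half of Proposition~\ref{prop:isometry} does not require the affine-spanning hypothesis is exactly the right observation — it is what makes the argument go through despite $p(V)$ being only two points in $\mathbb{R}^2$. The two approaches amount to the same underlying fact, but your orthogonality bookkeeping makes it transparent why the residual reflection lies in the allowed isometry group and also makes manifest that genericity plays no role in the sufficiency direction (it is only used, via Lemma~\ref{lem:connectivity}, in the necessity direction).
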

\begin{proof}  Let \(V(G)=\{u,v\}\), and suppose that every edge is oriented to \(v\). 

If \(k=1\), then there exist two edges from \(u\) to \(v\) with distinct labels. By switching, we may assume that  these labels are \(\id\) and \(\gamma\). Given \(q(v)\),  there are only two possible positions for \(q(u)\) if $(G,\psi, p)$ is equivalent to $(G,\psi,p)$ . In both cases, the resulting framework is congruent to $(G,\psi, p)$, and hence $(G,\psi, p)$ is $L$-periodically globally rigid.

 If \(k=2\), then there exists a third edge with label \(\gamma'\) not spanned by $\gamma$. 
 The given distance between \(q(u)\) and \(\gamma'q(v)\) then uniquely determines the position of \(q(u)\). Thus, $(G,\psi,q)$ is again congruent to $(G,\psi, p)$, and hence $(G,\psi, p)$ is $L$-periodically globally rigid.
\end{proof}

We remark here that there is an efficient algorithm to check whether \((G,\psi)\) satisfies the combinatorial conditions of the main theorem. 
After finding the 2-connected components of \(G\) the method given in \cite{jkt} can be used here as well to check their redundant periodic rigidity. 
Their rank can also be checked easily by finding an equivalent gain function by switchings (see, e.g., \cite{jkt}). 
Finally, for each pair of vertices we can  check whether they are the boundary of a (0,2)-block.

The proofs of Theorem~\ref{thm:main0} and Theorem~\ref{thm:main1} are almost identical and consist of  two parts, an algebraic part and a combinatorial part.
The algebraic part is solved in Lemma~\ref{lem:redrig}, and the combinatorial part is solved in Lemmas~\ref{lem:comb} and \ref{lem:comb2}.

\subsection{Algebraic part}
Let $(G,\psi,p)$ be a $\Gamma$-labeled framework.
For each edge $e$ from $v_i$ to $v_j$ in $G$, the edge direction $d_e$ is defined 
by $d_e=p(v_j)+L(\psi(e))-p(v_i)$.
We say that a vertex $v$ is {\em nondegenerate in $(G,\psi,p)$} if 
the set $\{d_e: \text{$e$ is incident to $v$}\}$ is linearly independent.
Note that this is a generic property, that is, $v$ is nondegenerate in a generic realization of $(G,\psi)$ if and only if it is nondegenerate in any generic realization of $(G,\psi)$.
Thus a vertex $v$ in $(G,\psi)$ is said to be {\em $d$-nondegenerate} if 
$v$ is nondegenerate in a generic realization of $(G,\psi)$.
%
%%for every neighbor \(u\) of \(v\), 
%the set $\bigcup_{v_i\in N_G(v)}\{p(v_i)+L(\psi(e)):\, e=vv_i\in E(G)\}$ is affinely independent for any generic $p:V(G)\to \mathbb{R}^d$, where $N_G(v)$ denotes the set of neighbors of $v$ in $G$.

Suppose that every edge incident to $v$ is directed from $v$.
For each pair of nonparallel edges $e_1=vu$ and $e_2=vw$ in $(G,\psi)$, 
let $e_1\cdot e_2$ be the edge from $u$ to $w$ with  label \(\psi(vu)^{-1}\psi(vw)\).
We define \((G_v,\psi_v)\) to be the $\Gamma$-labeled graph obtained from $(G,\psi)$ by removing $v$
and inserting $e_1\cdot e_2$ for every pair of nonparallel edges $e_1, e_2$ incident to $v$  
(unless an edge identical to $e_1\cdot e_2$  is already present in $(G,\psi)$).
The following is the periodic generalization of an observation given in \cite{jjsz,T1}.

\begin{lemma} \label{lem:redrig}
Let $(G,\psi,p)$ be a generic $\Gamma$-labeled framework with $|V(G)|\geq d+1$ and $L:\Gamma\rightarrow \mathbb{R}^d$ be nonsingular. Suppose that $G$ has a $d$-nondegenerate vertex $v$  of degree $d+1$ for which
\begin{itemize}
\item $(G-v,\psi)$ is $L$-periodically rigid in $\mathbb{R}^d$, and
\item $(G_v, \psi_v, p')$ is $L$-periodically globally rigid in $\mathbb{R}^d$ with notation \(p'=p|_{V(G)-v}\).
\end{itemize}
Then $(G, \psi, p)$ is $L$-periodically globally rigid in $\mathbb{R}^d$.
\end{lemma}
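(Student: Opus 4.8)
The plan is to show that any $L$-periodic framework $(G,\psi,q)$ equivalent to $(G,\psi,p)$ must be congruent to it, by a ``coning/vertex-splitting'' argument comparing $(G,\psi,q)$ to the contracted framework $(G_v,\psi_v,p')$. Let $e_0 = vu_0, e_1 = vu_1, \dots, e_d = vu_d$ be the $d+1$ edges incident to the nondegenerate degree-$(d+1)$ vertex $v$, all oriented away from $v$. First I would observe that, since $(G,\psi,p)$ is generic and $v$ is nondegenerate, the framework $(G_v,\psi_v,p')$ is generic (its coordinates are a subset of those of $p$), so the hypothesis that it is $L$-periodically globally rigid can be applied to it. Now take any $q:V(G)\to\mathbb{R}^d$ with $f_{G,L}(p)=f_{G,L}(q)$. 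The edges of $(G_v,\psi_v)$ are of two kinds: edges inherited from $G-v$, on which $q$ automatically matches $p$; and the new edges $e_i\cdot e_j$, whose squared lengths at $q$ are $\|q(u_i) - (q(u_j) + L(\psi(e_i)^{-1}\psi(e_j)))\|^2$. These are \emph{not} a priori preserved by $q$, so the first real step is:

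\emph{Step 1 (the contracted framework sees a consistent assignment).} Because $(G-v,\psi)$ is $L$-periodically rigid and $(G,\psi,q)$ restricted to $G-v$ is equivalent to the generic rigid framework $(G-v,\psi,p')$, Proposition~\ref{prop:isometry} (applied after noting $|V(G-v)| = |V(G)|-1 \geq d$ affinely spans) tells us $q|_{V(G)-v} = h\circ p'$ for some $L(\Gamma)$-invariant isometry $h$ — provided $p'(V(G)-v)$ affinely spans $\mathbb{R}^d$, which holds generically. Composing $q$ with $h^{-1}$, we may assume $q|_{V(G)-v} = p|_{V(G)-v}$; then every edge of $G_v$ has the same length under $q$ as under $p'$, i.e. $f_{G_v,L}(q|_{V(G)-v}) = f_{G_v,L}(p')$. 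Applying global rigidity of $(G_v,\psi_v,p')$ gives $f_{V(G)-v,L}(q|_{V(G)-v}) = f_{V(G)-v,L}(p')$, so in fact $q|_{V(G)-v}$ is congruent to $p'$ via an $L(\Gamma)$-invariant isometry — and after a further adjustment we may take $q|_{V(G)-v} = p|_{V(G)-v}$ exactly.

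\emph{Step 2 (locating $q(v)$).} It now remains to show $q(v) = p(v)$ up to an $L(\Gamma)$-invariant isometry fixing all of $p(V(G)-v)$. The $d+1$ length constraints $\|q(v) - (q(u_i) + L(\psi(e_i)))\| = \|p(v) - (p(u_i) + L(\psi(e_i)))\|$ for $i=0,\dots,d$, together with $q(u_i) = p(u_i)$, say that $q(v)$ lies on the intersection of $d+1$ spheres centered at the $d+1$ generic points $p(u_i) + L(\psi(e_i))$ and passing through $p(v)$. Two distinct points equidistant from $d+1$ points in general position cannot exist unless those $d+1$ centers lie on a common hyperplane; by nondegeneracy of $v$ and genericity of $p$, the centers $p(u_i)+L(\psi(e_i))$ are affinely independent, hence do not lie on a hyperplane, so $q(v) = p(v)$. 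This forces $f_{V(G),L}(p) = f_{V(G),L}(q)$, i.e. $(G,\psi,q)$ is congruent to $(G,\psi,p)$, as desired.

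The main obstacle is Step 1, specifically the bookkeeping of isometries: one must be careful that the isometry $h$ produced from the rigidity of $(G-v,\psi)$ and the (possibly different) isometry produced from the global rigidity of $(G_v,\psi_v,p')$ are compatible and both $L(\Gamma)$-invariant, so that normalizing $q$ does not destroy the hypotheses. A subtlety is that $(G-v,\psi)$ might have rank less than $k$, in which case $L(\Gamma_{G-v})$-invariant isometries form a larger family than the full $L(\Gamma)$-invariant ones; one then needs the extra edges of $G_v$ (which can carry labels outside $\Gamma_{G-v}$) to pin down the remaining freedom — this is exactly where the global rigidity of the contracted framework, rather than mere rigidity, is used. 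The affine-span genericity conditions needed to invoke Proposition~\ref{prop:isometry} (namely $|V(G)|\geq d+1$) are guaranteed by the hypothesis. Once these isometry identifications are handled cleanly, Step 2 is the short sphere-intersection argument above.
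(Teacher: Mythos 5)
Your Step 1 contains a critical error that invalidates the argument. You write that ``because $(G-v,\psi)$ is $L$-periodically rigid and $(G,\psi,q)$ restricted to $G-v$ is equivalent to the generic rigid framework $(G-v,\psi,p')$, Proposition~\ref{prop:isometry}\ldots tells us $q|_{V(G)-v}=h\circ p'$ for some $L(\Gamma)$-invariant isometry $h$.'' This conflates local rigidity with global rigidity. $L$-periodic rigidity of $(G-v,\psi,p')$ means only that equivalent frameworks \emph{in a neighborhood of $p'$} are congruent to $p'$; it says nothing about an arbitrary $q'$ equivalent to $p'$. Likewise, Proposition~\ref{prop:isometry} cannot be applied here: it requires $f_{V,L}(p')=f_{V,L}(q')$ (all pairwise distances, over the complete $\Gamma$-labeled graph), not merely $f_{G-v,L}(p')=f_{G-v,L}(q')$ (edge distances in $G-v$). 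Obtaining congruence of $q'$ with $p'$ is precisely the conclusion you are trying to reach via the global rigidity of $(G_v,\psi_v,p')$ --- so your argument is circular: if Step~1 held, you would already have $q'$ congruent to $p'$ and the hypothesis on $G_v$ would be superfluous.

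The genuine difficulty that the paper's proof addresses is how to get from local rigidity of $(G-v,\psi,p')$ to the statement that $q'$ satisfies the edge-length constraints of $G_v$ (so that global rigidity of $G_v$ can then be invoked). The paper does this in two nontrivial stages. First, local rigidity of $(G-v,\psi,p')$ means the Jacobian $d\hat{f}_{G-v,L}|_{p'}$ is nonsingular, and Lemma~\ref{lem:pq} then yields $\overline{\mathbb{Q}(p')}=\overline{\mathbb{Q}(q')}$, so $q'$ is generic. Second, the length constraints at the degree-$(d+1)$ vertex $v$ are used to derive a quadratic polynomial identity in $x_v=p(v)-p(v_0)$ with coefficients lying in $\overline{\mathbb{Q}(p')}$; since $p(v)$ is algebraically independent over $\overline{\mathbb{Q}(p')}$, every coefficient must vanish, which forces $PQ^{-1}$ to be orthogonal and hence $\|x_i\|=\|y_i\|$ for all $i$, i.e.\ $f_{G_v,L}(q')=f_{G_v,L}(p')$. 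Your Step~2 (the sphere-intersection argument showing $q(v)=p(v)$ once $q'=p'$) is essentially correct and matches the paper's final step, but the proposal as a whole does not work because Step~1 is a non sequitur.
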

\begin{proof}
Pin the framework $(G,\psi,p)$ (as done in Section~\ref{sec:z2nec}) and take any $q\in \hat{f}_{G,L}^{-1}(\hat{f}_{G,L}(p))$.
Since $|V(G)|\geq d+1$, we may assume that $v$ is not ``pinned'' (i.e., $v$ is different from the vertices selected when augmenting $f_{G,L}$ to $\hat{f}_{G,L}$).
Our goal is to show that $p=q$.

Let $p'$ and $q'$ be the restrictions of $p$ and $q$ to $V(G)-v$, respectively.
Since $(G-v,\psi,p')$ is $L$-periodically rigid, we have
$\rank df_{G-v,L}|_{p'}=d|V(G-v)|-d-{d-k\choose 2}$ by Proposition~\ref{prop:rigidity_matrix}.
 We may assume (by rotating the whole space if necessary) that $L(\Gamma)=\{0\}^{d-k}\times \mathbb{R}^k$.
Then by Proposition~\ref{prop:rigidity_matrix2}, we further have 
$\rank d\hat{f}_{G-v,L}|_{p'}=d|V(G-v)|$. 
Thus we can take a spanning subgraph $H$ of $G-v$ such that $d\hat{f}_{H,L}|_{p'}$ has linearly independent rows, i.e. is nonsingular.
As $q'\in \hat{f}_{H,L}^{-1}(\hat{f}_{H,L}(p'))$,  it follows from Lemma~\ref{lem:pq} that $\overline{\mathbb{Q}(p')}=\overline{\mathbb{Q}(q')}$.
This in turn implies that $q'$ is generic.

We may assume that all the edges incident to $v$ are directed from $v$.
Let $e_0=vv_0, e_1=vv_1,\dots, e_{d}=vv_d$ denote the edges incident to $v$,
where $v_i=v_j$ may hold.
By switching we may assume $\psi(vv_0)=\textrm{id}$.
For each $1\leq i\leq d$, let
\begin{align*}
\gamma_i&=\psi(vv_i), \\
x_i&=p(v_i)+L(\gamma_i)-p(v_0), \\
y_i&=q(v_i)+L(\gamma_i)-q(v_0),
\end{align*}
and let $P$ and $Q$ be the $d\times d$-matrices whose $i$-th column is $x_i$ and $y_i$, respectively.
Note that since $v$ is $d$-nondegenerate  and $p'$, \(q'\) are generic, $x_1,\dots, x_d$ and $y_1,\dots, y_d$ are, respectively, linearly independent, and hence $P$ and $Q$ are both nonsingular.

Let $x_v=p(v)-p(v_0)$ and $y_v=q(v)-q(v_0)$.
We then have $\|x_v\|=\|y_v\|$ since $G$ has the edge $vv_0$ with $\psi(vv_0)=\textrm{id}$.
Due to the existence of the edge $e_i$ we also have
\begin{align*}0 &=
\langle p(v_i)+L(\gamma_i)-p(v), p(v_i)+L(\gamma_i)-p(v)\rangle  -
\langle q(v_i)+L(\gamma_i)-q(v),q(v_i)+L(\gamma_i)-q(v) \rangle\\
&=\langle x_i-x_v, x_i-x_v\rangle - \langle y_i-y_v, y_i-y_v\rangle\\
&=(\|x_i\|^2-\|y_i\|^2)-2\langle x_i, x_v\rangle +2\langle y_i, y_v\rangle,
 \end{align*}
 where we used $\|x_v\|=\|y_v\|$.
Denoting by $\delta$ the $d$-dimensional vector whose $i$-th coordinate is equal to $\|x_i\|^2-\|y_i\|^2$,
the above $d$ equations can be summarized as
$$0=\delta-2P^T x_v+2Q^T y_v$$ which is equivalent to
$$y_v=(Q^T)^{-1}P^T  x_v -\frac{1}{2}(Q^T)^{-1}\delta.$$
By putting this into  $\|x_v\|^2=\|y_v\|^2$, we obtain
\begin{equation}
\label{eq:algebraic}
x_v^T(I_d-PQ^{-1}(PQ^{-1})^T)x_v-(\delta^T Q^{-1}(Q^{-1})^T P^T)x_v+\frac{1}{4}\delta^T Q^{-1}(Q^{-1})^T\delta=0,
\end{equation}
where $I_d$ denotes the $d\times d$ identity matrix.

Note that each entry of $P$ is contained in $\mathbb{Q}(p')$, and each entry of $Q$ is contained in $\mathbb{Q}(q')$. Since $\overline{\mathbb{Q}(p')}=\overline{\mathbb{Q}(q')}$,
this implies that each entry of $PQ^{-1}$ is contained in $\overline{\mathbb{Q}(p')}$.
On the other hand, since $p$ is generic, the set of coordinates  of $p(v)$ (and hence those of $x_v$) is algebraically independent over $\overline{\mathbb{Q}(p')}$.
Therefore, by regarding the left-hand side of (\ref{eq:algebraic}) as a polynomial in $x_v$,
the polynomial must be identically zero. In particular, we get
$$I_d-PQ^{-1}(PQ^{-1})^T=0.$$
Thus, $PQ^{-1}$ is orthogonal.
In other words, there is some orthogonal matrix $S$ such that $P=SQ$,
and we get $\|p(v_i)+L(\gamma_i)-p(v_0)\|=\|x_i\|=\|Sy_i\|=\|y_i\|=\|q(v_i)+L(\gamma_i)-q(v_0)\|$ for every $1\leq i\leq d$.
Therefore, $q'\in f_{G_v, L}^{-1}( f_{G_v, L}(p'))$.
Since  $(G_v, \psi_v,p)$ is $L$-periodically globally rigid, this in turn implies that
$f_{V-v, L}(p')=f_{V-v, L}(q')$.
Thus we have $p'=q'$.

Since $v$ is $d$-nondegenerate  and $p$ is generic,
%$\{p(v_i)+L(\gamma_i): 0\leq i\leq d\}$ 
$\bigcup_{v_i\in N_G(v)}\{p(v_i)+L(\psi(e)):\, e=vv_i\in E(G)\}$ affinely spans $\mathbb{R}^d$.
Hence there is a unique extension of $p':V-v\rightarrow \mathbb{R}^d$ to $r:V\rightarrow \mathbb{R}^d$ such that $f_{G, L}(r)=f_{G,L}(p)$.
Thus we obtain $p=q$.
\end{proof}

\subsection{Combinatorial part}
The combinatorial part consists of the following two lemmas whose proof will be given  in the next sections separately.
\begin{lemma}\label{lem:comb}
Let $(G,\psi)$ be a $\Gamma$-labeled graph with rank $k\geq 1$ periodicity $\Gamma$.
Suppose that $(G,\psi)$ is 2-connected, redundantly periodically rigid in $\mathbb{R}^2$, and has no $(0,2)$-block. Then at least one of the following holds:
\begin{itemize}
\item[(i)] There exists  $e\in E(G)$ such that the \(\Gamma\)-labeled graph $(G-e,\psi)$ is 2-connected, redundantly periodically rigid, and has no $(0,2)$-block.
\item[(ii)] $G$ has  a vertex of degree three.
\end{itemize}
\end{lemma}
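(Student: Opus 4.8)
The plan is to argue by contradiction: assume $(G,\psi)$ has no vertex of degree three, so every vertex has degree at least four (it cannot have degree $\leq 2$ by 2-connectedness, and degree three is excluded by assumption (ii) failing). Combining this with the Ross count $|E(G)| = 2|V(G)| - 2$ for a minimally rigid spanning subgraph — together with the fact that redundant rigidity forces $|E(G)| \geq 2|V(G)| - 1$ — one sees the average degree is only slightly above four, so the overwhelming majority of vertices have degree exactly four. I would then try to delete a carefully chosen edge $e$ and verify that $(G-e,\psi)$ retains all three properties: (a) 2-connectivity, (b) redundant periodic rigidity, and (c) no $(0,2)$-block.

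The core of the argument is a counting/matroid lemma establishing that the set of ``good'' edges — those whose removal preserves redundant periodic rigidity — is large. Here I would work in the count matroid underlying Theorem~\ref{thm:ross} (the $(2,2)$-sparsity matroid with the extra balanced $(2,3)$-constraint): redundant periodic rigidity of $(G,\psi)$ means every edge lies in two disjoint spanning rigid subgraphs, or equivalently the edge set minus any single edge still has full rank $2|V(G)|-2$. The obstruction to $(G-e,\psi)$ being redundantly periodically rigid is the existence of some edge $f$ such that $e$ lies in every spanning rigid subgraph of $G-f$; I would show, via submodularity of the rank function and the degree bound ($\delta \geq 4$), that the number of such ``bad'' edges is strictly less than $|E(G)|$, so a redundancy-preserving edge exists. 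One then additionally has to avoid the finitely many edges whose deletion destroys 2-connectivity (these lie in no cycle-free sense but rather are bridges of $2$-connectivity — there are at most $|V(G)|-1$ structurally constrained such edges, controlled again by the degree condition), and the edges whose deletion creates a $(0,2)$-block. The $(0,2)$-block issue is the most delicate: deleting $e$ can only create a new $(0,2)$-block $H$ if $e$ had exactly one endpoint, or bridged the balance structure, in a near-$(0,2)$ configuration; I would characterize these ``dangerous'' edges and show they, too, form a proper subset, so that after excluding all three bad families at least one edge survives.

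The step I expect to be the main obstacle is simultaneously controlling all three conditions: individually each ``bad'' family can be bounded, but one must ensure the three families do not cover all of $E(G)$. The degree hypothesis (every vertex has degree $\geq 4$) is what provides the slack — it gives $|E(G)| \geq 2|V(G)|$, hence at least two ``excess'' edges beyond the rigidity count $2|V(G)|-2$ — but translating this global slack into the statement that the union of the bad sets is proper will require a somewhat careful local analysis around low-degree vertices and around the vertices of any tight $(s,t)$-block. I would handle this by first reducing to the case where $(G,\psi)$ is ``critically'' close to the boundary of each property (otherwise there is obvious room), and then doing the local surgery: if a vertex $v$ of degree four is adjacent to another low-degree vertex or sits in a small tight block, one can often contract or reroute rather than delete; the remaining generic case has enough degree-four vertices spread out that a direct counting argument closes it. An alternative fallback, should the direct count be too tight, is to prove the contrapositive constructively via an inductive edge-addition / vertex-splitting scheme on $(2,2)$-tight graphs with no $(0,2)$-block, mirroring the Henneberg-type constructions in~\cite{jjsz,T1}; but I would attempt the counting argument first since it is cleaner.
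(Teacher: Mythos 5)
Your high-level reduction is right, but the argument as outlined has a genuine gap exactly where you flag you expect trouble, and the slack you invoke is not of the right kind to close it.

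Two structural points that you are missing. First, the paper does not try to show that three separate ``bad'' families fail to cover $E(G)$ by a degree count. Instead it uses Theorem~\ref{lem:5} to collapse two of your three conditions into one: for unbalanced $G$ with no $(0,2)$-block, being 2-connected and redundantly periodically rigid is \emph{equivalent} to being $M$-connected in the count matroid $\mathcal{R}_2$. So the whole question becomes: is there $e$ with $G-e$ $M$-connected and free of $(0,2)$-blocks? Non-emptiness of $E' = \{e : G-e \text{ $M$-connected}\}$ is then obtained not by a global degree/rank count but by Lemma~\ref{lem:8}, an ear-decomposition argument showing that an unbalanced minimally $M$-connected graph always has a degree-three vertex; your hypothesis $\delta(G)\geq 4$ then forces $E'\neq\emptyset$. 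Your observation that $|E(G)|\geq 2|V(G)|$ under $\delta\geq 4$ is true but does not, on its own, give you this — the two excess edges beyond $2|V|-2$ could both be ``essential'' in complicated ways, and the paper needs the full ear-decomposition machinery (Lemmas~\ref{lem:EDfacts},~\ref{lem:jj},~\ref{lem:jordan},~\ref{lem:8}) to rule that out.

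Second, and more seriously, the $(0,2)$-block condition is not handled by counting at all. The paper argues by contradiction: pick $e\in E'$ whose $(0,2)$-block $H_e$ has \emph{smallest} vertex set, then invokes Lemma~\ref{lem:key} — a substantial structural lemma using 2-sums and cleavage graphs — to produce an edge $f$ \emph{inside} $H_e$ with $G-f$ still $M$-connected, and then runs a four-case analysis on how the new block $H_f$ sits relative to $H_e$ (its boundary $\{x,y\}$ versus $\{a,b\}=B(H_e)$) to derive a contradiction. There is no bound on the number of ``dangerous'' edges anywhere in this argument; the proof lives entirely in the combinatorics of overlapping $(0,2)$-blocks, balancedness, and matroid circuits. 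Your proposed characterization of dangerous edges plus a cardinality bound is not developed, and it is not clear it can be, since the bad sets can easily be large; what saves the day is the extremal choice of $e$ and the structural lemma, not a count. Your fallback via Henneberg-type construction is also essentially orthogonal to what the paper does. In short, the skeleton (reduce mod the degree-three case, then find a good edge) matches, but the central mechanism is absent from your write-up.
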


\begin{lemma}\label{lem:comb2}
Let $(G,\psi)$ be a $\Gamma$-labeled graph with rank $k\geq 1$ periodicity $\Gamma$ and \(|V(G)|\geq4\).
Suppose that $(G,\psi)$ is 2-connected, redundantly periodically rigid in $\mathbb{R}^2$, and has no $(0,2)$-block.
Then the minimum degree of $G$ is at least three and the following hold for every vertex $v$ of degree three.
\begin{itemize}
\item $v$ is $2$-nondegenerate.
\item $(G-v, \psi)$ is periodically rigid in $\mathbb{R}^2$.
\item $(G_v, \psi_v)$ is 2-connected, redundantly periodically rigid in $\mathbb{R}^2$, and has no $(0,2)$-block.
\end{itemize}
\end{lemma}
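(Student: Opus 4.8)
The statement is purely combinatorial, so the plan is to argue inside the count matroid given by the sparsity conditions of Theorem~\ref{thm:ross}: I call a subgraph of a $\Gamma$-labeled graph a \emph{basis} if it spans the vertex set, has $2|V|-2$ edges, is $(2,2)$-sparse, and is $(2,3)$-sparse on every balanced edge set, so that $(H,\psi)$ is periodically rigid iff it contains a basis, and redundantly periodically rigid iff $(H-e,\psi)$ contains a basis for every $e\in E(H)$. The four assertions — minimum degree at least three; nondegeneracy of each degree-$3$ vertex $v$; periodic rigidity of $(G-v,\psi)$; and that $(G_v,\psi_v)$ is $2$-connected, redundantly periodically rigid, and has no $(0,2)$-block — will be established essentially in that order, since each uses the previous ones.

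The first three assertions are short. A vertex of degree $\le 1$ would force $|V(G)|\le 2$ by $2$-connectivity. If $w$ has degree $2$ its two neighbours are distinct (otherwise the common neighbour is a cut vertex), the two edges at $w$ span a balanced subgraph $H$, and $2$-connectivity together with $|V(G)|\ge 4$ gives $I(H)=\{w\}$ and $B(H)=N_G(w)$, a forbidden $(0,2)$-block; hence $\delta(G)\ge 3$. If a degree-$3$ vertex $v$ had all three edges to one neighbour, that neighbour would be a cut vertex, so $v$ has at least two distinct neighbours, and for each neighbour $u$ the set $\{L(\psi(e)):e=vu\in E(G)\}$ consists of at most two points (distinct because $(G,\psi)$ is semi-simple and $L$ is injective), hence is affinely independent, so $v$ is nondegenerate. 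For $(G-v,\psi)$, fix an edge $e_0$ at $v$; redundant periodic rigidity gives a basis $H_0$ of $G-e_0$, and since $v$ has degree $2$ in $G-e_0$ while a vertex of $H_0$-degree $\le 1$ would violate $(2,2)$-sparsity, $\deg_{H_0}(v)=2$, so $H_0-v$ (easily seen to span $V(G)\setminus\{v\}$) is a basis of $G-v$ and $(G-v,\psi)$ is periodically rigid; as $E(G-v)\subseteq E(G_v)$, the graph $(G_v,\psi_v)$ is periodically rigid as well.

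Next consider $(G_v,\psi_v)$. It is $2$-connected: $G-v$ is connected and adding edges destroys no connectivity, so it suffices to destroy every cut vertex $c$ of $G-v$; but $2$-connectivity of $G$ forces $v$ to have a neighbour strictly inside each of the at most three components of $(G-v)-c$, and the edges that $G_v$ inserts among the neighbours of $v$ reconnect these components without passing through $c$. For redundant periodic rigidity, remove an edge $e$ from $G_v$: if $e$ is an inserted shortcut edge then $G_v-e\supseteq G-v$ is periodically rigid; if $e$ is an original edge of $G-v$, then $(G-e,\psi)$ is periodically rigid with $v$ still of degree $3$, a basis $H_0$ of $G-e$ has $\deg_{H_0}(v)\in\{2,3\}$, and either $H_0-v$ (degree $2$) or $H_0-v$ together with a suitable shortcut edge (degree $3$) is a basis of $G_v-e$. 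The last point uses a periodic reverse $1$-extension (Henneberg) lemma — at least one of the at most three shortcut edges at $v$ can be added to $H_0-v$ while preserving both sparsity counts — which I would prove directly from basic properties of the matroid, together with the bookkeeping needed when $v$ has a repeated neighbour or when the useful shortcut edge coincides with an edge already present in $G$ (so that it might equal $e$).

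It remains to show $(G_v,\psi_v)$ has no $(0,2)$-block. Suppose $(H,\psi)$ were one with $B_{G_v}(H)=\{x,y\}$, and form $H'\subseteq G$ by replacing every shortcut edge used by $H$ with its corresponding $2$-path through $v$; since a shortcut edge carries exactly the group label of that $2$-path and the shortcut labels at $v$ satisfy the compatibility relations coming from the labels of the edges at $v$, a short computation shows that $H'$ is again balanced. Splitting into cases according to how many of the (at most three) edges at $v$ these shortcuts touch — equivalently, whether $H$ uses zero, exactly one, or at least two shortcut edges — one verifies that $H'$, or a subgraph of it, is a $(0,2)$-block of $G$, or else that the configuration forces $v$ or a neighbour of $v$ to be a cut vertex of $G$; either way this contradicts the hypotheses on $G$. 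I expect this final step to be the main obstacle: the shortcut edges make the map from subgraphs of $G_v$ to subgraphs of $G$ many-to-one, so one must track precisely which edges at $v$ a candidate block uses, check that the lift stays balanced, and eliminate a short list of boundary configurations — a block meeting some but not all neighbours of $v$, or one whose boundary in $G$ acquires an extra vertex from an original edge incident to a neighbour of $v$. The reverse $1$-extension lemma for the $\Gamma$-labeled count matroid is the remaining, smaller, technical ingredient.
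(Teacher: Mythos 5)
Your handling of the first three bullets is fine — the paper gets the degree bound and the periodic rigidity of $(G-v,\psi)$ directly from Lemma~\ref{lem:0extension} rather than via a $(0,2)$-block, but your variants work. Where you diverge is the $G_v$ part, and there your plan has a real gap. The paper isolates this as Lemma~\ref{lem:comb2pre} and proves redundant periodic rigidity of $G_v$ entirely inside the matroid ${\cal R}_2$: fix $f\in E_v$; if some circuit of $G+F$ through $f$ avoids the removed edge $e$, a two-line closure computation gives $r_2(G+F-e-f)=2|V(G)|-2$ and Lemma~\ref{lem:0extension} finishes; otherwise every such circuit contains $e$, forcing $e\in F$, and a separate circuit-elimination argument (Claim~\ref{claim:mc}) derives a contradiction. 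No Henneberg reversal is used.

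Your route instead takes a basis $H_0$ of $G-e$ with $\deg_{H_0}(v)=3$ and invokes a periodic reverse $1$-extension lemma to add one shortcut $e_i\cdot e_j$ to $H_0-v$. You flag as ``bookkeeping'' the case where the useful shortcut is identical to the removed edge $e$, but that case is the crux, not bookkeeping: the construction of $(G_v,\psi_v)$ explicitly does \emph{not} duplicate an edge already in $G$, so if $e_i\cdot e_j$ is identical to $e$ then $e_i\cdot e_j\notin E(G_v-e)$, and nothing in the statement ``at least one shortcut works for $H_0-v$'' excludes this being the only one that works. You would need either a strictly stronger extension lemma (that a second shortcut is always available, which is false in general for degree-$3$ splits) or a proof that $H_0$ can always be re-chosen with $\deg_{H_0}(v)=2$, and neither is supplied or obvious. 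The $(0,2)$-block part of your plan is closer to the paper (its four cases on $|E(H)\cap F|$ correspond to your zero/one/$\ge$two split), but the sub-case $|N_G(v)|=3$, $|E(H)\cap F|=2$ needs the maximality of the block and the fact that $F$ is then a balanced triangle, which your ``forces $v$ or a neighbour to be a cut vertex'' description doesn't capture. So the approach is a genuinely different (basis-and-Henneberg vs.\ closure-and-circuit) route, but as written it leaves the decisive step unproved.
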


\subsection{Proofs of Theorem~\ref{thm:main0} and Theorem~\ref{thm:main1}}
Assuming the correctness of Lemma~\ref{lem:comb} and Lemma~\ref{lem:comb2}
we are now ready to prove our main theorems.
\begin{proof}[Proof of Theorem~\ref{thm:main0}]
To see the necessity, first observe the following:
\begin{equation}
\label{eq:nec1}
\begin{split}
&\text{$(G,\psi)$  has no $(s,t)$-block $H$ with $s+t\leq 2=d$ such that $V(H)\neq V(G)$ } \\
&\text{or $s<1=k$ if and only if $(G,\psi)$ is  2-connected and has no $(0,2)$-block.}
\end{split}
\end{equation}
Indeed, if $(G,\psi)$ is not 2-connected, then it has an $(s,t)$-block $H$ with $s\leq k=1$, $t\leq 1$, and $V(H)\neq V(G)$.
 Conversely, suppose that  $(G,\psi)$ has an $(s,t)$-block $H$ with $s+t\leq 2$ such that $V(H)\neq V(G)$ or $s<k=1$.
 If $H$ is not a $(0,2)$-block, then $s\geq 1$, $t\leq 1$ and $V(H)\neq V(G)$ hold. Hence $G$ is not 2-connected.
  Thus by (\ref{eq:nec1}) the necessity of the conditions follows from Lemmas~\ref{lem:connectivity} and \ref{thm:necweak}.

The proof of the sufficiency is done by induction on the lexicographic order of the list $(V(G), E(G))$.
Suppose that $|V(G)|=3$.
By the 2-connectivity, \(G\) contains a triangle and the minimum degree in \(G\) is at least three.
Let $G'$ be an inclusionwise minimal spanning subgraph that is 2-connected and redundantly periodically rigid. Then it consists of five edges,  two parallel classes and one simple edge.
Since $G'$ has a vertex of degree three, we can use Lemmas~\ref{lem:redrig} and~\ref{rem:2orbits} to deduce that \((G',\psi)\) (and hence $(G,\psi)$) is $L$-periodically globally rigid.

Assume that $|V(G)|\geq 4$.
Suppose that $G$ has a vertex of degree three.
By Lemma~\ref{lem:comb2}, $(G_v, \psi_v)$ is 2-connected, redundantly periodically rigid, and has no $(0,2)$-block. 
Hence, by induction, $(G_v, \psi_v, p)$ is $L$-periodically globally rigid.
Therefore it follows from Lemma~\ref{lem:comb2} and Lemma~\ref{lem:redrig} that $(G,\psi, p)$ is $L$-periodically globally rigid.

Thus we may assume that $G$ has no vertex of degree three.
Then by Lemma~\ref{lem:comb} there is an edge $e$ such that $(G-e, \psi)$ is 2-connected, redundantly periodically rigid, and has no $(0,2)$-block.
By induction,  $(G-e,\psi,p)$ (and hence $(G,\psi,p)$) is $L$-periodically globally rigid.
\end{proof}

\begin{proof}[Proof of Theorem~\ref{thm:main1}]
By Lemma~\ref{lem:glueing} we may assume that $G$ is 2-connected.
Then the necessity of the conditions again follows from Lemmas~\ref{lem:connectivity} and \ref{thm:necweak}.

The proof of the sufficiency is similar to that of Theorem~\ref{thm:main0}, and it is done by induction on the lexicographic order of the list $(V(G), E(G))$.
The case when $|V(G)|=3$ is exactly the same as that for Theorem~\ref{thm:main0}.
Hence we assume $|V(G)|\geq 4$.

Suppose that $G$ has a vertex of degree three.
By Lemma~\ref{lem:comb2}, $(G_v, \psi_v)$ is 2-connected, redundantly periodically rigid, and has no $(0,2)$-block. Also, by the definition of $(G_v,\psi_v)$, we have $\Gamma_{G_v}=\Gamma_G$. 
Hence, by induction, $(G_v, \psi_v, p)$ is $L$-periodically globally rigid.
Therefore it follows from Lemma~\ref{lem:comb2} and Lemma~\ref{lem:redrig} that $(G,\psi, p)$ is $L$-periodically globally rigid.

Thus we may assume that $G$ has no vertex of degree three.
Then by Lemma~\ref{lem:comb} there is an edge $e$ such that $(G-e, \psi)$ is 2-connected, redundantly periodically rigid, and has no $(0,2)$-block.
If the rank of $\Gamma_{G-e}$ is equal to two, then we can apply the induction hypothesis, meaning that  $(G-e,\psi,p)$ (and hence $(G,\psi,p)$) is $L$-periodically globally rigid.
Therefore, we assume that the rank of $\Gamma_{G-e}$ is smaller than two.

%Since $(G-e, \psi)$ is periodically rigid, Theorem~\ref{thm:ross} implies that the rank of $\Gamma_{G-e}$ is nonzero.
Since $\Gamma_G$ has rank $2$, the rank of $\Gamma_{G-e}$ is nonzero.
 Thus the rank of $\Gamma_{G-e}$ is one.
By switching, we may suppose that every label in $E(G-e)$ is in $\Gamma_{G-e}$
and consider the restriction $L':\Gamma_{G-e}\rightarrow \mathbb{R}^2$ of $L$.
By Theorem~\ref{thm:main0}, $(G-e,\psi,p)$ is $L'$-periodically globally rigid.
To show that $(G,\psi,p)$ is $L$-periodically globally rigid,
take any $q:V(G)\rightarrow \mathbb{R}^2$ such that $f_{G,L}(p)=f_{G,L}(q)$.
Since $(G-e,\psi,p)$ is $L'$-periodically globally rigid, Proposition~\ref{prop:isometry} implies that $q$ can be written as $q=h\circ p$ for some $L(\Gamma_{G-e})$-invariant isometry $h$.
Since the rank of $\Gamma_{G-e}$ is one and the dimension of the ambient space is two,
$h$ is either the identity or the reflection along the line whose direction is in the span of $L(\Gamma_{G-e})$.

Let $i$ and $j$ be the end vertices of $e$ and let $\gamma_e=\psi(e)$.
Using the orthogonal matrix $S$ representing the reflection along the span of $L(\Gamma_{G-e})$,
we have  $q(i)-q(j)=S(p(i)-p(j))$.
By looking at the length constraint for $e$,
$\|p(i)-(p(j)+L(\gamma_e))\|=\|q(i)-(q(j)+L(\gamma_e))\|$, which implies
$\langle (I-S)(p_i-p_j), L(\gamma_e)\rangle=0$,
or equivalently,  $\langle p_i-p_j, (I-S^{\top})L(\gamma_e)\rangle=0$.
Note that $S$ is determined by $L(\Gamma_{G-e})$, and independent from $p_i-p_j$.
Hence, the generic assumption for $p$ implies $(I-S^{\top})L(\gamma_e)=0$.
Since $\gamma_e\notin \Gamma_{G-e}$, this implies $I=S$.
Thus we get $f_{V,L}(p)=f_{V,L}(q)$, and $(G,\psi, p)$ is $L$-periodically globally rigid.
\end{proof}

\section{Proof of the Combinatorial Part: Proofs of Lemma~\ref{lem:comb} and Lemma~\ref{lem:comb2}}
Due to Theorem~\ref{thm:ross},  we may  work in a purely combinatorial world in order to prove Lemma~\ref{lem:comb} and Lemma~\ref{lem:comb2}.

Since all the graphs we treat in the following discussions are $\Gamma$-labeled graphs,
we shall use the following convention.
Throughout the section, we  omit the labeling function $\psi$ to denote a $\Gamma$-labeled graph $(G,\psi)$.
The underlying graph of each $\Gamma$-labeled graph is directed, but the direction is used only to refer to the group labeling. So the underlying graph is treated as an undirected graph if we are interested in its graph-theoretical properties, such as the connectivity, vertex degree, and so on.

Let $G=(V,E)$ be a $\Gamma$-labeled graph.
For disjoint sets $X, Y\subseteq V$, $d_G(X,Y)$ denotes the number of edges between $X$ and $Y$,
and let $d_G(X):=d_G(X, V\setminus X)$ and $d_G(v)=d_G(\{v\})$ for $v\in V$.
For $X\subseteq V$, $i_G(X)$ denotes the number of edges induced by $X$.
For an edge set $F$, let $G[F]=(V(F), F)$.

Given two $\Gamma$-labeled graphs $G_1=(V_1, E_1)$ and $G_2=(V_2, E_2)$,
let $G_1\cup G_2=(V_1\cup V_2, E_1\cup E_2)$ and
$G_1\cap G_2=(V_1\cap V_2, E_1\cap E_2)$.
Note that $G_1\cup G_2$ and $G_1\cap G_2$ are $\Gamma$-labeled graphs whose labeling are inherited from those of $G_1$ and $G_2$.
Note also that, when taking the union or the intersection,
 the labels are taken into account and two edges are recognized as the same edge  if and only if they are identical. Hence $G_1\cup G_2$ may contain parallel edges even if $G_1$ and $G_2$ are simple.

Several terms defined for edge sets will be  used for graphs $G$ by implicitly referring to $E(G)$. If there is no confusion, terms for graphs will be conversely used for edge sets $E$ by referring to the graph $(V(E), E)$.

We use the following terminology from matroid theory.
Let  \(\M=(E,\mathcal{I})\) be a matroid, where $E$ is the ground set and $\mathcal{I}$ is the family of independent sets.  A circuit in $\M$ is a minimal dependent subset of $E$, i.e. a dependent set whose proper subsets are all independent. We define a relation on \(E\) by saying that \(e,f\in E\) are \emph{related} if \(e=f\) or if there is a circuit \(C\) in \(\M\) with \(e,f\in C\). It is well-known that this is an equivalence relation. The equivalence classes are called the \emph{components} of \(\M\). If \(\M\) has at least two elements and only one component then \(\M\) is said to be \emph{connected}.
If \(\M\) has components \(E_1,E_2,\dots,E_t\) and \(\M_i\) is the matroid restriction of \(\M\) onto \(E_i\) then \(\M=\M_1\oplus\M_2\oplus\dots\oplus\M_t\) is the direct sum of the \(\M_i\)'s.

%%%%%%%%%%%%%%%%%%%%%%%%%%%%%%%%%%%%%%%%%%%%%%%%%%%%%%%%%%%%%%%%%%%%%%%%%%%%%%%%%%%%%%%%%%%%%%%%%%%%%%%%%%%%%%%%%%%%%%%%%%%%%%%%%%%%%%%%%%%%%%%%%%
\subsection{Count matroids}
Let $V$ be a finite set and $\Gamma$ be  a  group isomorphic to $\mathbb{Z}^k$.
We say that an edge set $E$ is {\em independent} if $|F|\leq 2|V(F)|-3$ for every nonempty balanced $F\subseteq E$ and  $|F|\leq 2|V(F)|-2$ for every nonempty $F\subseteq E$.
Proposition~\ref{prop:rigidity_matrix} and Theorem~\ref{thm:ross} imply that the family of all independent edge sets forms the family of independent sets of a matroid on $K(V,\Gamma)$,
which is denoted by ${\cal R}_2(V, \Gamma)$ or simply by ${\cal R}_2(V)$.
(This can also be checked in a purely combinatorial fashion, see, e.g., \cite{jkt}.)
Let $r_2$ be the rank function and ${\rm cl}_2$ be the closure operator of ${\cal R}_2(V)$.

We say that $G$ is a {\em circuit} if $E(G)$ is a circuit in ${\cal R}_2(V(G))$.
An example of a balanced circuit is the simple complete graph on four vertices with identity label on each edge.
An example of an unbalanced circuit is a $\Gamma$-labeled graph with two vertices and three parallel edges such that no two parallel edges form a balanced set.

We say that $G$ is {\em $M$-connected} if the restriction of ${\cal R}_2(V(G))$ to $E(G)$ is connected.

For a $\Gamma$-labeled graph $G$ on $V$, denote $r_2(G)=r_2(E(G))$. 
By Theorem~\ref{thm:ross}, $G$ is periodically rigid in $\mathbb{R}^2$ if and only if
$r_2(G)=2|V(G)|-2$.

Note that if $G$ is balanced, then the restriction of ${\cal R}_2(V)$ to $E(G)$ is isomorphic to the 2-dimensional generic rigidity matroid.
Hence we say that  $G$ is {\em rigid} if $G$ is balanced and $r_2(G)=2|V(G)|-3$.

We can use known properties of the 2-dimensional generic rigidity matroids for balanced sets.
The following statements  can be found in \cite{jj} (with a slightly different terminology).
\begin{lemma}
\label{lem:4}
If a $\Gamma$-labeled graph $G$ is balanced and $M$-connected, then $G$ is rigid.
\end{lemma}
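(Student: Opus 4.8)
The plan is to reduce Lemma~\ref{lem:4} to the corresponding statement for the ordinary $2$-dimensional generic rigidity matroid of a finite \emph{simple} graph, which is contained in \cite{jj}, and to outline the rigid-component argument behind that statement.

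First I would record that a balanced $\Gamma$-labeled graph $G$ has no parallel edges: if $e_1,e_2$ both joined $u$ and $v$, then, since $G$ is semi-simple, they may be oriented from $u$ to $v$ with distinct labels $\gamma_1\neq\gamma_2$, and the closed walk $u,e_1,v,e_2,u$ would have label $\gamma_1\gamma_2^{-1}\neq\id$, contradicting that $G$ is balanced. Hence the underlying graph $G_0$ of $G$ is simple. Moreover every subset $F$ of $E(G)$ is balanced, so the only binding condition for independence in the restriction of ${\cal R}_2(V)$ to $E(G)$ is $|F|\leq 2|V(F)|-3$; thus this restriction is exactly the $2$-dimensional generic rigidity matroid of $G_0$. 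Consequently $G$ is $M$-connected if and only if the rigidity matroid of $G_0$ is connected, and $G$ is rigid (in the sense defined above) if and only if $r_2(E(G))=2|V(G_0)|-3$, i.e. if and only if $G_0$ is rigid. So it remains to prove: a finite simple graph whose $2$-dimensional generic rigidity matroid is connected is rigid.

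For this I would argue by contradiction via rigid components. Suppose $G_0$ is not rigid, and let $E_1,\dots,E_m$ (with $m\geq 2$ and each $E_i\neq\emptyset$) be the edge sets of the maximal rigid subgraphs of $G_0$; these partition $E(G_0)$. The key point is that every circuit $C$ of the rigidity matroid lies inside a single $E_i$: a rigidity circuit satisfies $|C|=2|V(C)|-2$ with $|V(C)|\geq 4$ and spans a connected rigid subgraph $G_0[C]$ — these are standard facts from \cite{jj} (the count follows from minimality together with the rank bound $r_2(F)\leq 2|V(F)|-3$, connectedness from the observation that a disconnected spanning subgraph of $C$ would make a proper subset of $C$ dependent, and $|V(C)|\geq 4$ from simplicity of $G_0$). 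Hence $G_0[C]$ is contained in some maximal rigid subgraph, and in a unique one, since any two maximal rigid subgraphs meet in at most one vertex while $|V(C)|\geq 4$. Therefore no circuit meets two distinct classes, so the rigidity matroid of $G_0$ is the direct sum of its restrictions to $E_1,\dots,E_m$, a direct sum of $m\geq 2$ nonempty matroids, contradicting connectivity. Thus $G_0$, and hence $G$, is rigid.

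The only slightly delicate part is the reduction step: one must check carefully that "balanced" genuinely collapses the two count conditions defining ${\cal R}_2(V)$ to the single Laman-type inequality and removes all parallel edges, so that the identification with the rigidity matroid of $G_0$ is clean. After that, the argument is a routine application of the rigid-component structure theory recorded in \cite{jj}; in fact, once the reduction is in place one may simply invoke the statement from \cite{jj} that an $M$-connected finite simple graph is rigid.
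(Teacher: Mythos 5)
Your proof is correct. The paper itself offers no proof of Lemma~\ref{lem:4}, only a citation to \cite{jj} after observing that the restriction of ${\cal R}_2(V)$ to a balanced edge set is isomorphic to the ordinary $2$-dimensional generic rigidity matroid; your argument spells out exactly that reduction (including the elimination of parallel edges from a balanced semi-simple graph) and then reconstructs the standard rigid-component argument from \cite{jj} showing that a connected rigidity matroid on a simple graph forces rigidity.
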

\begin{lemma}\label{cl:G_1cupG_2rigid1}
Let $G_1$ and $G_2$ be two rigid graphs.
Suppose that   $|V(G_1)\cap V(G_2)|\geq 2$ and  $G_1\cup G_2$  is balanced.
Then $G_1\cup G_2$ is rigid.
\end{lemma}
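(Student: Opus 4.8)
The plan is to reduce the claim to the corresponding well-known fact about the 2-dimensional generic rigidity matroid on balanced graphs. Since $G_1\cup G_2$ is balanced, after a suitable switching at the vertices of some spanning tree of $G_1\cup G_2$ we may assume that every edge of $G_1\cup G_2$ carries the label $\mathrm{id}$; indeed, a balanced $\Gamma$-labeled graph can always be brought into this form by switchings, and switchings preserve all the relevant matroidal data (the balanced and general count conditions, hence the matroid $\mathcal{R}_2(V)$ restricted to the edge set). The restriction of $\mathcal{R}_2(V)$ to a balanced edge set is, by the remark preceding Lemma~\ref{lem:4}, isomorphic to the 2-dimensional generic rigidity matroid on the corresponding simple graph, and under this isomorphism "rigid" in our sense ($r_2(G)=2|V(G)|-3$ together with balancedness) corresponds exactly to rigidity in the usual $\mathbb{R}^2$ sense.

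With that translation in hand, the statement becomes: if $H_1$ and $H_2$ are rigid graphs in $\mathbb{R}^2$ with $|V(H_1)\cap V(H_2)|\geq 2$, then $H_1\cup H_2$ is rigid in $\mathbb{R}^2$. This is a classical gluing lemma and is exactly one of the statements cited from \cite{jj}; I would either invoke it directly or give the one-line argument: pick a generic realization $p$ of $V(H_1)\cup V(H_2)$; then $p$ restricted to $V(H_i)$ is a generic realization of $H_i$, so $(H_i,p)$ is infinitesimally rigid; the union of two infinitesimally rigid frameworks sharing at least two vertices (hence sharing an edge's worth of "pinning", i.e. enough to kill all trivial motions of the ambient space jointly) is infinitesimally rigid, because any infinitesimal flex of $H_1\cup H_2$ restricts to a trivial flex on each $V(H_i)$, and two trivial flexes of $\mathbb{R}^2$ agreeing on two points agree everywhere, so the flex is globally trivial. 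Hence $H_1\cup H_2$ is rigid.

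The one subtlety worth spelling out carefully is that we must check that the hypothesis "$G_1\cup G_2$ is balanced" together with "$G_1$, $G_2$ rigid" is consistent with passing to a common switching: rigid graphs are in particular balanced, and a balanced graph that contains $G_1$ and $G_2$ has each of $G_1$, $G_2$ balanced as subgraphs, so the simultaneous switching to the all-$\mathrm{id}$ labeling on $G_1\cup G_2$ also trivializes $G_1$ and $G_2$ individually; there is no obstruction here. The main (and only real) obstacle is simply making the correspondence between the $\Gamma$-labeled count matroid on balanced sets and the ordinary generic rigidity matroid fully precise, but this is already asserted in the text just before Lemma~\ref{lem:4}, so I would lean on it and keep the argument short. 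No step requires any genuinely new idea.
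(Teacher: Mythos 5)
Your proposal is correct and takes the same route the paper intends: the paper simply cites \cite{jj} for this gluing lemma after noting that balanced edge sets in $\mathcal{R}_2(V)$ form the ordinary 2-dimensional generic rigidity matroid, and you make this reduction explicit (switching $G_1\cup G_2$ to the all-identity labeling and then invoking or re-deriving the classical gluing lemma). Your care about the switching simultaneously trivializing $G_1$ and $G_2$, and the infinitesimal-rigidity argument that two shared points kill all trivial motions of $\mathbb{R}^2$, are both sound.
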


The following simple property was first observed in \cite{ross2}.
\begin{lemma}[\cite{ross2}]
\label{lem:0extension}
Let $v$ be a vertex of degree two in $G$ with $|V(G)|\geq 2$.
Then $r_2(G-v)=r_2(G)-2$.
\end{lemma}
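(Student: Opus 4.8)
The statement is that deleting a degree-two vertex $v$ from a $\Gamma$-labeled graph $G$ (with $|V(G)|\geq 2$) drops the rank in ${\cal R}_2(V)$ by exactly two. The plan is to argue the two inequalities $r_2(G-v)\geq r_2(G)-2$ and $r_2(G-v)\leq r_2(G)-2$ separately. The first is immediate and holds for any vertex of degree at most two: deleting $v$ removes $d_G(v)\leq 2$ edges, and deleting one edge drops the rank by at most one, so $r_2(G-v)\geq r_2(G)-d_G(v)\geq r_2(G)-2$. (If $d_G(v)<2$ the inequality $r_2(G-v)=r_2(G)-d_G(v)$ is trivial and one checks directly that $d_G(v)=2$ is the interesting case; if $v$ has degree $0$ or $1$ then actually $r_2$ drops by $d_G(v)$, but the hypothesis of the lemma is degree exactly two — I would simply note that the two incident edges are always independent from each other and from $E(G-v)$.)

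For the reverse inequality $r_2(G-v)\leq r_2(G)-2$, I would take a maximal independent subset $I$ of $E(G-v)$, so $|I|=r_2(G-v)$, and show that $I$ together with the two edges $e_1,e_2$ incident to $v$ is independent in ${\cal R}_2(V)$; this gives $r_2(G)\geq |I|+2=r_2(G-v)+2$. To verify independence of $I\cup\{e_1,e_2\}$ I would check the two defining count conditions. Any nonempty $F\subseteq I\cup\{e_1,e_2\}$ either avoids $v$ — in which case $F\subseteq I$ is independent by assumption — or contains $v\in V(F)$. In the latter case $F$ contains at most two edges at $v$, so $i_{G[F]}(V(F))\le |F\setminus\{e_1,e_2\}| + 2 = |F'| + 2$ where $F'=F\cap I$ spans a vertex set $V(F')$ with $|V(F')|\ge |V(F)|-1$. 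Since $F'$ satisfies $|F'|\le 2|V(F')|-2$, we get $|F|\le |F'|+2\le 2|V(F')|\le 2|V(F)|$; one then has to push this to the sharper bounds $|F|\le 2|V(F)|-2$ in general and $|F|\le 2|V(F)|-3$ when $F$ is balanced, using that $v$ has degree exactly two in $F$ so that $F'$ spans $V(F)\setminus\{v\}$ with $|V(F')|=|V(F)|-1$, hence $|F|\le |F'|+2\le 2(|V(F)|-1)-2+2 = 2|V(F)|-2$, and if moreover $F$ is balanced then so is $F'$ (as $\Gamma_{F'}\le\Gamma_F$), giving $|F|\le 2(|V(F)|-1)-3+2 = 2|V(F)|-3$.

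The one point requiring a little care — and the main (mild) obstacle — is the case where $F'=F\cap I$ is empty or where $V(F')$ does not contain both endpoints-other-than-$v$ of $e_1$ and $e_2$, i.e.\ when $F=\{e_1\}$, $F=\{e_2\}$, or $F=\{e_1,e_2\}$ and $e_1,e_2$ are parallel. In those small cases one checks the counts by hand: a single edge has $|F|=1\le 2\cdot 2-3$ and is balanced or not according to its label, and two parallel edges $e_1,e_2$ at $v$ with distinct labels form an unbalanced set with $|F|=2=2\cdot 2-2$ and no nonempty balanced subset violating the bound (each single edge satisfies $1\le 2\cdot2-3$), while if they were identical they would be the same edge by semi-simplicity. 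So $I\cup\{e_1,e_2\}$ is independent, which completes the argument. I expect the whole proof to be short once the bookkeeping for $V(F')$ versus $V(F)$ is set up correctly.
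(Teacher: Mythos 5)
The paper does not actually prove this lemma; it attributes it to Ross~\cite{ross2} and cites it as a known fact, so there is no ``paper proof'' to compare against. Your direct verification from the count conditions defining ${\cal R}_2(V,\Gamma)$ is a valid proof and is the natural way to establish it.

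Both halves of your argument are sound. The inequality $r_2(G)-r_2(G-v)\le 2$ is immediate from submodularity of the rank function (deleting the two incident edges drops rank by at most two). For the reverse direction, taking a basis $I$ of $E(G-v)$ and verifying that $I\cup\{e_1,e_2\}$ still satisfies the sparsity counts works. One small imprecision: your ``i.e.''\ clause listing the exceptional cases is not exhaustive. Besides $F=\{e_1\}$, $F=\{e_2\}$, and $F=\{e_1,e_2\}$ parallel, you also have cases where $F'=F\cap I$ is nonempty but $V(F')$ fails to contain one or both of the non-$v$ endpoints of the edges of $F$ incident to $v$ (so $|V(F)|>|V(F')|+1$), and the case where $F$ contains exactly one of $e_1,e_2$ together with a nonempty $F'$. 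These extra cases do not cause trouble --- whenever $|V(F)|$ exceeds $|V(F')|+1$, or $v$ has degree $1$ in $F$, the resulting bound is \emph{slacker} than the tight case you compute ($a=2$, $V(F')=V(F)\setminus\{v\}$), so the counts $|F|\le 2|V(F)|-2$ and (for balanced $F$) $|F|\le 2|V(F)|-3$ still hold, using that balancedness of $F$ passes to $F'\subseteq F$. So the proof is correct as a matter of substance; only the bookkeeping description is a touch incomplete, just as you yourself flag at the end.
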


Lemma~\ref{lem:0extension} enables us to show an analogue of Lemma~\ref{cl:G_1cupG_2rigid1}.
\begin{lemma}\label{cl:G_1cupG_2rigid2}
Let $G_1$ and $G_2$ be $\Gamma$-labeled graphs.
\begin{itemize}
\item[(i)] If $G_1$ is periodically rigid, $G_2$ is rigid and $|V(G_1)\cap V(G_2)|\geq2$, then $G_1\cup G_2$ is periodically rigid.
\item[(ii)] If $G_1$ and $G_2$ are periodically rigid and $|V(G_1)\cap V(G_2)|\geq 2$,
then $G_1\cup G_2$ is periodically rigid.
\end{itemize}
\end{lemma}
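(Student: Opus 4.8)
The plan is to prove both parts of Lemma~\ref{cl:G_1cupG_2rigid2} by reducing to the already-available balanced statements (Lemma~\ref{cl:G_1cupG_2rigid1}) together with the degree-two reduction (Lemma~\ref{lem:0extension}). The basic idea is a standard ``coning'' or ``vertex-splitting'' trick: a periodically rigid $\Gamma$-labeled graph $H$ on $V(H)$ behaves, as far as the count matroid ${\cal R}_2$ is concerned, like a rigid (balanced) graph on $V(H)$ together with one extra vertex of degree two that records the one unit of non-contractibility coming from the rank of the gain group. Concretely, I would first prove the following auxiliary claim: if $H$ is periodically rigid, then there is a rigid (balanced) $\Gamma$-labeled graph $H^\ast$ obtained from $H$ by adding a single new vertex $z$ joined by two suitably labelled edges to vertices of $H$, such that $H^\ast$ is rigid on $V(H)\cup\{z\}$. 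This follows from Theorem~\ref{thm:ross}: a periodically rigid $H$ contains a spanning subgraph with $2|V(H)|-2$ edges satisfying the sparsity counts, and adding $z$ with two edges yields a graph on $|V(H)|+1$ vertices with $2|V(H)|=2(|V(H)|+1)-2$ edges; one checks it is balanced by choosing the labels on the two new edges to ``kill'' the rank, and the sparsity counts are inherited, so by Theorem~\ref{thm:ross} (the balanced case, i.e.\ Laman) it is rigid.

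With this in hand, for part~(i) I would take $G_1$ periodically rigid and $G_2$ rigid with $|V(G_1)\cap V(G_2)|\ge 2$. Form $G_1^\ast=G_1+z$ as above, which is rigid on $V(G_1)\cup\{z\}$. Now $G_1^\ast$ and $G_2$ are both rigid (balanced) graphs, their union is balanced (since $G_2$ is balanced and $G_1^\ast$ is balanced, and the intersection shares at least two vertices so no new cycles of nonzero gain are created — one should be slightly careful here and switch so that all gains on $G_1^\ast\cup G_2$ are the identity, which is possible precisely because the union is balanced), and they meet in at least two vertices. Hence by Lemma~\ref{cl:G_1cupG_2rigid1}, $G_1^\ast\cup G_2$ is rigid. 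Since $z$ has degree exactly two in $G_1^\ast\cup G_2$, Lemma~\ref{lem:0extension} gives $r_2\bigl((G_1^\ast\cup G_2)-z\bigr)=r_2(G_1^\ast\cup G_2)-2$. But $(G_1^\ast\cup G_2)-z=G_1\cup G_2$ and $r_2(G_1^\ast\cup G_2)=2(|V(G_1)\cup V(G_2)|+1)-3=2|V(G_1)\cup V(G_2)|-1$, so $r_2(G_1\cup G_2)=2|V(G_1)\cup V(G_2)|-3$. That is not quite what we want: we want $2|V(G_1)\cup V(G_2)|-2$, i.e.\ periodic rigidity, not balanced rigidity.

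So I would instead apply the coning only to $G_1$ and keep $G_2$ as is but also cone $G_2$ is unnecessary; the fix is to cone whichever factor is periodically rigid and then NOT delete $z$ until after intersecting — more precisely, for part~(i): $G_1\cup G_2$ already contains $G_1$ which is periodically rigid, so $r_2(G_1\cup G_2)\ge r_2(G_1)=2|V(G_1)|-2$; the content is that adding the vertices of $V(G_2)\setminus V(G_1)$ each contributes exactly $2$ to the rank, which follows by applying Lemma~\ref{cl:G_1cupG_2rigid1} to the balanced graph $G_1^\ast$ and $G_2$ to see $r_2(G_1^\ast\cup G_2)=2(|V(G_1)\cup V(G_2)|+1)-3$, then using $z$ has degree two to peel it off and conclude $r_2(G_1\cup G_2)=2|V(G_1)\cup V(G_2)|-2$, which is periodic rigidity by Theorem~\ref{thm:ross}. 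For part~(ii), with both $G_1,G_2$ periodically rigid, I would cone $G_1$ to a rigid $G_1^\ast$, observe $G_2$ is periodically rigid, apply part~(i) (with the roles $G_1^\ast$ rigid, $G_2$ periodically rigid — note (i) is symmetric in the two hypotheses up to relabelling) to get that $G_1^\ast\cup G_2$ is periodically rigid, hence has rank $2(|V(G_1)\cup V(G_2)|+1)-2$, and again strip off the degree-two vertex $z$ via Lemma~\ref{lem:0extension} to obtain $r_2(G_1\cup G_2)=2|V(G_1)\cup V(G_2)|-2$.

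The main obstacle I anticipate is the careful bookkeeping around gains and balancedness when forming $G_1^\ast$ and when taking unions: one must verify that the two edges added at $z$ can be labelled so that $G_1^\ast$ is genuinely balanced (equivalently, rank of the appended structure is zero) while the sparsity counts of Theorem~\ref{thm:ross} are preserved, and that $G_1^\ast\cup G_2$ does not secretly create an unbalanced cycle through $z$ — here the hypothesis $|V(G_1)\cap V(G_2)|\ge 2$ and a switching normalization are exactly what is needed. A cleaner alternative, which I would pursue if the coning bookkeeping gets unwieldy, is to argue directly with rank counts: pick spanning subgraphs realizing the counts $2|V(G_i)|-2$ from Theorem~\ref{thm:ross}, and use submodularity of $r_2$ together with the balanced case applied to the ``balanced parts'' to bound $r_2(G_1\cup G_2)$ from below by $2|V(G_1\cup G_2)|-2$; the upper bound is automatic from the matroid's rank function. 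Either way the crux is transferring the balanced gluing lemma (Lemma~\ref{cl:G_1cupG_2rigid1}) to the unbalanced setting, and the degree-two lemma (Lemma~\ref{lem:0extension}) is the bridge.
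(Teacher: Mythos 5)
Your auxiliary ``coning'' claim is false, and it sinks the proposal. You assert that from a periodically rigid $H$ one can build a \emph{balanced} rigid $H^\ast$ by adding one new vertex $z$ with two suitably-labelled edges. But every closed walk in $H$ is still a closed walk in $H^\ast$, so $\Gamma_{H^\ast}\supseteq\Gamma_H$; since $H$ is periodically rigid it satisfies $r_2(H)=2|V(H)|-2$, which forces $H$ to be unbalanced (a balanced graph has $r_2\le 2|V|-3$), hence $\Gamma_H$ has positive rank and so does $\Gamma_{H^\ast}$. No choice of labels on the two new edges can ``kill'' this: adding edges can only create cycles, never remove them. The rank count confirms it: since $z$ has degree two, Lemma~\ref{lem:0extension} gives $r_2(H^\ast)=r_2(H)+2=2|V(H^\ast)|-2$, which is the \emph{periodic} count, not the balanced-rigid count $2|V(H^\ast)|-3$. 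So Lemma~\ref{cl:G_1cupG_2rigid1} cannot be applied to $G_1^\ast\cup G_2$ (neither is $G_1^\ast$ rigid in the paper's sense, nor is $G_1^\ast\cup G_2$ balanced), and even your own arithmetic in the middle of the proposal yields $2|V_1\cup V_2|-3$ after stripping $z$, not $-2$. The ``cleaner alternative'' you sketch also fails, because submodularity of $r_2$ gives an \emph{upper} bound on $r_2(G_1\cup G_2)$ (namely $r_2(G_1)+r_2(G_2)-r_2(G_1\cap G_2)$), not the lower bound you need.

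The paper's actual argument runs the degree-two trick in the opposite direction and avoids balancing anything unbalanced. Since $G_2$ is (balanced) rigid, after switching we may take all of its labels to be the identity, so the closure $\mathrm{cl}_2(E_2)$ contains the full identity-labelled complete graph $K^0(V_2)$ on $V_2$. Pick $u,v\in V_1\cap V_2$ and let $F$ be the identity-labelled complete bipartite graph from $\{u,v\}$ to $V_2\setminus V_1$. Each vertex of $V_2\setminus V_1$ has degree two in $G_1\cup F$ (its only edges go to $u,v\in V_1$), so iterating Lemma~\ref{lem:0extension} gives $r_2(E_1\cup F)=r_2(E_1)+2|V_2\setminus V_1|=2|V_1\cup V_2|-2$. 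As $F\subseteq K^0(V_2)\subseteq\mathrm{cl}_2(E_2)$, one gets $r_2(E_1\cup E_2)\ge r_2(E_1\cup F)$, which is the bound you want. For part (ii) the same argument works because $K^0(V_2)\subset K(V_2,\Gamma)\subseteq\mathrm{cl}_2(E_2)$. The lesson is that the degree-two extension should be used to \emph{build up} $V_2\setminus V_1$ on top of the already-periodically-rigid $G_1$, letting the rigidity of $G_2$ supply the needed edges inside $\mathrm{cl}_2(E_2)$, rather than trying to ``de-periodicize'' $G_1$ by coning.
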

\begin{proof}
Let $G_i=(V_i, E_i)$ for $i=1,2$.
To see the first claim, suppose that $G_1$ is periodically rigid and $G_2$ is rigid.
Since $G_2$ is rigid, $G_2$ is balanced, and we may suppose, by switching (see \cite[Proposition 2.3 and Lemma 2.4]{jkt}), that $\psi(e)=\textrm{id}$ for every edge $e$ of $G_2$.
Thus we have $K^0(V_2)\subseteq {\rm cl}_2(E_2)$,
where $K^0(V_2)$ denotes the set of edges on $V_2$ whose labels are the identity.
Since $|V_1\cap V_2|\geq2$, there are two distinct vertices $u$ and $v$ in $V_1\cap V_2$.
Let $F$ be the edge set of the complete bipartite subgraph of $K^0(V_2)$
whose one partite set is $\{u,v\}$ and whose other partite set is $V_2\setminus V_1$.
By Lemma~\ref{lem:0extension}, we have $r_2(E_1\cup F)=r_2(E_1)+2|V_2\setminus V_1|$.
Since $F\subseteq K^0(V_2)\subseteq {\rm cl}_2(E_2)$, we get
$$r_2(E_1\cup E_2)\geq r_2(E_1\cup F)=2|V_1|-2+2|V_2\setminus V_1|=2|V_1\cup V_2|-2.$$
Thus $G_1\cup G_2$ is periodically rigid.

We can use the same argument to prove the second statement. 
In this case we have $K^0(V_2)\subset K(V_2,\Gamma)\subseteq {\rm cl}_2(E_2)$ and we can again use Lemma~\ref{lem:0extension} to deduce that $r_2(E_1\cup E_2)=2|V_1\cup V_2|-2$, which completes the proof.
\end{proof}

Lemma~\ref{lem:4} implies that  a balanced circuit is always rigid. By Theorem~\ref{thm:ross}, the following periodic version can be easily proved.
\begin{lemma}\label{lem:circuit_periodic_rigid}
If $G$ is an unbalanced circuit, then it is periodically rigid.
\end{lemma}

\subsection{$M$-connectivity and ear decomposition}

Jackson and Jord{\'a}n~\cite{jj} used ear decompositions of connected rigidity matroids as a key tool in their proof of Theorem~\ref{thm:jj}.
Let \(C_1,C_2,\dots,C_t\) be a non-empty sequence of circuits of the matroid \(\M\).
 For \(1\leq j\leq t\), we denote \(D_j=C_1\cup C_2\cup\dots\cup C_j\) and define the {\em lobe} $\tilde{C}_j$ of $C_j$ by $\tilde{C}_j=C_j\setminus D_{j-1}$. We say that \(C_1,C_2,\dots,C_t\) is a \emph{partial ear decomposition} of \(\M\) if for every \(2\leq i\leq t\) the following properties hold:
\begin{itemize}
\item[(E1)] \(C_i\cap D_{i-1}\neq\emptyset\),
\item[(E2)] \(C_i\setminus D_{i-1}\neq\emptyset\),
\item[(E3)] no circuit \(C'\) satisfying (E1) and (E2) has \(C'\setminus D_{i-1}\) properly contained in \(C_i\setminus D_{i-1}\).
\end{itemize}
An \emph{ear decomposition} of \(\M\) is a partial ear decomposition with \(D_t=E\). We need the following facts about ear decompositions.
\begin{lemma}[\cite{coullard,jj}]\label{lem:EDfacts}
Let \(\M\) be a matroid. Then the following hold.
\begin{enumerate}
  \item[(a)] \(\M\) is connected if and only if \(\M\) has an ear decomposition.
	\item[(b)] If \(\M\) is connected then any partial ear decomposition of \(\M\) can be extended to an ear decomposition of \(\M\).
	\item[(c)] If \(C_1,C_2,\dots,C_t\) is an ear decomposition of \(\M\) then \(r(D_i)-r(D_{i-1})=|\tilde{C}_i|-1\) for \(2\leq i\leq t\).
\end{enumerate}
\end{lemma}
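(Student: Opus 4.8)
The plan is to treat the three parts in the order (c), then (a) and (b), using only the circuit axioms, the exchange property of independent sets, and the transitivity of the ``related'' relation. Since constructing an ear decomposition from scratch in a connected matroid is the special case $s=1$ of the extension procedure in (b), it is enough to prove (a)$\Leftarrow$, (b), and (c); the forward implication of (a) then falls out of (b).

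For (a)$\Leftarrow$ I would induct along an ear decomposition $C_1,\dots,C_t$ with $D_t=E$. The base $D_1=C_1$ is a single circuit, so all its elements are pairwise related. Assuming the elements of $D_{i-1}$ are pairwise related, pick $e\in C_i\cap D_{i-1}$, which is non-empty by (E1); every element of $C_i$ is related to $e$ via the circuit $C_i$, and $e$ is related to each element of $D_{i-1}$, so transitivity makes all of $D_i$ pairwise related. With $i=t$ this shows $\M$ has one component, and since $|E|\geq 2$ (the only ear decomposition with $|E|=1$ is a single loop, which is excluded by the usual connectivity convention) $\M$ is connected.

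For (b), suppose $C_1,\dots,C_s$ is a partial ear decomposition with $D_s\subsetneq E$. Taking $e\in D_s$ and $f\in E\setminus D_s$, connectedness of $\M$ yields a circuit through both; any such circuit meets $D_s$ at $e$ and contains $f\notin D_s$, so circuits obeying (E1) and (E2) exist. Choosing among them a circuit $C_{s+1}$ with $C_{s+1}\setminus D_s$ inclusionwise minimal forces (E3). As $C_{s+1}\setminus D_s\neq\emptyset$, the set $D_{s+1}$ is strictly larger, so iterating terminates with $D_t=E$. Starting this procedure from $C_1$ an arbitrary circuit proves (a)$\Rightarrow$.

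The substance lies in (c). Fix $i\geq2$ and set $\tilde C_i=C_i\setminus D_{i-1}$; by (E1) this is a proper subset of the circuit $C_i$, hence independent. For the upper bound, pick any $f\in\tilde C_i$: then $C_i-f$ is independent and contained in $D_i\setminus\{f\}$, so $f\in\cl(D_i\setminus\{f\})$ and $\rank(D_i)=\rank(D_i\setminus\{f\})\leq\rank(D_{i-1})+|\tilde C_i|-1$. For the lower bound I would argue by contradiction: if $\rank(D_i)\leq\rank(D_{i-1})+|\tilde C_i|-2$, extend a maximal independent subset $I$ of $D_{i-1}$ to a maximal independent subset $J$ of $D_i$; maximality of $I$ forces $J\setminus I\subseteq\tilde C_i$, and $|J\setminus I|=\rank(D_i)-\rank(D_{i-1})\leq|\tilde C_i|-2$, so there are distinct $g,h\in\tilde C_i\setminus J$. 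The fundamental circuit $C'$ of $g$ with respect to $J$ satisfies $C'\subseteq J+g\subseteq D_i\setminus\{h\}$; it is not contained in the independent set $\tilde C_i$, so it meets $I\subseteq D_{i-1}$ and hence obeys (E1), and it obeys (E2) since $g\in C'\setminus D_{i-1}$; but $C'\setminus D_{i-1}\subseteq(\tilde C_i\setminus\{g,h\})\cup\{g\}=\tilde C_i\setminus\{h\}\subsetneq C_i\setminus D_{i-1}$, contradicting (E3) for $C_i$. Thus equality holds, proving (c). The main obstacle is precisely this lower bound: one must exhibit a circuit $C'$ that genuinely satisfies both (E1) and (E2) yet has a strictly smaller ``ear'' than $C_i$, and the subtle point making (E1) work for $C'$ is the independence of $\tilde C_i$; everything else is bookkeeping with circuits, closures, and the relation ``related''.
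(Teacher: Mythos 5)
The paper does not prove this lemma but cites it to Coullard--Hellerstein and to Jackson--Jord\'{a}n, so there is no in-paper argument to compare against. Your proposal is a complete and correct proof of all three parts, and it follows the standard line of argument found in those references. A few remarks on the details you got right that are easy to get wrong.

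In (c), the lower bound is the only nontrivial step, and your argument is sound: with $I$ a basis of $D_{i-1}$ extended to a basis $J$ of $D_i$, the disjoint decomposition $D_i=D_{i-1}\sqcup\tilde C_i$ forces $J\cap D_{i-1}=I$ and hence $J\setminus I\subseteq\tilde C_i$, so $|\tilde C_i\setminus J|\geq 2$ under the contradiction hypothesis. The fundamental circuit $C'$ of $g$ then satisfies $C'\setminus D_{i-1}\subseteq (J\setminus I)+g\subseteq\tilde C_i\setminus\{h\}\subsetneq\tilde C_i$, and the key point you correctly isolate is that $C'\not\subseteq\tilde C_i$ because $\tilde C_i$, as a proper subset of the circuit $C_i$, is independent; that is what delivers (E1) for $C'$ and closes the contradiction with (E3). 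Your upper bound via $f\in\cl(D_i\setminus\{f\})$ for any $f\in\tilde C_i$ is also correct, since $D_i\setminus\{f\}$ exceeds $D_{i-1}$ by only $|\tilde C_i|-1$ elements.

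Parts (a) and (b) are routine and you handle them correctly: transitivity of the relatedness relation with $C_i\cap D_{i-1}\neq\emptyset$ for (a)$\Leftarrow$, and for (b) the observation that among the (finitely many, nonempty) circuits satisfying (E1) and (E2) one can pick one with inclusionwise minimal lobe, which automatically yields (E3) and strictly enlarges $D_s$. The one convention you should state explicitly rather than gesture at is the trivial ground-set case: a matroid consisting of a single loop has an ear decomposition $C_1=E$ but is not connected under the paper's definition (which requires $|E|\geq 2$), so the equivalence in (a) is to be read with $|E|\geq 2$ assumed throughout; this is standard but worth one line.
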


For an $M$-connected graph $G=(V,E)$,  {\em an ear decomposition of $E$} means an ear decomposition of the restriction of ${\cal R}_2(V)$ to $E$.

When the matroid ${\cal M}$ is specialized to the generic 2-dimensional rigidity matroid, several properties of ear-decompositions were proved in \cite{jj,j}.
We will use the following (with some of the terminology adjusted to fit our context).
\begin{lemma}[\cite{jj}]
\label{lem:jj}
Let $G$ be balanced $M$-connected,
and let $C_1, \dots, C_t$ be an ear decomposition of $E(G)$  with $t\geq 2$.
Let $Y=V(C_t)\setminus \bigcup_{i=1}^{t-1} V(C_i)$.
Then the following hold.
\begin{description}
\item[(a)] Either $Y=\emptyset$ and $|\tilde{C}_t|=1$, or $Y\neq \emptyset$ and every edge $e\in \tilde{C}_t$ is incident to $Y$.
\item[(b)] $|\tilde{C}_t|=2|Y|+1$.
\end{description}
\end{lemma}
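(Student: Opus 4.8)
The plan is to argue entirely inside the count matroid ${\cal R}_2(V)$ with $V=V(G)$, exploiting that for the balanced graph $G$ this matroid restricts to the generic $2$-dimensional rigidity matroid, so that "rigid" means "balanced with $r_2=2|V|-3$" and the usual circuit facts apply. Write $D_j=C_1\cup\dots\cup C_j$, $\tilde{C}_t=C_t\setminus D_{t-1}$, and $U=V(C_t)\cap V(D_{t-1})$, so $V(C_t)=U\sqcup Y$. I will use two standard facts about each circuit $C_i$ (which follow directly from the count definition of independence): $C_i$ is balanced with $|C_i|=2|V(C_i)|-2$ while every proper nonempty $F\subsetneq C_i$ has $|F|\leq 2|V(F)|-3$; hence every vertex of $V(C_i)$ has degree at least $3$ in $C_i$, so for every $e\in C_i$ the graph $C_i-e$ is balanced, independent and still spans $V(C_i)$, i.e. it is rigid on $V(C_i)$, and $C_i$ itself is rigid. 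From (E1), $C_t\cap D_{t-1}\neq\emptyset$, so in particular $|U|\geq 2$.

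The first step is to show that $D_{t-1}$ is rigid on $V(D_{t-1})$, by induction along the ear decomposition: $D_1=C_1$ is a circuit, hence rigid; and $D_j=D_{j-1}\cup C_j$ is rigid by Lemma~\ref{cl:G_1cupG_2rigid1}, since $D_{j-1}$ and $C_j$ are rigid, $D_j$ is balanced (being a subgraph of $E(G)$), and $|V(D_{j-1})\cap V(C_j)|\geq 2$ by (E1). Next, fix any $e^*\in\tilde{C}_t$ and set $H=D_{t-1}\cup(C_t-e^*)=D_{t-1}\cup(\tilde{C}_t\setminus\{e^*\})$. Since $D_{t-1}$ is rigid on $V(D_{t-1})$, $C_t-e^*$ is rigid on $V(C_t)$, their vertex sets meet in the $\geq 2$ vertices of $U$, and $H$ is balanced, Lemma~\ref{cl:G_1cupG_2rigid1} gives that $H$ is rigid on $V(H)=V(D_{t-1})\cup Y$, so $r_2(H)=2(|V(D_{t-1})|+|Y|)-3=r_2(D_{t-1})+2|Y|$. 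Because $e^*\in{\rm cl}_2(C_t-e^*)\subseteq{\rm cl}_2(H)$, we get $r_2(D_t)=r_2(H\cup\{e^*\})=r_2(H)$, whence $r_2(D_t)-r_2(D_{t-1})=2|Y|$. Combining this with Lemma~\ref{lem:EDfacts}(c) yields $|\tilde{C}_t|-1=2|Y|$, which is exactly assertion (b).

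Assertion (a) then follows from (b) by a short count. If $Y=\emptyset$, (b) gives $|\tilde{C}_t|=1$. If $Y\neq\emptyset$, every edge of $C_t$ incident to $Y$ lies in $\tilde{C}_t$, since every edge of $D_{t-1}$ has both endpoints in $V(D_{t-1})$, a set disjoint from $Y$; let $e_Y$ be the number of such edges. Because every vertex of $Y$ has degree at least $3$ in $C_t$, the set $C_t[U]$ of edges of $C_t$ with both endpoints in $U$ is a proper, nonempty, balanced subset of $C_t$, hence independent, so $|C_t[U]|\leq 2|U|-3$; together with $|C_t|=2(|U|+|Y|)-2$ this gives $e_Y=|C_t|-|C_t[U]|\geq 2|Y|+1=|\tilde{C}_t|\geq e_Y$. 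Therefore $e_Y=|\tilde{C}_t|$, i.e. every edge of $\tilde{C}_t$ is incident to $Y$.

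I expect the only genuine care to be in the bookkeeping: verifying that the subgraphs involved ($D_j$, $C_t-e^*$, $H$) are balanced so that Lemma~\ref{cl:G_1cupG_2rigid1} and the identification with the generic rigidity matroid are legitimate, and justifying the circuit facts (minimum degree $3$ and $C_t-e^*$ being rigid on $V(C_t)$) in the $\Gamma$-labeled setting. The conceptual crux is the observation that $D_{t-1}\cup(C_t-e^*)$ is already rigid, so deleting $e^*$ from $C_t$ before taking the union costs exactly the rank of $e^*$; once this is fed into the rank formula of Lemma~\ref{lem:EDfacts}(c), both assertions fall out with no further appeal to the minimality axiom (E3) beyond what is already packaged in that formula.
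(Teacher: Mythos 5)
Your proof is correct, and it is essentially the standard argument underlying the cited result in~\cite{jj} (the paper itself gives no proof, only the citation): build up rigidity of $D_{t-1}$ by repeated application of the gluing Lemma~\ref{cl:G_1cupG_2rigid1}, combine the resulting rank identity with the ear-decomposition rank formula of Lemma~\ref{lem:EDfacts}(c) to get part~(b), and finish with a sparsity count on $C_t[U]$ for part~(a). One small simplification worth noting: since $D_{t-1}$ and $C_t$ are both rigid, balanced, and share at least two vertices, Lemma~\ref{cl:G_1cupG_2rigid1} already gives that $D_t=D_{t-1}\cup C_t$ is rigid, so $r_2(D_t)-r_2(D_{t-1})=(2|V(D_t)|-3)-(2|V(D_{t-1})|-3)=2|Y|$ directly, without the detour through $H=D_{t-1}\cup(C_t-e^*)$ and the closure argument; the rest of your proof goes through unchanged.
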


%We say that $G$ is {\em minimally $M$-connected} if $G$ is $M$-connected and for every $e\in E(G)$, $G-e$ is not $M$-connected.
In \cite[Lemma 3.4.3]{j}, Jord{\'a}n pointed out that Lemma~\ref{lem:jj} immediately implies the existence of a degree three vertex in a minimally $M$-connected graph. His proof actually gives a slightly stronger statement as follows.
\begin{lemma}
\label{lem:jordan}
Let $G$ be balanced and  $M$-connected,
and let $C_1,\dots, C_t$ be an ear decomposition of $E(G)$.
Let $Y=V(C_t)- V(\bigcup_{i=1}^{t-1} C_i)$.
Suppose further that $|\tilde{C}_t|>1$. 
Then $Y$ contains a vertex which has degree three in $G$.
Moreover, if $|Y|\geq 2$, then $Y$ contains at least two vertices of degree three in $G$.
\end{lemma}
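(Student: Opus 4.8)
The claim concerns a balanced, minimally $M$-connected $\Gamma$-labeled graph $G$ with ear decomposition $C_1,\dots,C_t$, $t\ge 2$, and the vertex set $Y = V(C_t)\setminus V(\bigcup_{i<t}C_i)$. Since $G$ is balanced, the restriction of ${\cal R}_2(V)$ to $E(G)$ is isomorphic to the generic $2$-dimensional rigidity matroid, so I can work entirely inside that matroid and quote the structural facts from \cite{jj,j}. The strategy is to combine Lemma~\ref{lem:jj} with the minimality of $M$-connectedness. First I would invoke Lemma~\ref{lem:jj}(a),(b): the lobe $\tilde C_t$ has $|\tilde C_t| = 2|Y|+1$ edges, and (unless $Y=\emptyset$, which forces $|\tilde C_t|=1$ and contradicts $t\ge 2$ together with minimality, since then $C_t$ would be redundant) every edge of $\tilde C_t$ is incident to $Y$. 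So $Y\ne\emptyset$ and all $2|Y|+1$ edges of $\tilde C_t$ touch $Y$.

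Next I would bound the degrees of vertices in $Y$ from below using minimal $M$-connectedness. The key point is that for any $v\in Y$, no edge incident to $v$ can be deleted without destroying $M$-connectedness; by a standard argument (an $M$-connected graph has minimum degree at least $3$, since a vertex of degree $\le 2$ would either be isolated in the matroid or lie in no circuit with some incident edge) every vertex of $Y$ has degree at least $3$ in $G$. Actually more is true and is what I need: each $v\in Y$ lies only in $C_t$ among the ears (by definition of $Y$), so every edge incident to $v$ belongs to $C_t$, hence $d_G(v) = d_{C_t}(v)$, and all these edges lie in $\tilde C_t$ except possibly edges joining two vertices of $Y$ — wait, those are also in $\tilde C_t$. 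So in fact every edge incident to a vertex of $Y$ lies in $\tilde C_t$, giving $\sum_{v\in Y} d_G(v) = \sum_{v\in Y} d_{\tilde C_t}(v)$. Counting edges of $\tilde C_t$ by their endpoints in $Y$: each edge of $\tilde C_t$ has at least one endpoint in $Y$, and at most two, so $2|Y|+1 = |\tilde C_t| \le \sum_{v\in Y} d_{\tilde C_t}(v) = \sum_{v\in Y} d_G(v)$, and also $\sum_{v\in Y}d_G(v) \le 2|\tilde C_t| = 4|Y|+2$.

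From $\sum_{v\in Y} d_G(v) \le 4|Y|+2$ together with $d_G(v)\ge 3$ for all $v\in Y$, a simple averaging argument finishes both assertions. If $|Y|=1$, say $Y=\{v\}$, then $3\le d_G(v)\le 4|Y|+2$ gives nothing directly, but here $\tilde C_t$ has $3$ edges all incident to $v$, and edges incident only to $v$ cannot form a multi-edge to a single vertex without creating a digon issue — more carefully, $d_G(v)=d_{\tilde C_t}(v)$ and $\tilde C_t$ has exactly $3$ edges each with an endpoint at $v$, so $d_G(v)=3$. If $|Y|\ge 2$: suppose at most one vertex of $Y$ had degree $3$; then $\sum_{v\in Y}d_G(v)\ge 3\cdot 1 + 4(|Y|-1) = 4|Y|-1$, which for $|Y|\ge 2$ exceeds $4|Y|+2$ only when... hmm, $4|Y|-1 \le 4|Y|+2$ always, so that bound is too weak — I need the sharper $\sum_{v\in Y}d_G(v) = 2|\tilde C_t| - (\text{number of edges with exactly one endpoint in }Y)$. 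Since $\bigcup_{i<t}C_i$ is nonempty and $C_t$ meets it (property (E1)), at least one edge of $\tilde C_t$ has an endpoint outside $Y$, in fact the interface is nontrivial; pushing this, $\sum_{v\in Y}d_G(v) = 2(2|Y|+1) - e_{\mathrm{out}}$ where $e_{\mathrm{out}}\ge 2$ is the number of $\tilde C_t$-edges leaving $Y$ (this requires a separate lemma, essentially contained in \cite{jj} — that the "attachment" of the last ear uses at least two boundary vertices, coming from $2$-connectivity of circuits in the rigidity matroid). Then $\sum d_G(v)\le 4|Y|$, and if only one vertex had degree $3$ we would get $3+4(|Y|-1)=4|Y|-1 \le 4|Y|$, still no contradiction — so I must instead argue with degree $\ge 3$ replaced by a parity/counting refinement, or simply cite that this is exactly the content of \cite[Lemma~3.4.3]{j} and its proof.

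**Main obstacle.** The delicate point, and the one I would spend the most care on, is establishing the sharper edge-count $\sum_{v\in Y}d_G(v)\le 4|Y|$ (equivalently, that at least two edges of the last lobe leave $Y$), which is what separates the "$|Y|\ge2$ implies two degree-$3$ vertices" conclusion from the weaker "at least one." This follows from the fact that in a connected rigidity matroid the circuit $C_t$ attaching to $D_{t-1}$ shares at least two vertices with $\bigcup_{i<t}C_i$ when $|Y|\ge1$ — a property of the generic rigidity matroid used implicitly in \cite{jj} (circuits are $2$-connected as graphs, and by (E3) the lobe is "tight"). Assuming this, the averaging is immediate: $\sum_{v\in Y}d_G(v)\le 4|Y|$ and $d_G(v)\ge 3$ force at least two vertices with $d_G(v)=3$ when $|Y|\ge 2$, and at least one in general. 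I would state the attachment fact as a short sublemma (or extract it verbatim from Jordán's argument) rather than reprove it, since the excerpt already licenses citing \cite{jj,j}.
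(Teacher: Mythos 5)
Your overall strategy matches the paper's: invoke Lemma~\ref{lem:jj} to get $|\tilde C_t| = 2|Y|+1$ with every edge of $\tilde C_t$ incident to $Y$, then average degrees over $Y$. The $|Y|=1$ case is handled correctly. But you correctly detect that your degree bound for $|Y|\geq 2$ is too weak and never close it, ultimately deferring to a citation of~\cite[Lemma~3.4.3]{j} — which does not work here, since the paper explicitly states that the claimed lemma is \emph{stronger} than what Jord\'an proved, so the stronger conclusion has to be argued, not cited.

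The missing ingredient is a sharper bound on $i_G(Y)$, the number of edges of $G$ induced by $Y$. Every edge incident to $Y$ lies in $\tilde C_t$ (since $Y\cap V(D_{t-1})=\emptyset$), so $\sum_{v\in Y} d_G(v) = 2i_G(Y) + e_{\mathrm{out}} = |\tilde C_t| + i_G(Y) = (2|Y|+1) + i_G(Y)$. You try to bound $i_G(Y)$ by bounding $e_{\mathrm{out}}$ from below, but even with $e_{\mathrm{out}}\geq 3$ (from $3$-edge-connectivity of circuits) you only get $\sum d_G(v)\leq 4|Y|-1$, and an adversary can match this with one degree-$3$ vertex and $|Y|-1$ degree-$4$ vertices. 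The paper instead observes that $E(G[Y])$ is a \emph{proper} subset of the circuit $C_t$ (proper because $V(C_t)\cap V(D_{t-1})\neq\emptyset$ forces $\tilde C_t$ to have edges leaving $Y$), hence independent in the rigidity matroid; as a balanced independent set it obeys $i_G(Y)\leq 2|Y|-3$. This gives $\sum_{v\in Y} d_G(v)\leq 4|Y|-2$, and now the averaging against the minimum-degree-$\geq 3$ lower bound $3 + 4(|Y|-1) = 4|Y|-1$ yields the needed contradiction when fewer than two vertices of $Y$ have degree three. So the one idea you are missing is to use matroid independence (sparsity) of the induced edge set, rather than edge-connectivity, to bound $i_G(Y)$.
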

\begin{proof}
It is a well known fact that a circuit in the ordinary 2-dimensional generic rigidity matroid has at least four vertices of degree three. 
Hence the claim follows if $t=1$. 
Suppose that $t\geq 2$.
%Since $G$ is minimally $M$-connected, $|\tilde{C}_t|>1$.
Lemma~\ref{lem:jj}(a) says that, if  $|\tilde{C}_t|>1$, then  $Y\neq \emptyset$
and every edge in $\tilde{C}_t$ is incident to $Y$.
Moreover, Lemma~\ref{lem:jj}(b) says that
\begin{equation}
\label{eq:jordan1}
|\tilde{C}_t|=2|Y|+1.
\end{equation}
Hence if $|Y|=1$ then the vertex in $Y$ has degree three.

If $|Y|\geq 2$, then the edge set of $G$ induced by $Y$ is a proper subset of $\tilde{C}_t$, and hence $i_G(Y)\leq 2|Y|-3$. Combining this with (\ref{eq:jordan1}), we have that
the total degree of the vertices of $Y$ is
$d_G(Y, V(G)\setminus Y)+2i_G(Y)=|\tilde{C}_t|+i_G(Y)=
4|Y|-2$.
Thus $Y$ contains  at least two vertices that have  degree three in $G$.
\end{proof}

It is well known that every circuit in the generic rigidity matroid (i.e., every balanced circuit) is $2$-connected and $3$-edge-connected.
The next lemma shows that even if $G$ is unbalanced or  $M$-connected, we can guarantee the  same connectivity.
 \begin{lemma} \label{lem:7}
If $G$ is $M$-connected, then $G$ is 2-connected and $3$-edge-connected.
\end{lemma}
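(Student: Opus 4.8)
### Proof proposal for Lemma~\ref{lem:7}

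\textbf{The plan} is to deduce $2$-connectivity and $3$-edge-connectivity of an $M$-connected $\Gamma$-labeled graph $G$ from the corresponding known facts for balanced circuits, using the matroid structure. The key structural fact I would exploit is that $M$-connectivity means every pair of edges of $G$ lies in a common circuit of ${\cal R}_2(V(G))$, and that each such circuit, being a circuit of the count matroid, is either balanced (hence a circuit of the generic $2$-dimensional rigidity matroid, which is $2$-connected and $3$-edge-connected) or unbalanced. So the main task is to understand the connectivity of a single unbalanced circuit, and then to "glue" circuits along shared edges to propagate connectivity to all of $G$.

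\textbf{First} I would analyze a single circuit $C$ of ${\cal R}_2(V)$. If $C$ is balanced, it is a circuit of the generic rigidity matroid, hence $2$-connected and $3$-edge-connected by the classical fact quoted just before the lemma. If $C$ is unbalanced, then $|C| = 2|V(C)| - 1$ and every proper nonempty subset $F$ satisfies $|F| \le 2|V(F)| - 2$ (and the balanced ones satisfy $|F|\le 2|V(F)|-3$). I would argue $2$-connectivity by contradiction: if $C$ had a cut vertex $v$, splitting $C$ into $C_1, C_2$ with $V(C_1)\cap V(C_2) = \{v\}$, then $|C| = |C_1| + |C_2|$ and $|V(C)| = |V(C_1)| + |V(C_2)| - 1$; applying the upper bounds on $C_1$ and $C_2$ (at least one of which is a proper subset, so bounded by $2|V(\cdot)|-2$) would contradict $|C| = 2|V(C)| - 1$. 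A similar counting argument handles $3$-edge-connectivity: a $2$-edge cut $\{e,f\}$ separating $C$ into parts $C_1, C_2$ (on vertex sets partitioning $V(C)$) forces $|C_1| + |C_2| = |C| - 2 = 2|V(C)| - 3 = 2|V(C_1)| + 2|V(C_2)| - 3$, so one of $C_1, C_2$ violates $|F| \le 2|V(F)| - 2$; one must also rule out the degenerate case where a part is a single vertex of degree $\le 2$, but since $C$ is a circuit it has minimum degree at least... here I would note a vertex of degree $\le 1$ in $C$ is impossible (it would be a coloop-like element, not in any circuit properly), and a vertex $v$ of degree $2$ would give $C - v$ with $r_2(C-v) = r_2(C) - 2$ by Lemma~\ref{lem:0extension}, contradicting $C$ being a circuit (removing two elements drops the rank by $2$, so $C$ minus those two edges plus... ) — this needs care, so I'd phrase it as: a circuit has no vertex of degree $\le 2$.

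\textbf{Then} I would pass from circuits to all of $G$. Since $G$ is $M$-connected, its restriction matroid has an ear decomposition $C_1, C_2, \dots, C_t$ by Lemma~\ref{lem:EDfacts}(a), with $E(G) = C_1 \cup \cdots \cup C_t$ and each $C_i$ ($i\ge 2$) sharing at least one edge with $D_{i-1}$. Each $C_i$ is $2$-connected and $3$-edge-connected by the circuit analysis above. I would then prove by induction on $j$ that $D_j := C_1\cup\cdots\cup C_j$ (as a subgraph) is $2$-connected and $3$-edge-connected: the union of a $2$-connected graph and a $2$-connected graph sharing at least one \emph{edge} (hence at least two vertices) is $2$-connected, and likewise an analogous well-known gluing fact gives $3$-edge-connectivity when the two pieces share an edge. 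For $D_j = D_{j-1} \cup C_j$, property (E1) guarantees $C_j \cap D_{j-1} \ne \emptyset$ as edge sets, so they share an edge and the inductive step goes through; taking $j = t$ gives the claim for $G = D_t$.

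\textbf{The main obstacle} I anticipate is the single-unbalanced-circuit case: verifying $2$- and $3$-edge-connectivity of an unbalanced circuit purely from the count $|C| = 2|V(C)| - 1$ together with the (mixed balanced/unbalanced) sparsity bounds on subsets, and in particular correctly handling small or degenerate sub-pieces (isolated vertices, low-degree vertices, parallel edges) in the separation arguments. The gluing step is routine graph theory once the circuit case is settled. An alternative, cleaner route for the circuit case — which I would try first — is to invoke general matroid connectivity: if $\M$ is a connected matroid then it is "vertically" connected in an appropriate sense, and there are abstract results relating matroid connectivity of a sparsity matroid to graph connectivity; but since the excerpt only provides the count definitions, I expect the direct counting argument above is what the authors intend, so that is the plan I would commit to.
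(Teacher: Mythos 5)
Your proof is correct, but the reduction from a general $M$-connected graph to the single-circuit case is done by a genuinely different route than the paper's. The paper tersely asserts that ``it suffices to consider the case when $G$ is an unbalanced circuit,'' implicitly invoking the fact that, since any two edges of an $M$-connected graph lie on a common circuit, any cut vertex or $2$-edge cut of $G$ would also disconnect some circuit of $G$ with edges on both sides; together with the known balanced case, the unbalanced-circuit counting then finishes the argument. You instead make the reduction explicit via an ear decomposition (Lemma~\ref{lem:EDfacts}(a)) and induction: each circuit $C_i$ shares an edge, hence two vertices, with $D_{i-1}$, so $2$-connectivity and $3$-edge-connectivity of the pieces are inherited by the union. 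Both routes are valid; yours is more self-contained but requires the gluing fact for $3$-edge-connectivity when two pieces share an edge, which is true but not quite as routinely cited as the $2$-connectivity version. The counting argument for the unbalanced circuit is essentially identical to the paper's, and your treatment of low-degree vertices is correct (a degree-$2$ vertex in a circuit contradicts the independence of proper subsets via Lemma~\ref{lem:0extension}), though the paper's count already absorbs the degenerate case $|V(G_i)|=1$ without a separate argument since $0\leq 2\cdot 1-2$.
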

\begin{proof}
The statement is known if $G$ is balanced (see, e.g., \cite{jj}).
Hence we assume that $G$ is unbalanced.

It suffices to consider the case when $G$ is an unbalanced circuit.
Suppose that $G$ can be decomposed into two edge-disjoint subgraphs $G_1$ and $G_2$
with $|V(G_1)\cap V(G_2)|\leq 1$  and $|V(G_i)|\geq 2$ for $i=1,2$.
Then $2|V(G)|-1=|E(G)|=|E(G_1)|+|E(G_2)|\leq 2|V(G_1)|-2+2|V(G_2)|-2
\leq 2|V(G)|-2$, which is a contradiction.
Thus $G$ is 2-connected.

The same counting argument works for showing that $G$ is 3-edge-connected.
Suppose that there are two vertex-disjoint induced subgraphs $G_1$ and $G_2$
connected by at most two edges in $G$ with $V(G_1)\cup V(G_2)=V(G)$.
Then we have $2|V(G)|-1=|E(G)| \leq |E(G_1)|+|E(G_2)| +2\leq 2|V(G_1)|-2 + 2|V(G_2)|-2 +2=2|V(G)|-2$, which is a contradiction.
\end{proof}

%%%%%%%%%%%%%%%%%%%%%%%%%%%%%%%%%%%%%%%%%%%%%%%%%%%%%%%%%%%%%%%%%%%%%%%%%%%%%%%%%%%%%%%%%%%%%%%%%%%%%%%%%%%%%%%%%%%%%%%%%
\subsection{Redundant rigidity and M-connectivity}\label{subsec:red_mconn}

Based on the ear decomposition, in this section we shall reveal a connection between redundant rigidity and $M$-connectivity. In particular, we show that, under a certain connectivity condition, these properties are equivalent. This is an analogue of the fact that $M$-connectivity is equivalent to redundant rigidity under (near) 3-connectivity in the plane~\cite[Theorem 3.2]{jj}.

In Lemma~\ref{lem:3} we first give the unbalanced version of Lemma~\ref{lem:4}.
For the proof we need the following technical lemma.

\begin{lemma}\label{lem:disconnected}
Let $E_1$ and $E_2$ be balanced edge sets in a $\Gamma$-labeled graph $G$.
Suppose that $E_1\cup E_2$ is unbalanced.
Then the graph $(V(E_1)\cap V(E_2), E_1\cap E_2)$ is disconnected.
\end{lemma}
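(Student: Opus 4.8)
The plan is to prove the contrapositive flavour of the statement: assuming the intersection graph $H:=(V(E_1)\cap V(E_2),\, E_1\cap E_2)$ is connected, I would deduce that $E_1\cup E_2$ is balanced, contradicting the hypothesis. Two elementary facts drive everything. First, a switching at a vertex leaves $\psi(C)$ unchanged for every closed walk $C$, hence preserves balancedness of every edge set and changes nothing we must prove. Second, a balanced $\Gamma$-labeled graph can be made entirely $\mathrm{id}$-labeled by a suitable switching (pick a spanning tree of each component, switch its edges to $\mathrm{id}$; balancedness forces the remaining edges to $\mathrm{id}$ too). I also note the degenerate case $V(E_1)\cap V(E_2)=\emptyset$ separately: then $G[E_1]$ and $G[E_2]$ are vertex-disjoint, so no closed walk of $G[E_1\cup E_2]$ uses edges of both $E_1$ and $E_2$, whence $E_1\cup E_2$ is balanced; this already contradicts the hypothesis, so under the hypothesis $V(H)\neq\emptyset$, and the main argument will then show $H$ is disconnected (the case $|V(H)|=1$ is subsumed by the main argument below).

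First I would put the configuration in normal form. Since $E_1$ is balanced, there is a switching supported only on $V(E_1)$ after which every edge of $E_1$, and therefore every edge of $E_1\cap E_2$, carries the label $\mathrm{id}$; this does not affect balancedness of $E_2$ or of $E_1\cup E_2$, so we may assume it has been performed. The key structural step is then: $V(E_1)\cap V(E_2)$ is contained in a single connected component $K_0$ of the graph $G[E_2]$. Indeed, every edge of $E_1\cap E_2$ lies in $E_2$, hence belongs to exactly one connected component of $G[E_2]$, so following any walk inside the connected graph $H$ never leaves one component of $G[E_2]$; since $H$ is connected this forces all of $V(H)=V(E_1)\cap V(E_2)$ into a single component $K_0$.

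To finish, for each connected component $K$ of $G[E_2]$ I would fix a potential $\phi_K\colon V(K)\to\Gamma$ with $\psi(e)=\phi_K(u)^{-1}\phi_K(w)$ for every edge $e=uw$ of $K$ directed from $u$ to $w$; such $\phi_K$ exists precisely because $E_2$ is balanced. For $K\neq K_0$ we have $V(K)\cap V(E_1)=\emptyset$, while on $K_0$ the $\mathrm{id}$-labeled edges of $E_1\cap E_2$ together with the connectedness of $H$ force $\phi_{K_0}$ to be constant on $V(H)=V(K_0)\cap V(E_1)$, so after multiplying by a constant we may take $\phi_{K_0}$ to be $\mathrm{id}$ on $V(H)$. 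Then the switching that acts at each $v\in V(E_2)$ by $\phi_K(v)^{-1}$ (where $K$ is the component containing $v$) is supported on $V(E_2)\setminus V(E_1)$; performing it makes every edge of $E_2$ $\mathrm{id}$-labeled without disturbing the $\mathrm{id}$ labels on $E_1$. Hence every edge of $E_1\cup E_2$ is $\mathrm{id}$-labeled, so $E_1\cup E_2$ is balanced, the desired contradiction. The only genuinely delicate point — the main obstacle — is the compatibility of the two switchings: the second one must fix every vertex incident to $E_1$, and this is exactly what the single-component claim (via the connectivity of $H$) delivers; the rest is routine bookkeeping about switchings and potentials.
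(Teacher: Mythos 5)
Your proof is correct and is, at its core, the same switching argument as the paper's: assume the intersection graph is connected, switch so that all labels become the identity, and contradict unbalancedness. The paper does this in one stroke by choosing a spanning tree of $G[E_1\cup E_2]$ that restricts to a spanning tree of each $G[E_i]$, while you switch $E_1$ to identity first and then build a potential for $E_2$ that can be made trivial on $V(E_1)\cap V(E_2)$ — a slightly more explicit bookkeeping that also handles the cases where $G[E_1]$, $G[E_2]$, or their union is disconnected, which the paper's terse ``spanning tree'' phrasing passes over.
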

\begin{proof}
This is a special case of \cite[Lemma 2.4]{jkt}.
Since the proof is easy, we include it for completeness.
Suppose that $(V(E_1)\cap V(E_2), E_1\cap E_2)$ is connected.
Then one can take a spanning tree $T$ in $(V(E_1\cup E_2), E_1\cup E_2)$
such that $T\cap E_i$ is a spanning tree in \((V(E_i),E_i)\) for \(i=1,2\).
By switching, we may assume that $\psi(e)=\textrm{id}$ for every $e\in T$ (see \cite[Proposition 2.3]{jkt}).
Since $E_i$ is balanced, we get $\psi(e)=\textrm{id}$ for every $e\in E_1\cup E_2$,
contradicting that $E_1\cup E_2$ is unbalanced.
\end{proof}

\begin{lemma} \label{lem:3}
If $G$ is unbalanced and M-connected, then $G$ is periodically rigid.
\end{lemma}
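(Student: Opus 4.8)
The plan is to argue by induction using an ear decomposition of $G$, mirroring the strategy behind Lemma~\ref{lem:4} but tracking the balanced/unbalanced status of the partial unions. Since $G$ is $M$-connected, by Lemma~\ref{lem:EDfacts}(a) it has an ear decomposition $C_1,\dots,C_t$ with $D_t=E(G)$, where $D_j=C_1\cup\cdots\cup C_j$ and $\tilde C_j=C_j\setminus D_{j-1}$. Each $C_i$ is a circuit of ${\cal R}_2(V)$, so it is either a balanced circuit (an ordinary generic rigidity circuit, hence rigid on its vertex set) or an unbalanced circuit with $|C_i|=2|V(C_i)|-1$; in either case $r_2(C_i)=2|V(C_i)|-2$, so each $C_i$ is periodically rigid by Theorem~\ref{thm:ross}. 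I will prove by induction on $j$ that each $D_j$ is periodically rigid, i.e. $r_2(D_j)=2|V(D_j)|-2$.

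For the base case $D_1=C_1$ this is immediate from the previous paragraph. For the inductive step, assume $D_{j-1}$ is periodically rigid. If $|V(C_j)\cap V(D_{j-1})|\geq 2$, then since both $D_{j-1}$ and $C_j$ are periodically rigid, Lemma~\ref{cl:G_1cupG_2rigid2}(ii) gives that $D_j=D_{j-1}\cup C_j$ is periodically rigid, completing the step. The remaining case is $|V(C_j)\cap V(D_{j-1})|\leq 1$; here I need to rule this out (or handle it). Because $C_j$ is a circuit it is $2$-connected by Lemma~\ref{lem:7} (or, in the balanced case, classically), and property (E1) forces $C_j\cap D_{j-1}\neq\emptyset$, so the two graphs share at least one edge, hence at least two vertices — contradicting $|V(C_j)\cap V(D_{j-1})|\leq 1$. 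Thus this case does not occur, and the induction goes through; taking $j=t$ yields that $G=D_t$ is periodically rigid.

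I should double-check one subtlety: I have not yet used that $G$ is \emph{unbalanced}. In fact the argument above shows that any $M$-connected $G$ is periodically rigid; the hypothesis that $G$ is unbalanced is only needed to make the conclusion non-vacuous relative to Lemma~\ref{lem:4} (a balanced $M$-connected graph is rigid, $r_2=2|V|-3$, and adding $1$ to its rank is exactly what happens when the graph becomes unbalanced). The only place I would be careful is confirming that Lemma~\ref{cl:G_1cupG_2rigid2}(ii) applies without a balancedness assumption on the union — and indeed its statement requires only that $G_1,G_2$ are periodically rigid and $|V(G_1)\cap V(G_2)|\geq 2$, with no constraint on $G_1\cup G_2$, so it is exactly the tool needed. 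The main (minor) obstacle is therefore the bookkeeping in the $|V(C_j)\cap V(D_{j-1})|\leq 1$ case; everything else is a direct application of the ear-decomposition machinery and the gluing lemma. As a cleaner alternative, one can avoid ears entirely: since $G$ is $M$-connected it is in particular connected, pick any edge $e$ lying in some circuit $C\ni e$ together with another edge $f$; but the ear-decomposition route is the most transparent, so I would present that.
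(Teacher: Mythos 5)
Your proof has a fatal error at the very start, and you actually noticed the symptom of it yourself without diagnosing it. You assert that each circuit $C_i$ of ${\cal R}_2(V)$ has $r_2(C_i)=2|V(C_i)|-2$ "in either case," but this is false when $C_i$ is balanced. A balanced circuit satisfies $|C_i|=2|V(C_i)|-2$, hence $r_2(C_i)=|C_i|-1=2|V(C_i)|-3$: it is \emph{rigid} in the paper's terminology, not \emph{periodically rigid} (indeed no balanced set can ever have rank $2|V|-2$, since balanced independent sets are capped at $2|V|-3$). With this corrected, your base case $D_1=C_1$ fails whenever $C_1$ is balanced, and your inductive step fails whenever $D_{j-1}$ is balanced — in that case you cannot invoke Lemma~\ref{cl:G_1cupG_2rigid2}(ii), which requires both pieces to be periodically rigid. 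The remark at the end that "the argument above shows that any $M$-connected $G$ is periodically rigid" should have been an alarm bell: that conclusion contradicts Lemma~\ref{lem:4}, which gives only $r_2=2|V|-3$ for balanced $M$-connected graphs. A proof that never invokes the unbalancedness hypothesis cannot be right here.

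The genuinely hard part of the lemma, which your argument never reaches, is the step where $D_{i-1}$ and $C_i$ are both balanced but $D_{i-1}\cup C_i$ is unbalanced. Then neither piece is periodically rigid and the union is not balanced, so neither gluing lemma applies directly. The paper handles this with Lemma~\ref{lem:disconnected} (the intersection $(V(C_i)\cap V(D_{i-1}),C_i\cap D_{i-1})$ must be disconnected when two balanced sets have unbalanced union) combined with Lemma~\ref{lem:EDfacts}(c) and a delicate count showing that the extra vertex hidden in the disconnected intersection yields the needed $+1$ to the rank. Your proof needs to be restructured: keep track of whether $D_i$ is balanced or unbalanced at each stage, use Lemma~\ref{cl:G_1cupG_2rigid2}(i) when exactly one piece is periodically rigid, Lemma~\ref{cl:G_1cupG_2rigid1} when the union stays balanced, and the disconnectedness-plus-counting argument for the transition from a balanced $D_{i-1}$ to an unbalanced $D_i$ via a balanced ear.
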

\begin{proof}
Since $G$ is M-connected,  $E(G)$ has an ear decomposition $C_1,\ldots, C_t$.
It follows from $|D_i\cap C_{i+1}|\geq1$ that there are at least two vertices in \(V(D_i)\cap V(C_{i+1})\). We will prove by induction on $i$
that 
\begin{equation}
\label{eq:3-0}
\text{$D_i$ is rigid if $D_i$ is balanced, and $D_i$ is periodically rigid otherwise.}
\end{equation}
This follows from Lemma~\ref{lem:4} and Lemma~\ref{lem:circuit_periodic_rigid} if $i=1$. Hence, we assume $i\geq 2$.

If $D_{i-1}$ is unbalanced, then by induction $D_{i-1}$ is periodically rigid.
By Lemma \ref{cl:G_1cupG_2rigid2}, $D_{i-1}\cup C_i$ is periodically rigid, implying (\ref{eq:3-0}).

If $D_{i-1}$ is balanced, then by induction it is rigid. If $C_i$ is unbalanced, then by Lemma~\ref{cl:G_1cupG_2rigid2}(i) and Lemma~\ref{lem:circuit_periodic_rigid},  $D_{i-1}\cup C_i$ is periodically rigid.
If $C_i$ is balanced and $D_{i-1}\cup C_i$ is balanced, then by Lemma~\ref{lem:4} and Lemma~\ref{cl:G_1cupG_2rigid1}, $D_{i-1}\cup C_i$ is rigid.

For (\ref{eq:3-0}) it remains to show \(r_2(D_{i})=2|V(D_{i})|-2\) if $C_i$ and \(D_{i-1}\) are balanced and  $D_{i-1}\cup C_i$ is unbalanced.
Let $s:=2|V(C_i\cap D_{i-1})|-3-|C_i\cap D_{i-1}|$.
We claim that 
\begin{equation}
\label{eq:lem3}
s+2( |V(C_i)\cap V(D_{i-1})|-|V(C_i\cap D_{i-1})|)\geq 1.
\end{equation}
To see this, observe first that $s\geq 0$ holds since $C_i\cap D_{i-1}$ is balanced and independent.
Also (\ref{eq:lem3}) trivially holds if $s\geq 1$. 
Hence suppose $s=0$. Then $C_i\cap D_{i-1}$ is rigid as $C_i\cap D_{i-1}$ is balanced, and hence $C_i\cap D_{i-1}$ is connected.
%Hence by Lemma~\ref{lem:7}, $C_i\cap D_{i-1}$ is connected.
However, since $(V(C_i)\cap V(D_{i-1}), C_i\cap D_{i-1})$ is disconnected by Lemma~\ref{lem:disconnected},
we have $V(C_i)\cap V(D_{i-1})\neq V(C_i\cap D_{i-1})$, implying (\ref{eq:lem3}) even for $s=0$.

%This in particular implies

Since $D_{i-1}$ is rigid and $C_i$ is a balanced circuit, Lemma~\ref{lem:EDfacts}(c) implies
\begin{align*}
r_2(D_i)&=r_2(D_{i-1})+|C_i\setminus D_{i-1}|-1 \\
&=2|V(D_{i-1})|-3+|C_i|-|C_i\cap D_{i-1}|-1 \\
&=2|V(D_{i-1})|-3+2|V(C_i)|-2-2|V(C_i\cap D_{i-1})|+3+s-1 \\
&=2(|V(D_{i-1})|+|V(C_i)|-|V(C_i\cap D_{i-1})|)-3+s \\
&=2|V(D_i)|-3+2(|V(C_{i})\cap V(D_{i-1})|-|V(C_i\cap D_{i-1})|)+s\\
&\geq 2|V(D_i)|-2
\end{align*}
where the last inequality follows from (\ref{eq:lem3}).

Since $E(G)=D_t$ and $G$ is unbalanced, we conclude that $G$ is periodically rigid by (\ref{eq:3-0}).
\end{proof}

The following is an analogue of \cite[Theorem 3.2]{jj}.
Using the lemmas collected so far, its proof is now a direct adaptation of that of \cite[Theorem 3.2]{jj}.
\begin{theorem} \label{lem:5}
Let $G$ be unbalanced and suppose that it has no $(0,2)$-block.
Then $G$ is 2-connected and redundantly periodically rigid if and only if $G$ is M-connected.
\end{theorem}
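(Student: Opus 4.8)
The plan is to prove the two implications separately, following the template of the proof of \cite[Theorem~3.2]{jj}.

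\emph{Backward implication.} Suppose $G$ is $M$-connected. Then $G$ is $2$-connected by Lemma~\ref{lem:7}, and, as $G$ is unbalanced, Lemma~\ref{lem:3} gives $r_2(G)=2|V(G)|-2$. Moreover $G$ has at least two edges and is $M$-connected, so each edge lies in a circuit of ${\cal R}_2(V(G))$ and hence is not a coloop; since $2$-connectivity forces $V(G-e)=V(G)$, we obtain $r_2(G-e)=r_2(G)=2|V(G-e)|-2$ for every $e\in E(G)$, so every $(G-e,\psi)$ is periodically rigid and $G$ is redundantly periodically rigid. This direction does not use the hypothesis on $(0,2)$-blocks.

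\emph{Forward implication.} Assume $G$ is $2$-connected, redundantly periodically rigid, and has no $(0,2)$-block, and suppose for contradiction that $G$ is not $M$-connected. Let $G_1,\dots,G_m$, $m\geq 2$, be the subgraphs induced by the components $E_1,\dots,E_m$ of the restriction of ${\cal R}_2(V(G))$ to $E(G)$. Redundant periodic rigidity means no edge is a coloop, so each $E_i$ has at least two elements, whence each $G_i$ is $M$-connected and, by Lemma~\ref{lem:7}, $2$-connected and $3$-edge-connected. By Lemmas~\ref{lem:4} and~\ref{lem:3}, $r_2(G_i)=2|V(G_i)|-c_i$ with $c_i=3$ when $E_i$ is balanced and $c_i=2$ otherwise; and a balanced $M$-connected graph has at least four vertices. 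Since the $E_i$ are the components of a matroid, $r_2(G)=\sum_i r_2(G_i)$; using $r_2(G)=2|V(G)|-2$ and $\sum_i |V(G_i)|=\sum_{v\in V(G)} d^*(v)$, where $d^*(v)$ is the number of $i$ with $v\in V(G_i)$, a short computation yields
\[
\sum_{v\in V(G)}\bigl(d^*(v)-1\bigr)=m-1+\tfrac{b}{2},
\]
with $b$ the number of balanced $G_i$.

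\emph{Contradiction and main difficulty.} Write $N=|\{v\in V(G):d^*(v)\geq 2\}|$. Each of these $N$ vertices contributes at least $1$ to the left side of the displayed identity, so $N\leq m-1+\tfrac b2$. Since $B(G_i)=\{v\in V(G_i):d^*(v)\geq 2\}$ we have $\sum_i|B(G_i)|=\sum_{v:\,d^*(v)\geq2}d^*(v)=N+\sum_v(d^*(v)-1)=N+m-1+\tfrac b2$. The key step is to prove the lower bounds $|B(G_i)|\geq 2$ for every $i$, and $|B(G_i)|\geq 3$ when $G_i$ is balanced. For the first: $|B(G_i)|=0$ makes $G_i$ a connected component of $G$, forcing $m=1$; $|B(G_i)|=1$ either forces $E(G)=E(G_i)$ (the degenerate case $V(G_i)=V(G)$, giving $m=1$) or makes the unique boundary vertex separate $I(G_i)\neq\emptyset$ from $V(G)\setminus V(G_i)\neq\emptyset$, contradicting $2$-connectivity. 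For the second: if $|B(G_i)|=2$ and $G_i$ is balanced then, as $|V(G_i)|\geq 4$ gives $I(G_i)\neq\emptyset$, $G_i$ is a $(0,2)$-block, which is excluded. These bounds give $\sum_i|B(G_i)|\geq 2m+b$, hence $N\geq m+1+\tfrac b2$, contradicting $N\leq m-1+\tfrac b2$. The most delicate point is precisely this lower-bound step: in the balanced setting of \cite{jj} the single assumption of $3$-connectivity rules out all bad configurations at once, whereas here balanced and unbalanced $M$-components coexist with the different counts $2n-3$ and $2n-2$, so one must combine $2$-connectivity of $G$ and of each $M$-component with the absence of $(0,2)$-blocks to control $|B(G_i)|$.
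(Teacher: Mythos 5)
Your proof is correct and follows essentially the same route as the paper's: the backward implication uses Lemma~\ref{lem:7}, Lemma~\ref{lem:3}, and the circuit property verbatim, and the forward implication decomposes $E(G)$ into $M$-connected components, applies $r_2(E_i)=2|V(E_i)|-3+\delta(E_i)$, and reaches a contradiction from the bounds $|B(G_i)|\ge 2$ (with $|B(G_i)|\ge 3$ when $E_i$ is balanced), which the paper encodes via the quantity $a_i=|B(G_i)|$ and you encode via $d^*(v)$ and $N$; these are two packagings of the identical counting inequality. One small expository slip: in ruling out $|B(G_i)|=1$ you say the subcase $V(G_i)=V(G)$ ``forces $E(G)=E(G_i)$,'' but this isn't quite what happens — rather, if $V(G_i)=V(G)$ and $m\ge 2$, any other component $G_j$ contributes both endvertices of one of its edges to $B(G_i)$, so $|B(G_i)|\ge 2$, contradicting $|B(G_i)|=1$ directly. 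This doesn't affect the correctness of your argument since either way the case is impossible, but the justification as written is imprecise.
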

\begin{proof}
Suppose first that $G$ is M-connected.
By Lemma~\ref{lem:7} $G$ is 2-connected.
Also  $G$ is periodically rigid by Lemma \ref{lem:3}.
Since every edge is included in a circuit, we have $r_2(G-e)=r_2(G)$ for every edge $e\in E(G)$, meaning that $G-e$ is periodically rigid.
Thus $G$ is redundantly periodically rigid.

Conversely, suppose that $G$ is 2-connected and redundantly periodically rigid.
Suppose further for a contradiction that $G$ is not $M$-connected.
Then $E(G)$ can be decomposed into M-connected components $E_1,\ldots, E_t$ with $t\geq 2$.
By Lemmas~\ref{lem:3} and \ref{lem:4},
we have \(r_2(E_i)=2|V(E_i)|-3\) if \(E_i\) is balanced and
\(r_2(E_i)=2|V(E_i)|-2\) otherwise.
Let $a_i$ be the number of vertices in $G[E_i]$ which are shared by other $G[E_j]$.
With this notation, we have
\begin{align}
\nonumber
&2|V(G)|-2\geq r_2(E(G))=\sum_{i=1}^t r_2(E_i) \\
&= \sum_{E_i: \textrm{balanced}} (2|V(E_i)|-3) +\sum_{E_i:  \textrm{unbalanced}} (2|V(E_i)|-2) \nonumber \\
&=\sum_{i=1}^t (2|V(E_i)|-a_i)+\sum_{E_i: \textrm{balanced}} (a_i-3) +\sum_{E_i: \textrm{unbalanced}} (a_i-2). \label{eq:lem5}
\end{align}

Since $G$ is redundantly periodically rigid, every $M$-connected component contains a circuit.
Hence, for each balanced $E_i$, we have $|V(E_i)|\geq 4$ (since the smallest balanced circuit is $K_4$). This means that, if $a_i\leq 2$ for a balanced $E_i$, then $G[E_i]$ is a $(0,2)$-block in $G$. Hence we have $a_i\geq 3$ for each balanced $E_i$.
On the other hand, for each unbalanced $E_i$, we have $a_i\geq 2$ since $G$ is 2-connected.
Therefore the last two terms in (\ref{eq:lem5}) are nonnegative.
Observe finally that \(\sum_{i=1}^k (2|V(E_i)|-a_i)\geq 2|V(G)|\),
since each vertex of \(G\) contained in a unique $E_i$ is counted twice while  the rest of the vertices are counted at least twice.
Therefore (\ref{eq:lem5}) is at least $2|V(G)|$, which is a contradiction.
\end{proof}

%%%%%%%%%%%%%%%%%%%%%%%%%%%%%%%%%%%%%%%%%%%%%%%%%%%%%%%%%%%%%%%%%%%%%%%%%%%%%%%%%%%%%%%%%%%%%%%%%%%%%%%%%%%%%%%%%%%%%%%%%%%%%%%%%%%%%%%%%%%%%%%%%%%%%
\subsection{Proof of Lemma~\ref{lem:comb2}}
We first solve the following main case of the proof of Lemma~\ref{lem:comb2}.
\begin{lemma}
\label{lem:comb2pre}
Let $G$ be a 2-connected and redundantly periodically rigid graph  that has no $(0,2)$-block. Then for any vertex $v$ with degree $3$ in $G$,  $G_v$ is 2-connected and redundantly periodically rigid and has no $(0,2)$-block.
\end{lemma}
\begin{proof}
The 2-connectivity of $G_v$ follows from the 2-connectivity of $G$.
Let $E_v$ be the set of edges incident with $v$ in $G$, and assume that those edges are directed from $v$.
Let $F$ be the set of edges $e_1\cdot e_2$ over all pairs of nonparallel edges $e_1, e_2$ in $E_v$. Recall that $G_v$ is obtained from $G$ by removing $v$ and adding $F$, 
where the label of $e_1\cdot e_2$ is defined to be $\psi(e_1)^{-1}\psi(e_2)$.
Thus, we have that
\begin{equation}
\label{eq:comb2}
\text{$E_v\cup F$ is a circuit.}
\end{equation}

We first show that $G_v$ has no $(0,2)$-block.
Suppose to the contrary that $G_v$ contains a $(0,2)$-block $H$.
Take a maximal such $H$. 
We split the proof into four cases depending on how $H$ intersects with $F$ as follows.
(Case 1) If $|E(H)\cap F|=0$, then $H$ is also a $(0,2)$-block of $G$, a contradiction.
(Case 2)  If $|E(H)\cap F|=1$, then $B(H)$ is exactly the end vertices of the edge $e$ in $E(H)\cap F$.
 Hence $H\cap G$ would be a $(0,2)$-block in $G$ with
 $V(H\cap G)=V(H)$ and $B(H\cap G)=B(H)$. This is a contradiction.
(Case 3) If $F\subseteq E(H)$, then $|F|=3$ and it is easy to see that $(H+E_v)\cap G$ is a $(0,2)$-block in $G$,  again a contradiction.   
% $(H+E_v)\cap G$ is a $(0,2)$-block in $G$ by (\ref{eq:comb2}),  again a contradiction.
(Case 4) The remaining case is when $|N_G(v)|=3$ and $|E(H)\cap F|=2$.
Recall that $F$ forms a balanced triangle if $|N_G(v)|=3$. Hence $H+F$ would be  balanced and hence be a $(0,2)$-block in $G_v$, which contradicts the maximality of $H$.
This completes the proof that $G_v$ has no $(0,2)$-block.

To see that $G_v$ is redundantly periodically rigid, we first check that $G_v$ is periodically rigid.
Indeed, since $G$ is redundantly periodically rigid, $G-e'$ is periodically rigid for an edge $e'$ incident to $v$.
Then $v$ has degree two in $G-e'$, and by Lemma~\ref{lem:0extension} $G-v$ is periodically rigid. 
As $G_v$ is obtained from $G-v$ by adding $F$, $G_v$ is periodically rigid. 

Now take any edge $e\in E(G_v)$. Our goal is to show that $G_v-e$ is periodically rigid.
We take an edge $f$ from $E_v$.
Suppose first that there is a circuit \(C\) in \(G+F\) with \(f\in C\) and \(e\not\in C\).
Since $C\subseteq E(G)+F-e$, we have ${\rm cl}_2(E(G)+F-e)={\rm cl}_2(E(G)+F-e-f)$.
On the other hand, since $G$ is redundantly periodically rigid, ${\rm cl}_2(E(G)+F)={\rm cl}_2(E(G)+F-e)$.
Thus, ${\rm cl}_2(E(G)+F)={\rm cl}_2(E(G)+F-e-f)$,
and we get $r_2(G+F-e-f)=r_2(G+F)=2|V(G)|-2$.
Since $v$ has degree two in $G+F-e-f$, we get $r_2(G_v-e)=2|V(G)|-2-2=2|V(G_v)|-2$. Thus,  $G_v-e$ is periodically rigid.

So we may suppose that every circuit in \(G+F\) that contains \(f\) also contains \(e\).
Then by (\ref{eq:comb2}), $e\in F$.
Suppose that $G_v-e$ is not periodically rigid.
\begin{claim}\label{claim:mc}
$G_v-e$ contains an $M$-connected component $D'$ with $D'\cap (F-e)=\emptyset$.
\end{claim}
\begin{proof}
We split the proof into two cases depending on the size of $N_G(v)$.

\medskip
\noindent
Case 1: $|N_G(v)|=3$.
We denote $N_G(v)=\{v_1, v_2, v_3\}$, and assume that $e$ connects  $v_1$ and $v_2$.
For $i=1,2$, let $e_i$ be the edge in $F$ that connects  $v_i$ and $v_3$,
and let $D_i$ be the $M$-connected component in $G_v-e$ that contains $e_i$.

We first show that $D_1\neq D_2$.
To see this, suppose to the contrary that $D_1=D_2$. 
Then we have $e\in {\rm cl}_2(D_1)$.
Indeed, if $D_1$ is unbalanced, then ${\rm cl}_2(D_1)$ contains every edge on $V(D_1)$ by  Lemma~\ref{lem:3},
implying $e\in {\rm cl}_2(D_1)$.
On the other hand, if $D_1$ is balanced, then  by Lemma~\ref{lem:4} ${\rm cl}_2(D_1)$ contains every edge $e'$ on $V(D_1)$ for which $D_1+e'$ is balanced,
and $D_1+e$ is indeed balanced by the balancedness of $\{e, e_1, e_2\}$ and Lemma~\ref{lem:disconnected}.
Hence $e\in {\rm cl}_2(D_1)$.
%
%implies that ${\rm cl}_2(D_1)$  hen, by Lemma~\ref{lem:4} and Lemma~\ref{lem:3}, $e\in {\rm cl}_2(D_1)$ since $\{e, e_1, e_2\}$ is balanced.
%This however implies that ${\rm cl}_2(E(G_v))={\rm cl}_2(E(G_v-e))$ and hence $G_v$ is not periodically rigid as $G_v-e$ is not. This contradiction implies  $D_1\neq D_2$.
This implies that ${\rm cl}_2(E(G_v))={\rm cl}_2(E(G_v-e))$ and hence $G_v$ is not periodically rigid as $G_v-e$ is not. 
This contradiction implies $D_1\neq D_2$.

%However, $G-v$ is periodically rigid, as the following argument shows. Take any edge $e$ incident to $v$. Since $G$ is redundantly periodically rigid, we have $2|V(G)|-2=r_2(G)=r_2(G-e)$. Since $v$ has degree two in $G-e$,
%we get $r_2(G-v)= r_2(G-e)-2=2|V(G-v)|-2$ by Lemma~\ref{lem:0extension}, implying that  $G-v$ is indeed periodically rigid.
%But this implies that $G_v$ must also be periodically rigid, a contradiction. It follows that  $D_1\neq D_2$.

%Next, to see the statement of the claim, assume
Suppose, contrary to the claim,  that $D_1$ and $D_2$ are all the $M$-connected components in $G_v-e$.
By Lemma~\ref{lem:4} and Lemma~\ref{lem:3}, $r_2(D_i)=2|V(D_i)|-3+\delta(D_i)$, where $\delta(D_i)=0$ if it is balanced and  $\delta(D_i)=1$ otherwise.
Since $G_v-e$ is not periodically rigid, $2|V(G_v)|-3\geq r_2(G_v-e)=r_2(D_1)+r_2(D_2)=
2(|V(D_1)|+|V(D_2)|)-6+\delta(D_1)+\delta(D_2)$,
implying
$2|V(D_1)\cap V(D_2)|+\delta(D_1)+\delta(D_2)\leq 3$.
Since $V(D_1)\cap V(D_2)\neq \emptyset$, we obtain
\[
|V(D_1)\cap V(D_2)|=1 \text{ and } \delta(D_1)+\delta(D_2)\leq 1.
\]
The first equation implies that  $v_3$ is a cut vertex in $G_v-e$. 
The second inequality implies that at least $D_1$ or $D_2$ is balanced.
Without loss of generality we assume that $D_2$ is balanced.
If $|V(D_2)|\geq 3$, then $D_2$ is a $(0,2)$-block in $G_v$ since $\{v_2, v_3\}$ forms a cut, which contradicts that $G_v$ has no $(0,2)$-block. On the other hand, if $|V(D_2)|=2$, then  $D_2=\{e_2\}$, and the edges $e$ and $e_2$ induce a $(0,2)$-block in $G_v$, which is again a contradiction.
%$G$ has no edge between $v_1$ and $v_2$ except possibly $e$.
%The second inequality implies that at least $D_1$ or $D_2$ is balanced.
%Without loss of generality we assume that $D_2$ is balanced.
%If $|V(D_2)|\geq 3$, then $G[D_2]$ or $G[D_2-e_2]$ is a subgraph of $G$,
%and one of them forms a $(0,2)$-block in $G$ since $\{v_2, v_3\}$ forms a cut, which contradicts that $G$ has no $(0,2)$-block.
%On the other hand, if $|V(D_2)|=2$, then  $D_2=\{e_2\}$ and $G$ has no edge between $v_2$ and $v_3$ except possibly $e_2$.
%Therefore $E_v\cup (F\cap E(G))$ induces a $(0,2)$-block in $G$, which is again a contradiction.
This contradiction completes the proof for Case 1.

\medskip
\noindent
Case 2: $|N_G(v)|=2$.
 Let $e_1$ be the edge in $F$ different from $e$.
Let $D_1$ be the $M$-connected component in $G_v-e$ that contains $e_1$.
If $D_1$ is unbalanced, then $D_1$ is periodically rigid by Lemma~\ref{lem:3}, and hence $e\in {\rm cl}_2(D_1)$.
This however implies that $G_v$ is not periodically rigid, which is a contradiction.
Thus $D_1$ is balanced.

If  $D_1$ is the only $M$-component in $G_v-e$, then $G[D_1]$  or $G[D_1-e_1]$ would be a $(0,2)$-block in $G$ by $|V(D_1)|\geq 3$. Thus $G_v-e$ has an $M$-component  different from $D_1$.
\end{proof}

Let $D'$ be an $M$-component in $G_v-e$ satisfying the condition of Claim~\ref{claim:mc}.
Let \(g\in D'\) be arbitrary.
By Theorem~\ref{lem:5} $G$ is $M$-connected, and hence $G+F$ is $M$-connected.
Hence $G+F$ contains a circuit $C_1$ with \(e,g\in C_1\).
Recall that $E_v\cup F$ is a circuit by (\ref{eq:comb2}).
 Since $e\in F$ and $g\notin E_v+F$, by the circuit elimination,
 we have a circuit $C_2\subseteq (C_1\cup (E_v+F))-e$ with $g\in C_2$.
 We claim
 \begin{equation}
 \label{eq:comb2-3}
 C_2\cap (E_v+e)=\emptyset.
 \end{equation}
 To see this recall that every circuit in $G+F$ containing $f$ also contains $e$.
 As $e\notin C_2$, we get $f\notin C_2$.
 Moreover, since $v$ has degree three in $G+F$, $f\notin C_2$ implies $E_v\cap C_2=\emptyset$,
 implying (\ref{eq:comb2-3}).

 By (\ref{eq:comb2-3}),  $C_2$ is a circuit in $G_v-e$.
However,  according to the construction of $C_2$,  we have $C_2\cap (E_v+F)\neq \emptyset$,
which means that $C_2$ intersects $F-e$ by (\ref{eq:comb2-3}).
Since $D'\cap (F-e)=\emptyset$ and $g\in C_2\cap D'$,
$C_2$ intersects more than one $M$-connected component in $G_v-e$,  which is a contradiction.
\end{proof}

\begin{proof}[Proof of Lemma~\ref{lem:comb2}]
By Lemma~\ref{lem:0extension}, the minimum degree of $G$ is at least three.
Let $v$ be a vertex of degree three, and suppose without loss of generality that all the edges incident to $v$ are directed from $v$. First we show that $v$ is $2$-nondegenerate, i.e. that $\bigcup_{v_i\in N_G(v)}\{p(v_i)+L(\psi(e)):\, e=vv_i\in E(G)\}$ is affinely independent for any generic $p:V(G)\to \mathbb{R}^2$. If $v$ has three distinct neighbors, then this is clearly true. 
If $v$ has exactly two neighbors, any parallel edges are not identical, and hence the parallel edges form an unbalanced cycle.  Thus the statement is again true.
Hence by 2-connectivity of $G$ we conclude that \(v\) is $2$-nondegenerate.

As shown in the proof of Lemma~\ref{lem:comb2pre}, $G-v$ is periodically rigid. Moreover,
%Next we show that $G-v$ is periodically rigid. Take any edge $e$ incident to $v$. Since $G$ is redundantly periodically rigid, we have $2|V(G)|-2=r_2(G)=r_2(G-e)$. Since $v$ has degree two in $G-e$, we get $r_2(G-v)= r_2(G-e)-2=2|V(G-v)|-2$ by Lemma~\ref{lem:0extension}, implying that  $G-v$ is periodically rigid.
the statement for $G_v$ has also already been proved in Lemma~\ref{lem:comb2pre}.
\end{proof}

%%%%%%%%%%%%%%%%%%%%%%%%%%%%%%%%%%%%%%%%%%%%%%%%%%%%%%%%%%%%%%%%%%%%%%%%%%%%%%%%%%%%%%%%%%%%%%%%%%%%%%%%%%%%%%%%%%%%%%%%%%%%%%%%%%%%%%%%%%%%%%%%%%%%%%%%%%
\subsection{Proof of Lemma~\ref{lem:comb}}
The proof of Lemma~\ref{lem:comb} is rather involved, and it consists of  three major parts.
In Lemma~\ref{lem:8} we first show the existence of a degree three vertex in a minimally $M$-connected graph.
By this result, we may focus on the case when $G$ is not minimally $M$-connected, i.e., $E'=\{e\in E(G): G-e \text{ is $M$-connected}\}$ is nonempty.
By Theorem~\ref{lem:5}, $G-e$ is 2-connected and redundantly periodically rigid for every $e\in E'$ for which $G-e$ has no $(0,2)$-block.
Thus, what remains to show is that there is an edge $e\in E'$ such that $G-e$ has no $(0,2)$-block. 
We prove this by contradiction. A key lemma for this will be Lemma~\ref{lem:key}, which shows that, for any $(0,2)$-block $H_e$ in $G-e$ for $e\in E'$, $H_e$ contains an edge $f$ in $E'$.
The proof is completed by first taking $e\in E'$ such that $|V(H_e)|$ is as small as possible and then showing that $G-f$ has no $(0,2)$-block for $f\in E(H_e)\cap E'$.

We start with the following lemma, which is an unbalanced version of  Lemma~\ref{lem:jordan}.
The proof strategy is similar, but its proof is technically more involved.
\begin{lemma} \label{lem:8}
If $G$ is unbalanced and minimally M-connected, then $G$ has a vertex of degree three.
\end{lemma}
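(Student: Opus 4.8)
The plan is to imitate the proof of Lemma~\ref{lem:jordan}: fix an ear decomposition $C_1,\dots,C_t$ of $E(G)$ (one exists by Lemma~\ref{lem:EDfacts}(a)), with $D_i=C_1\cup\dots\cup C_i$, and squeeze a low-degree vertex out of its last ear. Since $G$ is $M$-connected it is $3$-edge-connected by Lemma~\ref{lem:7}, so its minimum degree is at least three and it suffices to find a vertex of degree at most three. If $t=1$ then $G=C_1$, and as $G$ is unbalanced this $C_1$ is an \emph{unbalanced} circuit, so $|E(G)|=2|V(G)|-1$ and $\sum_{v\in V(G)}d_G(v)=4|V(G)|-2<4|V(G)|$; hence some vertex has degree at most three. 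So I may assume every ear decomposition of $E(G)$ has length $t\geq 2$.

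The crucial preliminary step is to produce an \emph{unbalanced} circuit in $G$. Starting from any ear decomposition, let $j$ be the least index with $D_j$ unbalanced; this exists since $D_t=E(G)$ is unbalanced. If $j=1$ then $C_1$ is unbalanced. Otherwise $D_{j-1}$ is balanced, and I claim $C_j$ is unbalanced. Suppose not; then $C_j$ and $D_{j-1}$ are balanced edge sets whose union $D_j$ is unbalanced, so by Lemma~\ref{lem:disconnected} the graph on $W:=V(C_j)\cap V(D_{j-1})$ with edge set $C_j\cap D_{j-1}$ is disconnected. But $C_j\cap D_{j-1}$ is balanced and, being a proper nonempty subset of the circuit $C_j$, independent, while Lemma~\ref{lem:EDfacts}(c) together with Lemmas~\ref{lem:3} and~\ref{lem:4} (applied to the $M$-connected graphs $D_{j-1}$ and $D_j$, which are balanced resp.~unbalanced) gives $|C_j\cap D_{j-1}|=2|W|-4$; applying the bound $|F|\leq 2|V(F)|-3$ for balanced independent $F$ to the two sides of a separation of $(W,C_j\cap D_{j-1})$ shows $2|W|-4$ edges are impossible. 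Hence $C_j$ is an unbalanced circuit.

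Now choose, using Lemma~\ref{lem:EDfacts}(b), an ear decomposition $C_1,\dots,C_t$ of $E(G)$ whose first circuit $C_1$ is this unbalanced circuit. Then $D_1$ is unbalanced, and hence so is $D_i$ for every $i$, in particular $D_{t-1}$. Set $\tilde C_t=C_t\setminus D_{t-1}$ and $Y=V(C_t)\setminus V(D_{t-1})$. Minimality of the $M$-connectivity of $G$ gives $|\tilde C_t|\geq 2$: if $\tilde C_t=\{e\}$ then $C_t-e\subseteq D_{t-1}$, so $G-e=D_{t-1}$ would still be $M$-connected, a contradiction. By Lemma~\ref{lem:EDfacts}(c) and Lemma~\ref{lem:3}, applied to $D_{t-1}$ and $D_t=E(G)$ (both unbalanced and $M$-connected, hence periodically rigid with $r_2=2|V|-2$), $|\tilde C_t|-1=r_2(D_t)-r_2(D_{t-1})=2|Y|$, so $|\tilde C_t|=2|Y|+1$, and in particular $Y\neq\emptyset$. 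Every edge of $G$ meeting $Y$ lies in $\tilde C_t$ (it cannot lie in $D_{t-1}$), so $i_G(Y)+d_G(Y,V(G)\setminus Y)\leq|\tilde C_t|$; moreover the edges of $G$ induced by $Y$ form a proper subset of the circuit $C_t$, hence are independent, so $i_G(Y)\leq 2|Y|-2$. Consequently
\[
\sum_{v\in Y}d_G(v)=2i_G(Y)+d_G(Y,V(G)\setminus Y)\leq i_G(Y)+|\tilde C_t|\leq (2|Y|-2)+(2|Y|+1)=4|Y|-1<4|Y|,
\]
so some vertex of $Y$ has degree at most three, hence exactly three.

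The step I expect to be the main obstacle is precisely the reduction to the favorable situation ``$D_{t-1}$ unbalanced''. With an arbitrary ear decomposition the last ear could be an unbalanced circuit attached to a balanced $D_{t-1}$; then $|\tilde C_t|=2|Y|+2$ and the count above only yields $\sum_{v\in Y}d_G(v)\leq 4|Y|$, which does not force a degree-three vertex. The way around this is to first locate an unbalanced circuit, via the index where $D_i$ first becomes unbalanced, and then restart the ear decomposition with it; the counting argument behind that existence statement — combining Lemma~\ref{lem:disconnected} with the rank identity of Lemma~\ref{lem:EDfacts}(c) — together with the careful base-case bookkeeping is where the extra technical work (relative to the balanced case of Lemma~\ref{lem:jordan}) is concentrated.
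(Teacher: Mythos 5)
Your proof is correct, and it departs from the paper's in an interesting and genuinely useful way. The paper fixes an arbitrary ear decomposition $C_1,\dots,C_t$ and splits on whether $D_{t-1}$ is balanced. In the balanced case it first disposes of the sub-case ``$C_t$ unbalanced'' by restarting the ear decomposition (landing back in the unbalanced-$D_{t-1}$ case), and then spends substantial effort on the remaining sub-case in which $D_{t-1}$ and $C_t$ are both balanced while $D_t$ is unbalanced, introducing the auxiliary sets $X$ (of non-identity-labelled edges in $C_t$) and $F$ (of edges of $C_t\setminus D_{t-1}$ induced by $V'$). You instead prove up front that $G$ contains an unbalanced circuit — by locating the first index $j$ with $D_j$ unbalanced, computing $|C_j\cap D_{j-1}|=2|W|-4$ from Lemma~\ref{lem:EDfacts}(c) together with Lemmas~\ref{lem:3} and~\ref{lem:4}, and contradicting the disconnectivity supplied by Lemma~\ref{lem:disconnected} (which forces $|C_j\cap D_{j-1}|\leq 2|W|-5$) — and then, via Lemma~\ref{lem:EDfacts}(b), restart the ear decomposition with that circuit as $C_1$. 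After that, only the ``$D_{t-1}$ unbalanced'' situation needs to be treated. In fact, your counting argument shows that the paper's ``both $D_{t-1}$ and $C_t$ balanced, $D_t$ unbalanced'' sub-case can never occur, so the paper's treatment of it, while not wrong, is vacuous; your route avoids it cleanly. The concluding degree count is also mildly different: you use only that edges of $G$ induced by $Y$ form a proper (hence independent) subset of the circuit $C_t$, yielding $i_G(Y)\leq 2|Y|-2$, together with $i_G(Y)+d_G(Y,V\setminus Y)\leq|\tilde C_t|=2|Y|+1$, whereas the paper exploits $K(V',\Gamma)\subseteq\mathrm{cl}_2(D_{t-1})$ and minimal $M$-connectivity to rule out chords of $V'$ inside $\tilde C_t$ and then invokes $d(V\setminus V',V')\geq 3$ from Lemma~\ref{lem:7}. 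Both produce the same bound $4|Y|-1<4|Y|$, but your version requires a little less machinery at that step.
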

\begin{proof}
Let $C_1,\ldots, C_t$ be an ear decomposition of  $E(G)$.
The statement trivially follows from the edge count of $G$ if $t=1$. Hence we assume $t\geq 2$.
Let $D_i=\bigcup_{j=1}^i C_j$. We will use the notation $V=V(G)$ and $V'=V(D_{t-1})$.
Our goal is to prove that the average degree of $V\setminus V'$ (denoted by \(d_{avg}(V\setminus V')\)) is less than 4, implying that $V\setminus V'$ must contain a vertex of degree three.
The proof is split into two cases depending on whether $D_{t-1}$ is balanced or not.

\noindent
\textbf{Case 1}:
Suppose that $D_{t-1}$ is unbalanced. Then it follows from Lemma~\ref{lem:3} that $D_{t-1}$ and $D_{t-1}\cup C_t$ are periodically rigid.
Using Lemma \ref{lem:EDfacts}(c) we have
\begin{equation}
\label{eq:lem8-1}
|C_t\setminus D_{t-1}|-1=r_2(D_{t-1}\cup C_t)-r_2(D_{t-1})=2|V|-2-(2|V'|-2)=2|V\setminus V'|.
\end{equation}
%since $D_{t-1}\cup C_t$ is periodically rigid and $D_{t-1}$ is unbalanced M-connected.

Since $D_{t-1}$ is periodically rigid, $K(V',\Gamma)\subseteq {\rm cl}_2(D_{t-1})$.
Therefore, since $G$ is minimally $M$-connected, no edge in $C_t\setminus D_{t-1}$ is induced by $V'$.
This implies $V\setminus V'\neq \emptyset$, and by Lemma~\ref{lem:7}  we further have
\begin{equation}
\label{eq:lem8-2}
d(V\setminus V',V')\geq 3.
\end{equation}

Combining (\ref{eq:lem8-1}) and (\ref{eq:lem8-2}), we get
$$d_{avg}(V\setminus V')=\frac{2|C_t\setminus D_{t-1}|-d(V\setminus V',V')}{|V\setminus V'|}=\frac{4|V\setminus V'|+2-d(V\setminus V',V')}{|V\setminus V'|}<4.$$

\noindent
\textbf{Case 2}:
Suppose that $D_{t-1}$ is balanced. We may assume that $C_t$ is balanced (otherwise, by Lemma~\ref{lem:EDfacts}, we can start the ear decomposition with the unbalanced circuit $C_{t}$, and hence  we are back in Case~1). We prove  that $V\setminus V'\neq \emptyset$.

Since $D_{t-1}$ is balanced, we may assume $\psi(e)=\textrm{id}$ for every $e\in D_{t-1}$.
Let $X=\{e\in D_t|\, \psi(e)\neq \textrm{id}\}$.
Since $D_t$ is unbalanced, we have $\emptyset\neq X\subseteq C_t\setminus D_{t-1}$.
However, since $C_t$ is balanced, $G[C_t]-X$ must be disconnected.
This in turn implies $|X|\geq 3$ by Lemma~\ref{lem:7}, and hence
\begin{equation}
\label{eq:lem8-4}
|C_t\setminus D_{t-1}|\geq 3.
\end{equation}
On the other hand, by Lemma~\ref{lem:3} $D_{t-1}\cup C_t$ is periodically rigid, 
and it follows from Lemma~\ref{lem:4} that $D_{t-1}$ is rigid since it is balanced and M-connected.
Thus, by Lemma \ref{lem:EDfacts}(c) we have
\begin{equation}
\label{eq:lem8-3}
|C_t\setminus D_{t-1}|-1=r(D_{t-1}\cup C_t)-r(D_{t-1})=2|V|-2-(2|V'|-3)=2|V\setminus V'|+1.
\end{equation}
%as $D_{t-1}\cup C_t$ is periodically rigid, and $D_{t-1}$ is balanced and M-connected and hence rigid by Lemma~\ref{lem:4}.
Combining (\ref{eq:lem8-4}) and (\ref{eq:lem8-3}), we get $V\setminus V'\neq \emptyset$.
By Lemma~\ref{lem:7} we again have (\ref{eq:lem8-2}).

Let $F$ be the set of edges in $C_t\setminus D_{t-1}$ induced by $V'$.
If $F\neq\emptyset$, then
we have
\begin{align*}
d_{avg}(V\setminus V')
&=\frac{2|C_t\setminus (D_{t-1}\cup F)|-d(V\setminus V',V')}{|V\setminus V'|}
=\frac{2|C_t\setminus D_{t-1}|-2|F|-d(V\setminus V',V')}{|V\setminus V'|} \\
&=\frac{4|V\setminus V'|+4-2|F|-d(V\setminus V',V')}{|V\setminus V'|}<4,
\end{align*}
where the third equation follows from (\ref{eq:lem8-3}) and the last inequality follows from (\ref{eq:lem8-2}) and $F\neq 0$.

Hence we suppose $F=\emptyset$ and every edge in $C_t\setminus D_{t-1}$ is incident to $V\setminus V'$.
By Lemma~\ref{lem:disconnected}, the graph $(V(C_t)\cap V(D_{t-1}), C_t\cap D_{t-1})$ is disconnected.
Since every edge in $C_t\setminus D_{t-1}$ is incident to $V\setminus V'$,
the disconnectivity of $(V(C_t)\cap V(D_{t-1}), C_t\cap D_{t-1})$ implies that
$V\setminus V'$ is a cut set in $G[C_t]$.
Hence Lemma~\ref{lem:7} implies  $|V\setminus V'|\geq 2$.
Since $C_t$ is balanced, the number of edges of $C_t$ induced by $V\setminus V'$ is at most $2|V\setminus V'|-3$, and we obtain
\[
|C_t\setminus D_{t-1}|\leq 2|V\setminus V'|-3+d(V\setminus V',V').
\]
Combining this with (\ref{eq:lem8-3}), we get \(d(V\setminus V',V')\geq5\). Applying (\ref{eq:lem8-3}) again gives
\[
d_{avg}(V\setminus V')=\frac{2|C_t\setminus D_{t-1}|-d(V\setminus V',V')}{|V\setminus V'|}
\leq\frac{4|V\setminus V'|-1}{|V\setminus V'|}<4.
\]
This completes the proof.
\end{proof}

The next target is to prove Lemma~\ref{lem:9} and Lemma~\ref{lem:10}, which will be needed for the proof of Lemma~\ref{lem:key}.
Lemmas~\ref{lem:9} and~\ref{lem:10} may be considered as periodic versions of the 2-sum lemma and the cleaving lemma given in \cite{jj}.
For the proofs we first need three technical lemmas.

\begin{lemma} \label{lem:2sum}
Let $F_1$ and $F_2$ be edge sets in a $\Gamma$-labeled graph $G$. 
Suppose that $F_1$ is balanced, 
$|V(F_1)\cap V(F_2)|=2$, and $F_1\cap F_2$ has exactly one edge, denoted by $f$.
%The following holds.
\begin{itemize}
\item[(i)] Suppose that  $F_1-f$ has a path between the end vertices of $f$.
Then $F_1\cup F_2 - f$ is balanced if and only if $F_2$ is balanced.
\item[(ii)] Suppose that $F_1$ and $F_2$ are circuits. Then $F_1\cup F_2-f$ is a circuit.
\end{itemize}
\end{lemma}
\begin{proof}
(i) Since $F_1$ is balanced, we may assume that the label of each edge in $F_1$ is identity.
If $F_2$ is balanced, then $F_1\cup F_2$ is balanced by Lemma~\ref{lem:disconnected}, and hence $F_1\cup F_2-f$ is balanced as well.
If $F_2$ is unbalanced, then it has an unbalanced cycle $C$. If $f\notin C$, it remains in $F_1\cup F_2 -f$, and $F_1\cup F_2 -f$ is unbalanced.
If $f\in C$, then the sum of the labels in $C-f$ is non-identity as $f$ has the identity label.
Concatenating $C-f$ with a path in $F_1-f$, we get an unbalanced cycle in $F_1\cup F_2-f$.

\medskip
%\end{proof}
%
%\begin{lemma}\label{lem:2sum2}
%Let $G_1$ be a balanced circuit and $G_2$ be a circuit.
%Suppose that $E(G_1)\cap E(G_2)=\{e\}$ and $|V(G_1)\cap V(G_2)|=2$.
%Then $G_1\cup G_2-e$ is a circuit.
%\end{lemma}
%\begin{proof}
(ii) Let $F=F_1\cup F_2-f$.
Let $\delta=0$ if $F_2$ is balanced, and otherwise $\delta=1$.
Since $F_1$ and $F_2$ are circuits,  we have $|F|=|F_1|+|F_2|-2=(2|V(F_1)|-2)+(2|V(F_2)|-2+\delta)-2=2|V(F_1\cup F_2)|-2+\delta$.
By (i), $\delta=0$ if $F$ is balanced, and otherwise $\delta=1$. 
Hence $F$ has the right number of edges to be a circuit.

Suppose that $F$ is not a circuit. Then there exists a proper edge subset $F'$ of $F$ 
with $|F'|\geq 2|V(F')|-2+\delta'$, where $\delta'=0$ if $F'$ is balanced, and otherwise $\delta'=1$.
Let $F_i' = F'\cap F_i$. Then $F'$ is the disjoint union of $F'_1$ and $F'_2$.
Also since each $F_i$ is a circuit, each $F'_i$ is nonempty.

Since $F_1'$ is balanced and independent (as $F_1'\subseteq F_1-f$), we have $|F_1'|\leq 2|V(F_1')|-3$.
Similarly, since  $F_2'$ is independent with $F_2'\subseteq F_2-f$, we have $|F_2'|\leq 2|V(F_2')|-3+\delta'$.
Moreover, since $F'$ is a proper edge subset of $F$, 
the equations do not hold simultaneously. 
(If  both equations hold, then we would have $F_i'=F_i-f$, as $F_i$ is a circuit, and hence $F'=F$ would follow.)
Thus $|F'|=|F_1'|+|F_2'|\leq 2|V(F'_1)|-3+2|V(F'_2)|-3+\delta'-1=2|V(F')|-3+\delta'$, which is a contradiction. 

Therefore, every proper edge subset $F'$ of $F$ satisfies $|F'|\leq 2|V(F')|-3+\delta'$, 
and $F$ is a circuit.
\end{proof}

Since our matroid becomes an ordinary generic rigidity matroid if the graph is balanced, 
by adapting notations, we have the following from Lemma~4.2 in \cite{bj} (see also Lemma 2.18 in \cite{jj}).
\begin{lemma}\label{lem:cleavage0}
Let $G$ be a balanced circuit, and let $H$ be a $(0,2)$-block in $G$ with $B(H)=\{a,b\}$ and $V(H)\neq V(G)$. 
Then $H$ has no edge on $\{a,b\}$. 
Also let $f_0$ be the edge in $K(\{a,b\}, \Gamma)$ such that $H+f_0$ is balanced.  
Then $H+f_0$ and $G-I(H)+f_0$ are balanced circuits.
\end{lemma}

We prove the counterpart for unbalanced circuits.
\begin{lemma}\label{lem:cleavage1}
Let $G$ be an unbalanced  circuit and let $H$ be a $(0,2)$-block in $G$ with $B(H)=\{a,b\}$.
Then $H$ has no edge on $\{a,b\}$.
Also let $f_0$ be the edge in $K(\{a,b\}, \Gamma)$ such that $H+f_0$ is balanced. 
Then $H+f_0$ is a balanced circuit, and $G-I(H)+f_0$ is an unbalanced circuit.
\end{lemma}
\begin{proof}
Let $G_1$ be the subgraph of $G$ induced by $V(G)-I(H)$.
Suppose that $H$ has an edge between $a$ and $b$.
Then that edge is also in $G_1$, and we have 
$|E(G)|=|E(H)|+|E(G_1)|-1\leq 2|V(H)|-3+2|V(G_1)|-2-1=2|V(G)|-2$, contradicting that $G$ is an unbalanced circuit.
Thus $H$ has no edge between $a$ and $b$.
The same counting argument also shows that 
$|E(H)|=2|V(H)|-3$ and $|E(G_1)|=2|V(G_1)|-2$. 

Since $H$ is balanced, there is exactly one edge $f_0$ in $K(\{a,b\}, \Gamma)$ whose addition to $H$ keeps the balancedness.
By Lemma~\ref{lem:2sum}(i) $G_1+f_0$ is unbalanced. 
Moreover, $f_0\notin E(G_1)$ since otherwise $H+f_0$ would be a circuit properly contained in $G$.
Hence $|E(H+f_0)|=2|V(H)|-2$ and $|E(G_1+f_0)|=2|V(G_1)|-1$.

If $H+f_0$ is not a circuit, then $H$ has a proper edge subset $F$ with $|F|=2|V(F)|-3$ and $\{a,b\}\subseteq V(F)$.
Then $|F\cup E(G_1)|=2|V(F)|-3+2|V(G_1)|-2=2|V(F)\cup V(G_1)|-1$, which contradicts that $G$ is a circuit.
Therefore $H+f_0$ is a circuit.

If $G_1+f_0$ is not a circuit, then $G_1$ has a proper edge subset $F$ such that $F+f_0$ is a  circuit.
Since $H+f_0$ is a circuit, by Lemma~\ref{lem:2sum}(ii), $E(H)\cup F$ would be a circuit properly contained in $G$, a contradiction.
Therefore $G_1+f_0(=G-I(H)+f_0)$ is a circuit.
\end{proof}

We now extend Lemma~\ref{lem:cleavage0} to general $M$-connected graphs.

\begin{lemma}
\label{lem:9}
Let $G$ be unbalanced and $M$-connected, and let $H$ be a $(0,2)$-block in $G$ with $B(H)=\{a,b\}$.
Also let $f_0$ be the edge in $K(\{a,b\}, \Gamma)$ (which may exist in $G$)
such that $H+f_0$ is balanced.
Then $H+f_0$ is balanced $M$-connected and $G-I(H)+f_0$ is unbalanced $M$-connected.
\end{lemma}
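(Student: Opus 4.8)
The plan is to pin down $f_0$ by a switching argument (which also yields uniqueness for free) and then to prove $M$-connectivity of $H+f_0$ by pulling back circuits of $G$ through the $2$-separation at $B(H)=\{a,b\}$.

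Since $H$ has rank $0$, after a switching I may assume $\psi(e)=\mathrm{id}$ for every $e\in E(H)$; let $f_0$ be the edge joining $a$ and $b$ with label $\mathrm{id}$. Then $H+f_0$ is balanced. If $f$ is any edge on $\{a,b\}$ with $\psi(f)=\gamma\neq\mathrm{id}$, then $f$ together with a path in $H$ from $a$ to $b$ forms a closed walk of gain $\gamma\neq\mathrm{id}$, so $H+f$ is unbalanced; hence $f_0$ is the only candidate, and (noting that by semi-simplicity $H$ has at most one edge on $\{a,b\}$, which if present is exactly $f_0$) it remains to prove $H+f_0$ is $M$-connected. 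Here $\mathcal{R}_2(V(H))|_{E(H)+f_0}$ is a matroid with at least two elements: by Lemma~\ref{lem:7} the graph $G$ is $3$-edge-connected, so every vertex of $I(H)$ has $H$-degree at least three, giving $|V(H)|\geq 4$. It therefore suffices to exhibit circuits $C_1',\dots,C_m'$ of this matroid whose union is $E(H)+f_0$ and whose "circuit hypergraph" is connected, since then $E(H)+f_0$ lies in a single matroid component.

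To produce such circuits I would fix an ear decomposition $C_1,\dots,C_t$ of $E(G)$ (possible since $G$ is $M$-connected), so that every edge of $G$ lies in some $C_i$. Call $C_i$ \emph{crossing} if it has a vertex in $I(H)$ and an edge in $E(G)\setminus E(H)$. Since $\{a,b\}$ separates $I(H)$ from $V(G)\setminus V(H)$ and each $C_i$ is $2$-connected (Lemma~\ref{lem:7}), a crossing $C_i$ contains $a,b$ and splits as $C_i^{\mathrm{in}}\cup C_i^{\mathrm{out}}$ with $C_i^{\mathrm{in}}\subseteq E(H)$, $C_i^{\mathrm{out}}\subseteq E(G)\setminus E(H)$, and $V(C_i^{\mathrm{in}})\cap V(C_i^{\mathrm{out}})=\{a,b\}$; moreover $C_i^{\mathrm{in}}$ is then a $(0,2)$-block of the circuit $C_i$ with boundary $\{a,b\}$ and a nonempty interior (a vertex of $I(H)$ lies in it). Applying Lemma~\ref{lem:count_balanced} when $C_i$ is balanced, or Lemma~\ref{lem:count} when $C_i$ is unbalanced, the count conditions on $C_i^{\mathrm{in}}$ (it is independent, has $2|V(C_i^{\mathrm{in}})|-3$ edges, and is "$ab$-tight") give that $C_i^{\mathrm{in}}+f_0$ is a balanced circuit. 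Every edge of $H$ not incident to $I(H)$ lies on $\{a,b\}$ and hence equals $f_0$; every edge $e\in E(H)$ incident to $I(H)$ lies in some $C_i$, which thus has a vertex in $I(H)$, and if that $C_i$ is crossing then $C_i^{\mathrm{in}}+f_0$ is a circuit through both $e$ and $f_0$. If instead $C_i\subseteq E(H)$, I would follow an $M$-connectivity chain in $G$ from $C_i$ towards $f_0$ (or towards some edge leaving $H$, which exists since $B(H)\neq\emptyset$) and stop at the first circuit $D$ in the chain with $D\not\subseteq E(H)$: then $D$ shares an edge of $E(H)$ incident to $I(H)$ with its predecessor, so $D$ is crossing and $D^{\mathrm{in}}+f_0$ joins the chain, while all earlier circuits of the chain lie inside $E(H)$. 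As every $C_i^{\mathrm{in}}+f_0$ contains $f_0$, the resulting family of circuits covers $E(H)+f_0$ and has connected circuit hypergraph, so $H+f_0$ is $M$-connected.

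The main obstacle is the structural analysis of a crossing circuit: establishing that it decomposes at $\{a,b\}$ into $C_i^{\mathrm{in}}$ and $C_i^{\mathrm{out}}$ with $C_i^{\mathrm{in}}$ a genuine $(0,2)$-block of the circuit $C_i$, so that Lemmas~\ref{lem:2cut}, \ref{lem:count} and~\ref{lem:count_balanced} can be invoked to deduce that $C_i^{\mathrm{in}}+f_0$ is itself a circuit; together with the bookkeeping around the at-most-one $\{a,b\}$-edge inside $H$ (equal to $f_0$ after switching) and the chain argument needed to attach the fully interior circuits to $f_0$.
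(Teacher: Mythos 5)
Your overall strategy matches the paper's: pin down the unique $f_0$ via switching, then argue that for a circuit $C$ of $G$ crossing $\{a,b\}$, the piece $C\cap E(H)$ is a $(0,2)$-block of $G[C]$ and, by Lemmas~\ref{lem:count_balanced}/\ref{lem:count}, the set $(C\cap E(H))+f_0$ is a balanced circuit containing $f_0$; since all these circuits share $f_0$, $M$-connectivity of $H+f_0$ follows. This is exactly the paper's core argument.

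However, your chain step has a small gap. When you "stop at the first circuit $D$ in the chain with $D\not\subseteq E(H)$," you assert that $D$ shares with its predecessor "an edge of $E(H)$ incident to $I(H)$." This is not automatic: the shared edge could be the unique $\{a,b\}$-edge of $H$ (your $f_0'=f_0$ when it exists), which is not incident to $I(H)$; in that case $D$ need not contain any vertex of $I(H)$ and so need not be crossing. (Indeed, a circuit $D$ can pass through the edge $f_0'$ and otherwise stay entirely outside $H$.) The fix is to abandon the chain and do what the paper does directly: for each $e\in E(H)-f_0'$, note $e$ has an endpoint in $I(H)$, pick any $g\in E(G)\setminus E(H)$, and use $M$-connectivity of $G$ to obtain a single circuit $C_e$ through $e$ and $g$; this $C_e$ is automatically crossing. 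This also removes the need for the ear-decomposition scaffolding and the auxiliary observation $|V(H)|\geq 4$, neither of which is used in the paper's proof.
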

\begin{proof}
Since $H$ is balanced, we may suppose that the label of each edge in $H$ is identity. 
Then $f_0$ is the edge in $K(\{a,b\}, \Gamma)$ with the identity label.
%Note that $f_0$ is the unique edge in $K(\{a,b\}, \Gamma)$ whose addition to $H$ keeps the balancedness.
%Hence the edge $f_0$ in the statement is unique if it exists.

Let $G_1=G-I(H)$. By Lemma~\ref{lem:2sum}(i) and the unbalancedness of $G$, $G_1+f_0$ is unbalanced.
It remains to show that $H+f_0$ and $G_1+f_0$ are $M$-connected.

%Let $C$ be a circuit intersecting both $H-f_0$ and $G_1$.
%By Lemma~\ref{lem:cleaving}, $C$ does not contain $f_0$ and 
%$C\cap E(H)+f_0$ and $C\cap E(G_1)+f_0$ are circuits. 

Take any edge $e$ in $H-f_0$ and any edge $e'$ in $G_1-f_0$.
Since $G$ is $M$-connected, $G$ contains a circuit $C$ containing $e$ and $e'$.
By Lemma~\ref{lem:cleavage0} and Lemma~\ref{lem:cleavage1}, $C$ does not contain $f_0$, and 
$C\cap E(H)+f_0$ and $C\cap E(G_1)+f_0$ are both circuits. 
Thus $e\sim f_0$ in $H+f_0$, and $e'\sim f_0$ in $G_1+f_0$.
Since $\sim$ is an equivalence relation, $H+f_0$ and $G_1+f_0$ are $M$-connected.
\end{proof}
%If $C'$ is in $H$, then we are done.
%If $C'$ is not in $H$, then 
% 
%Since $H$ is balanced, $H$ has at most one edge between $a$ and $b$.
%We denote it by $f_0'$ if it exists, and let $G_0=H-f_0'$ if $f_0'$ exists, and $G_0=H$ otherwise.
%Since $G_0$ is balanced and connected,  there is exactly one edge $f_0$ in $K(\{a,b\},\Gamma)$ whose addition to $G_0$ keeps the balancedness. (This $f_0$ coincides with $f_0'$  if $f_0'$ exists.)
%Moreover, Lemma~\ref{lem:count} implies $G_0+f_0$ is a circuit if $G$ is an unbalanced circuit.
%Thus we have shown the following:
%\begin{equation}
%\label{eq:cleavage1}
%\begin{split}
%&\text{If $G$ is an unbalanced circuit, then there is exactly one edge $f_0\in K(\{a,b\},\Gamma)$} \\
%&\text{such that $H+f_0$ is a balanced circuit.}
%\end{split}
%\end{equation}
%
%Next we consider a general $M$-connected graph $G$ that might not be a single circuit.
%Take any $e\in E(H)-f_0'$.
%Since $G$ is $M$-connected, $G$ has a circuit $C_e$ containing $e$ and some edge $f\in E(G)\setminus E(H)$.
%Since $C_e$ is 2-connected by Lemma~\ref{lem:7}, it spans $a$ and $b$.
%Hence, $H\cap C_e$ is a $(0,2)$-block in $G[C_e]$,
%and by (\ref{eq:cleavage1}) there is exactly one edge $f_{0,e}$ on $\{a,b\}$ such that $H\cap C_e+f_{0,e}$ is a balanced circuit.
%
%Since $H$ is balanced, we actually have $f_{0,e}=f_{0,e'}$ for any $e, e'\in E(H)-f_0'$.
%We denote the edge by $f_0$.
%Then we have shown that, for every $e\in E(H)-f_0$, $H+f_0$ contains a circuit $H\cap C_e+f_0$ that contains $e$ and $f_0$. This in turn implies the $M$-connectivity of $H+f_0$.
%\end{proof}

The edge $f_0$ in Lemma~\ref{lem:9} is called the {\em cleaving edge} for the $(0,2)$-block $H$, and $H+f_0$ is called the {\em cleavage graph} of $H$.

\begin{lemma}\label{lem:10}
Let $G$ be $M$-connected, $H$ be a $(0,2)$-block in $G$, and $f_0$ be the cleaving edge for $H$.
If $H+f_0-f$ is $M$-connected for some $f\in E(H-f_0)$, then $G-f$ is $M$-connected.
\end{lemma}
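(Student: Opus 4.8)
The plan is to realize the rigidity matroid of $G$ around the block $H$ as a matroid $2$-sum along the cleaving edge $f_0$, and then to quote the standard fact that a $2$-sum of two connected matroids is connected. Set $A:=E(G)\setminus E(H)$, and recall $B(H)=\{a,b\}$, so $V(A)\cap V(H)=\{a,b\}$ and $A\cup E(H)=E(G)$. Introduce a ``marker'' edge $p$ on $\{a,b\}$: take $p:=f_0$ if $f_0\notin E(G)$, and let $p$ be a fresh edge parallel to $f_0$ (same endpoints, same label) if $f_0\in E(G)$; put $\hat G:=G+p$. The key structural claim is that the restriction of ${\cal R}_2$ to $E(\hat G)$ is the $2$-sum, along $p$, of its restrictions to $E(H)+p$ and to $A+p$ (when $f_0\in E(G)$ one uses that $p$ is parallel to $f_0$ and that then $H+f_0=H$). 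This is plausible because $H+f_0$ is $M$-connected and rigid by Lemma~\ref{lem:9} and Lemma~\ref{lem:4}, so $p$ is neither a loop nor a coloop of ${\cal R}_2|_{E(H)+p}$, and it is exactly the cleaving property~(\ref{eq:cleavage1}) from the proof of Lemma~\ref{lem:9} that forces a crossing circuit to split correctly along $\{a,b\}$.

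Granting this decomposition, the rest is routine matroid bookkeeping. Since $G$ is $M$-connected, adding the non-coloop $p$ keeps $\hat G$ $M$-connected, so \emph{both} factors of the $2$-sum are connected; in particular ${\cal R}_2|_{A+p}$ is connected. As $f\in E(H-f_0)\subseteq E(H)$, deleting $f$ yields that ${\cal R}_2|_{E(\hat G)-f}$ is the $2$-sum along $p$ of ${\cal R}_2|_{E(H)+p-f}$ and ${\cal R}_2|_{A+p}$, and the first factor is connected precisely because $H+f_0-f$ is $M$-connected (again using $p=f_0$, respectively $p$ parallel to $f_0$). A $2$-sum of two connected matroids is connected, so $\hat G-f$ is $M$-connected; if $f_0\notin E(G)$ this is $G-f$ and we are done, while if $f_0\in E(G)$ then $G-f=(\hat G-f)-p$ with $p$ parallel to the surviving edge $f_0$, and deleting one of two parallel elements preserves matroid connectivity, so $G-f$ is $M$-connected.

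The main obstacle is the first step: proving that ${\cal R}_2|_{E(\hat G)}$ really is the displayed $2$-sum. The ``trivial'' inclusions (circuits wholly inside $E(H)+p$ or inside $A+p$) are immediate, and for a circuit $C$ meeting both sides one gets from the $2$-connectivity of circuits (Lemma~\ref{lem:7}) and the fact that $\{a,b\}$ separates that $a,b\in V(C)$, that $C\cap E(H)$ is a $(0,2)$-block of the circuit carried by $C$, and hence — by~(\ref{eq:cleavage1}) — that $(C\cap E(H))+p$ is a balanced circuit. The delicate point is the complementary statement, that $(C\cap A)+p$ is again a circuit, i.e.\ $f_0\in{\rm cl}_2(C\cap A)$; this cannot follow from generic matroid theory alone and must be extracted from the fact that the ambient matroid is the (periodic) $2$-dimensional rigidity matroid and that the two pieces meet in exactly two vertices, exactly as in the $2$-sum lemma of Jackson and Jord\'an~\cite{jj} and in the spirit of Lemmas~\ref{cl:G_1cupG_2rigid1} and~\ref{cl:G_1cupG_2rigid2}. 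The extra case analysis needed when $f_0\in E(G)$ (handled above via the parallel copy $p$) is routine but must be written out carefully.
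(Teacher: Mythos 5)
Your overall plan is sound and is in the same spirit as Jackson--Jord\'an's original $2$-sum argument, but as you yourself observe, the load-bearing step is left unverified, and that step is essentially the entire content of the lemma. Concretely, the claim that ${\cal R}_2|_{E(\hat G)}$ is the $2$-sum, along the marker $p$, of ${\cal R}_2|_{E(H)+p}$ and ${\cal R}_2|_{A+p}$ is not something you can wave at: it requires showing \emph{both} that every circuit $C$ crossing $\{a,b\}$ splits as $(C\cap E(H))+p$ and $(C\cap A)+p$, each a circuit, \emph{and} that conversely, gluing two circuits through $p$ (one from each side) yields a circuit of ${\cal R}_2$. You sketch the first half of the splitting via~(\ref{eq:cleavage1}) but explicitly leave the second half (``the delicate point, that $(C\cap A)+p$ is again a circuit'') unresolved, and you never address the gluing direction at all. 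Neither of these follows from abstract matroid theory; both have to be extracted from the rigidity count, and until they are, the argument is incomplete.

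By contrast, the paper's proof shows that you do not need the full $2$-sum structure. It establishes only the gluing direction, stated as~(\ref{eq:2sum1}): if $C_1$ is a circuit in $H+f_0-f$ with $f_0\in C_1$ and $C_2$ is a circuit in $G$ with $C_2\setminus E(H)\neq\emptyset$, then $(C_1-f_0)\cup(C_2-E(H))$ is a circuit. This is proved directly by the counting formulae of Lemmas~\ref{lem:count_balanced} and~\ref{lem:count}, and is then used to construct, for arbitrary $e_1\in E(H-f)$ and $e_2\in E(G-f)\setminus E(H)$, a circuit of $G-f$ through both; a short transitivity argument finishes the job. No decomposition theorem, no converse splitting direction, and no parallel-edge bookkeeping are needed. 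So while your framework would, if fully executed, reprove the lemma (and indeed subsume~(\ref{eq:2sum1}) as a special case of $2$-sum circuit structure), it requires strictly more to be verified than what the statement actually needs, and the part you skipped is the part the paper spends its effort on. To turn your proposal into a proof you would at minimum need to supply the counting argument behind~(\ref{eq:2sum1}), at which point the $2$-sum scaffolding becomes optional.
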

\begin{proof}
Let $G_1=G-I(H)$.
By Lemma~\ref{lem:9},  $H+f_0$ is balanced and $G_1+f_0$ is $M$-connected.
Note also that $f\notin E(G_1+f_0)$ since $H-f_0$ has no edge on $\{a,b\}$ by the balancedness of $H+f_0$.

Take any edge $e\in E(H-f)$ and any edge $e'\in E(G_1)$.
Since $H+f_0-f$ (resp., $G_1+f_0$) is  $M$-connected, 
it contains a circuit $C$ (resp., $C'$)  with $e, f_0\in C$ (resp., $e', f_0\in C'$).
By Lemma~\ref{lem:2sum}(ii), $C\cup C'$ is a circuit.
Since $C\cup C'$ is in $G-f$, $e\sim e'$ in $G-f$.
Since $\sim$ is an equivalence relation, this in turn implies (by $E(H-f)\neq \emptyset\neq E(G_1)$) that every edge is related to each other in $G-f$. 
In other words, $G-f$ is $M$-connected.
\end{proof}

We are now ready to prove the following key lemma.

\begin{lemma}
\label{lem:key}
Let $G$ be an unbalanced $M$-connected graph that has no $(0,2)$-block and its  minimum degree is at least four. Suppose that $G-e$ is $M$-connected but $G-e$ has a $(0,2)$-block $H_e$ for some $e\in E(G)$.
Then $H_e-f_0$ contains an edge $f$ such that $G-f$ is $M$-connected, where $f_0$ is the cleaving edge for $H_e$.

Moreover, if an end vertex of $e$ is not in $I(H_e)$, then $f$ can be taken such that $H_e-f-f_0$ is 2-connected.
\end{lemma}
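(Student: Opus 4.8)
I would first apply Lemma~\ref{lem:9} to the unbalanced $M$-connected graph $G-e$ and its $(0,2)$-block $H_e$, with $B(H_e)=\{a,b\}$, to obtain the cleaving edge $f_0$; by construction the cleavage graph $H^\ast:=H_e+f_0$ is balanced and $M$-connected. Two structural observations then drive the argument. Since $G$ has no $(0,2)$-block while $H_e$ is a $(0,2)$-block of $G-e$, the deleted edge $e$ must have at least one endvertex in $I(H_e)$: otherwise both endvertices of $e$ lie in $B(H_e)\cup(V(G)\setminus V(H_e))$, and a direct check shows $H_e$ would remain a $(0,2)$-block of $G$. Also, since $f_0$ is incident only to $\{a,b\}$, each vertex of $I(H_e)$ has the same degree in $H_e$ as in $H^\ast$; combined with the minimum-degree-four hypothesis on $G$, every vertex of $I(H_e)$ has degree at least three in $H^\ast$, with degree exactly three only if it is an endvertex of $e$.

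\textbf{Dichotomy on minimal $M$-connectivity of $H^\ast$.} If $H^\ast$ is not minimally $M$-connected, there is an edge $g\in E(H^\ast)$ with $H^\ast-g$ $M$-connected; a short counting argument excludes $g=f_0$ (e.g.\ when $f_0\notin E(H_e)$, $H^\ast-f_0=H_e$ would be balanced and $M$-connected but has one more edge than a rigid balanced graph on its vertex set, which is impossible), so $f:=g\in E(H_e-f_0)$. Lemma~\ref{lem:10}, applied to $G-e$, $H_e$, and $f_0$, then gives that $(G-e)-f$ is $M$-connected. To pass to ``$G-f$ is $M$-connected'', I would first observe that $G-e$, and hence $G-e-f$, must be unbalanced: a balanced $M$-connected graph is rigid by Lemma~\ref{lem:4}, so if $G-e$ were balanced then $e$ would be a coloop of $G$, contradicting $M$-connectivity of $G$ (similarly for $G-e-f$). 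Thus $G-e-f$ is periodically rigid with $r_2(G-e-f)=2|V(G)|-2=r_2(G-f)$, the last equality since $f$ lies on a circuit of $G$; hence $e$ is not a coloop of $G-f$, so a circuit of $G-f$ through $e$ joins $e$ to an edge of the $M$-connected graph $G-e-f$, and $G-f$ is $M$-connected.

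\textbf{The minimally $M$-connected case.} If $H^\ast$ is minimally $M$-connected, no edge of $H^\ast$ is removable, so Lemma~\ref{lem:10} cannot be invoked and a removable edge of $G$ must be produced by hand. I would take an ear decomposition $C_1,\dots,C_t$ of $H^\ast$ with $f_0\in C_1$ (Lemma~\ref{lem:EDfacts}(b)); then $t\ge 2$ and $a,b\in V(C_1)\subseteq V(D_{t-1})$, so the last-ear vertex set $Y=V(C_t)\setminus V(D_{t-1})$ is disjoint from $\{a,b\}$, hence $Y\subseteq I(H_e)$. By Lemma~\ref{lem:jordan}, $Y$ contains a degree-three vertex $v^\ast$ of $H^\ast$, which by the degree observation must be an endvertex of $e$ with degree exactly four in $G$; and by Lemma~\ref{lem:jj}, $|\tilde C_t|=2|Y|+1$ with every edge of $\tilde C_t$ incident to $Y$, so $D_{t-1}=H^\ast-Y$ is a rigid balanced $M$-connected graph. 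Taking $f$ to be a suitably chosen edge of $H_e$ at $v^\ast$, I would verify via Lemma~\ref{lem:0extension} and the count-matroid results of Section~5.1 that $G-f$ is $2$-connected, redundantly periodically rigid, and has no $(0,2)$-block, and conclude by Theorem~\ref{lem:5} that $G-f$ is $M$-connected. Disentangling how the $2$-cut $\{a,b\}$ of $G-e$, the re-inserted edge $e$, and the degree-four vertex $v^\ast$ interact in this verification is the technical heart of the proof and the step I expect to be hardest.

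\textbf{The ``moreover'' part.} When some endvertex of $e$ lies outside $I(H_e)$, the structural observation gives that exactly one endvertex of $e$ is in $I(H_e)$. In the minimally $M$-connected case this forces $|Y|=1$, since a second degree-three vertex of $Y\subseteq I(H_e)$ would, by Lemma~\ref{lem:jordan}, also have to be an endvertex of $e$ in $I(H_e)$. Hence $\tilde C_t$ is precisely the set of three edges of $H^\ast$ at $v^\ast$, $D_{t-1}=H^\ast-v^\ast$ is $2$-connected by Lemma~\ref{lem:7}, and one may pick $f$ among the edges at $v^\ast$ so that deleting $f$ and $f_0$ from $H^\ast$ leaves $H_e-f-f_0$ $2$-connected (reattaching $v^\ast$ by two edges cannot destroy $2$-connectivity). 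In the non-minimal case one instead takes the removable edge $g$ inside the last ear, away from the $2$-cut $\{a,b\}$, so that $H_e-f-f_0$ again stays $2$-connected.
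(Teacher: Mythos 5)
Your proposal diverges substantially from the paper's proof, and there are several genuine gaps — the most serious of which you flag yourself. The paper does not split on whether the cleavage graph $H^{*}=H_{e}+f_{0}$ is minimally $M$-connected. Instead it fixes an ear decomposition $C_{1},\dots,C_{t}$ of $E(H^{*})$ with $f_{0}\in C_{1}$ \emph{and} the interior endvertex $u$ of $e$ in $V(C_{1})$, so that the last lobe $\tilde C_{t}$ has no room to contain degree-three vertices of $H^{*}$ unless they are endvertices of $e$. In the easy case (some endvertex of $e$ outside $I(H_{e})$) this forces $|\tilde C_{t}|=1$, giving $f$ immediately. In the hard case (both endvertices in $I(H_{e})$) it pins down $v$ as the unique last-lobe vertex (or identifies $u,v,a,b$ as the four degree-three vertices when $t=1$), and the bulk of the argument — Claims~\ref{claim:key2}--\ref{claim:key5}, built around a circuit $C^{*}$ of $G$ with $e\in C^{*}$ and $f\notin C^{*}$ — is then devoted to showing $G-f$ is $M$-connected directly via the equivalence relation on edges, not by re-deriving $2$-connectivity, redundant rigidity and the $(0,2)$-block condition and invoking Theorem~\ref{lem:5}. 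This latter verification is precisely what you describe as ``the technical heart'' and ``the step I expect to be hardest,'' and you do not carry it out; so the proposal is incomplete at exactly the point where the real work lies.

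Beyond the admitted gap, there are concrete errors. First, in the non-minimal case your exclusion of $g=f_{0}$ does not hold up: you argue $H^{*}-f_{0}=H_{e}$ ``would be balanced and $M$-connected but has one more edge than a rigid balanced graph on its vertex set, which is impossible,'' but a balanced $M$-connected graph can have \emph{more} than $2|V|-3$ edges (e.g.\ a rigidity circuit has $2|V|-2$), so there is no contradiction; you would need a different argument or a different choice of $g$. Second, the minimally $M$-connected case asserts $t\geq 2$ without justification, silently omitting the $t=1$ subcase (where $H^{*}$ is a single balanced circuit); this is Claim~\ref{claim:key1}(i) and Claim~\ref{claim:key3} in the paper, and is not vacuous. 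Third, your $2$-connectivity argument in the ``moreover'' part — ``$D_{t-1}=H^{*}-v^{*}$ is $2$-connected, and reattaching $v^{*}$ by two edges cannot destroy $2$-connectivity'' — does not establish that $H_{e}-f-f_{0}$ is $2$-connected, because you must also \emph{delete} $f_{0}$ from $D_{t-1}$, and removing an edge from a $2$-connected graph can leave a cut vertex. The paper instead deduces $2$-connectivity from the rigidity of $H_{e}-f-f_{0}$, which in turn follows from the $M$-connectivity of $H^{*}-f$; that route sidesteps the issue entirely. (By contrast, your rank-counting step passing from $(G-e)-f$ $M$-connected to $G-f$ $M$-connected in the non-minimal case is sound and in fact patches a point the paper states rather tersely.)
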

\begin{proof}
Let $u$ and $v$ denote the end vertices of $e$.
Since $H_e$ is balanced and $G$ has no $(0,2)$-block, we may suppose that
\begin{equation}
\label{eq:key1}
\text{the label of each edge in $H_e$ is the identity, and that $e$ has a non-identity label if $u,v\in V(H_e)$.}
\end{equation}

Let $\{a,b\}=B(H_e)$ and let  $G'=H_e+f_0$ be the cleavage graph for $H_e$.
%The endvertices of $e$ are denoted by $u$ and $v$.
At least one end vertex of $e$ is contained in $I(H_e)$, since otherwise $H_e$ would be a  $(0,2)$-block in $G$. Hence we may assume $u\in I(H_e)$. 
As $G'$ is (balanced) $M$-connected by Lemma~\ref{lem:EDfacts} and Lemma~\ref{lem:9}, 
we can take an ear decomposition $C_1,\dots, C_t$ of $E(G')$ such that $f_0\in C_1$ and $u\in V(C_1)$.

We first solve the case when an end vertex of $e$ is not in $I(H_e)$.
We first remark that $t>1$. Otherwise  $C_1$ contains at least four vertices of degree three in $C_1$,
one of which is also a degree three vertex in $G$ since an end vertex of $e$ is not in $I(H_e)$.
This contradicts that the minimum degree of $G$ is at least four.
Thus $t>1$. By Lemma~\ref{lem:jordan} and the minimum degree condition for $G$, we have $|\tilde{C}_t|=1$.
Let $f$ be the edge in $\tilde{C}_t$. Then $H_e+f_0-f$ is $M$-connected.
By Lemma~\ref{lem:10}, $G-f$ is also $M$-connected.
It remains to show that $H_e-f-f_0$ is 2-connected.
Suppose that $H_e-f-f_0$ is not 2-connected.
Then it is not (balanced) rigid, since every rigid graph is 2-connected.
This in turn implies that $H_e-f+f_0$ is not $M$-connected by Lemma~\ref{lem:4}, a contradiction.
This completes the proof in the case where  an end vertex of $e$ is not in $I(H_e)$.

The difficult case is when  both end vertices of $e$ are contained in $I(H_e)$.
We may assume that $|\tilde{C}_t|>1$, for otherwise  the edge in $\tilde{C}_t$ has the desired property by Lemma~\ref{lem:10}.
\begin{claim}
\label{claim:key1}
The following holds:
\begin{description}
\item[(i)] If $t=1$, then $u, v, a, b$ are distinct,  and they are  exactly the vertices of degree three in $G'$;
\item[(ii)] If $t>1$, then $V(C_t)\setminus V(\bigcup_{i=1}^{t-1} C_i)=\{v\}$, and $v$ has degree three in $G'$.
\end{description}
\end{claim}
\begin{proof}
If $t=1$, then there are at least four vertices of degree three in $G'$ since $E(G')$ is a balanced circuit.
Since the minimum degree of $G$ is at least four, we  have (i).

Suppose $t>1$, and suppose also that (ii) does not hold. Then by Lemma~\ref{lem:jordan} there is a vertex $w$ in $V(C_t)-V(\bigcup_{i=1}^{t-1} C_i)$ other than $v$ that has degree three in $G'$.  Since $\{u,a, b\}\subseteq V(C_1)$, $w$ is  distinct from them. Hence $w$ has degree three even in $G$, contradicting the minimum degree condition of $G$.
\end{proof}
Claim~\ref{claim:key1} implies
\begin{equation}
\label{eq:key2-0}
d_{G}(v)=4.
\end{equation}
Since $v\in I(H_e)$,  we can take an edge $f$ in $G'$ incident to $v$.
We claim that $G-f$ has a circuit $C^*$ with $e\in C^*$ and $f\notin C^*$.
Indeed, since $G-e$ is $M$-connected, $G-e$ has a circuit $C$ with $f\in C$,
and $G$ has a circuit $C'$ with $e\in C'$ and $f\in C'$.
By the circuit elimination, we get $C^*\subseteq C\cup C'-f$ with $e\in C^*$.
By (\ref{eq:key2-0}) we have
\begin{equation}
\label{eq:key2}
d_{C^*}(v)=3 \text{ and } d_{G-f}(v)=3.
\end{equation}

\begin{claim}
\label{claim:key2}
$C^*$ is unbalanced with $\{a,b\}\subseteq V(C^*)$.
\end{claim}
\begin{proof}
If $C^*\subseteq E(H_e)+e$, then we would have $r(C^*)=r(C^*-e)+1$ by $e\in C^*$ and (\ref{eq:key1}), which contradicts that $C^*$ is a circuit.
Hence $C^*$ must contain at least one edge from $E(G)-E(H_e+e)$.
Thus the 2-connectivity of $C^*$ implies $\{a,b\}\subseteq V(C^*)$.

Suppose that $C^*$ is balanced.
Then every path between $u$ and $v$ in $C^*-e$ passes through $a$, since the concatenation of $e$ and a simple path between $u$ and $v$ avoiding $a$ is unbalanced by (\ref{eq:key1}).
Hence $a$ is a cut vertex in $C^*-e$, contradicting the rigidity of $G[C^*]$.
(Note that $u$ and $v$ are distinct from $a$ by $u, v\in I(H_e)$.)
\end{proof}

By Claim~\ref{claim:key2} and Lemma~\ref{lem:circuit_periodic_rigid}, $C^*$ is periodically rigid with $\{a,b\}\subseteq V(C^*)$, and $f_0\in {\rm cl}_2(C^*)$.
Hence $C^*+f_0$ contains a circuit $C^*_0$ with $f_0\in C^*_0$.
Note that $C_0^*-f_0\subset C^*$. 

%By Claim~\ref{claim:key2} and Lemma~\ref{lem:cleavage0}, $C^*_0:=C^*\cap E(H_e)+e+f_0$ is a balanced circuit.

For any $e_1, e_2\in E(G-f)$, we denote $e_1\sim e_2$ if $G-f$ has a circuit that contains $e_1$ and $e_2$.
In order to show the $M$-connectivity of $G-f$, we show that $e_1\sim e_2$ for any $e_1 ,e_2\in E(G-f)$.
Since $\sim$ is an equivalence relation, we just need to show $e\sim e'$ for any $e'\in E(G-f)$,
and this follows by showing that  $G-f$ has a circuit intersecting $e'$ and $C^*$ for each $e'\in E(G-f)$.
For $e'\in E(G-f)\setminus E(H_e-f_0)$ this can be rapidly shown as follows.
Since $G-I(H_e)+f_0$ is $M$-connected by Lemma~\ref{lem:9}, it contains a circuit $C_{e'}$ with $e', f_0\in C_{e'}$.
If $C_{e'}=C^*_0$, then we have $e'\in C^*$ by $C^*_0-f_0\subset C^*$.
If $C_{e'}\neq C^*_0$, then by the circuit elimination $C_{e'}\cup C^*_0$ contains a circuit intersecting $e'$ and avoiding $f_0$. 
This circuit has the desired property since $C^*_0-f_0\subset C^*$ and $f\notin C_{e'}\cup C^*_0$.

In the following discussion, we prove that for each $e'\in E(H_e-f)$ there is a circuit intersecting $e'$ and $C^*$.
The proof consists of two cases, depending on whether $t=1$ or $t>1$.
The case when $t=1$ is easily solved by the following claim.  

\begin{claim}
\label{claim:key3}
If $t=1$, then $V(H_e)\subseteq V(C^*)$.
\end{claim}
\begin{proof}
Let $X=V(C^*)\cap V(H_e)$ and $Y=V(H_e)\setminus X$.
Also let $k$ be the number of edges between $a$ and $b$ in $G$.
Note that $C_1=E(H_e)+f_0$ by $t=1$. (Recall that $C_1$ is the initial circuit in the ear decomposition $C_1,\dots, C_t$.)

By Claim~\ref{claim:key2} and $e\in C^*$,
$\{u,v,a,b\}\cap Y=\emptyset$.
Hence the edge set of $G$ induced by $Y$ is a proper subset of $C_1$. Hence
\begin{equation}
\label{eq:key4}
i_G(Y)\leq 2|Y|-3
\end{equation}
if $|Y|\geq 2$.
On the other hand, for $X$, we claim 
\begin{equation}
\label{eq:key5}
i_G(X)\geq 2|X|-3+k.
\end{equation}
To see this, recall first that $C^*$ is an unbalanced   circuit by Claim~\ref{claim:key2}. 
Also, since $V(C^*)\neq \{a,b\}$,  $C^*$ can contain at most two edges between $a$ and $b$.
Hence, if $V(C^*)\subseteq V(H_e)$, we have $i_G(X)\geq 2|X|-1+(k-2)=2|X|-3+k$, which is (\ref{eq:key5}).
Suppose $V(C^*)\setminus V(H_e)\neq \emptyset$.
Let $C_1^*$ be the set of edges in $C^*$ induced by $V(H_e)$, 
and $C_2^*$ be the set of edges in $C^*$ induced by $V(G)-I(H_e)$.
Then $C_1^*\cap C_2^*$ is the set of edges in $C^*$ on $\{a,b\}$.
Since $C_i^*\neq \emptyset$ and $C^*=C_1^*\cup C_2^*$, 
we have $|C_1^*|-|C_1^*\cap C_2^*|= |C^*|-|C_2^*|\geq 2|V(C^*)|-1-(2|V(C^*_2)|-2)=2|V(C^*)|-3$.
%we have $2|V(C^*)|-1=|C^*|=|C_1^*\cup C_2^*|=|C_1^*|+|C_2^*|-|C_1^*\cap C_2^*|
%\leq -|C_1^*\cap C_2^*|+\sum_{i=1,2} (2|V(C_i)|-2)=-|C_1^*\cap C_2^*|+2|V(C^*)|$.
%Thus $|C_1^*\cap C_2^*|\leq 1$ holds, 
%and if $|C_1^*\cap C_2^*|\leq 1$ then  $|C_1^*|=2|V(C^*)|-2$.
%This in turn implies $|C_1^*|-|C_1^*\cap C_2^*|\geq 2|V(C^*)|-3$.
Since $i_G(X)\geq |C_1^*|+(k-|C_1^*\cap C_2^*|)$, we get (\ref{eq:key5}).

Finally we claim
\begin{equation}
\label{eq:key6}
i_G(X\cup Y)=2|X\cup Y|-2+k.
\end{equation}
To see this, recall that  $E(H_e)+f_0$ is a balanced circuit.
Also $X\cup Y$ induces $E(H_e)+e$ and the edges on $\{a,b\}$.
Thus, if $f_0\in E(G)$ then we have $i_G(X\cup Y)= (2|X\cup Y|-2)+1+(k-1)$,
and otherwise we have $i_G(X\cup Y)= (2|X\cup Y|-3)+1+k$.
Hence (\ref{eq:key6}) holds.

Combining (\ref{eq:key4})(\ref{eq:key5})(\ref{eq:key6}), we get
\[2i_G(Y)+d(X,Y)= i_G(Y)+i_G(X\cup Y)-i_G(X)\leq 4|Y|-2\quad \text{if $|Y|\geq 2$}, \]
and
\[d(X,Y)= i_G(X\cup Y)-i_G(X)\leq 2|Y|+1=3\quad \text{if  $|Y|=1$}.\]
Those imply that, if $Y\neq \emptyset$, then $G$ has a  vertex of degree three in $Y$.
By the minimum degree condition, we conclude that $Y=\emptyset$.
\end{proof}

Suppose that $t=1$.
Since $C^*$ is periodically rigid, $e'\in {\rm cl}_2(C^*)$ holds for any $e'\in E(H_e-f)$.
Hence $C^*+e'$ contains a circuit $C_{e'}$ intersecting $e'$ and $C^*$, which has the desired property.

Suppose that $t>1$.
By Claim~\ref{claim:key1}(ii) and (\ref{eq:key2}), every edge in $\tilde{C}_t-f$ is included in $C^*$.
Since $E(H_e)=\bigcup_{i=1}^t (C_i-f_0)$ and $f\in \tilde{C}_t$, it remains to solve the case when $e'\in C_i-f_0$ for some $i$ with $i<t$.
We solve it by induction on $i$.
Suppose $e'\in C_1-f_0$. 
If $C_1=C^*_0$, then $e'\in C^*$ by $C_0^*-f_0\subset C^*$.
Otherwise, since $f_0\in C_1$ from the definition of $C_1$, by the circuit elimination  there is a circuit $C_{e'}$ with $e'\in C_{e'}\subseteq C_1\cap C_0^*-f_0$.
This circuit is contained in $G-f$ and intersects $e'$ and $C^*$ (by $C^*_0-f_0\subset C^*$).
This solves the base case.
For the general case, let $e'\in C_i-f_0$. 
If $f_0\in C_i$, one can apply  exactly the same argument as in the base case.
Otherwise $C_i$ is contained in $G-f$ by $f\in \tilde{C}_t$.
Moreover by the definition of ear decomposition $C_i$ contains an edge $e''$ in $\bigcup_{j<i} C_j$.
By induction $e''\sim e$, and we get $e'\sim e''\sim e$. 
This completes the proof.
\end{proof}

The following lemma lists properties of $(0,2)$-blocks which we will use frequently in the following.
Most of them follow directly from the definition.
\begin{lemma}
\label{lem:cleavage}
Let $G$ be an unbalanced and $M$-connected graph having no $(0,2)$-block,  and let $e\in E(G)$.
Suppose that $G-e$ is unbalanced and $M$-connected, but $G-e$ has a $(0,2)$-block $H_e$.
Then the following hold.
 \begin{description}
\item[(a)] Any edge of $G-e$ induced by $V(H_e)$ is included in $H_e$ unless it is on the boundary $B(H_e)$, i.e., $E_{G-e}(V(H_e))-E_{G-e}(B(H_e))\subseteq E_{G-e}(H_e)$.
 \item[(b)] At least one end vertex of $e$ is included in $I(H_e)$.
 \item[(c)] If $H_e$ is an inclusionwise minimal $(0,2)$-block in $G-e$, then the cleavage graph of $H_e$ is 3-connected.
 \item[(d)] $|E(G-e)\setminus E(H_e)|\geq 2$.
 \item[(e)] $(G-e)-I(H_e)$ is connected.
 \end{description}
\end{lemma}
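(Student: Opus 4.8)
The plan is to prove the five items in the order (a), (b), (e), (d), (c), since the structural picture supplied by the earlier items is exactly what drives the later (and harder) ones. Throughout, write $e=uv$, $B(H_e)=\{a,b\}$, and $R=E(G-e)\setminus E(H_e)$.

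Items (a) and (b) are immediate from the definitions. For (a): a vertex $w\in I(H_e)=V(H_e)\setminus B(H_e)$ is, by the definition of $B(H_e)$, incident in $G-e$ only to edges of $H_e$; hence every edge of $G-e$ induced by $V(H_e)$ having an endvertex in $I(H_e)$ already lies in $H_e$, and the remaining edges induced by $V(H_e)$ have both endvertices in $B(H_e)$. For (b): if neither $u$ nor $v$ lies in $I(H_e)$, then $\{u,v\}\cap V(H_e)\subseteq B_{G-e}(H_e)$, so $B_G(H_e)=B_{G-e}(H_e)$ has size $2$, while the rank of $H_e$ is still $0$ and $I_G(H_e)=I_{G-e}(H_e)\ne\emptyset$; thus $H_e$ would be a $(0,2)$-block of $G$, contradicting the hypothesis that $G$ has none.

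For (e): by (a), the pair $\{a,b\}$ separates $I(H_e)$ from $V(G-e)\setminus V(H_e)$ in $G-e$, and moreover every vertex outside $V(H_e)$ has all its edges going to other vertices outside $V(H_e)$ or to $\{a,b\}$. Suppose $(G-e)-I(H_e)$ is disconnected. If $a,b$ lie in the same component, then re-inserting $I(H_e)$ (whose only neighbours are in $\{a,b\}$) attaches it to that component alone, leaving $G-e$ disconnected, contradicting the $2$-connectivity of $G-e$ (Lemma~\ref{lem:7}); similarly a component containing neither $a$ nor $b$ would be a component of $G-e$ itself. So the components are exactly $A\ni a$ and $A'\ni b$, and deleting $a$ would isolate $A\setminus\{a\}$ in $G-e$ unless $A=\{a\}$, and symmetrically $A'=\{b\}$. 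But then $V(H_e)=V(G)$ and $G-e$ has no $ab$-edge, so by (a) every edge of $G-e$ lies in $H_e$, contradicting that $G-e$ is unbalanced while $H_e$ is balanced. This proves (e).

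For (d): suppose $|R|\le1$. If $|R|=0$ then $G-e=H_e$ is balanced, a contradiction. If $|R|=1$ and $V(H_e)\ne V(G)$, then $d_{G-e}(V(H_e))\le1$, contradicting the $3$-edge-connectivity of $G-e$ (Lemma~\ref{lem:7}). If $|R|=1$ and $V(H_e)=V(G)$, then by (a) the unique edge $f\in R$ joins $a$ and $b$, so $G-e=H_e+f$; since $G-e$ is $M$-connected with at least two edges, $f$ lies in a circuit, so $r_2(H_e)=r_2((G-e)-f)=r_2(G-e)$, while $G-e$ is unbalanced and $M$-connected, hence periodically rigid by Lemma~\ref{lem:3}, so $r_2(G-e)=2|V(G)|-2$; this forces $r_2(H_e)=2|V(H_e)|-2$, contradicting that $H_e$ is balanced. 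Hence $|R|\ge2$.

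Finally (c), which I expect to be the technical heart. The cleavage graph $G'=H_e+f_0$ is balanced and $M$-connected by Lemma~\ref{lem:9}, hence rigid by Lemma~\ref{lem:4}, hence $2$-connected (every rigid graph is $2$-connected), and $|V(G')|=|V(H_e)|\ge4$ because a balanced circuit has at least four vertices. If $G'$ were not $3$-connected, it would admit a $2$-separation $\{x,y\}$ with sides $A,B$ ($A\cap B=\{x,y\}$, no edge between $A\setminus B$ and $B\setminus A$, both sides with a vertex outside $\{x,y\}$). The standard fact for the generic rigidity matroid that one side together with the virtual edge $xy$ is rigid then produces a rigid balanced subgraph on a proper subset of $V(H_e)$; the plan is to show that, after accounting for where $f_0$, $a$, $b$ sit relative to $\{x,y\}$ and for the extra edges $R$ and the edge $e$, this subgraph (or its deletion of an $xy$-edge) is a $(0,2)$-block of $G-e$ with vertex set properly contained in $V(H_e)$, contradicting the minimality of $H_e$. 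Verifying that the candidate block has exactly two boundary vertices in $G-e$ — rather than picking up extra boundary vertices from $R$ or from $e$ — is precisely where (a), (b), (d), and the counting inequalities of Lemma~\ref{lem:count} are used, and this case analysis is the main obstacle; it parallels the corresponding $3$-connectivity statement for minimal $(0,2)$-like structures in \cite{jj}.
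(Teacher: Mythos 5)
Your treatment of items (a), (b), (d), and (e) is correct and matches the paper's approach in substance; indeed your proof of (d) spells out the closure/coloop argument that the paper compresses into one sentence, and your proof of (e) avoids depending on (d) by substituting the balancedness contradiction. These parts are fine.

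Item (c), however, is not proved: you explicitly leave the case analysis as an open sketch and call it "the main obstacle." This is a genuine gap, and moreover the route you sketch is a detour. You invoke the virtual-edge rigidity of sides of a $2$-separation and suggest that Lemma~\ref{lem:count} and items (a),(b),(d) are needed to control the boundary of the candidate block; none of that is required. The intended argument is purely combinatorial. If the cleavage graph $G'=H_e+f_0$ has a $2$-cut $\{x,y\}$, pick a component $A$ of $G'-\{x,y\}$ with $A\cap\{a,b\}=\emptyset$. Such an $A$ exists: $f_0$ is an $ab$-edge, so $a$ and $b$ lie in the same component of $G'-\{x,y\}$ whenever $\{a,b\}\not\subseteq\{x,y\}$, and there are at least two components; if $\{a,b\}\subseteq\{x,y\}$ every component qualifies. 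Then $A\subseteq V(H_e)\setminus\{a,b\}=I(H_e)$, and every $(G-e)$-edge incident to a vertex of $I(H_e)$ lies in $H_e$, hence in $G'$, hence (since $A$ is a component of $G'-\{x,y\}$) is induced by $A\cup\{x,y\}$. Therefore the subgraph of $G-e$ on $A\cup\{x,y\}$ has $B_{G-e}\subseteq\{x,y\}$, is balanced, has nonempty interior, and is a proper subgraph of $H_e$ (the other component of $G'-\{x,y\}$ is outside it). By $2$-connectivity of $G-e$ the boundary is exactly $\{x,y\}$, so this is a $(0,2)$-block of $G-e$ properly contained in $H_e$, contradicting minimality. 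The boundary-control worry you raise — extra boundary picked up from $R$ or from $e$ — never arises once $A$ is chosen to avoid $\{a,b\}$, because $R$-edges are incident only to $\{a,b\}$ or to $V(G)\setminus V(H_e)$, and $e$ has been deleted.
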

\begin{proof}
(a) This directly follows from the definition of $B(H_e)$.

(b) If both end vertices of $e$ are not in $I(H_e)$, then $H_e$ would be a $(0,2)$-block in $G$, a contradiction.

(c)  If the cleavage graph of $H_e$ is not 3-connected, then $G-e$ would have a proper subgraph of $H_e$ which is a $(0,2)$-block. This contradicts the minimality of $H_e$.

(d) Recall that $H_e$ is balanced.
Hence $G-e$ contains at least two edges which are not in $H_e$, since otherwise $G$ cannot be $M$-connected.

(e) Let $B(H_e)=\{a,b\}$. Suppose that $(G-e)-I(H_e)$ is disconnected.
Since $G-e$ is connected, $(G-e)-I(H_e)$ has two connected components $C_a$ and $C_b$ containing $a$ and $b$, respectively. In particular, there is no edge between $a$ and $b$ in $G-e$.
If any one of the two components is nontrivial, then $G-e$ has  a cut vertex.
However, since $G-e$ is $M$-connected, Lemma~\ref{lem:7} implies that $G-e$ is 2-connected, a contradiction.

Thus we may assume that both components are trivial. Then we have $V(H_e)=V(G)$.
By (a), every edge in $E(G-e)-E(H_e)$ lies on $\{a,b\}$. However, since $G-e$ has no edge between $a$ and $b$, we would have $E(G-e)=E(H_e)$, contradicting (d).
 \end{proof}

We are now ready to prove Lemma~\ref{lem:comb}.

\begin{proof}[Proof of Lemma \ref{lem:comb}]
We show that (i) holds if (ii) does not.
Hence suppose that the minimum degree of $G$ is at least four. 
For any edge $e\in E(G)$, $G-e$ is unbalanced since otherwise $r_2(G)=2|V(G)|-2 > 2|V(G)|-3 =r_2(G-e)$ would hold, contradicting the $M$-connectivity of $G$.
Hence, by Theorem~\ref{lem:5} it suffices to show that
$G$ has an edge $e$  such that 
\begin{itemize}
\item $G-e$ is $M$-connected, and 
\item $G-e$ has no $(0,2)$-block.
\end{itemize}
By Lemma~\ref{lem:8}, $G$ is not minimally $M$-connected.
In other words,  \(E'=\{e\in E: G-e \hbox{ is M-connected}\}\) is not empty.
We show that $G-e$ has no $(0,2)$-block for some  $e\in E'$.

Suppose that $G-{e'}$ has a $(0,2)$-block for every $e'\in E'$.
Let $H_{e'}$ be an inclusionwise minimal $(0,2)$-block in $G-e'$ for each $e'\in E'$,
and take $e\in E'$ such that $|V(H_{e})|$ is as small as possible.
Since the minimum degree of $G$ is at least four, Lemma~\ref{lem:key} implies that
$H_{e}$ contains an edge $f\in E'$, i.e.,  $G-f$ is $M$-connected.
Lemma~\ref{lem:key} further says that $f$ is not the cleaving edge of $H_e$, which implies
\begin{equation}
\label{eq:4-5-1-1}
\text{at least one end vertex of $f$ is in $I(H_e)$.}
\end{equation}

Let $\{a,b\}$ be the boundary $B(H_{e})$ of $H_e$ in $G-e$, and $\{x, y\}$ be the boundary $B(H_f)$ of $H_f$ in $G-f$.
The cleaving edges for $H_{e}$ and $H_f$ are denoted by $f_{ab}$ and $f_{xy}$, respectively.
By Lemma~\ref{lem:9} and Lemma~\ref{lem:cleavage}(c),
$H_{e}+f_{ab}$ and $H_f+f_{xy}$ are balanced $M$-connected and 3-connected.
Also, by Lemma~\ref{lem:key}, we may suppose that
\begin{equation}
\label{eq:4-5-1}
\text{$H_{e}-f-f_{ab}$ is 2-connected if an end vertex of $e$ is not in $I(H_{e})$.}
\end{equation}
In the subsequent discussion, we will frequently use the fact that 
$f\in E(H_e), f\notin E(H_f)$, and $e\notin E(H_e)$.
 
We first claim  the following two technical facts.
\begin{claim}
\label{claim:last1}
If $V(G)=V(H_f)$, then  $x$ and $y$ are contained in $I(H_e)$.
\end{claim}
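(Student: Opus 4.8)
The plan is to prove the claim by contradiction, exploiting the symmetry between $x$ and $y$: assuming $V(G)=V(H_f)$, it suffices to show $x\in I(H_e)$, so I would suppose $x\notin I(H_e)$, i.e.\ either $x\notin V(H_e)$ or $x\in B(H_e)=\{a,b\}$, and derive a contradiction with the hypotheses on $G$ (no $(0,2)$-block, minimum degree at least four) or with the minimality of $|V(H_e)|$.

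\textbf{Step 1: the shape of $H_f$ inside $G$.} Since $V(H_f)=V(G)$, Lemma~\ref{lem:cleavage}(a) applied to $H_f$ in $G-f$ shows that every edge of $G-f$ not joining $x$ and $y$ lies in $E(H_f)$; thus $E(G-f)=E(H_f)\cup P$, where $P$ is the set of edges of $G-f$ joining $x$ and $y$ that are not in $E(H_f)$, and $|P|\geq 2$ by Lemma~\ref{lem:cleavage}(d). I would first observe that $f$ is not an edge joining $x$ and $y$: otherwise every edge of $E(G)\setminus E(H_f)=\{f\}\cup P$ would be incident only to $\{x,y\}$, so that (using $|V(G)|\geq 3$, which holds because $I(H_f)=V(G)\setminus\{x,y\}\neq\emptyset$, and that $H_f$ is balanced) $H_f$ would itself be a $(0,2)$-block of $G$, contradicting our assumption on $G$. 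So $f$ has at least one endpoint outside $\{x,y\}$, and $E(G)\setminus E(H_f)=\{f\}\cup P$ with $P$ a set of $\geq 2$ parallel $xy$-edges.

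\textbf{Step 2: relating $H_e$ and $H_f$, and the case $x\notin V(H_e)$.} Since $H_e$ is balanced it contains at most one edge joining $x$ and $y$; every other edge of $H_e$ distinct from $f$ is an edge of $G-f$ that is not in $P$, hence lies in $E(H_f)$. Also, the cleavage graph $H_e+f_{ab}$ is $M$-connected (Lemma~\ref{lem:9}), hence $3$-edge-connected by Lemma~\ref{lem:7}, so $H_e$ has minimum degree at least two and $H_e-f$ still spans $V(H_e)$. Consequently, if $x\notin V(H_e)$, then $H_e-f$ is a balanced subgraph of $H_f$ with $V(H_e-f)=V(H_e)$, and I would combine this with the fact that every edge of $G-e$ incident to a vertex of $I(H_e)$ lies in $E(H_e)$ to locate $x,y$ and the endpoints of $e$: these vertices are forced onto the boundary of $H_e$, and then a suitable augmentation of $H_e$ (adding back $e$ and/or edges of $P$) is seen to be a $(0,2)$-block of $G$; if this fails, the same analysis yields a $(0,2)$-block in $G-g$ with strictly fewer vertices than $H_e$ for some $g\in E'$, contradicting the choice of $e$. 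The remaining case $x\in\{a,b\}$ (say $x=a$) is treated analogously, splitting according to whether $b=y$ (so the two blocks share a boundary pair) or $b\neq y$ (so $b\in I(H_f)$), in each subcase again producing a forbidden $(0,2)$-block of $G$ or a smaller one in some $G-g$.

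\textbf{Main obstacle.} The delicate part is exactly Step 2: controlling how the two boundary pairs $\{a,b\}$ and $\{x,y\}$ interleave, and the bookkeeping of which edges of $G$ lie in $E(H_e)$, in $E(H_f)$, in $P$, or equal $e$ or $f$. One must invoke the minimum-degree-$\geq 4$ hypothesis (to exclude spurious degree-three vertices in the relevant subgraphs) and the minimality of $|V(H_e)|$ at precisely the right moment. A small auxiliary check is also needed: that $G-f$ is unbalanced (so that Lemma~\ref{lem:cleavage} legitimately applies to the pair $f,H_f$), which follows from $G-f$ being $M$-connected together with the presence of an unbalanced circuit through $f$ in $G$.
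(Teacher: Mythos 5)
Your Step~1 is sound (and not needed by the paper), but Step~2 is the heart of the claim and you leave it as an outline, not a proof. Phrases such as ``a suitable augmentation of $H_e$ (adding back $e$ and/or edges of $P$) is seen to be a $(0,2)$-block of $G$; if this fails, the same analysis yields a $(0,2)$-block in $G-g$ with strictly fewer vertices'' and ``treated analogously'' are exactly the assertions that need an argument; you yourself flag this as the ``main obstacle.'' Moreover the strategy of augmenting $H_e$ looks like a dead end: by the normalisation (\ref{eq:key1}) at the start of the proof of Lemma~\ref{lem:key}, $H_e$ has all labels the identity while $e$ has a non-identity label, so $H_e+e$ is \emph{unbalanced} and cannot be a $(0,2)$-block. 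It is not at all clear what other augmentation of $H_e$, or what $g\in E'$, would yield the desired smaller block, and no mechanism is given.

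The paper's proof goes the other way and targets $H_f$, not $H_e$. Set $F=E(G-f)\setminus E(H_f)$; since $V(G)=V(H_f)$, every edge of $F$ is an $xy$-edge. Because $f\in E(H_e)$ and the cleavage graph $H_e+f_{ab}$ is $3$-connected (Lemmas~\ref{lem:9} and \ref{lem:cleavage}(c)), $H_e$ contains cycles $C_a$ and $C_b$ through $f$ avoiding $a$ and $b$ respectively. If $x\notin I(H_e)$ then $x\in\{a,b\}$ or $x\notin V(H_e)$, so one of $C_a,C_b$ avoids $x$; that cycle is disjoint from $F$, hence contained in $H_f+f$. Since it is a balanced cycle (it lies in $H_e$) through $f$, and $H_f$ is balanced, this forces $H_f+f$ to be balanced, and then $H_f+f$ is a $(0,2)$-block of $G$ (its boundary is $\{x,y\}$, the endpoints of the remaining edges $F$), contradicting the hypothesis on $G$. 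This use of the $3$-connectivity of $H_e+f_{ab}$ to route a balanced cycle through $f$ around $x$ is the idea your sketch is missing, and it makes the case analysis you propose unnecessary.
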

\begin{proof}
Suppose $V(G)=V(H_f)$ but $x$ is not contained in $I(H_e)$.
Let $F=E(G-f)\setminus E(H_f)$.
By $V(G)=V(H_f)$ and $|B(H_f)|=2$, $F$ is the set of parallel edges on $\{x,y\}(=B(H_f))$.
Note also that $f$ is not induced by $\{x,y\}$ by Lemma~\ref{lem:cleavage}(b).

Since $f\in E(H_e)$ and $H_e+f_{ab}$ is 3-connected, $H_e$ contains a cycle $C_a$ (resp., $C_b$) that passes through $f$ and avoids $a$ (resp., $b$).
Since $x$ is not contained in $I(H_e)$, $C_a$ or $C_b$ avoids $x$,
which means that $C_a\cap F=\emptyset$ or $C_b\cap F=\emptyset$.
Without loss of generality assume $C_a\cap F=\emptyset$.
By $F=E(G-F)\setminus E(G_f)$, $C_a\subset H_f+f$.
Since $f\in C_a\subset H_e\cap (H_f+f)$ and $H_e$ and $H_f$ are  balanced, we conclude that
$H_f+f$ is balanced.
This however implies that $H_f+f$ is a $(0,2)$-block in $G$, which is a contradiction.
\end{proof}

\begin{claim}
\label{claim:last2}
If $H_e\cap H_f$ has a path between $a$ and $b$, then $H_e\cup H_f$ is balanced. 
\end{claim}
\begin{proof}
Since $H_e-f$ is balanced, we may assume that the label of each edge in $H_e-f$ is identity.

Suppose that $(H_e-f)\cup (H_f-e)$ is unbalanced. 
Take any unbalanced cycle $C$ in $(H_e-f)\cup (H_f-e)$.
As $H_e-f$ and $H_f-e$ are balanced, $C$ must intersect both 
$E(H_e-f)\setminus E(H_f-e)$ and $E(H_f-e)\setminus E(H_e-f)$,
and hence $C$ passes through $a$ and $b$ (by the definition of the boundary).
As $C$ is unbalanced, $C$ has a path between $a$ and $b$ that consists of edges in $H_f$ and the sum of the labels is non-identity. 
Concatenating this path with a path in $H_e\cap H_f$, we get an unbalanced cycle in $H_f$, contradicting the balancedness of $H_f$.
Thus $(H_e-f)\cup (H_f-e)$ is balanced. Hence we may further assume that the label of each edge in $(H_e-f)\cup (H_f-e)$ is identity.

As $H_e+f_{ab}$ is 3-connected, $H_e-f$ is connected.
Hence $H_e-f$ contains a path between the end vertices of $f$.
Since the label of each edge in the path is identity,  for $H_e$ to be balanced, the label of $f$ should be identity.
By the same argument, if $e\in E(H_f)$, then the label of $e$ should be identity.
Thus the label of each edge in $H_e\cup H_f$ is identity, 
and $H_e\cup H_f$ is balanced.
\end{proof}

We split the proof into four cases depending on the relative positions among $\{a, b, x, y\}$.

\medskip
\noindent
{\bf Case 1:} $x\in I(H_{e})$ and $y\in V(G)\setminus V(H_{e})$.
In this case $\{a,b\}$ is a cut of $G-e-f$ since $x$ and $y$ belong to different components in
$(G-e)-a-b$.
By Claim~\ref{claim:last1}, $\{x,y\}$ is also a cut of $G-e-f$.
(If $\{x,y\}$ is not a cut in $G-e-f$, then it is also not a cut in $G-f$ and hence $V(G)=V(H_f)$ follows.
Hence Claim~\ref{claim:last1} implies $x,y\in I(H_e)$, contradicting $y\in V(G)\setminus V(H_e)$.)
Now, from the fact that $\{a,b\}$ and $\{x,y\}$ are cuts of $G-e-f$,
 $G-e-f$ can be decomposed into four subgraphs $G_1, \dots, G_4$ by taking
$G_1=(H_{e}-f)\cap (H_f-e)$,
$G_2=(H_f-e)\cap (G-I(H_{e}))$,
$G_3=(G-I(H_f))\cap (H_{e}-f)$ and
$G_4=(G-I(H_f))\cap (G-I(H_{e}))$.
Then $G_1, G_2$ and $G_3$ are balanced, and each $G_i$ has at least two vertices.
Thus, $r_2(G-e-f)\leq \sum_{i=1}^4 r_2(G_i)\leq  \sum_{i=1}^4 2|V(G_i)|-11
\leq 2|V(G)|-3$. 
On the other hand, since $G-e$ is unbalanced $M$-connected, 
$r_2(G-e)=2|V(G)|-2$ by Lemma~\ref{lem:3}, and by the $M$-connectivity $r_2(G-e-f)=r_2(G-e)=2|V(G)|-2$.
This is a contradiction.

\medskip
\noindent
{\bf Case 2:} $x\in I(H_{e})$ and $y\in I(H_{e})$.
We first show the following:
\begin{equation}
\label{eq:4-5-3}
(H_e-f)\cup (H_f-e)=G-e-f.
\end{equation}
By the minimum choice of $V(H_e)$, $(G-e)-I(H_e)$ must contain a vertex $w\in I(H_f)$.
By Lemma~\ref{lem:cleavage}(e), $(G-e)-I(H_e)$ is connected,
and hence there is a path from $w$ to any vertex in $(G-e)-I(H_e)$.
Such a path avoids $\{x,y\}$ by $\{x,y\}\subseteq I(H_e)$ and avoids $f$ by (\ref{eq:4-5-1-1}), and hence we get $V(G)\setminus I(H_e)\subseteq I(H_f)$.
(Note that such a path starts from the interior of $H_f$ and remains in the interior, since it avoids the boundary $\{x,y\}$.)
Hence every edge induced by $V(G)\setminus I(H_e)$ is included in $H_f$ by Lemma~\ref{lem:cleavage}(a) and (\ref{eq:4-5-1-1}), implying (\ref{eq:4-5-3}).

We next prove
\begin{equation}
\label{eq:4-5-4}
\text{$(H_e-f)\cap (H_f-e)$ contains a path between $a$ and $b$.}
\end{equation}
Suppose not. By (\ref{eq:4-5-3}) one can see $a, b\in V(H_f)$. 
Let $K_x$ be the component of $(H_e-f)\cap (H_f-e)$ containing $x\in I(H_e)$. 
Since (\ref{eq:4-5-4}) does not hold, $K_x$ cannot contain both $a$ and $b$.
If $K_x$ contains neither $a$ nor $b$, then $K_x$ would be a connected component of $H_f-e$, % by Lemma~\ref{lem:cleavage}(a) and $f\notin E(H_f)$,
and $H_f+f_{xy}$ is not 3-connected, which is a contradiction.
So we may suppose that $K_x$ contains $a$ but not $b$.
Then by Lemma~\ref{lem:cleavage}(a)(e) and (\ref{eq:4-5-1-1}), $a$ is a cut vertex in $H_f-e$.
   
Similarly, let $K_y$ be the component of $(H_e-f)\cap (H_f-e)$ containing $y\in I(H_e)$.
Then $K_y$ contains either $a$ or $b$.
If $K_y$ contains $a$ (i.e., $K_x=K_y$), then $a$ is a cut vertex in $H_f-e$ and it is still a cut vertex in $H_f-e+f_{xy}$. 
Hence $H_f+f_{xy}$ cannot be 3-connected, a contradiction.
If $K_y$ contains $b$, then by the 3-connectivity of $H_f+f_{xy}$, $H_f-e$ consists of three subgraphs $K_x, K_y$, and $K:=(H_f-e)-I(H_e)$ 
with $V(K_x)\cap V(K_y)=\emptyset, V(K_x)\cap V(K)=\{a\}, V(K_x)\cap V(K)=\{b\}$.
Since each subgraph is balanced and has at least two vertices, 
we have $r_2(H_f-e)\leq r_2(K_x)+r_2(K_y)+r_2(K)\leq 
2(|V(K_x)|+|V(K_y)|+|V(K)|-2)-5
=2|V(H_f-e)|-5$. 
On the other hand, since $H_f+f_{xy}$ is 3-connected and $M$-connected, it is redundantly rigid by Theorem 3.3 in \cite{jj}. 
This is a contradiction as $H_f$ is not rigid by $r_2(H_f-e)\leq 2|V(H_f-e)|-5$.

%
%,   there are only two possibilities for $H_f-e$:
%(i) $a$ and $b$ are both cut vertices of $H_f-e$, or
%(ii) $a$ (or $b$) is a cut vertex of $H_f-e$ and $x, y$ belong to the same component in $(H_f-e)-a$ (or $(H_f-e)-b$).
%Now consider the cleavage graph $H_f+f_{xy}$ of $H_f$.
%$H_f+f_{xy}$ is obtained from $H_f-e$ by adding $f_{xy}$ and $e$.
%However, the above two possibilities imply that $H_f+f_{xy}$ is not 3-connected,
%which is a contradiction.

Thus we have (\ref{eq:4-5-4}).
By (\ref{eq:4-5-4}) and Claim~\ref{claim:last2}, $H_e\cup H_f$ is balanced.
By (\ref{eq:4-5-3}) and $f\in E(H_e)$, $H_e\cup H_f$ is either $G-e$ or $G$.
Hence $G-e$ or $G$ is balanced, which contradicts the unbalancedness of $G-e$ (which follows from the $M$-connectivity of $G$ as explained at the beginning of the proof.)

% 
%We next prove
%\begin{equation}
%\label{eq:4-5-5}
%\text{$G-e-f$ is balanced.}
%\end{equation}
%To see this suppose $G-e-f$ has an unbalanced cycle $C$.
%Suppose that $C\cap (E(H_e)-f)=\emptyset$.
%Then, by (\ref{eq:f4-5-3}), $C\subseteq E(H_f)-e$ holds, and hence $H_f$ is unbalanced, which is a contradiction to the balancedness of $H_f$.
%Thus, $C\cap (E(H_e)-f)\neq \emptyset$, and the symmetric argument also gives $C\cap (E(H_f)-e)\neq \emptyset$.
%Therefore,  as $B(H_e)=\{a,b\}$, $C$ can be decomposed into
%two nonempty paths $P_1$ and $P_2$ such that
%$P_1$ is a path in $H_e-f$ between $a$ and $b$.
%By (\ref{eq:4-5-3}) $P_2$ is a path in $H_f-e$ between $a$ and $b$.
%Also by (\ref{eq:4-5-4}) we have a path $P_3$ between $a$ and $b$ in $(H_e-f)\cap (H_f-e)$.
%Since $P_1\cup P_2$ is unbalanced, $P_1\cup P_3$ or $P_2\cup P_3$ is unbalanced.
%This however contradicts that both $H_e$ and $H_f$ are balanced
%since $P_1\cup P_3\subseteq H_e$ and $P_2\cup P_3\subseteq H_f$.
%Thus we get (\ref{eq:4-5-5}).
%
%By (\ref{eq:4-5-5}) we can suppose that the label of each edge in  $G-e-f$ is identity.
%Since $f\in E(H_e)$, $H_e$ contains a cycle passing through $f$.
%Since $H_e$ is balanced, the label of $f$ is the identity.
%Thus $G-e$ is balanced, and this contradicts that $G$ is unbalanced and $M$-connected.

\medskip
\noindent
{\bf Case 3:} $x\in I(H_{e})$ and $y\in B(H_{e})$. Without loss of generality we assume $b=y$. 
We first remark that $a\in V(H_f)$. Indeed, due to the minimality of $V(H_e)$, $(G-e)-I(H_e)$ must contains at least one vertex $w$ from $I(H_f)$,
and hence if $a$ is not in $V(H_f)$ then $b$ would be a cut vertex in $(H_f-e)+f_{xy}$, which contradicts the 3-connectivity of $H_f+f_{xy}$.
 
We consider four  subgraphs $G_i\ (1\leq i\leq 4)$ of $G-e-f$ by taking
$G_1=(H_e-f)\cap (H_f-e)$,
$G_2=(H_e-f)\cap (G-I(H_f))$, 
$G_3=(G-I(H_e))\cap (H_f-e)$, and 
$G_4=(G-I(H_e))\cap (G-I(H_f))$.
Note that $G-e-f = \bigcup_{i=1}^4 G_i$. 
Also, by $a\in V(H_f)$, $a\in G_1$ and $a\in G_3$. 
As $\{a,b\}$ (resp., $\{x,y\}$) is the boundary of $H_e-f$ (resp., $H_f-e$) in $G-e-f$, we have
\[
\begin{array}{cc}
V(G_1\cap \bigcup_{i\neq 1} G_i)= \{x,b,a\}, &
V(G_2\cap \bigcup_{i\neq 2} G_i)= \{x,b\} \\
V(G_3\cap \bigcup_{i\neq 3} G_i)= \{a, b\}, &
V(G_4\cap \bigcup_{i\neq 4} G_i)= \{b\}.
\end{array}
\]
We have two subcases.

Case 3-1: Suppose that $G_1$ does not contain a path between $a$ and $b$.
If $G_1$ has no path between $a$ and $x$, then $b$ is a cut vertex in  $G_1\cup G_3(=H_f-e)$.
Then $b(=y)$ is still a cut vertex in  $H_f-e+f_{xy}$, and $H_f+f_{xy}$ cannot be 3-connected, a contradiction.
Therefore $G_1$ has a path between $a$ and $x$ (and has no path between $b$ and $x$).
This in turn implies that $x$ is a cut vertex in $G_1\cup G_2$.
Also $a$ is a cut vertex in $G_1\cup G_3(=H_f-e)$.
Hence, for the 3-connectivity of $H_f+f_{xy}$ at least one end vertex of $e$ is not in $I(H_e)$ (as $e$ should be incident to a vertex in $V(G_3)$).
Thus, by (\ref{eq:4-5-1}), $H_e-f-f_{ab}$ is 2-connected.
However, as $G_1\cup G_2=H_e-f$, this contradicts  the fact that  $x$ is a cut vertex in $G_1\cup G_2$.

Case 3-2: Suppose that $G_1$ contains a path between $a$ and $b$.
Then by Claim~\ref{claim:last2} $H_e\cup H_f$ is balanced.
Note also that $G-e$ is the union of $H_e\cup (H_f-e)$ and $G_4$ with intersection only at $b$. 
If $G_4$ consists of just one vertex $b$, then by $f\in E(H_e)$ $H_e\cup H_f$ would be either $G$ or $G-e$,
and the balancedness of $H_e\cup H_f$ contradicts the unbalancedness of $G-e$.
If $G_4$ contains more than one vertex, then $b$ is a cut vertex in $G-e$, and an end vertex of $e$ is in $V(G_4)\setminus \{b\}$.
This in turn implies that $b$ and another end vertex of $e$ is the boundary of $H_e\cup H_f$ in $G$.
 In other words, $H_e\cup H_f$ is a $(0,2)$-block in $G$, a contradiction.

\medskip
\noindent
{\bf Case 4:} $x\notin I(H_{e})$ and $y\notin I(H_{e})$.
We first prove $I(H_e)\cap I(H_f)\neq \emptyset$. 
To see this, recall that an end vertex of $f$ is in $I(H_f)$ by Lemma~\ref{lem:cleavage}(b), 
and this vertex is in $V(H_e)$ by $f\in E(H_e)$.
If this vertex belongs to $I(H_e)$, we are done.
If this vertex is in $B(H_e)$, then the other end vertex of $f$ is in $I(H_e)$ by (\ref{eq:4-5-1-1}). 
%(Note that at least one of the end vertices of $f$ is not in $B(H_e)$ since $f\in E(H_e)$ is not the cleaving edge for $H_e$ by Lemma~\ref{lem:key}.)
This vertex also belongs to $I(H_f)$, since $H_e$ has a path between the end vertices of $f$ internally avoiding $\{x, y\}$ by the 3-connectivity of $H_e+f_{ab}$ and $x, y\notin I(H_{e})$.

Thus we can take $v^*\in I(H_e)\cap I(H_f)$. We next prove 
that $H_e\cap H_f$ contains a path between $a$ and $b$.
Since $H_e+f_{ab}$ is 3-connected, $H_e$ contains a path from $v^*$ to $a$ (resp., $b$) avoiding $f$ and $b$ (resp., $a$).
By $x, y \notin I(H_e)$, such a path goes through the interior of $H_f$ (and the last vertex $a$ (resp., $b$) may be the boundary).
Hence such a path is in $H_e\cap H_f$, and the concatenation of those paths would be a desired path between $a$ and $b$ in $H_e\cap H_f$.

Therefore, by Claim~\ref{claim:last2},  $H_e\cup H_f$ is balanced,
and we may suppose that the label of each edge in $H_e\cup H_f$ is identity.

We next remark that 
\begin{equation}\label{eq:case4-2}
\text{ an end vertex of $f$ does not belong to $V(H_f)$.}
\end{equation}
Indeed, if both end vertices of $f$ belong to $V(H_f)$,
then $H_f+f$ would be  balanced since the label of each edge in $H_f$ and $f\in E(H_e)$ is identity. 
Moreover, the boundary of $H_f+f$ in $G$ is $\{x,y\}$, which means that $H_f+f$ would be a $(0,2)$-block of $G$, 
a contradiction if both end vertices of $f$ belong to $V(H_f)$
Thus we get (\ref{eq:case4-2}).

Let $u^*\in V(H_e)\setminus V(H_f)$ be an end vertex of $f$ shown by (\ref{eq:case4-2}).
We prove  
\begin{equation}\label{eq:case4-3}
\{x,y\}= \{a,b\}.
\end{equation}
To see this, suppose $\{x, y\} \neq \{a, b\}$.
Since $H_e+f_{ab}$ is 3-connected, $H_e$ contains a path from $v^*$ to $u^*$ avoiding $f$ and $a$ (resp., $b$).
Since $\{x,y\}\neq \{a,b\}$ and $u^*\notin \{x, y\}\subseteq V(H_f)$, one such path avoids $\{x,y\}$.
This however implies $u^*\in I(H_f)$ by $v^*\in I(H_f)$, contradicting  $u^*\in V(H_f)$.

Since $u^*\in V(H_e)\setminus V(H_f)$, by the minimality of $V(H_e)$, 
$V(H_f)\setminus V(H_e)$ is nonempty.
Hence $\{x,y\}$ is a cut in $H_f-e$ that separates $v^*$ and $V(H_f)\setminus V(H_e)$.
As $H_f+f_{xy}$ is 3-connected, we should have $e\in E(H_f)$.
This however implies that $H_e\cup H_f$ is a balanced graph whose boundary in $G$ is $\{x,y\}(=\{a,b\})$, i.e., a $(0,2)$-block in $G$.
This contradicts that $G$ has no $(0,2)$-block, and the proof is complete.
\end{proof}

\section{Concluding Remarks}
\subsection{Global rigidity of cylindrical/toroidal frameworks}
Given \(\ell\in \mathbb{R}^2\),  we consider the following equivalence relation of \(\mathbb{R}^2\):
\[\hbox{for }a,b\in\mathbb{R}^2,a\sim_1 b\hbox{ if and only if }a=b+z\ell\hbox{ for some }z\in\mathbb{Z}.\]
A flat cylinder \({\cal C}_{\ell}={\cal C}\) is obtained by factoring out \(\mathbb{R}^2\) with the relation \(\sim_1\). Similarly, given a pair of vectors \(\ell_1\ell_2\) of \(\mathbb{R}^2\), consider  an equivalence relation of \(\mathbb{R}^2\) by:
\[\hbox{for }a,b\in\mathbb{R}^2,a\sim_2 b\hbox{ if and only if }a=b+z_1\ell_1+z_2\ell_2\hbox{ for some pair }z_1,z_2\in\mathbb{Z}.\]
A flat torus \({\cal T}_{\ell_1,\ell_2}={\cal T}\) is obtained by factoring out \(\mathbb{R}^2\) with the relation \(\sim_2\). 

Given a straight-line drawing of an undirected graph  on ${\cal C}$ (resp.~on ${\cal T}$), we regard it as a bar-joint framework. Note that such frameworks on ${\cal C}$ (resp.~on ${\cal T}$) has a one-to-one correspondence with $1$-periodic (resp.~$2$-periodic) frameworks in $\mathbb{R}^2$ through $\sim_1$ (resp.~$\sim_2$).
Hence our combinatorial characterization of the global rigidity of periodic frameworks immediately implies a characterization of the global rigidity of cylindrical/toroidal frameworks.

The underlying combinations of a drawing on ${\cal C}$ (resp.~on ${\cal T}$) is captured by using a
$\mathbb{Z}$-labeled graph $(G,\psi)$ (resp.~$\mathbb{Z}^2$-labeled graph), 
where each label determines the geodesic between two end vertices.
A cylindrical framework (resp.~a toroidal framework) is defined as a pair $(G,\psi, p)$ of 
a $\mathbb{Z}$-labeled graph $(G,\psi)$ (resp.~$\mathbb{Z}^2$-labeled graph) and 
$p:V(G)\rightarrow {\cal C}$ (resp.~$p:V(G)\rightarrow {\cal T}$).
Note that a subgraph $H$ is balanced if and only if it is contractible on the surface. 
Hence Theorem~\ref{thm:main0} can be translated to Theorem~\ref{thm:cylinder} for cylindrical frameworks. For toroidal frameworks the statement becomes as follows.

\begin{theorem}\label{thm:torus}
A generic  framework  $(G,p)$ with $|V(G)|\geq 3$ on ${\cal T}$ is globally rigid if and only if $G$ is connected, each two-connected component of $G$ is redundantly rigid on ${\cal T}$, has no contractible subgraph $H$ with $|V(H)|\geq 3$ and $|B(H)|=2$, and has rank two.
\end{theorem}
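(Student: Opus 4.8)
The plan is to derive Theorem~\ref{thm:torus} as a direct corollary of Theorem~\ref{thm:main1}, exactly in parallel with the way Theorem~\ref{thm:cylinder} follows from Theorem~\ref{thm:main0}. First I would set up the correspondence: a straight-line drawing of $G$ on the flat torus ${\cal T}={\cal T}_{\ell_1,\ell_2}$ is encoded by a $\mathbb{Z}^2$-labeled graph $(G,\psi)$ together with a map $p:V(G)\to{\cal T}$, where each edge's label records which geodesic (i.e.\ which coset representative under $\sim_2$) realizes the bar. Via the quotient map $\mathbb{R}^2\to\mathbb{R}^2/\langle\ell_1,\ell_2\rangle={\cal T}$, such a toroidal framework lifts to a $2$-periodic framework in $\mathbb{R}^2$ with periodicity $\Gamma=\mathbb{Z}^2$ and $L:\Gamma\to\mathbb{R}^2$ the nonsingular homomorphism sending the generators to $\ell_1,\ell_2$; conversely every $2$-periodic framework descends. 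Edge lengths on ${\cal T}$ (measured in the inherited metric as geodesic lengths matching the chosen labels) correspond precisely to the quantities $\|p(v_i)-(p(v_j)+L(\psi(v_iv_j)))\|$ appearing in $f_{G,L}$, so two toroidal frameworks are equivalent (resp.\ congruent) if and only if their lifts are $L$-periodically equivalent (resp.\ congruent). Consequently $(G,p)$ is globally rigid on ${\cal T}$ if and only if its lift is $L$-periodically globally rigid, and genericity of $(G,p)$ on ${\cal T}$ matches genericity of the quotient $\Gamma$-labeled framework.

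With this dictionary in hand, I would simply invoke Theorem~\ref{thm:main1}. It states that a generic $\mathbb{Z}^2$-labeled framework in $\mathbb{R}^2$ with $|V(G)|\ge 3$ is $L$-periodically globally rigid iff each $2$-connected component $(G',\psi')$ is redundantly periodically rigid, has no $(0,2)$-block, and has rank two. The only remaining task is to translate each of these three combinatorial conditions into surface language. The key translation point, already noted in the text, is that a subgraph $H$ of a $\mathbb{Z}^2$-labeled graph is \emph{balanced} iff every cycle of $H$ is contractible on ${\cal T}$; so ``balanced'' becomes ``contractible.'' Hence ``$(G',\psi')$ has rank two'' becomes ``$G'$ has rank two'' (equivalently, $\Gamma_{G'}$ spans $\mathbb{Z}^2$, i.e.\ the noncontractible cycles of $G'$ wrap both directions), a $(0,2)$-block becomes a contractible subgraph $H$ with $I(H)\ne\emptyset$ and $|B(H)|=2$ — and since a balanced/contractible subgraph with only two boundary vertices and a nonempty interior necessarily has $|V(H)|\ge 3$, this matches the phrasing ``contractible subgraph $H$ with $|V(H)|\ge 3$ and $|B(H)|=2$'' — and redundant periodic rigidity of $(G',\psi')$ becomes redundant rigidity of $G'$ on ${\cal T}$, using the surface counterpart of Ross's theorem quoted before Theorem~\ref{thm:cylinder} to identify periodic rigidity with rigidity on ${\cal T}$ via the counts $|F|\le 2|V(F)|-2$ in general and $|F|\le 2|V(F)|-3$ for contractible $F$.

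I would assemble these remarks into a short proof: reduce to the periodic setting through the quotient/lift correspondence, apply Theorem~\ref{thm:main1}, and then reword. The argument that the toroidal notion of ``$2$-connected component'' agrees with the graph-theoretic one used in Theorem~\ref{thm:main1} is immediate since $2$-connectivity is a property of the underlying graph. I would also mention, for completeness, that the boundary-case count of vertex orbits (at least three) is exactly the hypothesis $|V(G)|\ge 3$ carried over, and that for fewer vertices Lemma~\ref{rem:2orbits} handles the situation but is excluded here by hypothesis. The main (and essentially only) obstacle is to make the lift/quotient correspondence precise enough that ``equivalent and congruent'' on the torus genuinely match the $f_{G,L}$ and $f_{V,L}$ conditions — in particular one must be careful that congruence on ${\cal T}$ corresponds to congruence under the $L(\Gamma)$-invariant isometry group of $\mathbb{R}^2$ (which, since $\Gamma$ has rank $d=2$, is exactly the translations, by the remark in the proof of Lemma~\ref{lem:glueing}), so that there is no discrepancy coming from ``extra'' isometries of the torus. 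Once that is checked, everything else is a routine restatement, so I do not expect any genuine difficulty beyond bookkeeping.
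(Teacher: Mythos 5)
Your proposal is correct and takes essentially the same route the paper sketches: Theorem~\ref{thm:torus} is presented as a direct translation of Theorem~\ref{thm:main1} via the one-to-one correspondence between toroidal frameworks and $2$-periodic frameworks, using the observation that balanced subgraphs of the quotient $\mathbb{Z}^2$-labeled graph are exactly the contractible ones on ${\cal T}$, so that $(0,2)$-blocks become contractible subgraphs with $|B(H)|=2$ and $|V(H)|\geq 3$, and redundant periodic rigidity becomes redundant rigidity on ${\cal T}$. Your added care about the isometry group (that for rank $d=2$ the $L(\Gamma)$-invariant isometries are just translations, as noted in the proof of Lemma~\ref{lem:glueing}) is a fine sanity check but is subsumed in the observation that equivalence/congruence on ${\cal T}$ matches $f_{G,L}$ and $f_{V,L}$ exactly.
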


\subsection{Open problems}
There are quite a few remaining questions. As mentioned in the introduction, an important challenging problem is to extend our results to more general settings of global rigidity of periodic frameworks, as was done for local rigidity in \cite{mt13,mt14}.
 
As mentioned above, Theorems~\ref{thm:main0} and \ref{thm:main1} may be viewed as a characterization of generic globally rigid bar-joint frameworks on a flat cylinder  or a flat torus, respectively. A natural open problem is to establish counterparts of Theorem~\ref{thm:jj} in other flat Riemannian manifolds. (Note that for frameworks on the cylinder and other surfaces with edge lengths measured by the standard Euclidean distance in $\mathbb{R}^3$, generic global rigidity has been investigated in \cite{jmn,jn,jn1}.)

A similar question would be about the global rigidity of frameworks on a flat cone.
 Since a flat cone with cone angle $2\pi / n$ is the quotient of $\mathbb{R}^2$ by an $n$-fold rotation, 
the global rigidity of such frameworks can be understood by the global rigidity of frameworks in $\mathbb{R}^2$ with $n$-fold rotational symmetry (under the given symmetry constraints). 
 The corresponding local rigidity question has been studied in \cite{jkt,mt15} and an extension of Laman's theorem is known. 
 Since the space of trivial motions is only of dimension one in this case (only rotations are trivial), an unbalanced rigidity circuit $G$ satisfies the count $|E(G)|=2|V(G)|$. Therefore, we cannot guarantee the existence of a vertex of degree three in $G$. This constitutes the key obstacle in applying our current proof method to this problem.

We may also ask about the global rigidity of finite frameworks with other point group symmetries. 
However, for the same reason, it also remains open to characterize the symmetry-forced global rigidity of finite bar-joint frameworks in $\mathbb{R}^2$ that are generic modulo reflection or dihedral symmetry. In fact, it is currently not even known whether symmetry-forced global rigidity is a generic property for any point group in dimension 2.  Necessary redundant rigidity and connectivity conditions, however, may be obtained in a similar fashion as described in Section~\ref{sec:nec}.

\section*{Acknowledgement}

We would like to thank the two anonymous referees for their careful and thoughtful review and for providing a number of valuable suggestions for improvement.

%% References

\end{document}